\title{Cónicas y Superficies Cuádricas}
\author{Jaime Chica\\ Jonathan Taborda}
\email{taborda50@gmail.com}
\newtheorem{defin}{Definici\'{o}n}[section]
\newtheorem{obser}{Observaci\'{o}n}[section]
\newtheorem{prop}{Proposici\'{o}n}[section]
\newtheorem{ejer}{Ejercicio}[section]
\newtheorem{ejem}{Ejemplo}[section]
\newtheorem{sol}{Solución}[section]
\newtheorem{pro}{Problema}[section]
\newtheorem{cas}{Caso}[section]
\begin{document}\maketitle
\selectlanguage{english}
\begin{abstract}
There are two problems Analytical Geometry with facing anyone who studies this discipline:
define the nature of the locus represented by the general equation $2^{do}$ degree in two or three variables:\\
That curve represents the plane?\\
What surface is in space?\\
These two problems are posed and solved by applying the study of matrices and spectral theory.
\end{abstract}
\selectlanguage{spanish}
\tableofcontents
\section{Introducción}
Hay en la Geometría Analítica dos problemas con los que se enfrenta todo el que estudie ésta disciplina:\\
definir la naturaleza del Lugar Geométrico representado por la ecuación general de $2^{do}$ grado en dos o tres variables. Por ejemplo,
\begin{enumerate}
\item[i)] donde la ecuación $3x^2-2xy+y^2-x+y+5=0,$ ?`Que curva representa en el plano? y\\
\item[ii)] donde la ecuación $x^2+2y^2-5z^2+2xy+3xz-4yz+2x+3y-5z+8=0,$ ?`Qué superficie representa en el espacio?
\end{enumerate}
La respuesta en el primer caso es que la curva es una cónica (elipse, parábola, hipérbola) ó una cónica degenerada y en el segundo caso es una superficie cuádrica (cono, cilindro, elipsoide, paraboloide,...) ó un caso degenerado de ellas.\\
En el $1^{er}$ caso es posible estudiar el problema con elementos que proporciona la Trigonometría empleando funciones del ángulo doble para definir el $\measuredangle$ que deben girarse los ejes para conseguir anular el término mixto.\\
Pero para estudiar el segundo problema es imprescindible el empleo de las matrices y los valores propios junto con el Teorema Espectral para matrices simétricas.\\
Estos dos problemas serán planteados y resueltos aplicando, como ya se dijo, las matrices y la teoría espectral que acaba de estudiarse en los dos capítulos anteriores.
\section[Lugares geométricos representados por la ecuación general de segundo grado en dos variables...]{Lugares geométricos representados por la ecuación general de segundo grado en dos variables: $Ax^2+2Bxy+Cy^2+2Dx+2Ey+F=0$}
\lettrine{E}l problema que vamos a abordar es el siguiente: dada la ecuación
\begin{equation}\label{1}
Ax^2+2Bxy+Cy^2+2Dx+2Ey+F=0
\end{equation}
donde $A,B,C,D,E,F$ son constantes reales dadas, ?`Qué conjunto de puntos del plano $xy$ la satisfacen?\\
La ecuación [\ref{1}]:
\begin{itemize}
\item Una componente cuadrática: $$Ax^2+2Bxy+Cy^2=\left(\begin{array}{cc}x&y\end{array}\right)\left(\begin{array}{cc}A&B\\B&C\end{array}\right)\left(\begin{array}{c}x\\y\end{array}\right)$$
\item Una componente lineal: $$2Dx+2Ey=\left(\begin{array}{cc}2D&2E\end{array}\right)\left(\begin{array}{c}x\\y\end{array}\right)$$
\item Un término independiente:$$F$$
\end{itemize}
Asumiremos que No todos los coeficientes $A,B,C$ de la componente cuadrática se anulan, ya que si así fuese, [\ref{1}] representaría la recta del plano $$2Dx+2Ey+F=0$$ y no hay nada que analizar.\\
Como se desprenderá del estudio que vamos a hacer, la ecuación [\ref{1}] puede representar:\\
\[\text{Una cónica}
\begin{cases}
\text{Circunferencia}\\
\text{Elipse}\\
\text{Hipérbola}\\
\text{Parábola}
\end{cases}
\]
O una
$$\text{cónica degenerada}
\begin{cases}
\text{dos rectas paralelas}\\
\text{dos rectas concurrentes}\\
\text{una recta}\\
\text{un punto}\\
\emptyset.\,\,\text{O sea que\,\, $\nexists(x,y)\in\mathbb{R}^2$\,\, que satisfagan [\ref{1}]}
\end{cases}
$$
todo dependerá, en el fondo, de los \underline{invariantes} de [\ref{1}] y de los \underline{valores propios} de la matriz $$M=\left(\begin{array}{cc}A&B\\B&C\end{array}\right)$$
Llamaremos en adelante cónica al lugar geométrico representado por [\ref{1}], o sea, el conjunto de puntos del plano que satisfacen [\ref{1}].\\
Consideremos, pues, la cónica [\ref{1}]. Reducirla es definir el lugar geométrico representado por la ecuación.\\
Asosiado a [\ref{1}] hay dos funciones:
\begin{figure}[ht!]
\begin{center}
  \includegraphics[scale=0.6]{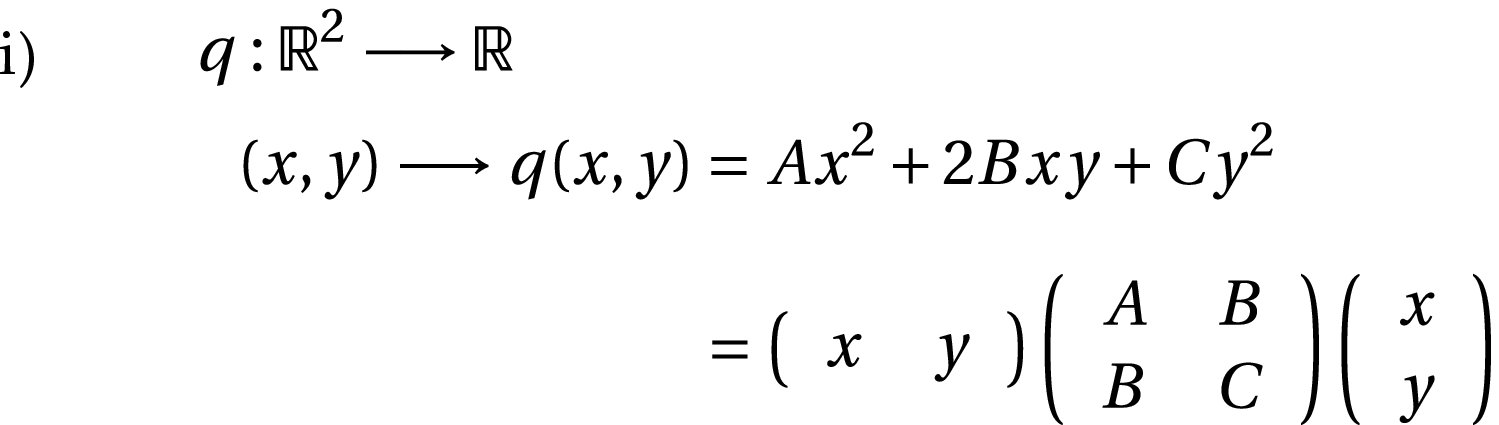}\\
\end{center}
\end{figure}
\newline
$q$ es la f.c (forma cuadrática) asociada a [\ref{1}]
\begin{enumerate}
\item[ii)]
\begin{align*}
f&:\mathbb{R}^2\longrightarrow\mathbb{R}\\
&(x,y)\longrightarrow f(x,y)=q(x,y)+\left(2D\,\,2E\right)\dbinom{x}{y}+F
\end{align*}
\end{enumerate}
El Kernel de $f$ (o el núcleo de $f$) es el conjunto $$\ker f=f^{-1}({0})=\left\{(x,y)\diagup f(x,y)=0\right\}.$$
La cónica [\ref{1}] no es más que el $\ker f$. Además es claro que un punto $P(x,y)$ está en la cónica\,\, $\Longleftrightarrow f(x,y)=0\Longleftrightarrow q(x,y)+\left(2D\,\,2E\right)\dbinom{x}{y}+F=0.$\\
Notese además que
\begin{align*}
\dfrac{\partial f}{\partial x}&=2Ax+2By+2D\\
\dfrac{\partial f}{\partial y}&=2Bx+2Cy+2E
\end{align*}
La naturaleza del lugar representado por [\ref{1}] está íntimamente ligada a ciertos escalares que se construyen con los coeficientes de la matriz simétrica $$\left(\begin{array}{ccc}A&B&D\\B&C&E\\D&E&F\end{array}\right)$$
Ello son:
$$\Delta=\left|\begin{array}{ccc}A&B&D\\B&C&E\\D&E&F\end{array}\right|=ACF-AE^2-B^2F+2BDE-CD^2.$$
$\Delta$ es llamado el invariante cúbico de [\ref{1}] ó el discriminante de la cónica.\\
$$\delta=\left|\begin{array}{cc}A&B\\B&C\end{array}\right|=AC-B^2=M_{33}$$
$M_{33}:$ menor principal $3-3$ de la matriz
\begin{equation}\label{2}
\left(\begin{array}{ccc}A&B&D\\B&C&E\\D&E&F\end{array}\right)
\end{equation}
$\delta$ es llamado el invariante cuadrático lineal de [\ref{1}].\\
$\omega=A+C;$ $\omega$ es llamado el invariante linaeal de [\ref{1}].\\
Estos escalares se llaman los \underline{invariantes de la cónica} porque son cantidades que como se verá no cambian de valor cuando sometemos [\ref{1}] bien sea a una trasposición ó una rotación de ejes. Los menores principales de orden dos de [\ref{2}] son:
$$M_{11}=\left|\begin{array}{cc}C&E\\E&F\end{array}\right|;\hspace{0.5cm}M_{22}=\left|\begin{array}{cc}A&D\\D&F\end{array}\right|;\hspace{0.5cm}M_{33}=\delta=\left|\begin{array}{cc}A&B\\B&C\end{array}\right|$$
De los tres, solo $M_{33}$ es invariante. Estos tres menores, sobre todo $\delta=M_{33}$, van a ser importantes en el problema de la reducción de la cónica.
\begin{ejem}
\begin{enumerate}
\item[1)] Un caso en que el lugar es $\emptyset.$\\
$$f(x,y)=x^2+y^2-4x-6y+24=0$$
Para identificar el lugar tratemos de completar trinomios cuadrados perfectos.\\
\begin{align*}
f(x,y)&=\left(x^2-4x+4\right)+\left(y^2-6y+9\right)+24-13\\
&={\left(x-2\right)}^2+{\left(y-3\right)}^2+11=0
\end{align*}
\end{enumerate}
$$\nexists(x,y)\in\mathbb{R}^2\hspace{0.5cm}\text{t.q}\hspace{0.5cm}f(x,y)=0.$$
El \underline{lugar es $\emptyset$}.\\
\item[2)] Sea $f(x,y)=x^2+y^2-4x-6y+13=0.$ Entonces
\begin{align*}
f(x,y)&=\left(x^2-4x+4\right)+\left(y^2-6y+9\right)+\cancel{13}-\cancel{13}\\
&={\left(x-2\right)}^2+{\left(y-3\right)}^2=0.
\end{align*}
El único punto que satisface $f(x,y)=0$ es (2,3).\\
Así que el \underline{lugar es un punto}.\\
\item[3)] Dos rectas paralelas.\\
Sean
\begin{align}
l(x,y)=3x-2y+2=0\label{3}\\
m(x,y)=3x-2y+1=0\label{4}
\end{align}
Definamos
\begin{align}
f(x,y)&=l(x,y)\cdot m(x,y)\notag\\
&=\left(3x-2y+2\right)\left(3x-2y+1\right)\notag\\
&=9x^2-12xy+4y^2+9x-6y+2=0\label{5}
\end{align}
Es claro que
\begin{align*}
f(x,y)&=0\Longleftrightarrow l(x,y)\cdot m(x,y)=0\\
&\Longleftrightarrow l(x,y)=0\hspace{0.5cm}\text{ó}\hspace{0.5cm}m(x,y)=0\\
&\Longleftrightarrow (x,y)\hspace{0.5cm}\text{está en la recta [\ref{3}] ó en la recta [\ref{4}]}
\end{align*}
Luego el lugar representado por [\ref{5}] consta de dos rectas paralelas.\\
\item[4)] Dos rectas que se cortan.\\
Sean
\begin{align}
l(x,y)=3x-2y+1=0\label{6}\\
m(x,y)=x+y-2=0\label{7}
\end{align}
[\ref{6}] y [\ref{7}] representan dos rectas que se cortan.\\
Definamos
\begin{align}
f(x,y)&=l(x,y)\cdot m(x,y)\notag\\
&=\left(3x-2y+1\right)\left(x+y-2\right)\notag\\
&=3x^2+xy-2y^2-5x+5y-2=0\label{8}
\end{align}
Es claro que
\begin{align*}
f(x,y)&=0\Longleftrightarrow l(x,y)\cdot m(x,y)=0\\
&\Longleftrightarrow l(x,y)=0\hspace{0.5cm}\text{ó}\hspace{0.5cm}m(x,y)=0\\
&\Longleftrightarrow (x,y)\hspace{0.5cm}\text{está en la recta [\ref{6}] ó en la recta [\ref{7}]}
\end{align*}
El lugar répresentado por [\ref{8}] consta de dos rectas que se cortan.\\
\item[5)] Una recta.\\
Sea $$l(x,y)=x+y-2=0$$
Definamos
\begin{align}
f(x,y)&=l(x,y)\cdot l(x,y)=\left(x+y-2\right)\left(x+y-2\right)\notag\\
&=x^2+2xy+y^2-4x-4y+4=0\label{9}
\end{align}
Es claro que $$f(x,y)=0\Longleftrightarrow l(x,y)=x+y-2=0$$
El lugar representado por [\ref{9}] es una recta.
\end{ejem}
\section{Invariantes de una cónica}
\begin{prop}
Consideremos la cónica de la ecuación [\ref{1}]. Los números reales $$\omega=A+C,\hspace{0.5cm}\delta=\left|\begin{array}{cc}A&B\\B&C\end{array}\right|=AC-B^2\hspace{0.5cm}\text{y}\hspace{0.5cm}\Delta=\left|\begin{array}{ccc}A&B&C\\B&C&E\\D&E&F\end{array}\right|$$
no cambian al realizar una traslación, una rotación de los ejes $xy$, o una combinación de ambas transformaciones.
\end{prop}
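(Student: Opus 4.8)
The plan is to recast the whole problem in matrix language and then reduce each of the three claims to an elementary invariance property of the trace and the determinant. First I would write the conic in homogeneous coordinates as $\mathbf{v}^{\mathsf{T}} N \mathbf{v} = 0$, where $\mathbf{v} = (x,y,1)^{\mathsf{T}}$ and $N$ is the symmetric $3\times 3$ matrix [\ref{2}]. Writing $M = \begin{pmatrix} A & B \\ B & C \end{pmatrix}$ and $\mathbf{b} = (D,E)^{\mathsf{T}}$, one has exactly $\omega = \operatorname{tr} M$, $\delta = \det M$ and $\Delta = \det N$. The three invariants are thus trace/determinant data attached to $M$ and $N$, so everything hinges on how $M$ and $N$ transform under a change of axes.

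A general motion of the axes — a rotation by $\theta$ composed with a translation by $(h,k)$ — is the substitution $\mathbf{v} = P\mathbf{v}'$ with the block-triangular matrix
\[
P = \begin{pmatrix} R & \mathbf{t} \\ \mathbf{0}^{\mathsf{T}} & 1 \end{pmatrix}, \qquad R = \begin{pmatrix} \cos\theta & -\sin\theta \\ \sin\theta & \cos\theta \end{pmatrix}, \quad \mathbf{t} = \begin{pmatrix} h \\ k \end{pmatrix}.
\]
Substituting gives $\mathbf{v}^{\mathsf{T}} N \mathbf{v} = \mathbf{v}'^{\mathsf{T}}(P^{\mathsf{T}} N P)\mathbf{v}'$, so in the new coordinates the conic is governed by $N' = P^{\mathsf{T}} N P$. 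Pure translations and pure rotations are the special cases $R = I$ and $\mathbf{t} = \mathbf{0}$, so treating $P$ at once handles all three transformations demanded by the statement.

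For $\Delta$ I would simply take determinants: $\det N' = (\det P)^2 \det N$, and since $P$ is block-triangular with $\det P = \det R = \cos^2\theta + \sin^2\theta = 1$, this yields $\Delta' = \Delta$ immediately. For $\omega$ and $\delta$ the key computation is that the upper-left $2\times 2$ block of $P^{\mathsf{T}} N P$ is exactly $R^{\mathsf{T}} M R$: multiplying out
\[
\begin{pmatrix} R^{\mathsf{T}} & \mathbf{0} \\ \mathbf{t}^{\mathsf{T}} & 1 \end{pmatrix}
\begin{pmatrix} M & \mathbf{b} \\ \mathbf{b}^{\mathsf{T}} & F \end{pmatrix}
\begin{pmatrix} R & \mathbf{t} \\ \mathbf{0}^{\mathsf{T}} & 1 \end{pmatrix},
\]
the translation vector $\mathbf{t}$ drops out of the top-left block because the bottom row of $P$ is $(\mathbf{0}^{\mathsf{T}}\ 1)$. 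Granting this, $M' = R^{\mathsf{T}} M R = R^{-1} M R$ is similar to $M$, hence has the same characteristic polynomial; therefore $\operatorname{tr} M' = \operatorname{tr} M$ and $\det M' = \det M$, i.e. $\omega$ and $\delta$ are preserved. Equivalently, $\omega$ and $\delta$ are the sum and product of the eigenvalues of $M$, which a similarity leaves fixed — the spectral viewpoint of the previous chapters.

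The main obstacle is purely bookkeeping: verifying the $(1,1)$-block identity $R^{\mathsf{T}} M R$ and the orthogonality relations $R^{\mathsf{T}} R = I$, $\det R = 1$ used throughout. I expect no conceptual difficulty beyond carrying the block multiplication carefully. It is worth remarking that $D$, $E$, $F$ themselves \emph{do} change under the motion; the content of the proposition is precisely that the three scalars $\omega$, $\delta$, $\Delta$ are the special combinations of the coefficients that survive a change of axes.
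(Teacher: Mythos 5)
Your proof is correct, and it takes a genuinely more unified route than the paper's. The paper argues in two separate cases: for a translation it substitutes $x=X+h$, $y=Y+k$ and reads off the new coefficients explicitly ($A'=A$, $B'=B$, $C'=C$, $D'=Ah+Bk+D$, $E'=Bh+Ck+E$, $F'=f(h,k)$), which gives $\omega'=\omega$ and $\delta'=\delta$ at sight, and then proves $\Delta'=\Delta$ by the row and column operations $f_3\to f_3-hf_1-kf_2$, $c_3\to c_3-hc_1-kc_2$ on the $3\times 3$ determinant; for a rotation it builds exactly your matrix with $\mathbf{t}=\mathbf{0}$, namely $\widetilde{P}=\left(\begin{smallmatrix}P&0\\0&1\end{smallmatrix}\right)$, and invokes similarity of $\widetilde{P}^{\mathsf{T}}N\widetilde{P}$ with $N$. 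Your congruence $N'=P^{\mathsf{T}}NP$ with block-triangular $P$ subsumes both cases in one computation, and you handle the one subtlety correctly: for a pure translation $P$ is unipotent rather than orthogonal, so $N'$ is only \emph{congruent} to $N$, not similar, but you never claim more than $\det N'=(\det P)^2\det N$ with $\det P=1$, while $\omega$ and $\delta$ come from the top-left block $R^{\mathsf{T}}MR$, from which $\mathbf{t}$ drops out (this block identity is the matrix form of the paper's $A'=A$, $B'=B$, $C'=C$, and your determinant identity is the matrix form of its row/column reduction, since those operations amount to multiplying by the transpose-inverse of the unipotent factor). What your unified argument buys is brevity and automatic closure under composition: a rotation followed by a translation is again such a $P$, so the ``combinaci\'on de ambas transformaciones'' clause of the proposition needs no extra word, whereas the paper must implicitly compose its two cases. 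What the paper's longer route buys is the explicit transformed coefficients $D'=\tfrac{1}{2}\left.\frac{\partial f}{\partial x}\right)_{h,k}$, $E'=\tfrac{1}{2}\left.\frac{\partial f}{\partial y}\right)_{h,k}$, $F'=f(h,k)$, which are reused repeatedly later in the text (the ecuaci\'on de incrementos, the center equations [\ref{16}], and the identity $f(\widetilde{h},\widetilde{k})=\Delta/\delta$); your coordinate-free argument does not produce these by-products, so if it replaced the paper's proof those formulas would still have to be derived separately.
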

\begin{proof}
\begin{enumerate}
\item[(1)] Supongamos que realizamos una traslación de los ejes xy al punto $O'(h,k)$. (Fig.1.1). Entonces se definen
ejes X-Y con origen en O' y paralelos a x-y teniendose que
\begin{align*}
x&=X+h\\
y&=Y+k
\end{align*}
que llevamos a [\ref{1}] obteniendose
\begin{figure}[ht!]
\begin{center}
  \includegraphics[scale=0.5]{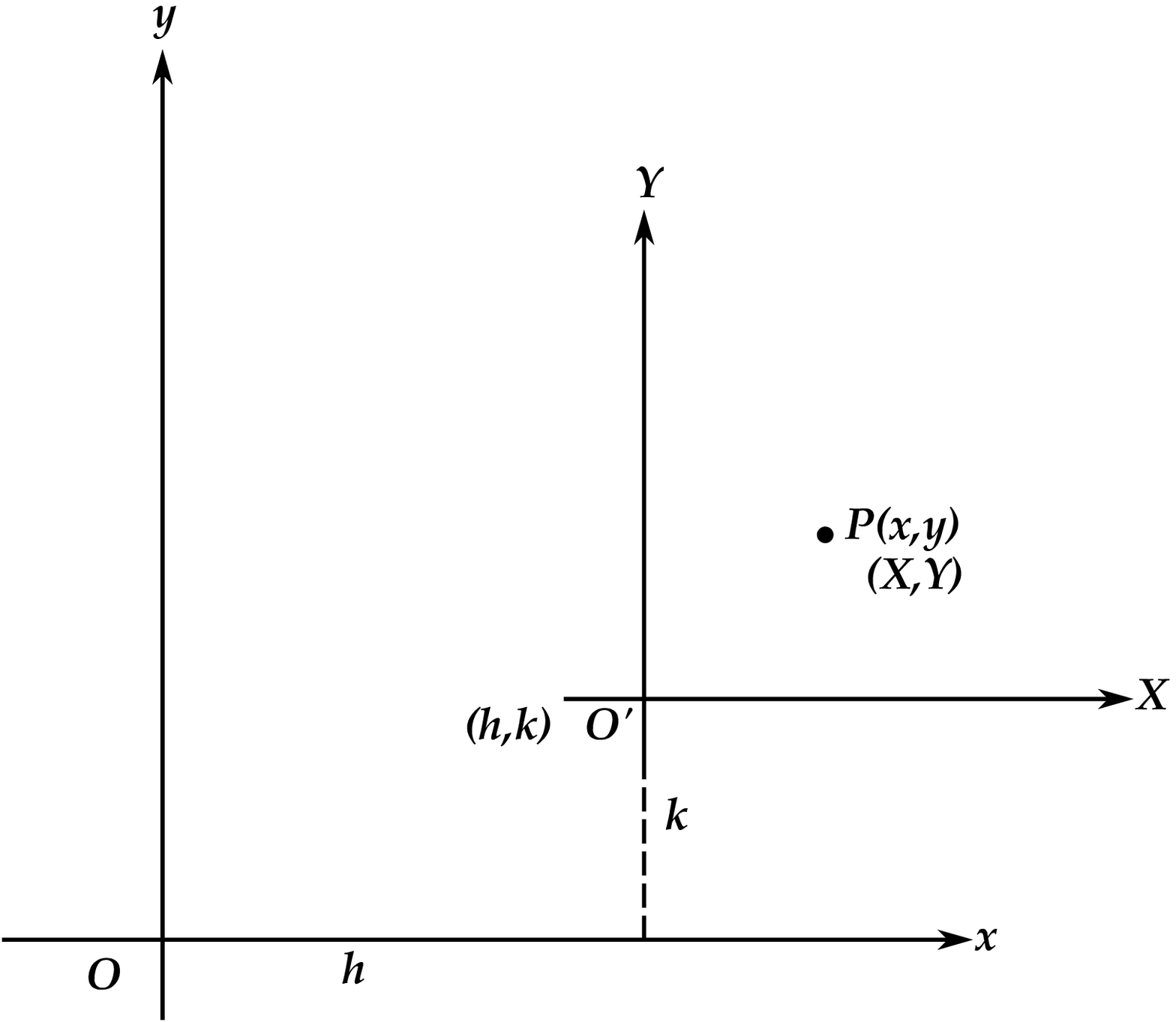}\\
  \caption{}\label{}
\end{center}
\end{figure}
$$A{\left(X+h\right)}^2+2B\left(X+h\right)\left(Y+k\right)+C{\left(Y+k\right)}^2+2D\left(X+h\right)+2E\left(Y+k\right)+F=0$$
O sea que\\
$AX^2+2BXY+CY^2+2\left(Ah+Bk+D\right)X+2\left(Bh+Ck+E\right)Y+\left(Ah^2+2Bhk+Ck^2+2Dh+2Ek+F\right)=0$\\
ecuación que podemos escribir en la forma
\begin{align*}
&A'X^2+2B'XY+C'Y^2+2D'X+2E'Y+F'=0\hspace{0.5cm}\text{siendo}\\
&A'=A\\
&B'=B\\
&C'=C\\
&D'=Ah+Bk+D=\dfrac{1}{2}\left.\dfrac{\partial f}{\partial x}\right)_{h,k}\\
&E'=Bh+Ck+E=\dfrac{1}{2}\left.\dfrac{\partial f}{\partial y}\right)_{h,k}\\
&F'=Ah^2+2Bhk+Ck^2+2Dh+2Ek+F=f(h,k)
\end{align*}
Nótese que cuando se hace una traslación de ejes al punto $(h,k)$ los coeficientes de la parte cuadrática de la ecuación no se tranasforman.\\
Si lo hacen los coeficientes de la parte lineal y el término independiente que ahora es $f(h,k)$.\\
En imagenes.
\begin{figure}[ht!]
\begin{center}
  \includegraphics[scale=0.5]{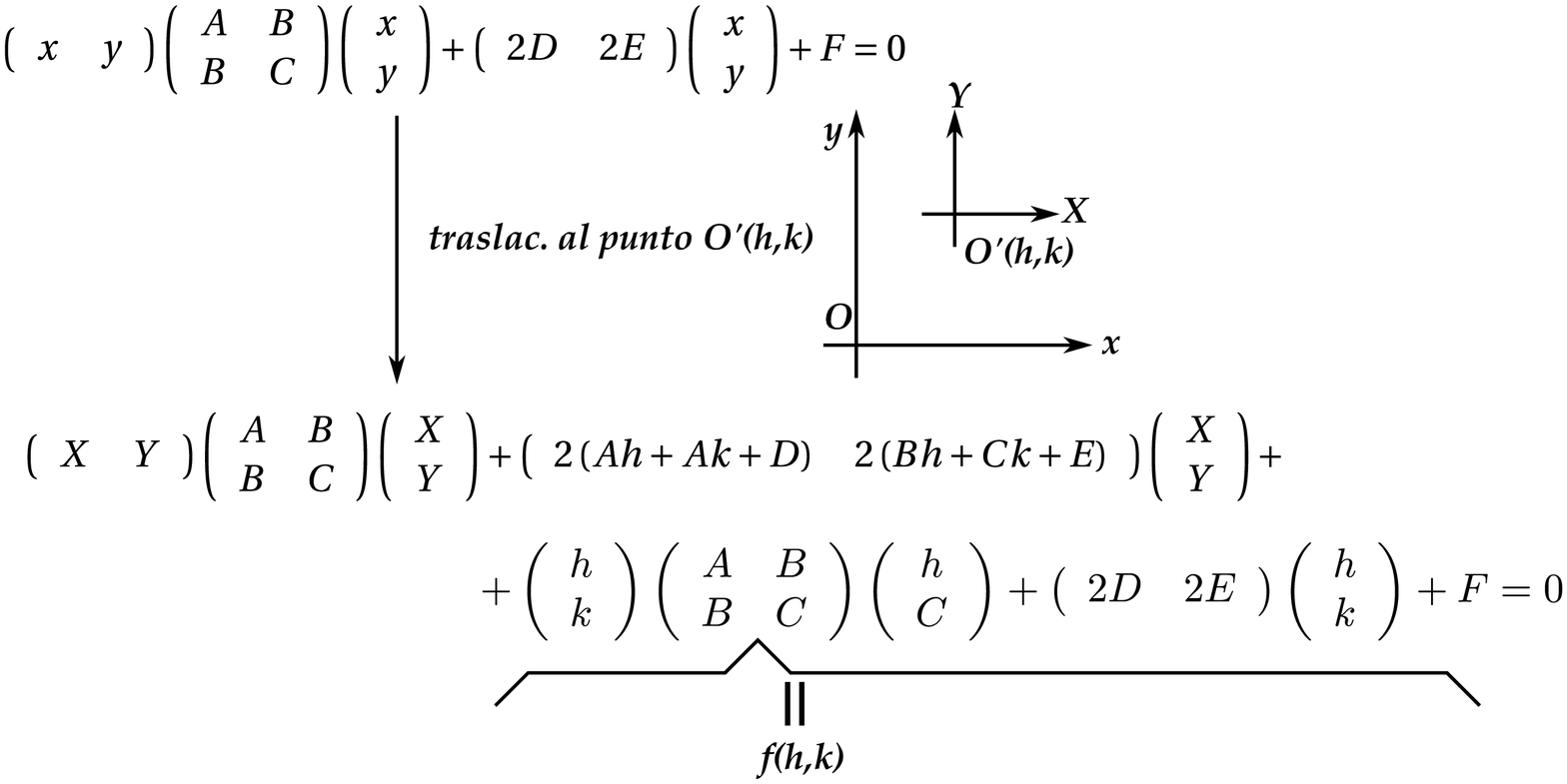}\\
\end{center}
\end{figure}
\newpage
nueva componente lineal:$\left(\begin{array}{cc}2\left(Ah+Bk+D\right)&2\left(Bh+Ck+E\right)\end{array}\right)\left(\begin{array}{cc}X\\Y\end{array}\right)$\\
nuevo término independiente: $f(h,k)=\left(\begin{array}{cc}h&k\end{array}\right)\left(\begin{array}{cc}A&B\\B&C\end{array}\right)\left(\begin{array}{cc}h\\k\end{array}\right)+\left(\begin{array}{cc}2D&2E\end{array}\right)\left(\begin{array}{cc}h\\k\end{array}\right)+F$
Calculemos los nuevos valores de $\omega, \delta,$ y $\Delta$.\\
\begin{align*}
\omega'&=A'+C'=A+C=\omega\\
\delta'&=\left|\begin{array}{cc}A'&B'\\B'&C'\end{array}\right|=\left|\begin{array}{cc}A&B\\B&C\end{array}\right|=AC-B^2=\delta.
\end{align*}
Esto demuestra que $\omega$ y $\delta$ son invariantes por traslación.\\
Veamos ahora que $\Delta$ también se conserva.
\begin{align*}
\Delta'&=\left|\begin{array}{ccc}A'&B'&D'\\B'&C'&E'\\D'&E'&F'\end{array}\right|\\
&=\left|\begin{array}{cccc}A&B&Ah+Bk+D\\B&C&Bh+Ck+E\\Ah+Bk+D&Bh+Ck+E&Ah^2+2Bhk+Ck^2+2Dh+2Dh+2Ek+F\end{array}\right|\\
&\underset{\tiny f_3\rightarrow f_3-hf_1-kf_2}{\underset{\uparrow}{=}}\left|\begin{array}{cccc}A&B&Ah+Bk+D\\B&C&Bh+Ck+E\\D&E&Dh+Ek+F\end{array}\right|\underset{\tiny c_3\rightarrow c_3-hc_1-kc_2}{\underset{\uparrow}{=}}\left|\begin{array}{cccc}A&B&C\\B&C&E\\D&E&F\end{array}\right|=\Delta
\end{align*}
lo que nos demuestra que $\Delta$ también es invariante por traslación.
\item[(2)] Supongamos ahora que rotamos los ejes $xy$ un ángulo
$\overset{\curvearrowleft}{\theta}(0<\theta<\phi/2)$ respecto a $O$.
\begin{figure}[ht!]
\begin{center}
  \includegraphics[scale=0.5]{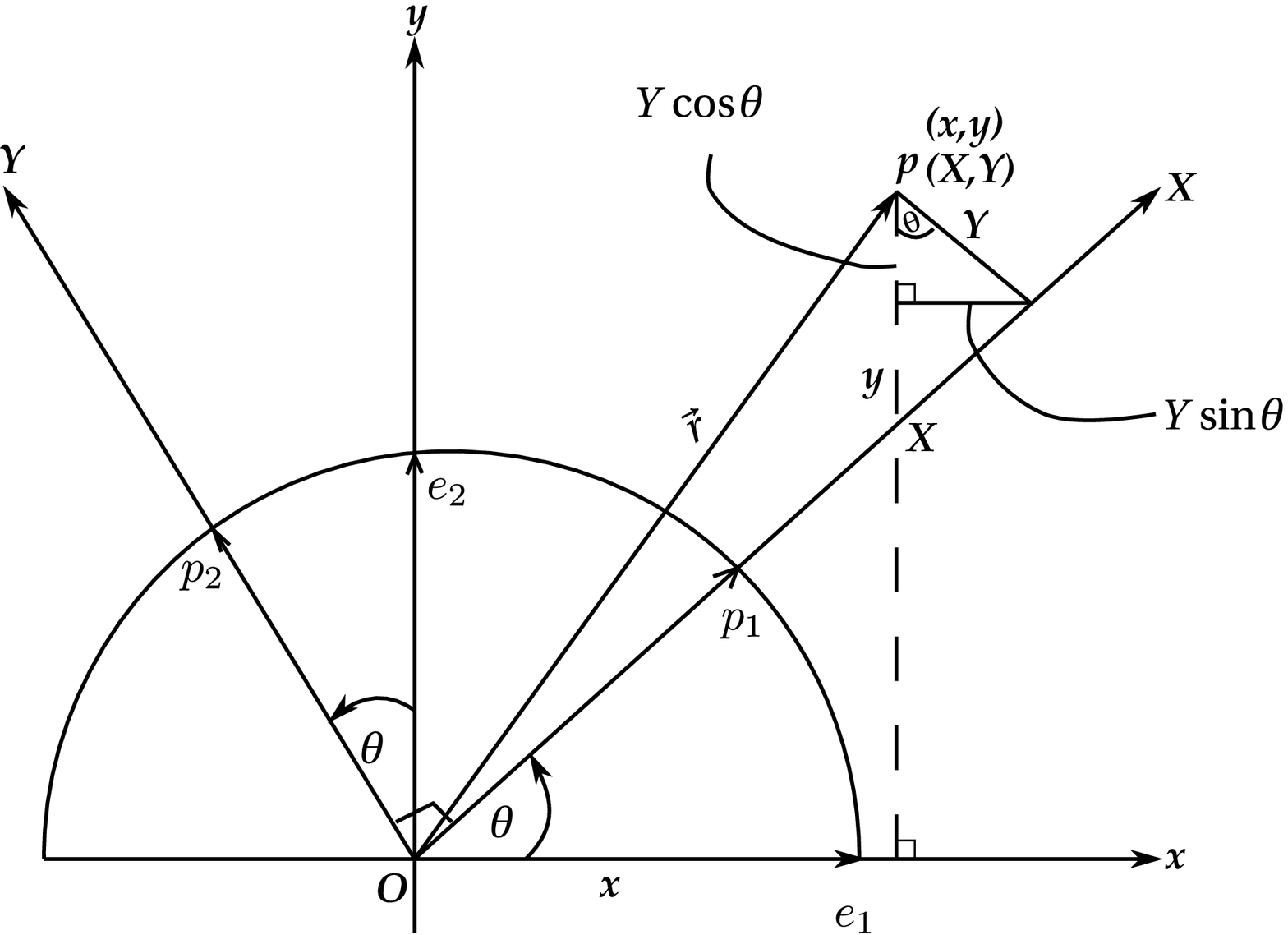}\\
  \caption{}
\end{center}
\end{figure}
\newpage
Se obtiene así un nuevo sistema $XY$ con origen en $O$.\\
Llamemos $\vec{P_1}$ y $\vec{P_2}$ a los vectores unitarios que señalan las dimensiones de los nuevos ejes. Sea $Q$ un punto cualquiera del plano de vector de posición $\vec{r}$ respecto a $O$.\\
$$\left[\vec{r}\right]_{e_\alpha}=\left[I\right]_{e_\alpha}^{p_\alpha}\left[\vec{r}\right]_{p_\alpha}$$
O sea
$$\left(\begin{array}{cc}x\\y\end{array}\right)=\left(\begin{array}{cc}\cos\theta&-\sin\theta\\\sin\theta&\cos\theta\end{array}\right)\left(\begin{array}{cc}X\\Y\end{array}\right)=P\left(\begin{array}{cc}X\\Y\end{array}\right)$$
con $$P=\left(\begin{array}{cc}\cos\theta&-\sin\theta\\\sin\theta&\cos\theta\end{array}\right):\text{ortogonal}$$
La cónica referida a los ejes $xy$ tiene por ecuación:
$$\left(\begin{array}{cc}x&y\end{array}\right)\left(\begin{array}{cc}A&B\\B&C\end{array}\right)\left(\begin{array}{cc}x\\y\end{array}\right)+\left(\begin{array}{cc}2D&2E\end{array}\right)\left(\begin{array}{cc}x\\y\end{array}\right)+F=0$$
Pero $$\left(\begin{array}{cc}x\\y\end{array}\right)=P\left(\begin{array}{cc}X\\Y\end{array}\right)$$
y al transponer, $$\left(\begin{array}{cc}x&y\end{array}\right)=\left(\begin{array}{cc}X&Y\end{array}\right)P^t.$$
Luego $$\left(\begin{array}{cc}X&Y\end{array}\right)\left(P^t\left(\begin{array}{cc}A&B\\B&C\end{array}\right)P\right)\left(\begin{array}{cc}X\\Y\end{array}\right)+\left(\begin{array}{cc}2D&2E\end{array}\right)P\left(\begin{array}{cc}X\\Y\end{array}\right)+F=0$$
es la ecuación de cónica $\diagup XY.$\\
Nótese que $$P^t\left(\begin{array}{cc}A&B\\B&C\end{array}\right)P$$ es simétrica ya que $${\left(P^t\left(\begin{array}{cc}A&B\\B&C\end{array}\right)P\right)}^t=P^t\left(\begin{array}{cc}A&B\\B&C\end{array}\right)P$$
Si llamamos
\begin{equation*}
\left.
\begin{split}
\left(\begin{array}{cc}A'&B'\\B'&C'\end{array}\right)&=P^t\left(\begin{array}{cc}A&B\\B&C\end{array}\right)P\\
\text{y}\hspace{0.5cm}\left(\begin{array}{cc}2D'&2E'\end{array}\right)&=\left(\begin{array}{cc}2D&2E\end{array}\right)P
\end{split}
\right\}\hspace{0.5cm}\star
\end{equation*}
se tiene que $$\left(\begin{array}{cc}A'&B'\\B'&C'\end{array}\right)$$ es simétrica y la ecuación de la cónica $\diagup XY$ es:
$$\left(\begin{array}{cc}X&Y\end{array}\right)\left(\begin{array}{cc}A'&B'\\B'&C'\end{array}\right)\left(\begin{array}{cc}X\\Y\end{array}\right)+\left(\begin{array}{cc}2D&2E\end{array}\right)\left(\begin{array}{cc}X\\Y\end{array}\right)+F=0,$$
o también, $$A'X^2+2B'XY+C'Y^2+2D'X+2E'Y+F=0$$ donde $A',B',C',D',E'$ se calculan utilizando $\star$.\\
Observese que cuando realizamos una rotación de ejes, tanto la parte cuadrática como la parte lineal se transforman. El tétmino independiente no se afecta.\\
En imágenes:
\begin{figure}[ht!]
\begin{center}
  \includegraphics[scale=0.5]{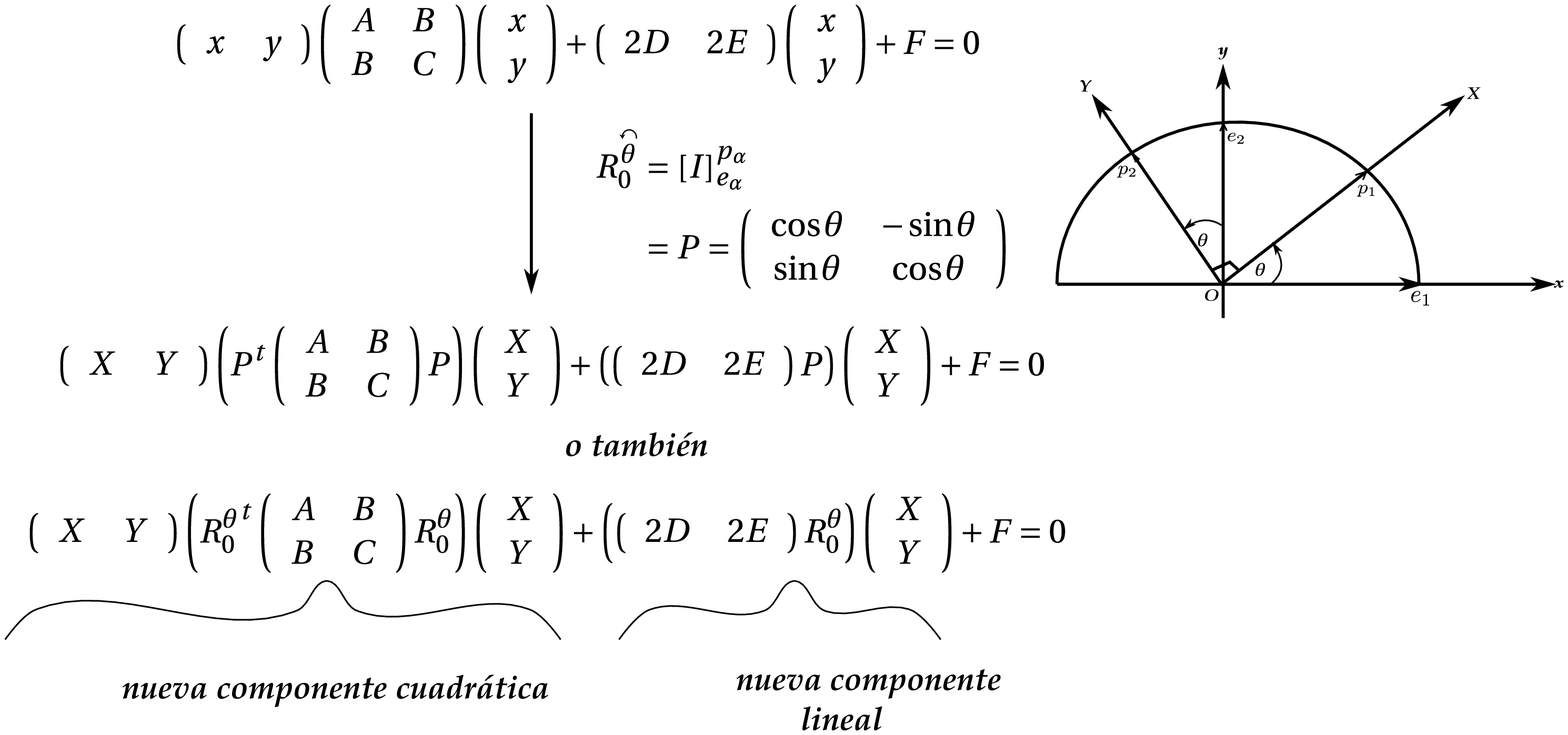}\\
\end{center}
\end{figure}
\newline
Los nuevos valores de $\omega, \delta$ y $\Delta$ son ahora:
\begin{align*}
\omega'&=A'+C'\\
\delta'&=\left|\begin{array}{cc}A'&B'\\B'&C'\end{array}\right|=A'C'-B'^2\\
\Delta'&=\left|\begin{array}{ccc}A'&B'&C'\\B'&C'&E'\\D'&E'&F'\end{array}\right|
\end{align*}
Como $P$ es ortogonal, $$\left(\begin{array}{cc}A'&B'\\B'&C'\end{array}\right)=P^t\left(\begin{array}{cc}A&B\\B&C\end{array}\right)P\sim\left(\begin{array}{cc}A&B\\B&C\end{array}\right)$$
Luego el polinomio característico de $$\left(\begin{array}{cc}A'&B'\\B'&C'\end{array}\right)$$ es el mismo polinomio característico de $$\left(\begin{array}{cc}A&B\\B&C\end{array}\right).$$
O sea que $$\lambda^2-\left(A'+C'\right)\lambda+\left(A'C'-B'^2\right)=\lambda^2-\left(A+C\right)+\left(AC-B^2\right)$$
\begin{align*}
\therefore\hspace{0.5cm}\omega'&=A'+C'=A+C=\omega\\
\delta'&=A'C'-B'^2=AC-B^2=\delta
\end{align*}
Esto demuestra que $\omega$ y $\delta$ son invariantes por una rotación de ejes.\\
Finalmente veremos que $\Delta$ también lo es.\\
Bastará con demostrar que $$\left(\begin{array}{ccc}A'&B'&D'\\B'&C'&E'\\D'&E'&F'\end{array}\right)\sim\left(\begin{array}{ccc}A&B&D\\B&C&E\\D&E&F\end{array}\right)$$
Una vez establecido esto se tendrá que $$\Delta'=\left|\begin{array}{ccc}A'&B'&D'\\B'&C'&E'\\D'&E'&F'\end{array}\right|=\left|\begin{array}{ccc}A&B&D\\B&C&E\\D&E&F\end{array}\right|=\Delta$$
ya que si dos matrices son semejantes tienen el mismo determinante.\\
Definamos
$$\overset{\sim}{P}=\left(\begin{array}{cc|c}P&&0\\ &&0\\ \hline 0&0&1\end{array}\right).$$
Como $P$ es ortogonal, $\overset{\sim}{P}$ también lo es y $${\overset{\sim}{P}}^{-1}={\overset{\sim}{P}}^t=\left(\begin{array}{cc|c}P^t&&0\\ &&0\\ \hline 0&0&1\end{array}\right)$$
Ya teníamos que $$\left(\begin{array}{cc}A'&B'\\B'&C'\end{array}\right)=P^t\left(\begin{array}{cc}A&B\\B&C\end{array}\right)P$$
y que $$\left(\begin{array}{cc}D'&E'\end{array}\right)=\left(\begin{array}{cc}D&E\end{array}\right)P\hspace{0.5cm}\therefore\hspace{0.5cm}\dbinom{D'}{E'}=P^t\dbinom{D}{E}.$$
Así que podemos escribir:
\begin{align*}
\left(\begin{array}{cc|cc}A'&B'&D'\\ B'&C'&E'\\ \hline D'&E'&F'\end{array}\right)&=\left(\begin{array}{c|cc}P^t\left(\begin{array}{cc}A&B\\B&C\end{array}\right)P&P^t\dbinom{D}{E}\\ \hline \left(\begin{array}{cc}D&E\end{array}\right)P&F\end{array}\right)\\
&=\left(\begin{array}{c|cc}P^t&0\\ &0\\ \hline 0&0&1\end{array}\right)\left(\begin{array}{cc|cc}A&B&D\\ B&C&E\\ \hline D&E&F\end{array}\right)\left(\begin{array}{c|cc}P&0\\ &0\\ \hline 0&0&1\end{array}\right)\\
&={\overset{\sim}{P}}^t\left(\begin{array}{cccc}A&B&D\\B&C&E\\D&E&F\end{array}\right)\overset{\sim}{P},
\end{align*}
lo que nos demuestra que
$$\left(\begin{array}{cccc}A'&B'&D'\\ B'&C'&E'\\D'&E'&F'\end{array}\right)\sim\left(\begin{array}{cccc}A&B&D\\ B&C&E\\D&E&F\end{array}\right)$$
Existen otros dos invariantes:
$$D^2+E^2\hspace{0.5cm}\text{y}M_{11}+M_{22}+M_{33}$$ son invariantes por una rotación.\\
Como $$\left(\begin{array}{cc}D'&E'\end{array}\right)=\left(\begin{array}{cc}D&E\end{array}\right)P, \hspace{0.5cm}\dbinom{D'}{E'}=P^t\dbinom{D}{E}$$ Luego $$D'^2+E'^2=\left(\begin{array}{cc}D'&E'\end{array}\right)\dbinom{D'}{E'}=\left(\begin{array}{cc}D&E\end{array}\right)\left(\begin{array}{cc}P&P^t\end{array}\right)\dbinom{D}{E}=D^2+E^2$$
lo que nos demuestra que $D^2+E^2$ es invariante por rotación.\\
$$M_{11}+M_{22}+M_{33}=\left|\begin{array}{cc}E&F\\E&F\end{array}\right|+\left|\begin{array}{cc}A&D\\D&F\end{array}\right|+\left|\begin{array}{cc}A&B\\B&C\end{array}\right|$$
\begin{align}
\Delta&=\left|\begin{array}{ccc}A&B&D\\B&C&E\\D&E&F\end{array}\right|\notag\\
&=CF-E^2+AF-D^2+\delta\notag\\
&=F\left(A+C\right)-\left(D^2+E^2\right)+\delta\notag\\
&=FW-\left(D^2+E^2\right)+\delta\label{10}
\end{align}
Ahora,
\begin{align}
M'_{11}+M'_{22}+M'_{33}&=\left|\begin{array}{ccc}C'&E'\\E'&F'\end{array}\right|+\left|\begin{array}{ccc}A'&D'\\D'&F'\end{array}\right|+\left|\begin{array}{ccc}A'&B'\\B'&C'\end{array}\right|\notag\\
&=C'F'-E'^2+A'F'-D'^2+\delta'\notag\\
&=F'\left(A'+C'\right)-\left(D'^2+E^2\right)+\delta'\notag\\
&\underset{\nearrow}{=}FW-\left(D^2+E^2\right)-\delta\label{11}\\
&\begin{cases}\notag\\
F'=F\\
D'^2+E'^2=D^2+E^2\\
W'=A'+C'=A+C=\omega\\
\delta'=\delta
\end{cases}
\end{align}
En virtud de [\ref{10}] y [\ref{11}], $M'_{11}+M'_{22}+M'_{33}=M_{11}+M_{22}+M_{33}.$ Lo que nos demuestra que $M_{11}+M_{22}+M_{33}$ es un invariante por una rotación.
\end{enumerate}
\end{proof}
\section{Ecuación de incrementos}
Sea $\left(X,Y\right)$ un punto cualquiera del plano $x,y$. Fig.1.3, $\left(X,Y\right)$ no necesariamente un punto de la cónica. Entonces $$F(X,Y)=q(X,Y)+\left(\begin{array}{cc}2D&2E\end{array}\right)\dbinom{X}{Y}+F.$$
\begin{figure}[ht!]
\begin{center}
  \includegraphics[scale=0.5]{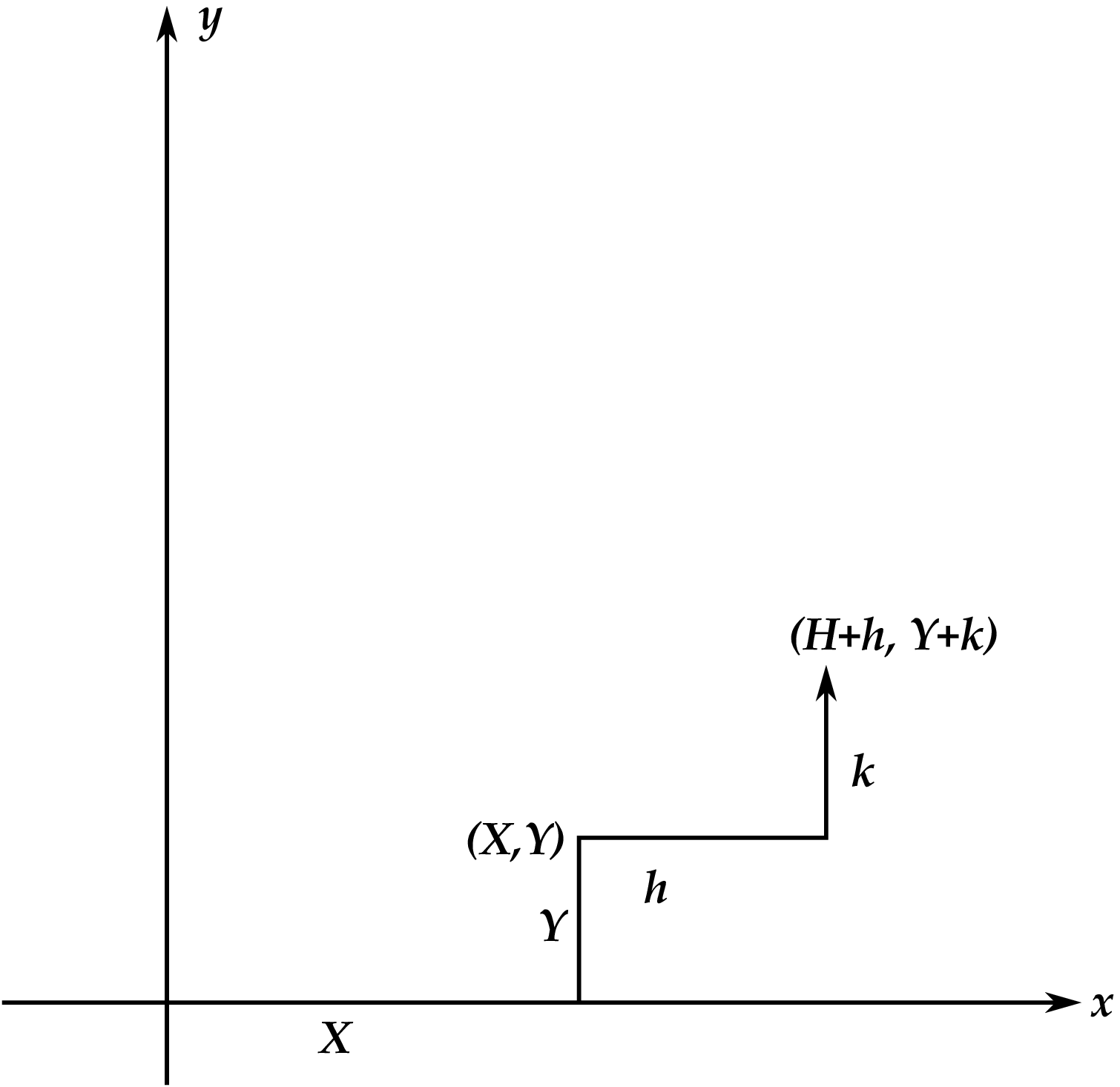}\\
  \caption{}\label{}
\end{center}
\end{figure}
\newline
Se trata de demostrar que $$\forall (h,k)\in\mathbb{R}^2: f\left(X+h,Y+k\right)=q(X,Y)+\left(\left.\dfrac{\partial f}{\partial x}\right)_{h,k}\left.\dfrac{\partial f}{\partial y}\right)_{h,k}\right)\dbinom{X}{Y}+ f(h,k)$$
En efecto,
\begin{align*}
f\left(X+h,Y+k\right)&=A{\left(X+h\right)}^2+2B\left(X+h\right)\left(Y+k\right)+C{\left(Y+k\right)}^2+\\
&+2D\left(X+h\right)+2E\left(Y+k\right)+F\\
&=AX^2+CY^2+2AhX+2CkY+Ah^2+Ck^2+\\
&+2BXY+2BXk+2BYk+2Bhk+2DX+\\
&+2Dh+2EY+2Ek+F\\
&=\left(AX^2+2BXY+CY^2\right)+\left(2Ah+2Bk+2D\right)X+\\
&+\left(2Bh+2Ck+2E\right)Y+\left(Ah^2+2Bhk+Ck^2+2Dh+2Ek+F\right)
\end{align*}
Como $$\dfrac{\partial f}{\partial x}=Ax+2By+2D$$ y $$\dfrac{\partial f}{\partial y}=2Bx+2Cy+2E,$$
\begin{align*}
\left.\dfrac{\partial f}{\partial x}\right)_{h,k}&=2Ah+2Bk+2D\\
\left.\dfrac{\partial f}{\partial y}\right)_{h,k}&=2Bh+2Ck+2E
\end{align*}
y regresando a la ecuación anterior,
\begin{equation}\label{12}
f\left(X+h,Y+k\right)=q(X,Y)+\left(\dfrac{\partial f}{\partial x}\right)_{h,k}\left(\dfrac{\partial f}{\partial y}\right)_{h,k}\dbinom{X}{Y}+f(h,k)
\end{equation}
Hemos demostrado así que $\forall (X,Y)$ y $\forall (h,k)\in\mathbb{R}^2$ se cumple [\ref{12}]. La ecuación [\ref{12}] se llama la \textit{ecuación de incrementos de la cónica} y será utilizada en la sección 1.3 que sigue y más adelante (sección 1.8) para hallar la ecuación de rectas tangentes y normales a la cónica en uno de sus puntos.
\section{Reducción de una cónica}
Reducir la cónica [\ref{1}] es definir que lugar geométrico representa.\\
Nuestro primer problema en la reducción de [\ref{1}] es definir, (si se puede) una transformación que elimine los términos lineales en la ecuación. Es obvio que debemos entonces considerar una cónica [\ref{1}] en la que no todos los coefientes $A,B,C$ de la forma cuadrática se anulan a la vez, ya que si eso sucede, [\ref{1}] tiene la forma $$2Dx+2Ey+F=0$$
que representa una recta y no nada más que decir.\\
Supongamos que bajo esas hipótesis realizamos una traslación de ejes al punto $O'$ de coordenadas $(h,k)\diagup xy,$ Fig. 1.4.\\
El punto $(h,k)$ no tiene que estar en la cónica. Quedan definidos dos ejes $XY$ con origen en $O',\,\,X-Y\parallel x-y$
\begin{figure}[ht!]
\begin{center}
  \includegraphics[scale=0.5]{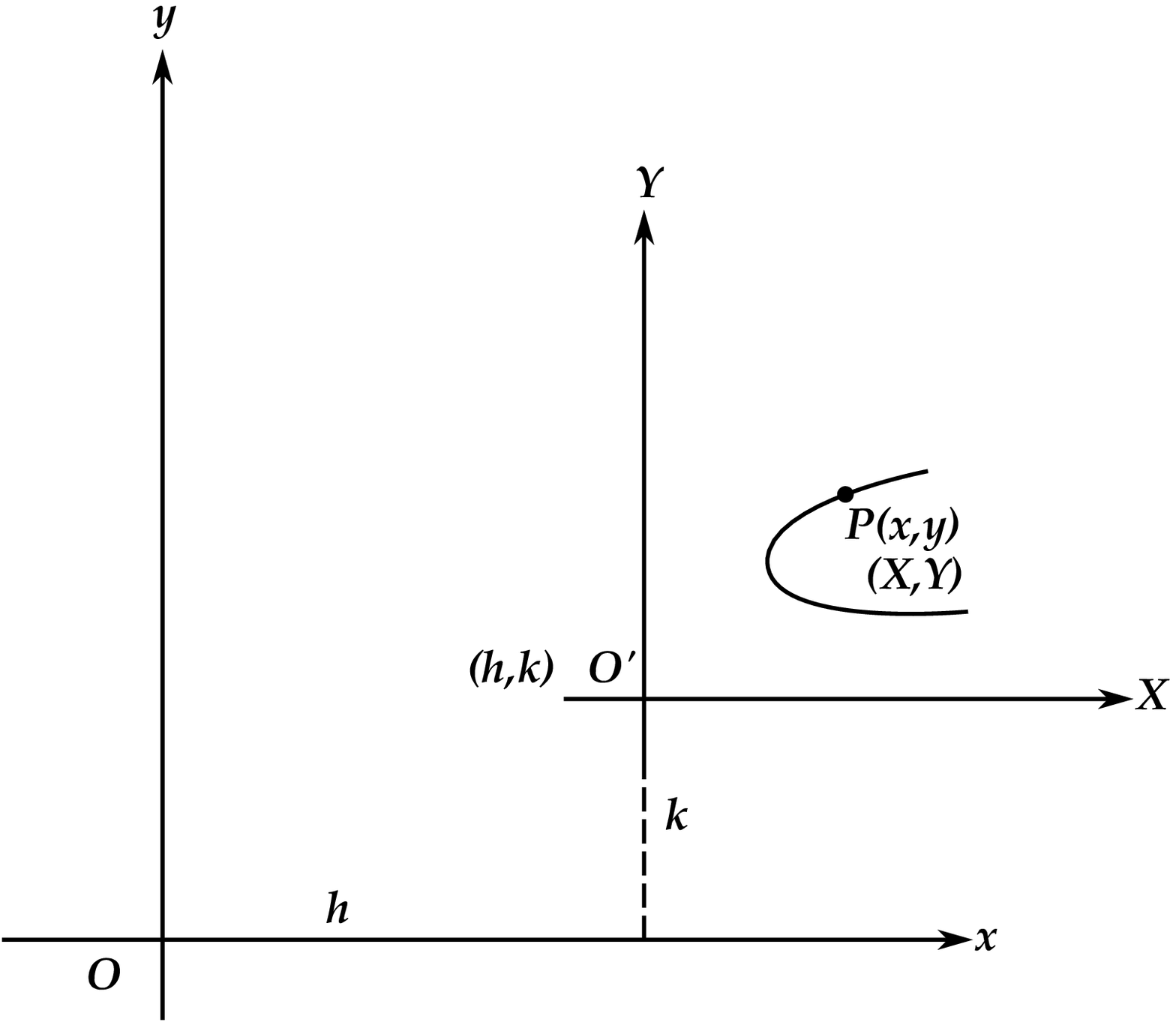}\\
  \caption{}\label{}
\end{center}
\end{figure}
Sea $P(x,y),\,\, P(X,Y)$ un punto de la cónica.\\
Como $P(x,y)$ está en la curva, $Ax^2+2Bxy+Cy^2+2Dx+2Ey+F=0.$\\
El mismo punto $P$ toma coordenadas $(X,Y)\diagup\text{sistema}X-Y.$\\
Las ecuaciones de transformación son:
\begin{align*}
x&=X+h\\
y&=Y+k
\end{align*}
y la ecuación de la cónica $\diagup XY$ es $$A{\left(X+h\right)}^2+2B\left(X+h\right)\left(Y+k\right)+C{\left(Y+k\right)}^2+2D\left(X+h\right)+2E\left(Y+k\right)+F=0$$
que según se acaba de demostrar en la ecuación de incrementos puede escribirse así: $$q(X,Y)+\left(\left.\dfrac{\partial f}{\partial x}\right)_{h,k}\left.\dfrac{\partial f}{\partial y}\right)_{h,k}\right)\dbinom{X}{Y}+f(h,k)=0.$$
Esta sería la ecuación de la cónica $\diagup XY$, o también
\begin{equation}\label{13}
\left(\begin{array}{cc}X&Y\end{array}\right)\left(\begin{array}{cc}A&B\\B&C\end{array}\right)\dbinom{X}{Y}\left(\left.\dfrac{\partial f}{\partial x}\right)_{h,k}\left.\dfrac{\partial f}{\partial y}\right)_{h,k}\right)\dbinom{X}{Y}+F(h,k)=0
\end{equation}
Nótese que
\begin{align}\label{14}
f(h,k)&=Ah^2+2Bhk+Ck^2+2Dh+2Ek+F\notag\\
&=\left(Ah^2+Bhk\right)+2Dh+\left(Bhk+Ck^2\right)+2Ek+F\notag\\
&=\left(Ah+Bk\right)h+\left(Ck+Bh\right)+2Dh+2Ek+F
\end{align}
Si queremos que se anulen los términos lineales en [\ref{13}] debemos escoger $(h,k)$ de modo que
\begin{equation}
\left.\dfrac{\partial f}{\partial x}\right)_{h,k}=0\hspace{0.5cm}\text{y}\hspace{0.5cm}\left.\dfrac{\partial f}{\partial y}\right)_{h,k}=0
\end{equation}
O sea debemos resolver para $(h,k)$ el sistema
\begin{align*}
Ah+Bk+D&=0\\
Bh+Ck+E&=0
\end{align*}
o
\begin{equation}\label{16}
\begin{cases}
Ah+Bk=-D\\
Bh+Ck=-E
\end{cases}
\end{equation}
lo que equivale a decir que debemos hallar los $\dbinom{h}{k}$ tal que $$h\dbinom{A}{B}+k\dbinom{B}{C}=\dbinom{-D}{-E}.$$
Es claro que $\forall(\overset{\sim}{h}, \overset{\sim}{k})$ que sea solución a [\ref{16}], la ecuación del lugar [\ref{13}] tiene la forma $$\left(\begin{array}{cc}X&Y\end{array}\right)\left(\begin{array}{cc}A&B\\B&C\end{array}\right)\dbinom{X}{Y}+f\left(\overset{\sim}{h}, \overset{\sim}{k}\right)=0$$ y se ha conseguido eliminar los términos lineales en [\ref{1}].
\section{Centro de una cónica. Propiedades}
\begin{defin}
Se llama \underline{centro de la cónica [\ref{1}]} a todo $(h,k)\in\Pi$ que sean solución de [\ref{16}].
\end{defin}
\begin{ejer}
Vamos a estudiar los centros de las cónicas en las que algunos de los coeficientes $A,B,C$ son ceros.\\
Hecho esto, estudiaremos el caso de los centros de las cónicas en que ninguno de $A,B,C$ es cero.
\begin{enumerate}
\item[(1)]
$\begin{tabular}{|c|c|c|}&&\\ \hline 0&B&0\end{tabular}$ La cónica es $2Bxy+2Dx+2Ey+F=0$\\
\begin{figure}[ht!]
\begin{center}
  \includegraphics[scale=0.3]{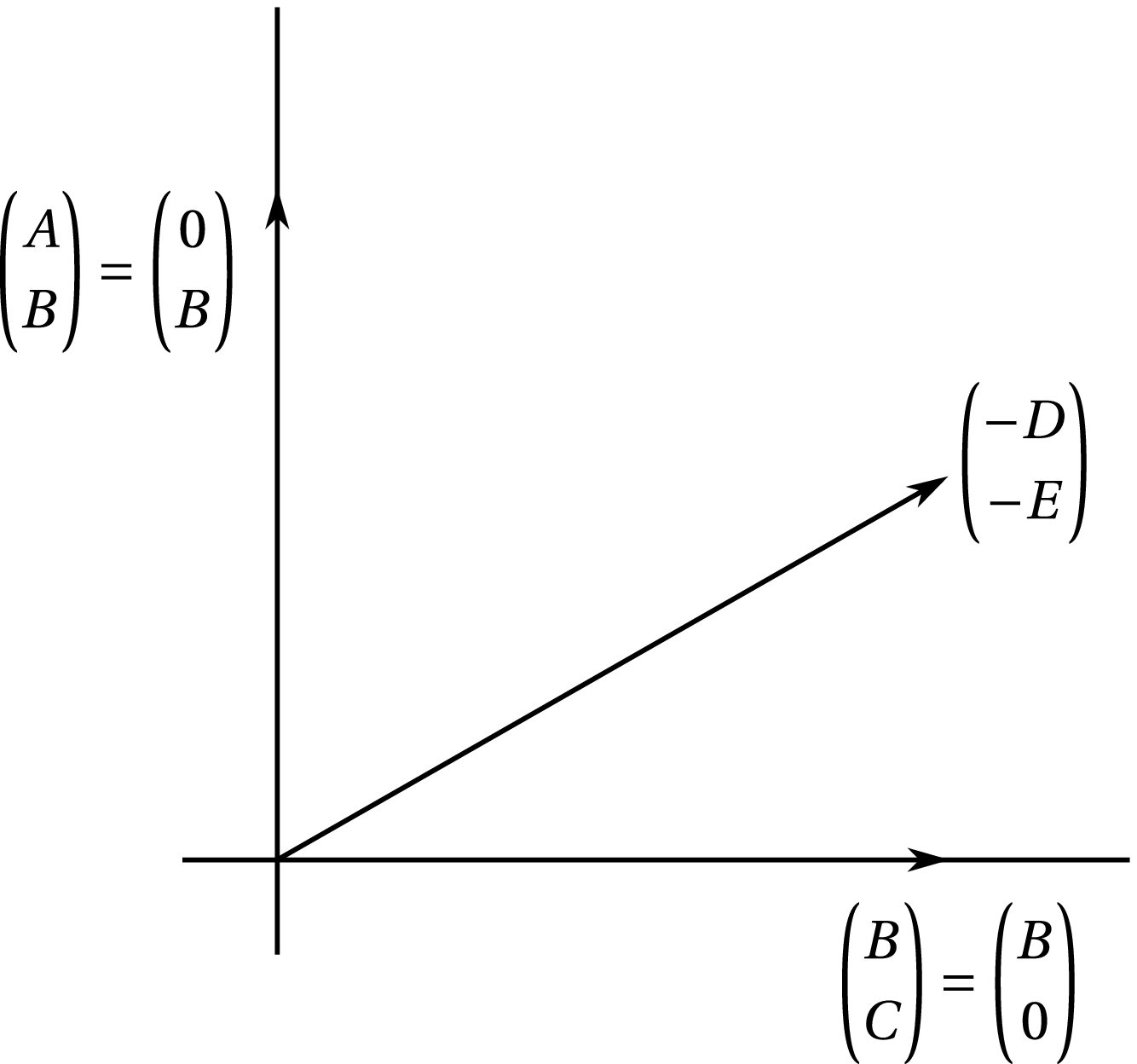}\\
\end{center}
\end{figure}
\newline
\begin{align*}
\left\{\dbinom{A}{B}, \dbinom{B}{C}\right\}&=\left\{\dbinom{0}{B}, \dbinom{B}{0}\right\}\hspace{0.5cm}\text{es Base de $\mathbb{R}^2$}\\
Ah+Bk&=-D\\
Bh+Ck&=-E\hspace{0.5cm}\text{es ahora}:\\
&\begin{cases}
0h+Bk=-D\\
Bh+0k=-E\hspace{0.5cm}\therefore\hspace{0.5cm}\begin{array}{cc}\overset{\sim}{h}=-\dfrac{E}{B}\\\\ \overset{\sim}{k}=-\dfrac{D}{B}\end{array}
\end{cases}
\end{align*}
\underline{la cónica tiene centro único}: $\left(-\dfrac{E}{B}, -\dfrac{D}{B}\right)$\\
Al trasladar los ejes al centro $0'\left(-\underset{\overset{\parallel}{\overset{\sim}{h}}}{\dfrac{E}{B}}, -\underset{\overset{\parallel}{\overset{\sim}{k}}}{\dfrac{D}{B}}\right),$ la ecuación de la cónica $\diagup XY$ con origen en $0'$ es:
$$\Delta=\left|\begin{array}{ccc}0&B&0\\ B&0&0\\ 0&0&f(\overset{\sim}{h},\overset{\sim}{k})\end{array}\right|=-B^2f(\overset{\sim}{h},\overset{\sim}{k})\hspace{0.5cm}\therefore\hspace{0.5cm}f(\overset{\sim}{h},\overset{\sim}{k})=-\dfrac{\Delta}{B^2}$$
y la ecuación de la cónica $\diagup XY$ es: $$2BXY-\dfrac{\Delta}{B^2}=0$$
O sea $$XY=\dfrac{\Delta}{2B^3}$$
\begin{itemize}
\item Si $\Delta\neq 0,$ el lugar es una \underline{hipérbola} de centro $0'$ y que se abre así:
\begin{figure}[ht!]
\begin{center}
  \includegraphics[scale=0.5]{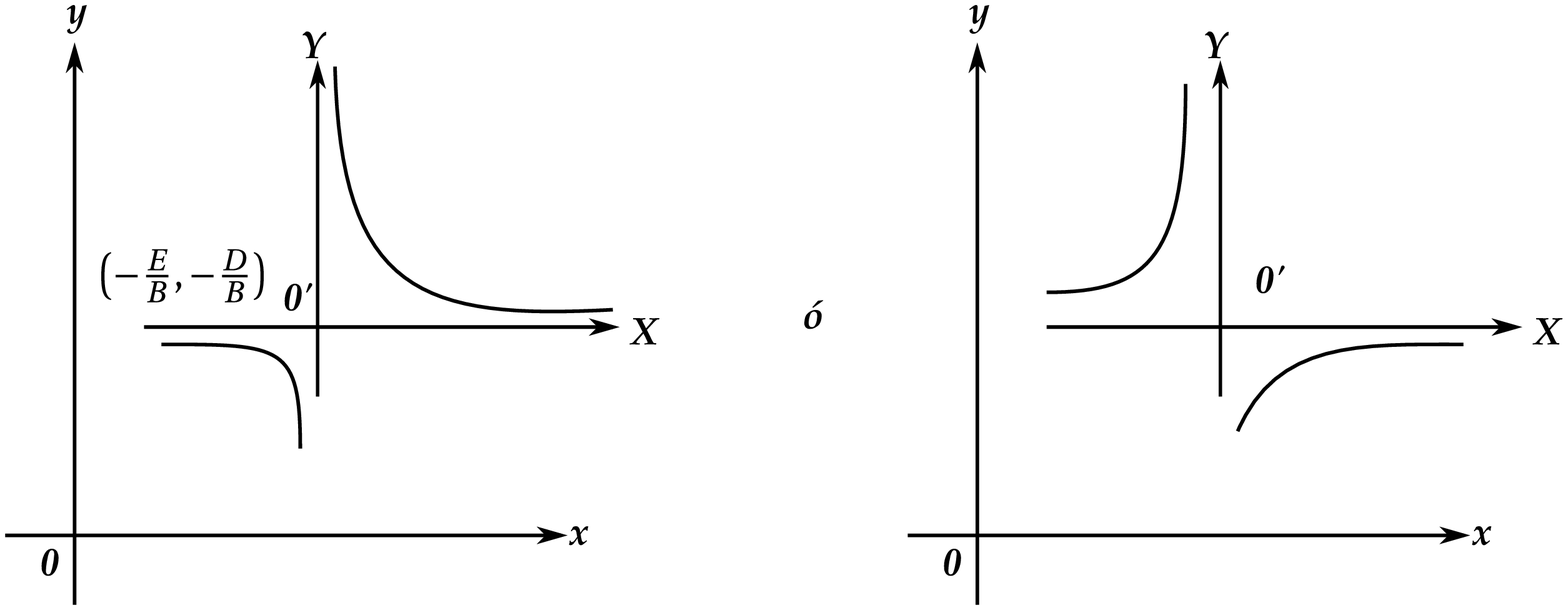}\\
\end{center}
\end{figure}
\newline
\item Si $\Delta=0,\,\,XY=0$. La \underline{cónica consta de dos rectas $\perp$s y concurrentes} en $0'$: el eje $Y$ de ecuación $x=-\dfrac{E}{B}$ y el eje $X$ de ecuación $y=-\dfrac{D}{B}$\\
Las dos rectas son: $\mathscr{L}_1: y+\frac{D}{B}=0$ y $\mathscr{L}_2: x+\frac{E}{B}=0$
\begin{figure}[ht!]
\begin{center}
  \includegraphics[scale=0.5]{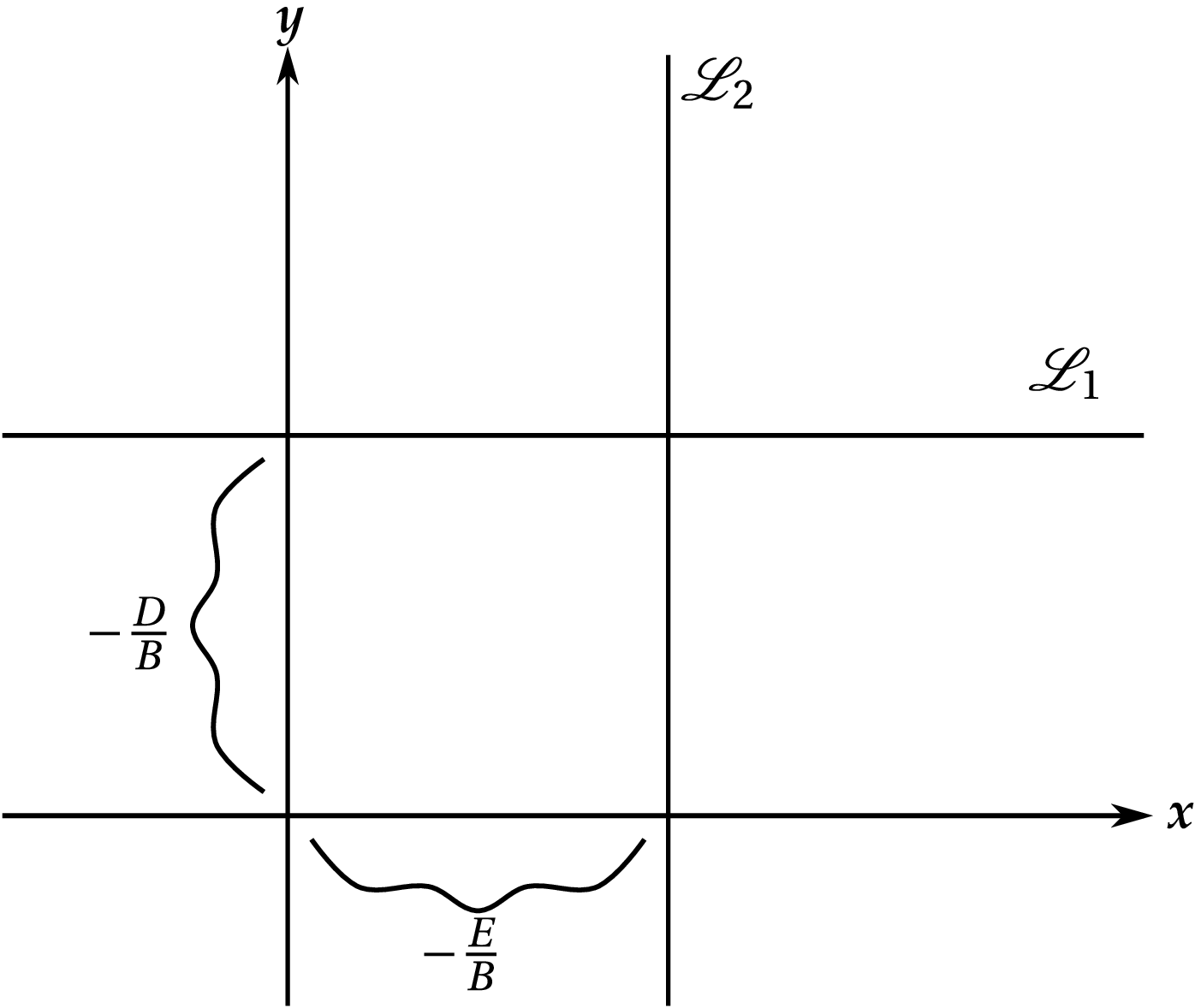}\\
\end{center}
\end{figure}
\newpage
\end{itemize}
Vamos a demostrar que en este caso $f(x,y)$ factoriza como <<el producto>> de las de ambas rectas.\\
O sea, veamos que $$f(x,y)=2B\left(y+\dfrac{D}{B}\right)\left(x+\dfrac{E}{B}\right)$$
$$\left(y+\dfrac{D}{B}\right)\left(x+\dfrac{E}{B}\right)=xy+\dfrac{E}{B}y+\dfrac{D}{B}x+\dfrac{DE}{B^2}\hspace{0.5cm}\star$$
Pero $\Delta=0.$ Luego $f\left(-\dfrac{E}{B}, -\dfrac{D}{B}\right)=0.$\\
O sea que $$2B\left(-\dfrac{E}{B}\right)\left(-\dfrac{D}{B}\right)+2D\left(-\dfrac{E}{B}\right)+2E\left(-\dfrac{D}{B}\right)+F=0$$
i.e., $$\dfrac{-2DE}{B}+F=0\hspace{0.5cm}\therefore\hspace{0.5cm}\dfrac{DE}{B^2}=\dfrac{F}{2B}$$ que llevamos a $\star$ $$\left(y+\dfrac{D}{B}\right)\left(x+\dfrac{E}{B}\right)=xy+\dfrac{D}{B}x+\dfrac{E}{B}y+\dfrac{F}{2B}=\dfrac{1}{2B}\left(2Bxy+2Dx+2Ey+F\right)$$
O sea que $$2B\left(y+\dfrac{D}{B}\right)\left(x+\dfrac{E}{B}\right)=2Bxy+2Dx+2Ey+F=f(x,y)$$
\item[(2)]
$\begin{tabular}{|c|c|c|}
&&\\ \hline A&0&0
\end{tabular}$
La cónica es
\begin{equation}
Ax^2+2Dx+2Ey+F=0
\end{equation}
Para hallar el centro debemos resolver el sistema:
\begin{equation*}
\left.
\begin{split}
Ax+0y&=-D\\
0x+0y&=-E
\end{split}
\right\}\hspace{0.5cm}\star
\end{equation*}
Se presentan dos casos.\\
\item[i)] $E=0$. La cónica es
\begin{equation}
\left\{(x,y)\diagup Ax^2+2Dx+F=0, y\in\mathbb{R}\right\}
\end{equation}
$$\delta=\left|\begin{array}{cc}A&0\\0&0\end{array}\right|=0;\hspace{0.5cm}\Delta=\left|\begin{array}{ccc}A&0&D\\0&0&0\\D&0&F\end{array}\right|=0;\hspace{0.5cm}M_{22}=AF-D^2$$
El sistema $\star$ es
\begin{align*}
Ax+0y&=-D\\
0x+0y&=0
\end{align*}
teniendose que $$\forall y\in\mathbb{R}, \left(-\dfrac{D}{A}, y\right)$$ es solución.\\
Así que hay infinitas soluciones y \underline{la cónica tiene infinitos centros}.\\
Como la segunda ecuación es redundante, los centros se encuentrasn sobre la $x=-\dfrac{D}{A}$ que llamaremos el \underline{eje de centros}.
\begin{figure}[ht!]
\begin{center}
  \includegraphics[scale=0.5]{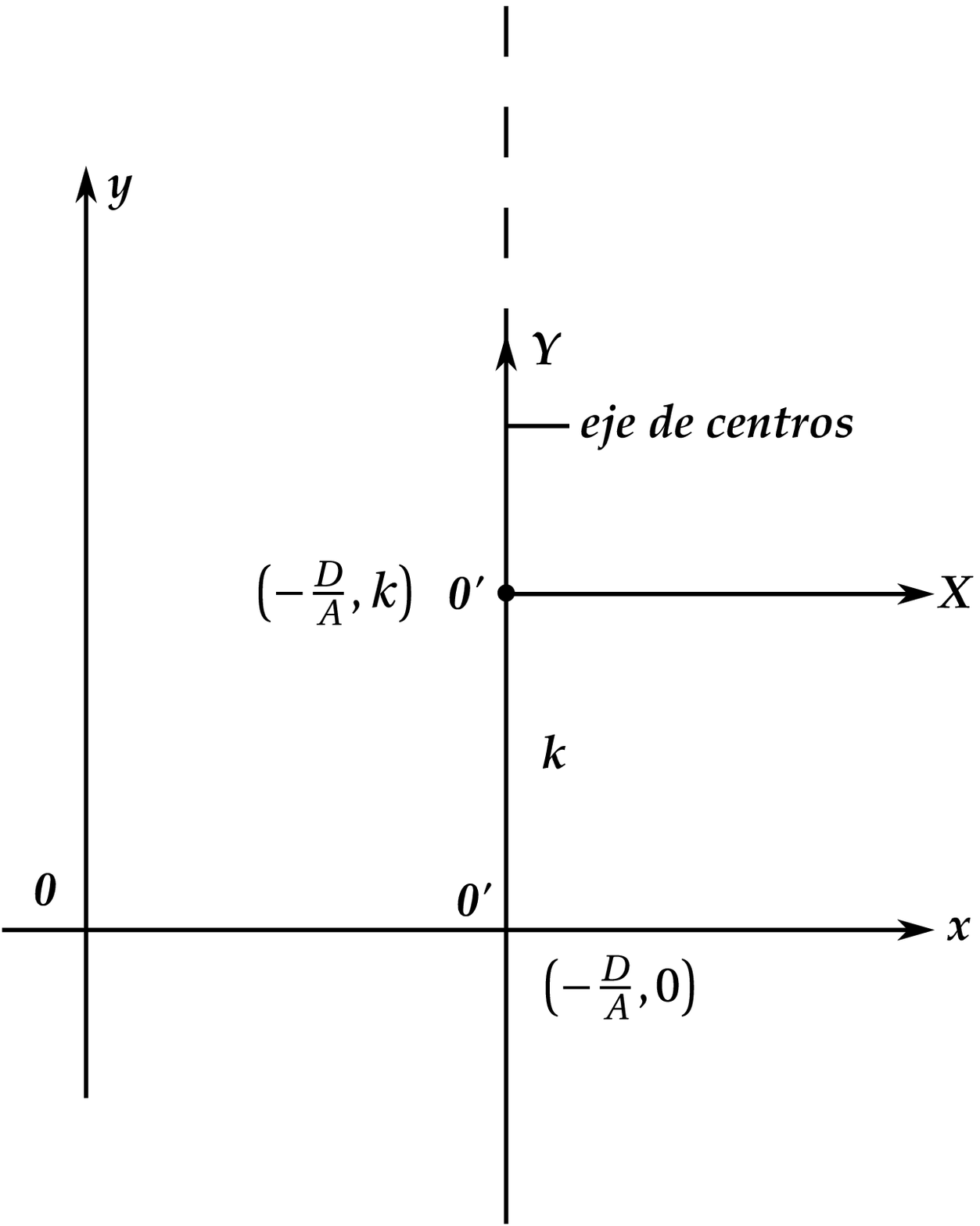}\\
\end{center}
\end{figure}
\newline
Al trasladar los ejes a un punto $0'$ de coordenadas $\left(-\dfrac{D}{A}, k\right)$ del eje de centros, las ecuaciones de la transformación son:\\
$\begin{cases}
x=X-\dfrac{D}{A}\\
y=Y+k
\end{cases}$\\
se eliminan los términos lineales y la ecuación de la cónica $\diagup XY$ es: $$AX^2+f\left(-\dfrac{D}{A}, k\right)=0$$
Pero
\begin{align*}
f\left(-\dfrac{D}{A}, k\right)&=A{\left(-\dfrac{D}{A}\right)}^2+2D\left(-\dfrac{D}{A}, k\right)+F\\
&=\dfrac{D^2}{A}-\dfrac{2D^2}{A}+F=F-\dfrac{D^2}{A}=\text{constante}
\end{align*}
independientemente del punto $0'$ tomado en el eje de centros.\\
Luego la ecuación de la cónica $\diagup XY$ con origen en el punto $0'$ de coordenadas $\left(-\dfrac{D}{A}, 0\right)\diagup xy$ es $$AX^2=\dfrac{D^2}{A}-F=\dfrac{D^2-AF}{A},$$
i.e., $$X^2=\dfrac{D^2-AF}{A^2}=-\dfrac{M_{22}}{A^2}.$$
Si $M_{22}<0$, el \underline{lugar son dos rectas $\parallel$ al eje Y}. El eje de centros es la paralela media de las dos rectas.\\
Si $M_{22}=0$, el \underline{lugar es el eje Y}.\\
Si $M_{22}>0$, el \underline{lugar es $\emptyset$}.\\
\item[$\bullet$] Si $M_{22}<0$, el lugar son dos rectas $\parallel$s al eje Y:
$$\mathscr{L}_1: X=+\sqrt{-\dfrac{M_{22}}{A^2}};\hspace{0.5cm}\mathscr{L}_2: X=-\sqrt{-\dfrac{M_{22}}{A^2}}$$
Vamos a dm. que la cónica <<factoriza>> como el producto de las dos rectas.\\
Como
$\begin{cases}
x&=X-\dfrac{D}{A},\hspace{0.5cm}X=x+\dfrac{D}{A}\\
y&=Y
\end{cases}$
\newline
La ecuación de $\mathscr{L}_1$ es $$x+\dfrac{D}{A}=\sqrt{-\dfrac{M_{22}}{A^2}}$$ y la de $\mathscr{L}_2$ $$x+\dfrac{D}{A}=-\sqrt{-\dfrac{M_{22}}{A^2}}$$
Definamos:
\begin{align*}
\mathscr{L}_1&=x+\dfrac{D}{A}-\sqrt{-\dfrac{M_{22}}{A^2}}\hspace{0.5cm}\text{y}\\
\mathscr{L}_2&=x+\dfrac{D}{A}+\sqrt{-\dfrac{M_{22}}{A^2}}\\
\mathscr{L}_1\cdot\mathscr{L}_2&=\left(x+\dfrac{D}{A}-\sqrt{-\dfrac{M_{22}}{A^2}}\right)\left(x+\dfrac{D}{A}+\sqrt{-\dfrac{M_{22}}{A^2}}\right)\\
&={\left(x+\dfrac{D}{A}\right)}^2-\left(-\dfrac{M_{22}}{A^2}\right)=x^2+\dfrac{2D}{A}x+\dfrac{D^2}{A^2}+\dfrac{M_{22}}{A^2}\\
&\underset{\nearrow}{=}x^2+\dfrac{2D}{A}x+\cancel{\dfrac{D^2}{A^2}}-\dfrac{AF-D^2}{\cancel{A^2}}=x^2+\dfrac{2D}{A}x+\dfrac{F}{A}\\
&M_{22}=AF-D^2\\
&=\dfrac{1}{A}\left(Ax^2+2Dx+F\right)=\dfrac{1}{A}f(x,y)
\end{align*}
O sea que $$f(x,y)=A\cdot\mathscr{L}_1\cdot\mathscr{L}_2$$
A este resultado podríamos haber llegado directamente factorizando el trinomio $Ax^2+2Dx+F$\\
$$Ax^2+2Dx+F=A\left(x^2+\dfrac{2D}{A}x+\dfrac{F}{A}\right)\hspace{0.5cm}\star$$
Si hacemos $$x^2+\dfrac{2D}{A}x+\dfrac{F}{A}=0,\hspace{0.5cm}x=\dfrac{-\dfrac{2D}{A}\pm\sqrt{\dfrac{4D^2}{A^2}-\dfrac{4F}{A}}}{2}$$
$$x=\dfrac{-\dfrac{2D}{A}\pm 2\sqrt{\dfrac{-M_{22}}{A^2}}}{2}$$
Luego $$x^2+\dfrac{2D}{A}x+\dfrac{F}{A}=\left(x+\dfrac{D}{A}-\sqrt{-\dfrac{M_{22}}{A^2}}\right)\left(x+\dfrac{D}{A}+\sqrt{-\dfrac{M_{22}}{A^2}}\right)$$
O sea que
\begin{align*}
f(x,y)&=Ax^2+2Dx+F=A\left(x^2+\dfrac{2D}{A}x+\dfrac{F}{A}\right)\\
&=A\left(x+\dfrac{D}{A}+\sqrt{-\dfrac{M_{22}}{A^2}}\right)\left(x+\dfrac{D}{A}-\sqrt{-\dfrac{M_{22}}{A^2}}\right)\\
&=A\cdot\mathscr{L}_1\cdot\mathscr{L}_2
\end{align*}
\item[$\bullet$] Si $M_{22}=0,$ el lugar es el eje $Y$ ó eje de centros de ecuación$\diagup xy:
x=-\dfrac{D}{A}.$\\
Definimos $\mathscr{L}=x+\dfrac{D}{A}.$ Vamos a dm. que la cónica puede escribirse en la forma $$f(x,y)=A{\left(x+\dfrac{D}{A}\right)}^2=A\mathscr{L}^2.$$
\begin{align*}
f(x,y)&=Ax^2+2Dx+F\\
&=A\left(x^2+\dfrac{2D}{A}x+\dfrac{F}{A}\right)\\
&\underset{\nearrow}{=}A\left(x^2+\dfrac{2D}{A}x+\dfrac{D^2}{A^2}\right)=A{\left(x+\dfrac{D}{A}\right)}^2=A\mathscr{L}^2\\
&\begin{cases}
\text{Como}\quad M_{22}&=0\\
AF-D^2&=0\\
\therefore\quad F=\dfrac{D^2}{A}
\end{cases}
\end{align*}
\item[$\bullet$] Si $M_{22}>0,$ el lugar es $\emptyset.$ Un argumento adicional para probarlo podría se éste.
\begin{align*}
f(x,y)&=Ax^2+2Dx+F\\
&=A\left(x^2+\dfrac{2D}{A}x+\dfrac{D^2}{A^2}\right)+F-\dfrac{D^2}{A}\\
&=A{\left(x+\dfrac{D}{A}\right)}^2+F-\dfrac{D^2}{A}\\
&=A{\left(x+\dfrac{D}{A}\right)}^2+\dfrac{AF-D^2}{A}\underset{\uparrow}{=}A\underset{\overset{\vee}{0} \text{cualq. sea $(x,y)\in\mathbb{R}^2$}}{\underbrace{\left({\left(x+\dfrac{D}{A}\right)}^2+\dfrac{AF-D^2}{A^2}\right)}}\neq0\\
&\hspace{4.5cm}\begin{cases}
\text{Como}\quad M_{22}&=AF-D^2>0\\
&\dfrac{AF-D^2}{A^2}>0
\end{cases}
\end{align*}
lo que dm. que el lugar es $\emptyset.$
\item[ii)] $E\neq 0.$ La cónica es
\begin{equation}\label{19}
Ax^2+2Dx+2Ey+F=0
\end{equation}
$$\delta=\left|\begin{array}{cc}A&0\\0&0\end{array}\right|;\hspace{0.5cm}\Delta=\left|\begin{array}{ccc}A&0&D\\0&0&E\\D&E&F\end{array}\right|=-AE^2\neq 0.$$
El sistema $\star$ es:
\begin{align*}
Ax+0y&=-D\\
0x+0y&=-E
\end{align*}
Como la segunda ecuación no tiene solución, el sistema no tiene solución y en consecuencia la cónica \textit{no tiene centro}. O sea que no es posible definir una traslación que elimine los términos lineales en [\ref{19}].\\
Puede ocurrir\\
\item[ii-1)] que $D\neq 0$.\\
Vamos a ver que es posible definir una traslación de los ejes $xy$ a un punto $0'$ de coordenadas $(a,b)\diagup xy,$ \underline{a y b a determinar}, de modo que se eliminen en [\ref{19}] el término lineal en $x$ y el término independiente.\\
Lo que no es posible, se acaba de demostrar, es definir una traslación a un punto que elimine los dos términos lineales a la vez.\\
Supongamos, pues, que trasladamos los ejes $x-y$ a un punto de coordenadas $(a,b)\diagup xy:$
\begin{figure}[ht!]
\begin{center}
  \includegraphics[scale=0.5]{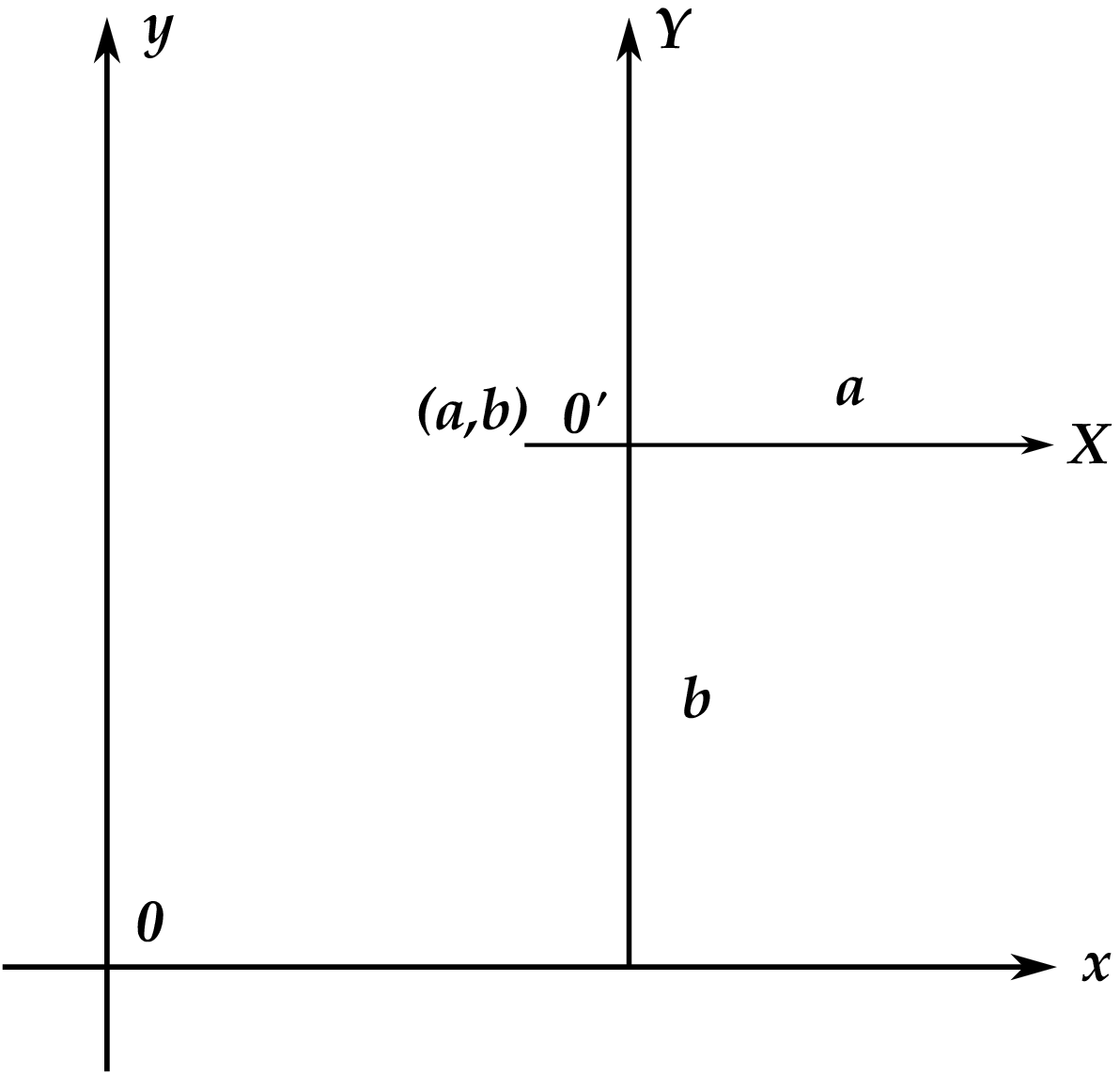}\\
\end{center}
\end{figure}
\newline
Ecuaciones de la trasformación:
\begin{align*}
x&=X+a\\
y&=Y+b\hspace{0.5cm}\text{que llevamos a [\ref{19}]}
\end{align*}
\begin{align*}
&A{\left(X+a\right)}^2+2D\left(X+a\right)+2E\left(Y+b\right)+F=0\\
&Ax^2+2AaX+Aa^2+2DX+2Da+2EY+2Eb+F=0\\
&AX^2+2\left(Aa+D\right)X+2EY+Aa^2+2Da+2Eb+F=0
\end{align*}
Para lo que se busca,\\
$\begin{cases}
Aa+D=0\\
Aa^2+2Da+2Eb+F=0
\end{cases}$\\
De la $1^a$ ecuación, $a=-\dfrac{D}{A}$ que llevamos a la $2^a$
\begin{align*}
2Eb&=-Aa^2-2Da-F\\
&=-A\left(\dfrac{D^2}{A}\right)+2D\left(\dfrac{D}{A}\right)-F\hspace{0.5cm}\therefore\hspace{0.5cm} b=-\dfrac{AF+D^2}{2AE}
\end{align*}
Así que si trasladamos los ejes $xy$ al punto $0'$ de coordenadas $\left(-\dfrac{D}{A},-\dfrac{AF+D^2}{2AE}\right)\diagup xy$ se definen otros ejes $X-Y\parallel$s a $x-y$ y la ecuación de la cónica$\diagup XY$ es: $$AX^2+2EY=0\hspace{0.5cm}\therefore\hspace{0.5cm}Y=-\dfrac{A}{2E}X^2$$
\newpage
\begin{figure}[ht!]
\begin{center}
  \includegraphics[scale=0.5]{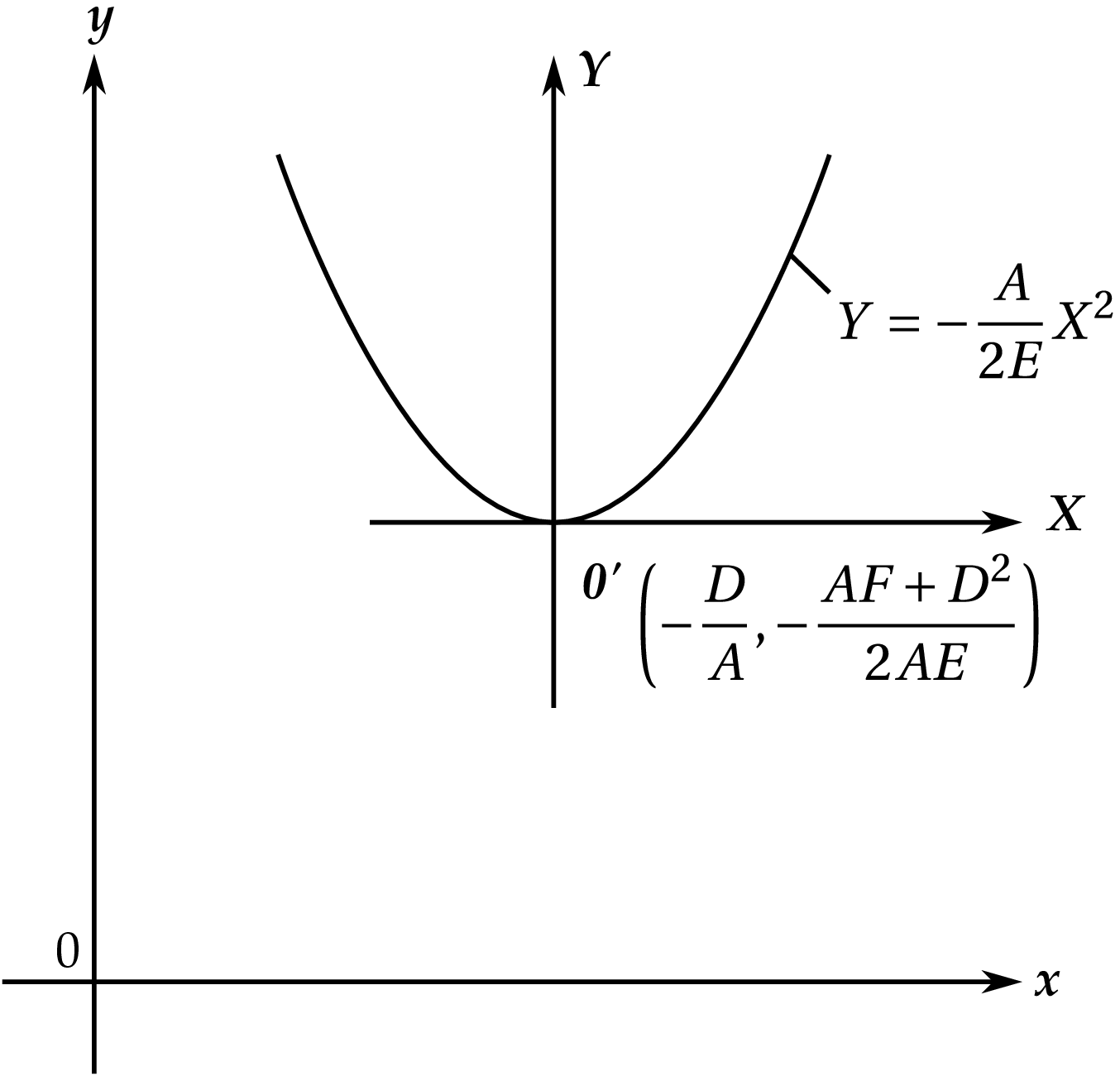}\\
\end{center}
\end{figure}
El \underline{lugar es una parábola} que se abre en el sentido del eje $Y(0-Y)$ de acuerdo al signo de $-\dfrac{A}{2E}.$ El vértice de la parábola es el punto $O'$ de coord.$\left(-\dfrac{D}{A},-\dfrac{AF+D^2}{2AE}\right)\diagup xy$
\item[ii-2)] $D=0$.\\ La ecuación de la cónica es:
\begin{equation}
Ax^2+2Ey+F=0
\end{equation}
$$\therefore\hspace{0.5cm}y=-\dfrac{A}{2E}x^2-\dfrac{F}{2E}.$$
\begin{figure}[ht!]
\begin{center}
  \includegraphics[scale=0.4]{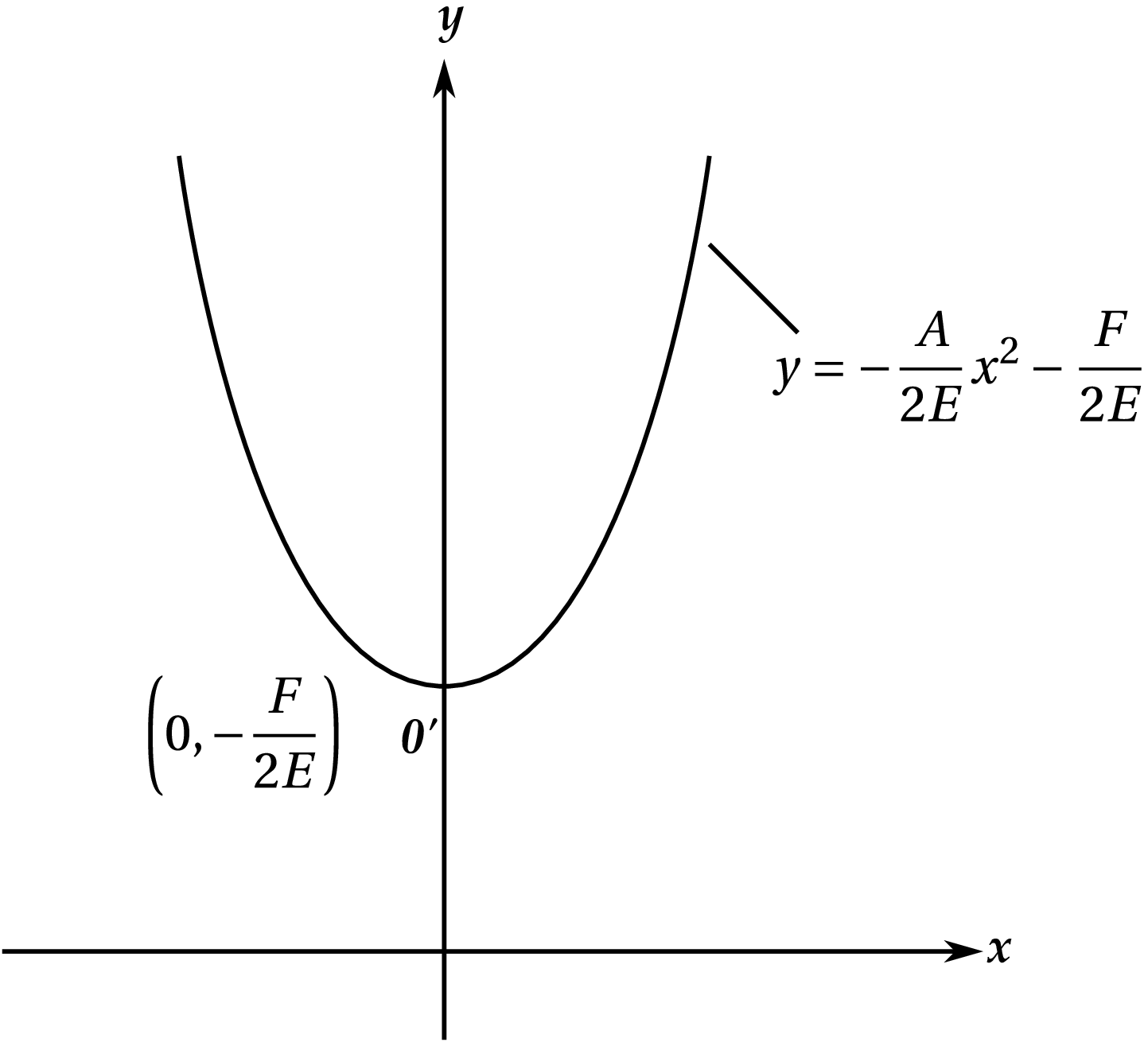}\\
\end{center}
\end{figure}
\newline
La cónica es \underline{una parábola}. El vértice de la parábola es el punto $O'$ de coord.$\left(0,-\dfrac{F}{2E}\right)\diagup xy$
\item[(3)]
$\begin{tabular}{|c|c|c|}&&\\\hline
0&0&c
\end{tabular}$ La cónica es
\begin{equation}\label{21}
\left\{(x,y)\diagup Cy^2+2Dx+2Ey+F=0\right\}
\end{equation}
$$\left\{\dbinom{A}{B},\dbinom{B}{C}\right\}=\left\{\dbinom{0}{0},\dbinom{0}{C}\right\}$$ Los centros son las soluciones al sistema
\begin{equation*}
\left.
\begin{split}
0x+0y&=-D\\
0x+Cy&=-E
\end{split}
\right\}\hspace{0.5cm}\star\star\quad\text{La situación es análoga al caso (2).}
\end{equation*}
Se presentan dos casos.\\
\item[(1)] $D=0.$ La ecuación de la cónica es
\begin{equation}
Cy^2+3Ey+F=0
\end{equation}
\begin{align*}
\delta&=\left|\begin{array}{cc}0&0\\0&C\end{array}\right|=0\\
\Delta&=\left|\begin{array}{ccc}0&0&0\\0&C&E\\0&E&F\end{array}\right|=0;\hspace{0.5cm}M_{11}=CF-E^2
\end{align*}
El sistema $\star\star$ es:
\begin{align*}
0x+0y&=0\\
0x+Cy&=-E
\end{align*}
Luego $\forall x\in\mathbb{R}, \left(x,-\dfrac{E}{C}\right)$ es solución.\\
Así que hay infinitas soluciones y \underline{la cónica tiene $\infty$s centros}.\\
Como la primera ecuación es redundante, los centros se encuentran sobre la recta $y=-\dfrac{E}{C}$ que es el
\underline{eje de centros}:
\begin{figure}[ht!]
\begin{center}
  \includegraphics[scale=0.5]{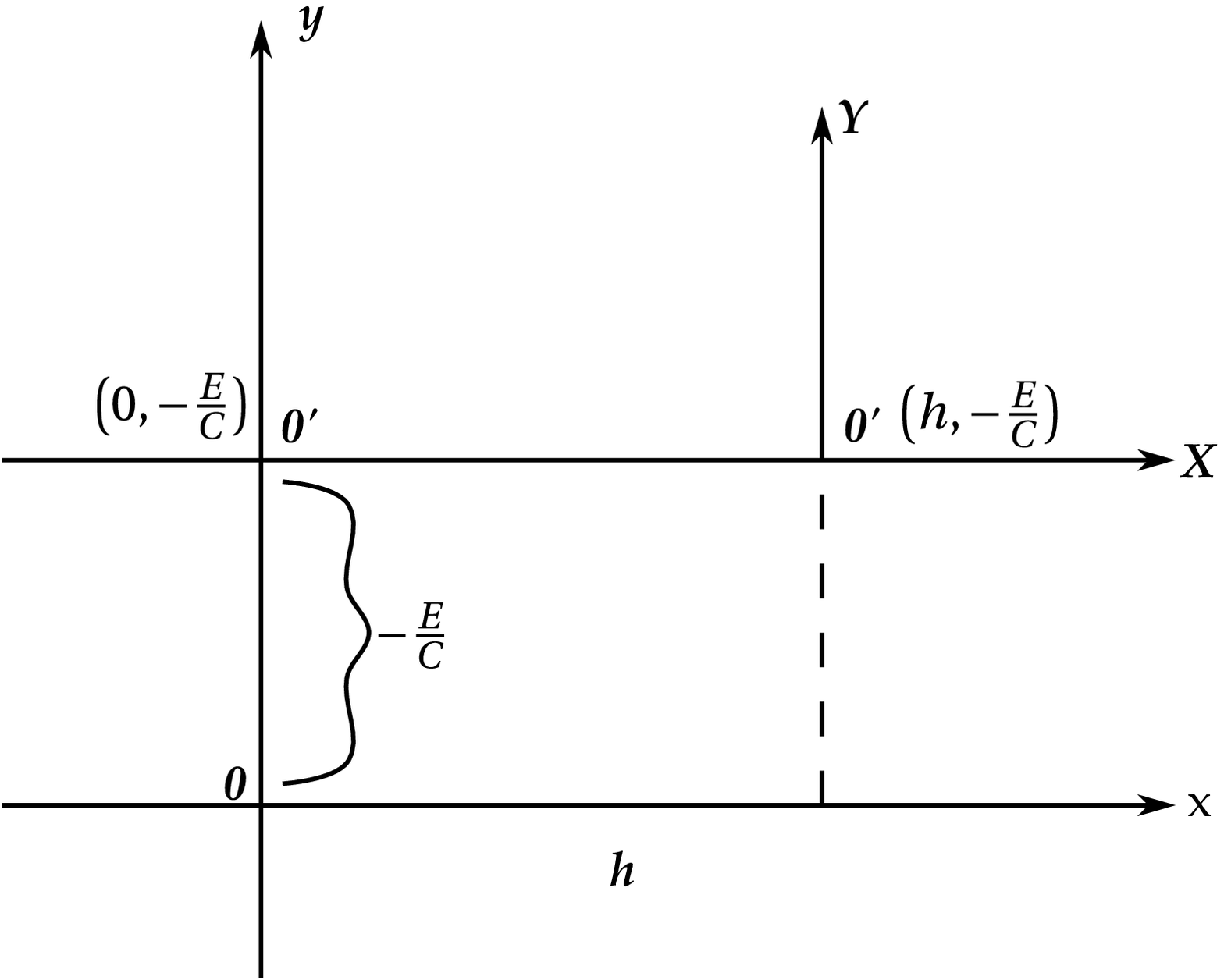}\\
\end{center}
\end{figure}
\newline
Al trasladar los ejes $xy$ a un punto $0'$ del eje de centros y de coordenadas $\left(h,\dfrac{E}{C}\right)\diagup xy$, se eliminan los términos lineales en [\ref{21}] y la ecuación de la cónica$\diagup XY$ es: $$CY^2+f\left(h,-\dfrac{E}{C}\right)=0$$
Ahora,
\begin{align*}
f\left(h,-\dfrac{E}{C}\right)&=C{\left(-\dfrac{E}{C}\right)}^2+2E\left(\dfrac{E}{C}\right)+F\\
&=F-\dfrac{E^2}{C}:\text{cte. independiente del punto $0'$ tomado sobre el eje de centros.}
\end{align*}
Luego la ecuación de la cónica$\diagup XY$ con origen en el punto $0'$ de coordenadas $\left(0, -\dfrac{E}{C}\right)\diagup xy$ es $$CY^2=\dfrac{E^2-CF}{C}\hspace{0.5cm}\text{o}\hspace{0.5cm}Y^2=\dfrac{E^2-CF}{C^2}=-\dfrac{\Delta_{11}}{C^2}$$
Si $M_{11}<0,$ el \underline{lugar son dos rectas $\parallel$s al eje X}. El eje de centros es la paralela media der ambas rectas.\\
Si $M_{11}=0,$ el \underline{lugar es el eje X}, ó eje de centros.\\
Si $M_{11}>0,$ el \underline{lugar es $\emptyset$}.\\
En el primer caso, o sea, cuando $M_{11}<0,$ dm. que la cónica factoriza como el producto de ambas rectas.\\
\item[$\bullet$] Si $M_{11}<0,$ el lugar consta de dos rectas $\parallel$s al eje $X$ y de ecuación
\begin{align*}
\mathscr{L}_1: Y_1=+\sqrt{-\dfrac{M_{11}}{C^2}}\\
\intertext{y}\\
\mathscr{L}_2: Y_2=-\sqrt{-\dfrac{M_{11}}{C^2}}
\end{align*}
Vamos a dm. que la cónica factoriza como el producto de dos rectas.\\
Como
\begin{align*}
x&=X\\
\intertext{Y}\\
y&=Y-\dfrac{E}{C},\quad Y=y+\dfrac{E}{C}
\end{align*}
La ecuación de $\mathscr{L}_1$ es $y+\dfrac{E}{C}=+\sqrt{-\dfrac{M_{11}}{C^2}}$\\
Definimos
\begin{align*}
\mathscr{L}_1&=y+\dfrac{E}{C}-\sqrt{-\dfrac{M_{11}}{C^2}}\\
\intertext{y}\\
\mathscr{L}_2&=y+\dfrac{E}{C}+\sqrt{-\dfrac{M_{11}}{C^2}}\\
\mathscr{L}_1\cdot\mathscr{L}_2&=\left(y+\dfrac{E}{C}-\sqrt{\dfrac{M_{11}}{C^2}}\right)\left(y+\dfrac{E}{C}+\sqrt{\dfrac{M_{11}}{C^2}}\right)\\
&={\left(y+\dfrac{E}{C}\right)}^2-\left(-\dfrac{M_{11}}{C^2}\right)\\
&=y^2+\dfrac{2E}{C}y+\dfrac{E^2}{C^2}+\dfrac{M_{11}}{C^2}\underset{M_{11}\overset{\uparrow}{=}CF-E^2}{=}y^2+\dfrac{2E}{C}y+\dfrac{E^2}{C^2}+\dfrac{CF-E^2}{C^2}\\
&=y^2+\dfrac{2E}{C}y+\dfrac{F}{C}=\dfrac{1}{C}\left(Cy^2+2Ey+F\right)=\dfrac{1}{C}f(x,y)
\end{align*}
O sea que
\begin{align*}
f(x,y)&=C\cdot\mathscr{L}_1\cdot\mathscr{L}_2,\quad\text{i.e.,}\\
Cy^2+2Ey+F&=C\left(y+\dfrac{E}{C}-\sqrt{-\dfrac{M_{11}}{C^2}}\right)\left(y+\dfrac{E}{C}+\sqrt{-\dfrac{M_{11}}{C^2}}\right)
\end{align*}
\item[$\bullet$] Si $M_{11}=0,$ el lugar es el eje $X$ ó eje de centros de ecuación$\diagup
xy:y=-\dfrac{E}{C}.$\\
Definamos $\mathscr{L}=y+\dfrac{E}{C}.$ Vamos a dm. que la cónica puede escribirse en la forma $$f(x,y)=C{\left(y+\dfrac{E}{C}\right)}^2=C\mathscr{L}^2$$
\begin{align*}
f(x,y)&=Cy^2+2Ey+F\\
&=C\left(y^2+\dfrac{2E}{C}y+\dfrac{F}{C}\right)\\
&\underset{\uparrow}{=}C\left(y^2+\dfrac{2E}{C}y+\dfrac{E^2}{C^2}\right)=C{\left(y+\dfrac{E}{C}\right)}^2=C\mathscr{L}^2\\
&\begin{cases}
\text{Como}\quad M_{11}=0\\
CF-E^2=0\\
\therefore\quad F=\dfrac{E^2}{C}
\end{cases}
\end{align*}
\item[$\bullet$] Si $M_{11}>0,$ el lugar es $\emptyset$. Veamos porque.
\begin{align*}
f(x,y)&=Cy^2+2Ey+F\\
&=C\left(y^2+\dfrac{2E}{C}y+\dfrac{E^2}{C^2}\right)+F-\dfrac{E^2}{C}\\
&=C{\left(y+\dfrac{E}{C}\right)}^2+\dfrac{CF-E^2}{C}\\
&\underset{\uparrow}{=}\left({\left(y+\dfrac{E}{C}\right)}^2+\dfrac{CF-E^2}{C^2}\right)\neq0\\
&\begin{cases}
\text{Como}\quad M_{11}&=CF-E^2>0,\\
&\dfrac{CF-E^2}{C^2}>0
\end{cases}
\end{align*}
Lo que dm. que el lugar es $\emptyset$.
\item[(2)] $D\neq 0$.\\
La cónica es
\begin{equation}\label{23}
Cy^2+2Dx+2Ey+F=0
\end{equation}
$$\delta=\left|\begin{array}{cc}0&0\\0&C\end{array}\right|;\hspace{0.5cm}\Delta=\left|\begin{array}{ccc}0&0&D\\0&C&E\\D&E&F\end{array}\right|=CD^2\neq 0$$
El sistema $\star\star$ es:
\begin{align*}
0x+0y&=-D\\
0x+Cy&=-E
\end{align*}
Como la primera ecuación no tiene solución, el sistema no tiene solución y en consecuencia, la \underline{cónica no tiene centro}. O sea que no es posible definir una traslación que elimine en [\ref{23}] los términos lineales.\\
\item[2-i)] que en [\ref{23}], $E\neq 0$.\\
Vamos a ver que en este caso es posible definir una traslación de ejes $xy$ a un punto $0'$ de coordenadas $(a,b)\diagup xy$, \underline{a y b a determinar}, de modo que en [\ref{23}] se elimine el término lineal en $y$ y el término independiente.\\
Supongamos entonces que trasladamos los ejes $xy$ a un punto $0'$ de coordenadas $(a,b)\diagup xy$.
\begin{figure}[ht!]
\begin{center}
  \includegraphics[scale=0.5]{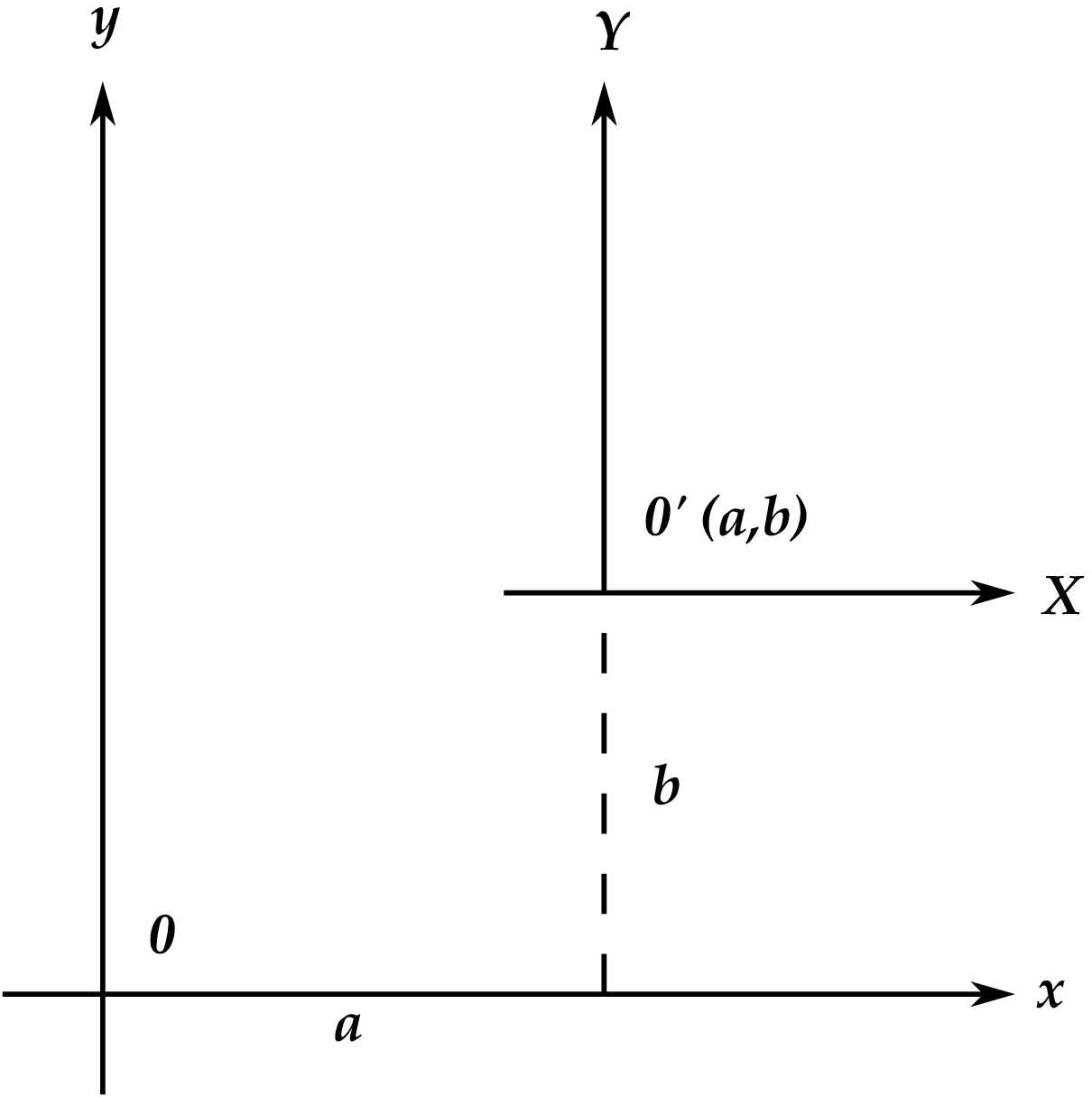}\\
\end{center}
\end{figure}
\newline
Ecuaciones de la transformación:
\begin{align*}
x&=X+a\\
y&=Y+b
\end{align*}
que llevamos a [\ref{23}]
\begin{align*}
&C{\left(Y+b\right)}^2+2D\left(X+a\right)+2E\left(Y+b\right)+F=0\\
&CY^2+2CbY+Cb^2+2DX+2Da+2Da+2EY+2Eb+F=0\\
&CY^2+2DX+2\left(Cb+E\right)Y+\left(Cb^2+2Da+2Eb+F\right)=0
\end{align*}
Para lo que se busca, debemos tener que
\begin{align*}
Cb+E&=0\\
Cb^2+2Da+2Eb+F&=0
\end{align*}
De la primera ecuación, $b=-\dfrac{E}{C}$ que llevamos a la segunda
\begin{align*}
2Da&=-Cb^2-2Eb-F\\
&=-C{\left(-\dfrac{E}{C}\right)}^2-2E\left(-\dfrac{E}{C}\right)-F\\
&=-\dfrac{E^2}{C}-F=-\dfrac{CF+E^2}{C}\hspace{0.5cm}\therefore\hspace{0.5cm}a=-\dfrac{CF+E^2}{2CD}
\end{align*}
Así que si trasladamos los ejes $xy$ al punto $0'$ de coordenadas $\left(-\dfrac{CF+E^2}{2CD}, -\dfrac{E}{C}\right)\diagup xy,$ la ecuación de la cónica$\diagup XY$ es: $$CY^2+2DX=0\hspace{0.5cm}\therefore\hspace{0.5cm}X=-\dfrac{C}{2D}Y^2$$
\newpage
\begin{figure}[ht!]
\begin{center}
  \includegraphics[scale=0.5]{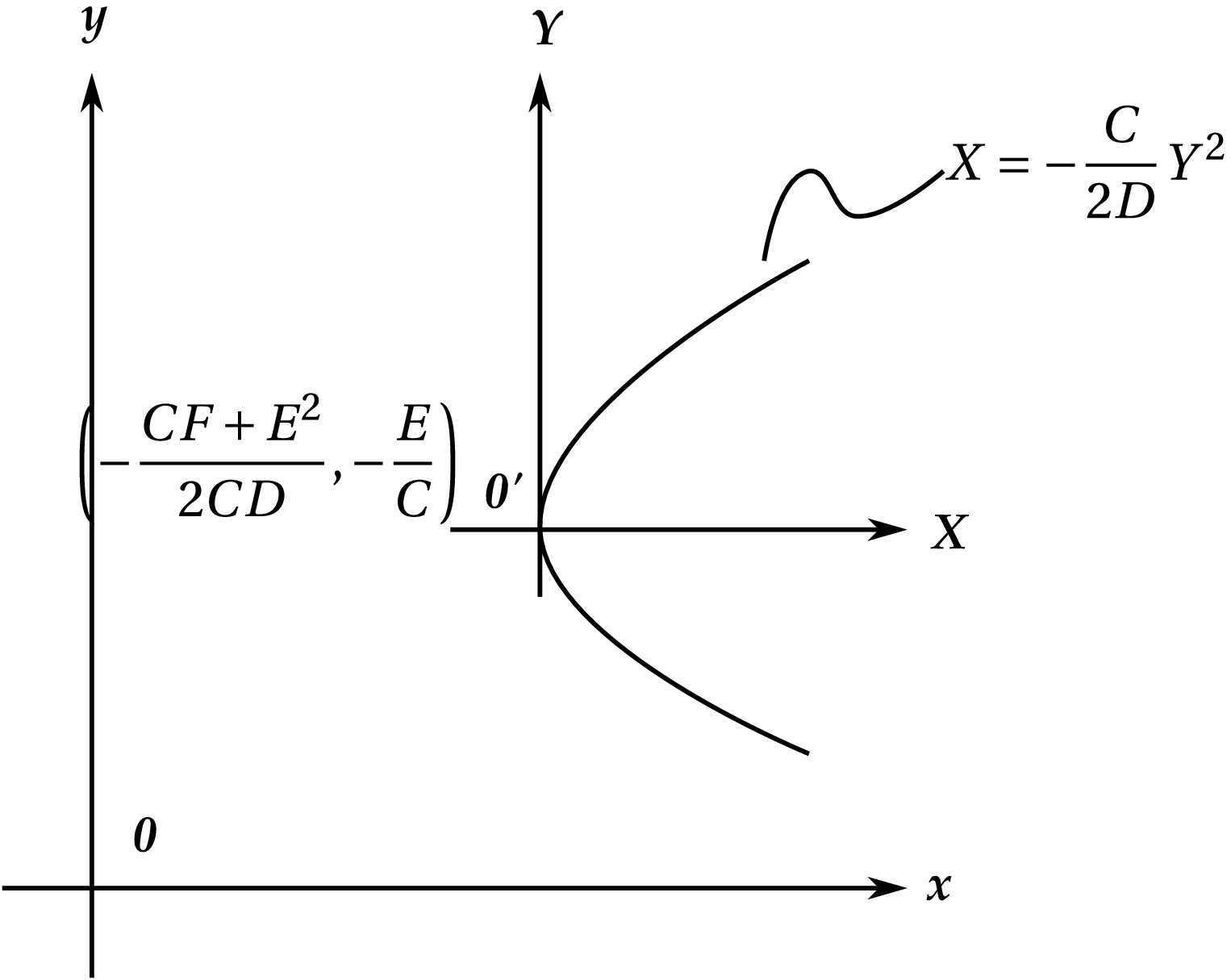}\\
\end{center}
\end{figure}
El lugar es \underline{una parábola} que se abre en el sentido del $X\left(0-X\right)$ dependiendo del signo de $-\dfrac{C}{2D}$. El vértice de la parábola es el punto $O'$ de coord.$\left(-\dfrac{CF+E^2}{2CD},-\dfrac{E}{C}\right)\diagup xy.$\\
\item[2-ii)] que en [\ref{23}], $E=0$. La ecuación de la cónica es
\begin{align*}
&Cy^2+2Dx+F=0\\
&\therefore\hspace{0.5cm}x=-\dfrac{C}{2D}y^2-\dfrac{F}{2D}
\end{align*}
\begin{figure}[ht!]
\begin{center}
  \includegraphics[scale=0.4]{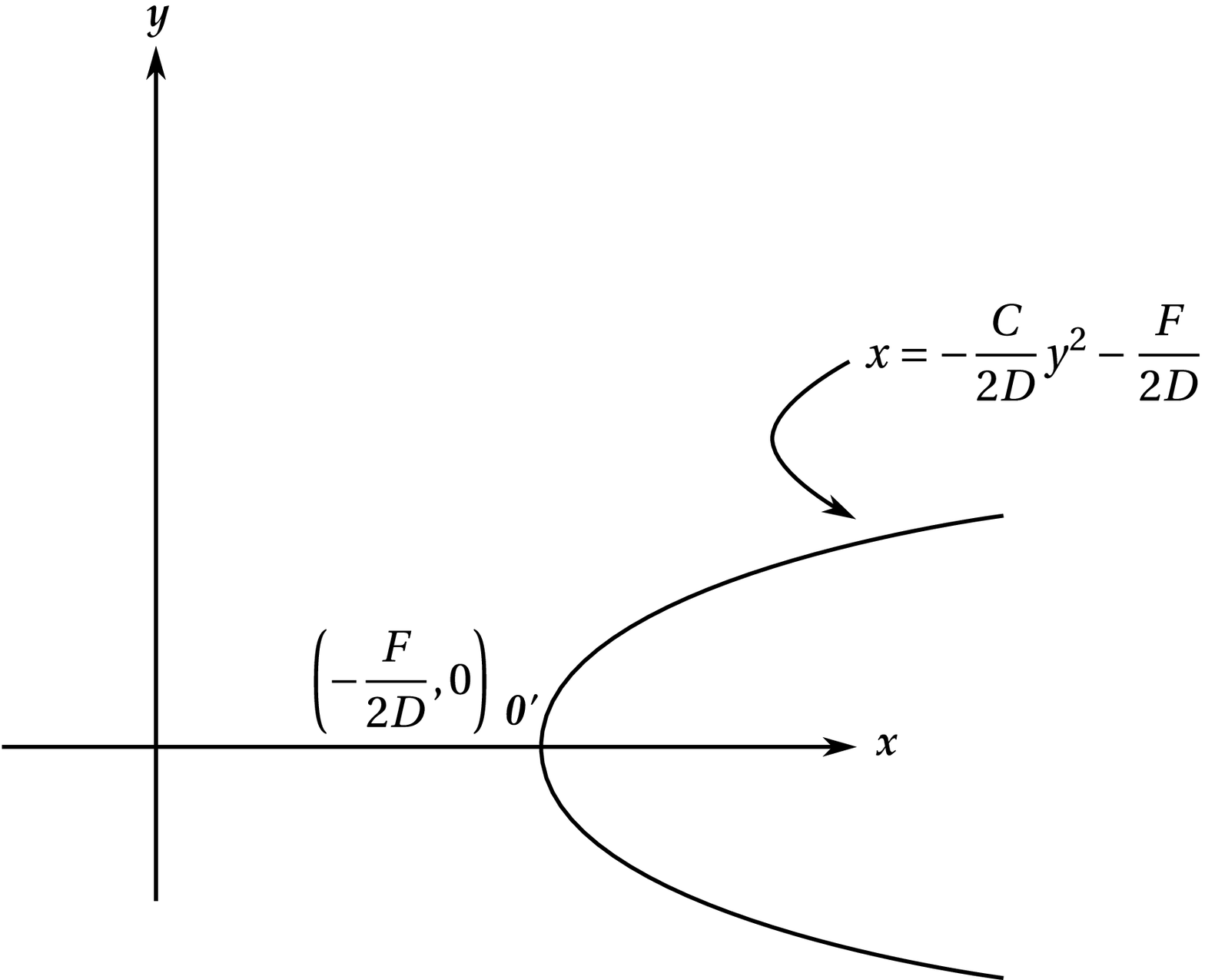}\\
\end{center}
\end{figure}
La \underline{cónica es una parábola} que se abre según el eje $x$. El vértice de la parábola es el punto $O'$ de coord.$\left(-\dfrac{F}{2D},0\right)\diagup xy.$
\newpage
\item[(4)] Hay un solo coeficiente $A,B,C$ que se anula. Se presentan varios casos.
\item[i)]
$\begin{tabular}{|c|c|c|}0&B&C\\ \hline&&\end{tabular}$ La cónica es $2Bxy^2+Cy^2+2Dx+2Ey+F=0.$\\
$\left\{\dbinom{A}{B},\dbinom{B}{C}\right\}=\left\{\dbinom{0}{B},\dbinom{B}{C}\right\}:\text{Base de $\mathbb{R}^2$}$
\begin{figure}[ht!]
\begin{center}
  \includegraphics[scale=0.5]{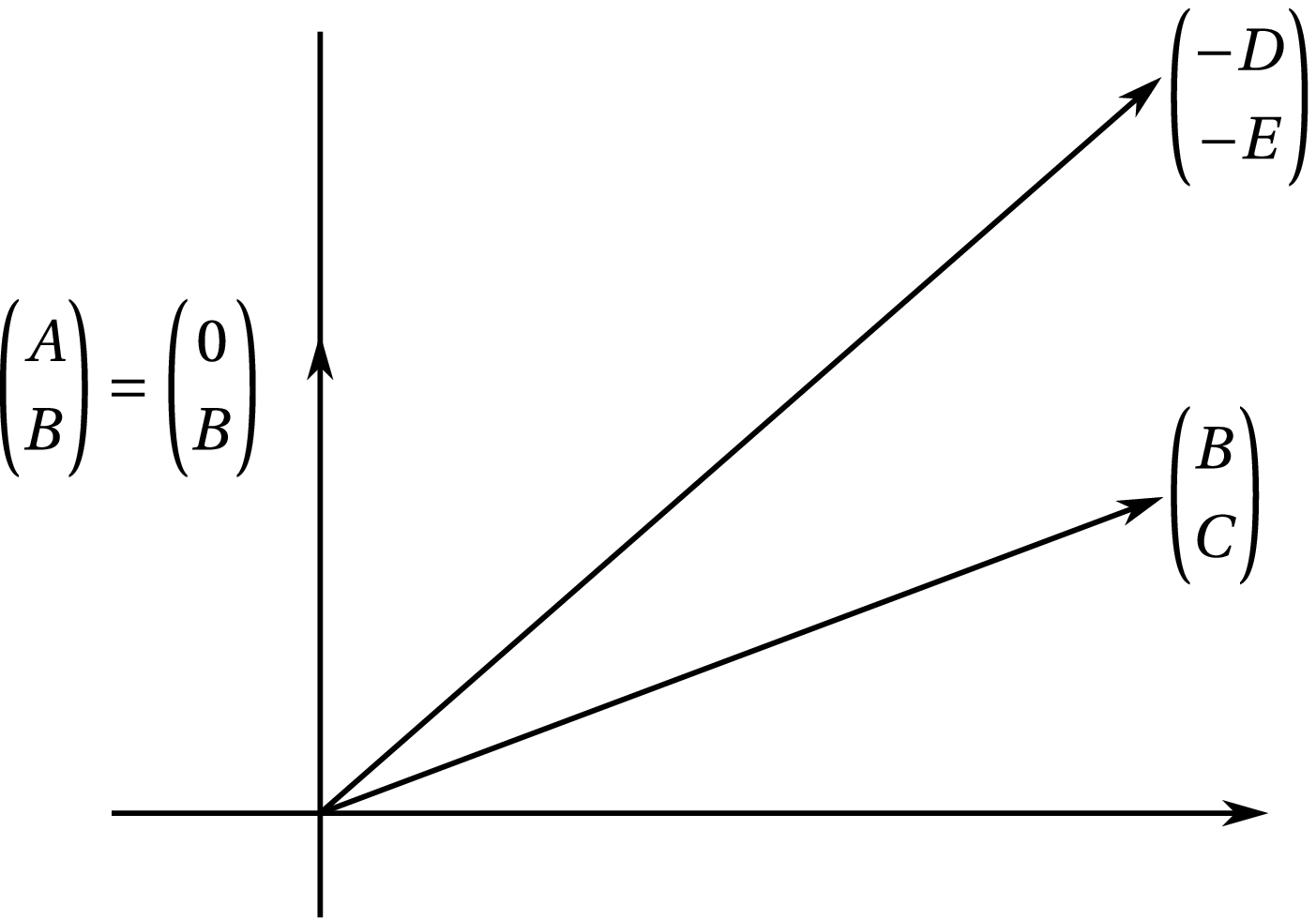}\\
\end{center}
\end{figure}
\newline
El sistema
\begin{align*}
Ah+Bk=-D\\
Bh+Ck=-E
\end{align*}
es ahora
$$\begin{cases}
Bk&=-D\\
Bh+Ck&=-E
\end{cases}$$\\
\underline{Hay centro único} que se determina así:\\
de la primera ecuación, $\overset{\sim}{k}=-\dfrac{D}{B}$ que llevamos a la segunda
\begin{align*}
B\overset{\sim}{k}=-E-C\overset{\sim}{k}&=-E-C\left(-\dfrac{D}{B}\right)\\
&=\dfrac{CD}{B}-E=\dfrac{CD-BE}{B}
\end{align*}
Así que el centro es el punto $$0'\left(\dfrac{CD-BE}{B^2}, -\dfrac{D}{B}\right)$$
Al trasladar los ejes al punto $0'\left(\overset{\sim}{h}, \overset{\sim}{k}\right),$ la ecuación de la cónica$\diagup XY$ es $$2BXY+CY^2+f\left(\overset{\sim}{h}, \overset{\sim}{k}\right)=0$$
Ahora, $$\Delta=\left|\begin{array}{ccc}0&B&0\\B&C&0\\0&0&f\left(\overset{\sim}{h}, \overset{\sim}{k}\right)\end{array}\right|=-B^2f\left(\overset{\sim}{h}, \overset{\sim}{k}\right)\hspace{0.5cm}\text{y como $B\neq 0$},$$
$$f\left(\overset{\sim}{h}, \overset{\sim}{k}\right)=-\dfrac{\Delta}{B^2}.$$
La ecuación de la cónica$\diagup XY$ es entonces
\begin{align*}
2BXY+CY^2=\dfrac{\Delta}{B^2}&\underset{\uparrow}{=}-\dfrac{\Delta}{\delta}\\
&\delta=AC-B^2=-B^2\neq 0
\end{align*}
\begin{figure}[ht!]
\begin{center}
  \includegraphics[scale=0.5]{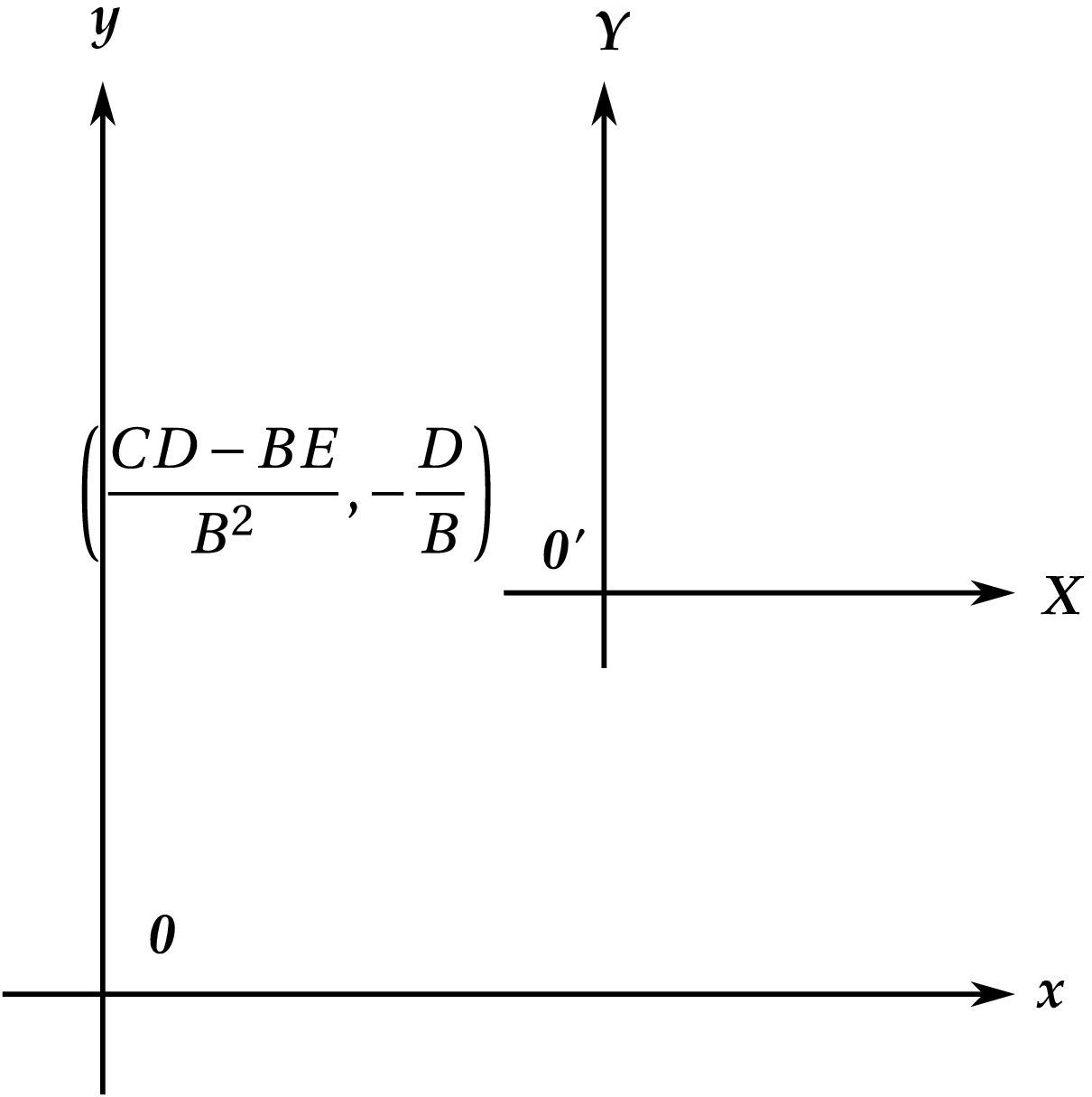}\\
\end{center}
\end{figure}
\newline
\item[i)] Si $\Delta=0, 2BXY+CY^2=0.$
$$Y\left(2BX+CY\right)=0\hspace{0.5cm}\therefore\hspace{0.5cm}Y=0\hspace{0.5cm}\text{ó}\hspace{0.5cm}Y=-\dfrac{2B}{C}X$$
El \underline{lugar consta de dos rectas concurrentes en $0'$}. y de ecuación $\diagup XY: Y=0\hspace{0.5cm}\text{y}\hspace{0.5cm}Y=-\dfrac{2B}{C}X$\\
\item[2)] Si $\Delta\neq 0$, la ecuación del lugar$\diagup XY$ es $2BXY+CY^2=-\dfrac{\Delta}{\delta}.$\\
El paso que sigue en la reducción es eliminar el término mixto. Pero eso se hará en la sección 1.5.\\
Regresamos al caso $i)$.\\
El lugar consta de las rectas de ecuación $Y=0$ y $Y=-\dfrac{2B}{C}X$ concurrentes en $0'$. Como
\begin{align*}
x&=X+\dfrac{CD-BE}{B^2}\\
\text{y}\hspace{0.5cm}y&=Y-\dfrac{D}{B},\hspace{0.5cm}X=x-\dfrac{CD-BE}{B^2}\hspace{0.5cm}Y=y+\dfrac{D}{B}
\end{align*}
y las ecuaciones de las rectas$\diagup xy$ son: $$y+\dfrac{C}{B}=0\hspace{0.5cm}\text{y}\hspace{0.5cm}y+\dfrac{D}{B}=-\dfrac{2B}{C}\left(x-\dfrac{CD-BE}{B^2}\right)$$
que luego de simplificar podemos escribir como $$2Bx+Cy+\dfrac{2BE-CD}{B}=0$$
Definimos $$\mathscr{L}_1=y+\dfrac{D}{B}$$ Y $$\mathscr{L}_2=2Bx+Cy+\dfrac{2BE-CD}{B}$$
Vamos a demostrar que la ecuación de la cónica $$f(x,y)=2Bxy+Cy^2+2Dx+2Ey+F$$ <<factoriza>> como el producto de dos rectas, i.e., que
\begin{equation}\label{24}
f(x,y)=2Bxy+Cy^2+2Dx+2Ey+F=\left(y+\dfrac{D}{B}\right)\left(2Bx+Cy+\dfrac{2BE-CD}{B}\right)
\end{equation}
Como
\begin{align*}
\Delta&=\left|\begin{array}{ccc}0&B&D\\ B&C&E\\ D&E&F\end{array}\right|=0,\hspace{0.5cm}-B\left(BF-DE\right)+D\left(BE-CD\right)=0\\
&\therefore F=\dfrac{2BDE-CD^2}{B^2}
\end{align*}
y $$f(x,y)=2Bxy+Cy^2+2Dx+2Ey+F=2Bxy+Cy^2+2Dx+2Ey+\dfrac{2BDE-CD^2}{B^2}$$
Así que para tener [\ref{24}], bastará con demostrar que
\begin{align*}
\left(y+\dfrac{D}{B}\right)\left(2Bx+Cy+\dfrac{2BE-CD}{B}\right)&=2Bxy+Cy^2+2Dx+2Ey+\dfrac{2BDE-CD}{B^2}\\
\left(y+\dfrac{D}{B}\right)\left(2Bx+Cy+\dfrac{2BE-CD}{B}\right)&=2Bxy+Cy^2+\cancel{\dfrac{2BE-CD}{B}}y+2Dx+\cancel{\dfrac{CD}{B}}y+\dfrac{2BDE-CD^2}{B^2}\\
&=Bxy+Cy^2+2Dx+2Ey+\dfrac{2BDE-CD^2}{B^2}
\end{align*}
\item[ii)]
$\begin{tabular}{|c|c|c|}A&0&C\\ \hline &&\end{tabular}$ La cónica es $Ax^2+Cy^2+2Dx+2Ey+F=0$.\\
$\left\{\dbinom{A}{B},\dbinom{B}{C}\right\}=\left\{\dbinom{A}{0},\dbinom{0}{C}\right\}:\text{Base de $\mathbb{R}^2$}$
\begin{figure}[ht!]
\begin{center}
  \includegraphics[scale=0.5]{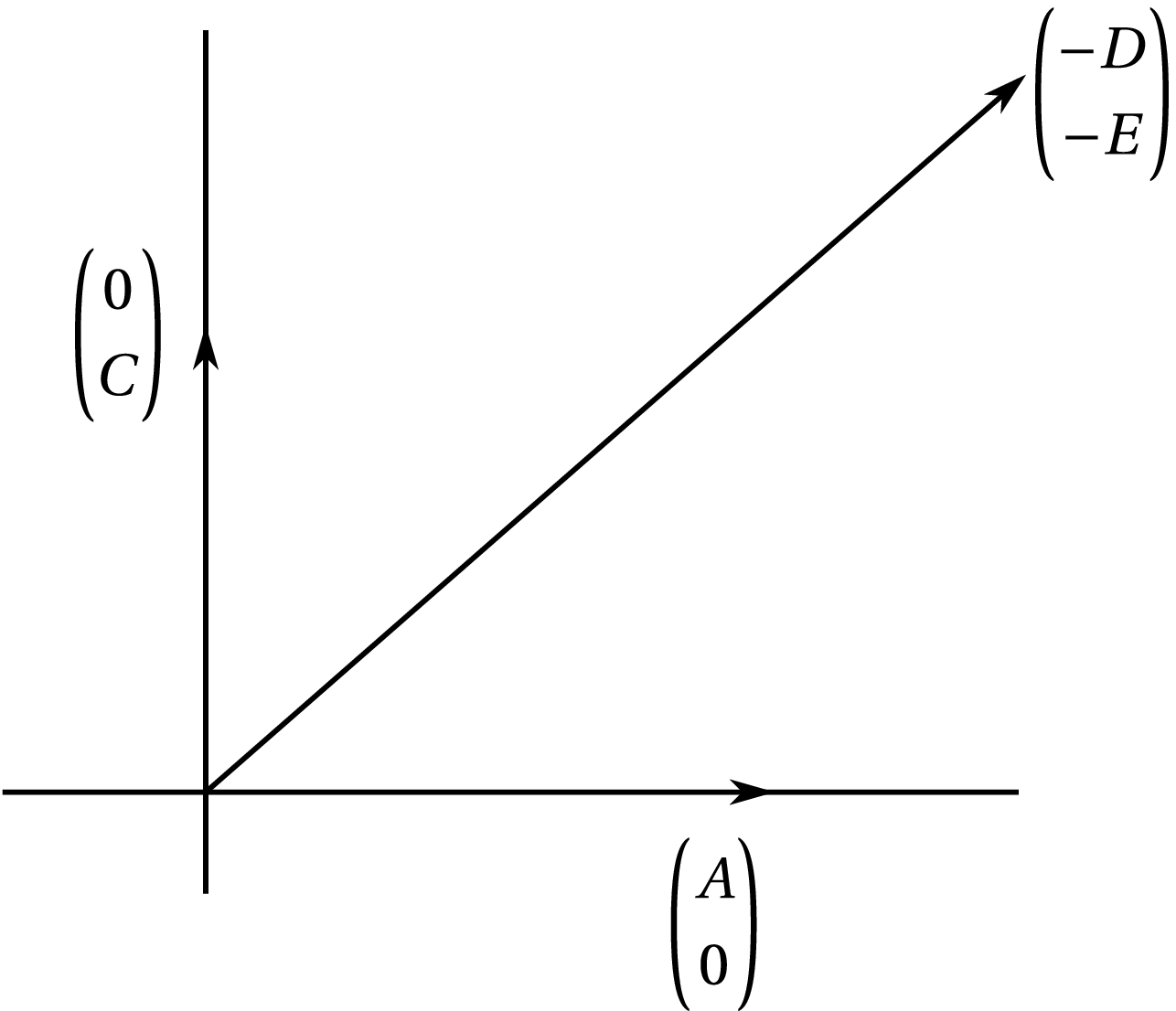}\\
\end{center}
\end{figure}
\newline
El sistema
\begin{align*}
Ah+Bk&=-D\\
Bh+Ck&=-E
\end{align*}
es en este caso
$$\begin{cases}
Ah=-D\\
Ck=-E
\end{cases}$$
\underline{Hay centro único}.\\
La ausencia del término mixto $(Bxy)$ permite que podamos realizar la reducción y no aplazarla para la sección 1.5.\\
Al realizar la traslación de ejes al punto $0'\left(-\underset{\overset{\parallel}{\widetilde{h}}}{\dfrac{D}{A}}, -\underset{\overset{\parallel}{\widetilde{k}}}{\dfrac{E}{C}}\right)$, se eliminan los términos lineales y la ecuación de la cónica$\diagup XY$ con origen en $0'$ es $AX^2+CY^2+f(\widetilde{h},\widetilde{k})=0$.\\
Ahora, $$\delta=\left|\begin{array}{cc}A&0\\0&C\end{array}\right|=AC\neq 0.\text{ya que $A$ y $C$ son $\neq 0$}$$
$$\Delta=\left|\begin{array}{ccc}A&0&0\\0&C&0\\0&0&f(\widetilde{h},\widetilde{k})\end{array}\right|=ACf(\widetilde{h},\widetilde{k})\delta f(\widetilde{h},\widetilde{k})\hspace{0.5cm}\text{y como $\delta\neq 0$},$$
$f(\widetilde{h},\widetilde{k})=\dfrac{\Delta}{\delta}.$ Así que la ecuación de la cónica$\diagup XY$ es: $$AX^2+CY^2=-\dfrac{\Delta}{\delta}.$$ Se tiene varios casos.\\
\item[1)] Si $\Delta>0$ y $\delta=AC<0,\,\,-\dfrac{\Delta}{\delta}>0.$\\
Como $AC<0,$ $A$ y $C$ tienen signos diferentes el \underline{lugar es una hipérbola}.\\
Si $\Delta>0$ y $\delta=AC>0,\,\,-\dfrac{\Delta}{\delta}<0$.\\
Como $AC>0$, $A$ y $C$ tienen el mismo signo.\\
Si $A>0$ Y $C>0$, $\omega=A+C>0$. \underline{El lugar es $\emptyset$}.\\
Si $A<0$ y $C<0$, $\omega=A+C<0$. El \underline{lugar es una elipse o una circunferencia}.\\
\item[2)] Si $\Delta=0$ y $\delta=AC>0,\,\,-\dfrac{\Delta}{\delta}=0$.\\
Pero si $AC>0,$ y $A$ y $C$ tienen el mismo signo.\\
El lugar es el punto $0'$ (el centro).\\
Si $\Delta=0$ y $\delta=AC<0,\hspace{0.5cm}-\dfrac{\Delta}{\delta}=0.$\\
Pero si $AC<0,$ $A$ y $C$ tienen signos contrarios.\\
\underline{El lugar son dos rectas concurrentes en $0'$}.\\
\item[3)] Si $\Delta<0$ y $\delta=AC<0,\hspace{0.5cm}-\dfrac{\Delta}{\delta}<0$.\\
Pero si $AC<0$, $A$ y $C$ tienen signos contrarios.\\
\underline{El lugar es una hipérbola}.\\
Si $\Delta<0$ y $\delta=AC>0,\hspace{0.5cm}-\dfrac{\Delta}{\delta}>0$.\\
Pero si $AC>0,$ y $A$ y $C$ tienen el mismo signo.\\
\item[$\bullet$] Si $A>0$ y $C>0$, $\omega=A+C>0$.\\
\underline{El lugar es una elipse o una circunferencia}.\\
\item[$\bullet$] Si $A<0$ y $C<0$, $\omega=A+C<0$.\\
\underline{El lugar es $\emptyset$}.
En resumen, la naturaleza del lugar se determina a través de la siguiente tabla\\
$\Delta>0\begin{cases}\delta<0:\text{hipérbola}\\
\delta>0\begin{cases}\omega>0:\emptyset\\ \omega<0:\text{elipse o circunferencia}\end{cases}\end{cases}$\\\\
$\Delta=0\begin{cases}\delta>0:\text{el punto $0'$ (el centro)}\\ \delta<0:\text{dos rectas concurrentes en $0'$}\end{cases}$\\\\
$\Delta<0\begin{cases}\delta<0:\text{hipérbola}\\ \delta>0\begin{cases}\omega>0:\text{elipse ó circunferencia}\\
\omega<0:\emptyset\end{cases}\end{cases}$\\\\
\item[iii)] $\begin{tabular}{|c|c|c|}A&B&0\\ \hline &&\end{tabular}$. La cónica es $Ax^2+2Bxy+2Dx+2Ey+F=$.\\
$\left\{\dbinom{A}{B},\dbinom{B}{C}\right\}=\left\{\dbinom{A}{B},\dbinom{B}{0}\right\}:\text{Base de $\mathbb{R}^2$}$
\begin{figure}[ht!]
\begin{center}
  \includegraphics[scale=0.5]{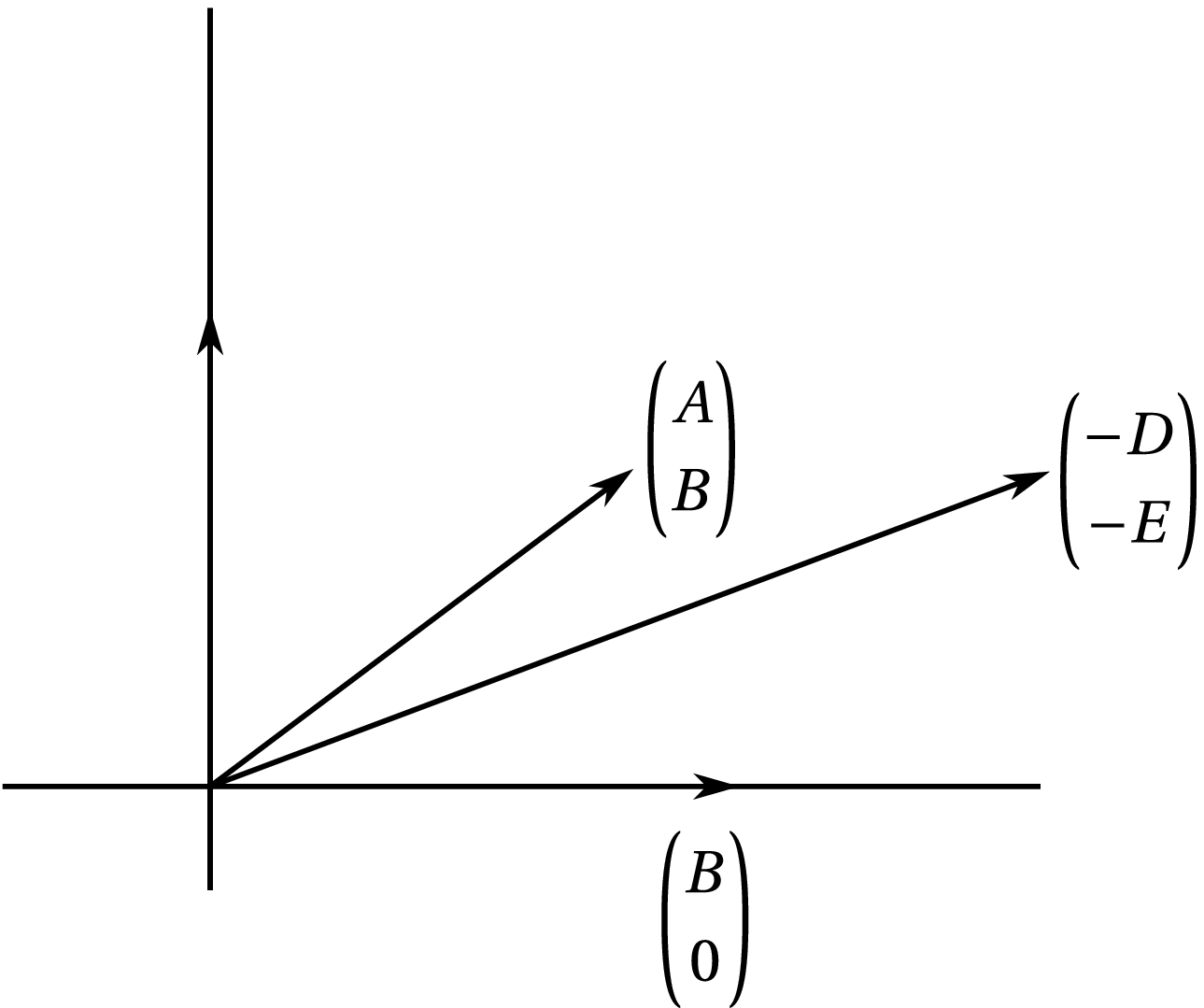}\\
\end{center}
\end{figure}
\newline
El sistema
\begin{align*}
Ah+Bk&=-D\\
Bh+Ck&=-E
\end{align*}
es ahora\\
$$\begin{cases}
Ah+Bk=-D\\
Bh=-E
\end{cases}$$\\
Hay \underline{centro único}.\\
Las coordenadas del centro se obtiene así:\\
De la $2^a$ ecuación, $\hat{h}=-\dfrac{E}{B}$ que llevamos a la $1^a$:
\begin{align*}
B\hat{k}&=-D-A\hat{h}\\
&=-D-A\left(-\dfrac{E}{B}\right)=\dfrac{AE}{B}-D=\dfrac{AE-BD}{B}\\
\therefore\hspace{0.5cm}\hat{k}=\dfrac{AE-BD}{B^2}
\end{align*}
Al trasladar los ejes al punto $0'(\hat{h},\hat{k}),$ la ecuación de la cónica$/XY$ es: $AX^2+2BXY+f(\hat{h},\hat{k})=0$ $$\Delta=\left|\begin{array}{ccc}A&B&0\\B&0&0\\0&0&f(\hat{h},\hat{k})\end{array}\right|=-B^2f(\hat{h},\hat{k})\hspace{0.5cm}\text{y como $B\neq 0$},$$
$$f(\hat{h},\hat{k})=-\dfrac{\Delta}{B^2}.$$ La ecuación de la cónica$\diagup XY$ es:
\begin{align*}
AX^2+2BXY=\dfrac{\Delta}{B^2}&\underset{\uparrow}{=}-\dfrac{\Delta}{\delta}\\
&\delta=AC-B^2=-B^2\neq 0
\end{align*}
\begin{figure}[ht!]
\begin{center}
  \includegraphics[scale=0.5]{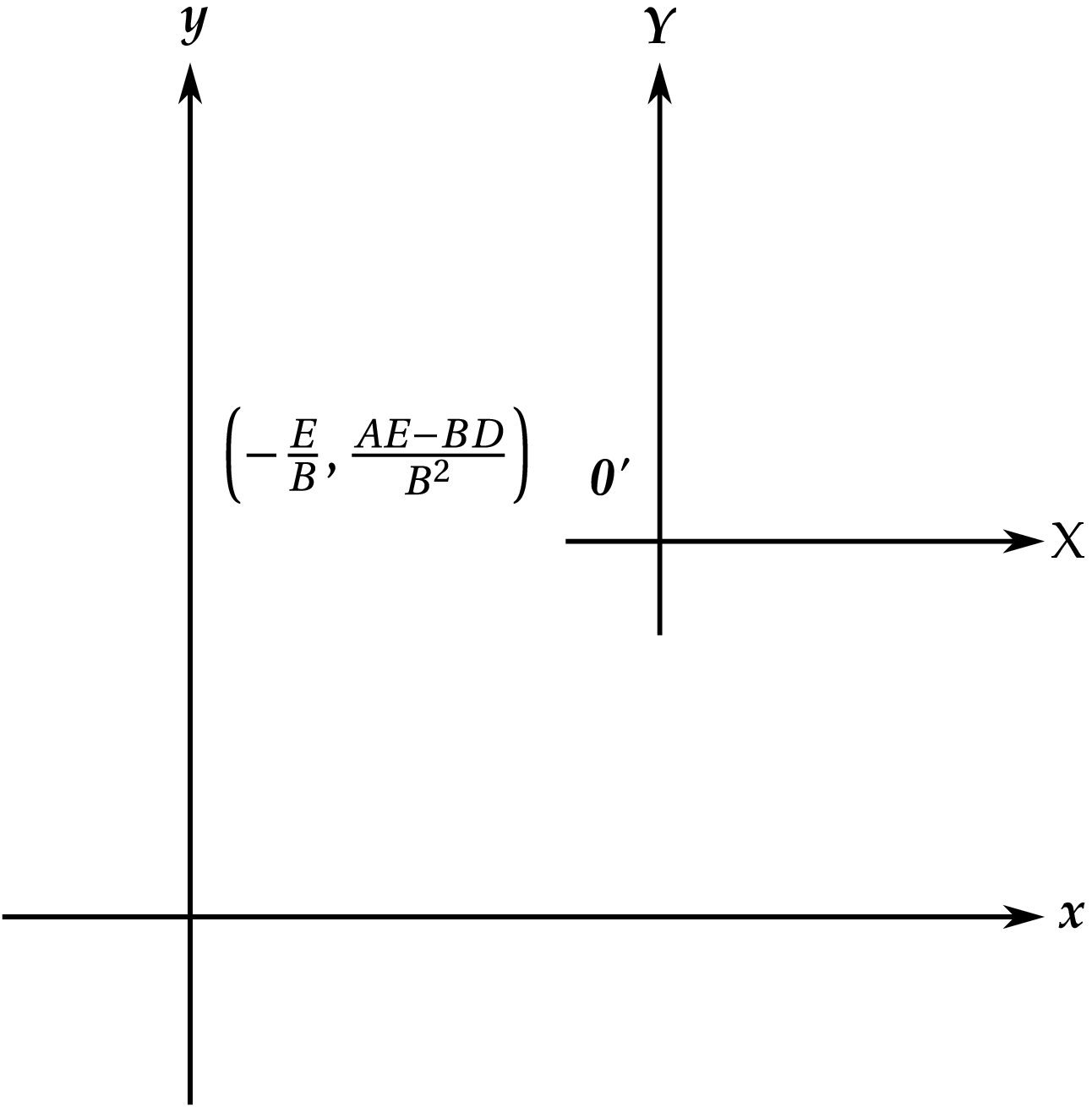}\\
\end{center}
\end{figure}
\newline
\item[1)] Si $\Delta=0,$
\begin{align*}
AX^2+2BXY=0\\
x\left(AX+2BY\right)=0\hspace{0.5cm}\therefore\hspace{0.5cm}X=0\hspace{0.5cm}\text{ó}\hspace{0.5cm}Y=-\dfrac{A}{2B}X
\end{align*}
El lugar consta de dos rectas concurrenter en $0'$ y de ecuación$\diagup XY;X=0\,\,\text{y}\,\,Y=-\dfrac{A}{2B}X$.\\
Demuestre como se hizo en el caso $\begin{tabular}{|c|c|c|}&&\\\hline
0&B&C
\end{tabular}$ que la cónica <<factoriza>> como el producto de dos rectas definidas al sistema $x-y.$\\
\item[2)] Si $\Delta\neq 0$ la ecuación del lugar$\diagup XY$ es $AX^2+2BXY=-\dfrac{\Delta}{\delta}$.\\
Lo que sigue en la reducción es eliminar el término mixto y esto se hace en la sección 1.5.
\end{enumerate}
\end{ejer}
\begin{ejer}
Vamos a estudiar el problema de los centros en dos casos en que ninguno de $A,B,C$ es cero.\\
Puede ocurrir:
\begin{enumerate}
\item[I)] que $\left\{\dbinom{A}{B}, \dbinom{B}{C}\right\}\subset\mathbb{R}^2$ sea L.I.\\
Entonces $\left\{\dbinom{A}{B}, \dbinom{B}{C}\right\}$ es base de $\mathbb{R}^2,\hspace{0.5cm}\left(\begin{array}{cc}A&B\\B&C\end{array}\right)$ es No singular y como los vectores $\dbinom{A}{B}$ y $\dbinom{B}{C}$ definen un paralelogramo, el area de este es $\left|\begin{array}{cc}A&B\\B&C\end{array}\right|=AC-B^2\neq 0$. Además, $\exists!(\widetilde{h},\widetilde{k})$ tal que
\begin{align*}
A\widetilde{h}+B\widetilde{k}&=-D\\
B\widetilde{h}+C\widetilde{k}&=-E
\end{align*}
$\widetilde{h}$ y $\widetilde{k}$ se calculan así:\\
$$\dbinom{\widetilde{h}}{\widetilde{k}}=\left(\begin{array}{cc}A&B\\B&C\end{array}\right)\dbinom{-D}{-E}=\dfrac{1}{AC-B^2}\left(\begin{array}{cc}C&-B\\-B&A\end{array}\right)\dbinom{-D}{-E}$$
\begin{figure}[ht!]
\begin{center}
  \includegraphics[scale=0.4]{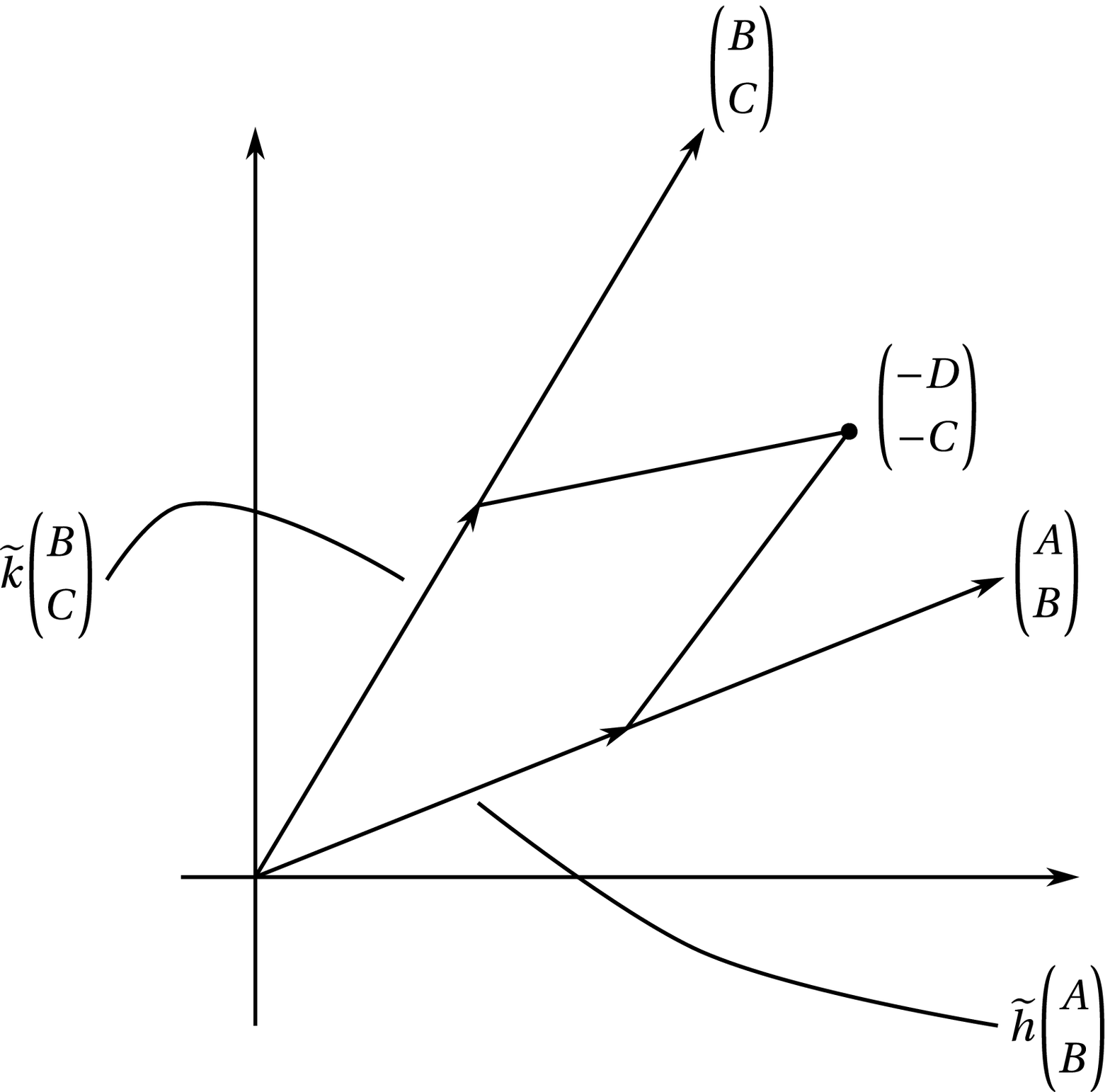}\\
\end{center}
\end{figure}
La cónica \underline{tiene centro único}; $(\widehat{h}, \widehat{k}).$\\
\item[II)] que $\left\{\dbinom{A}{B}, \dbinom{B}{C}\right\}$ sea L.D.\\
En este caso ambos vectores están aplicados sobre una misma línea que no puede ser ni el eje $x$ ni el eje $y$.\\
Así que lo que se tiene es que $\dbinom{B}{C}=\alpha\dbinom{A}{B}$
\begin{figure}[ht!]
\begin{center}
  \includegraphics[scale=0.5]{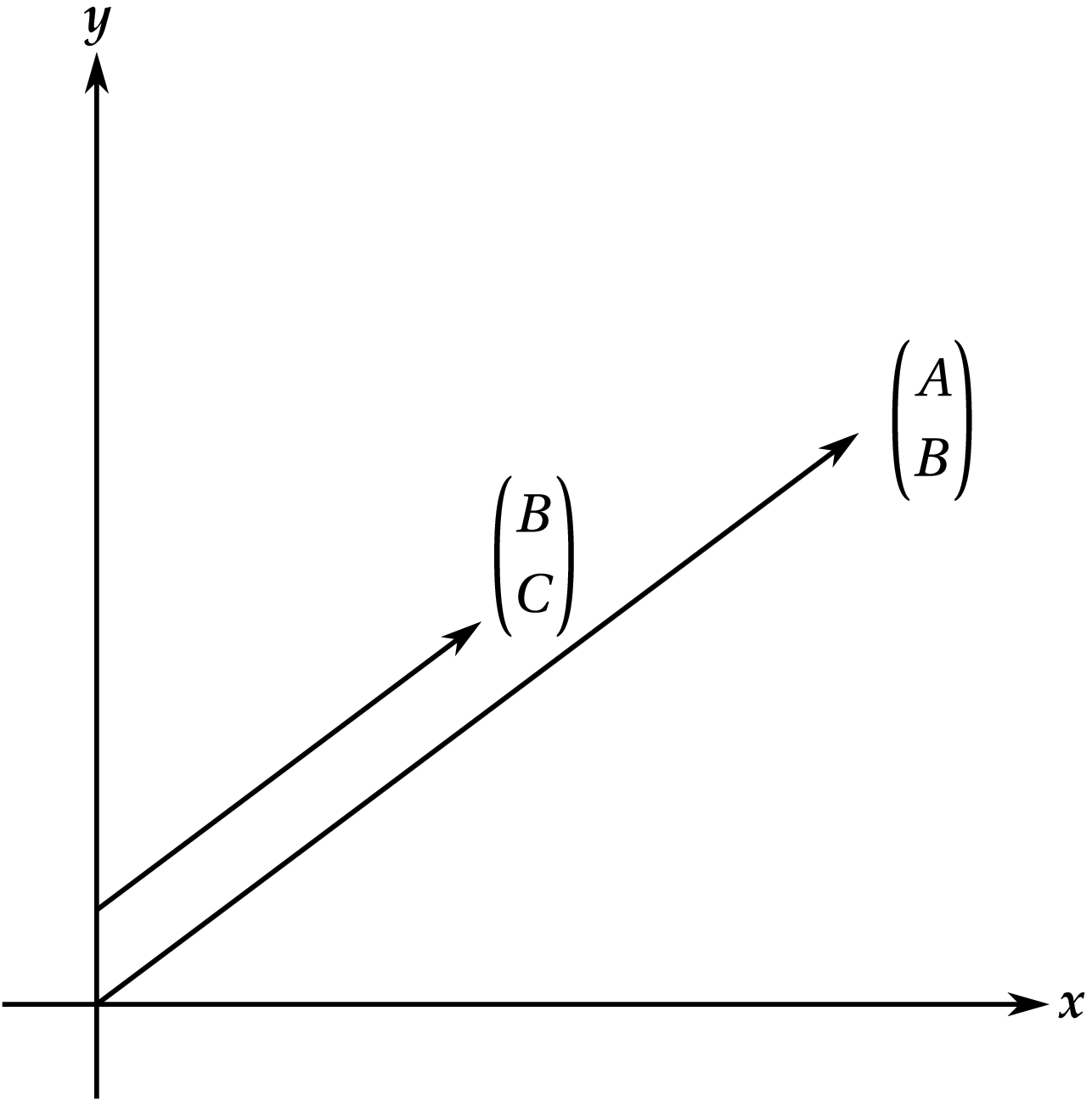}\\
\end{center}
\end{figure}
Como $\dbinom{A}{B}$ y $\dbinom{B}{C}$ son $\parallel$s, el area del paralelogramo de $\dbinom{A}{B}$ y $\dbinom{B}{C}$ es cero y esto quiero decir de $\delta=\left|\begin{array}{cc}A&B\\B&C\end{array}\right|=0$.\\
Ahora, puede tenerse\\
\item[i)] que\\
$\dbinom{-D}{-E}\notin\text{Sg}\left\{\dbinom{A}{B}\right\}=\text{Sg}\left\{\dbinom{B}{C}\right\};\dbinom{-D}{-E}\in\text{Sg}\dbinom{A}{B}\Longleftrightarrow\left|\begin{array}{cc}A&-D\\B&-E\end{array}\right|=0$
\begin{figure}[ht!]
\begin{center}
  \includegraphics[scale=0.5]{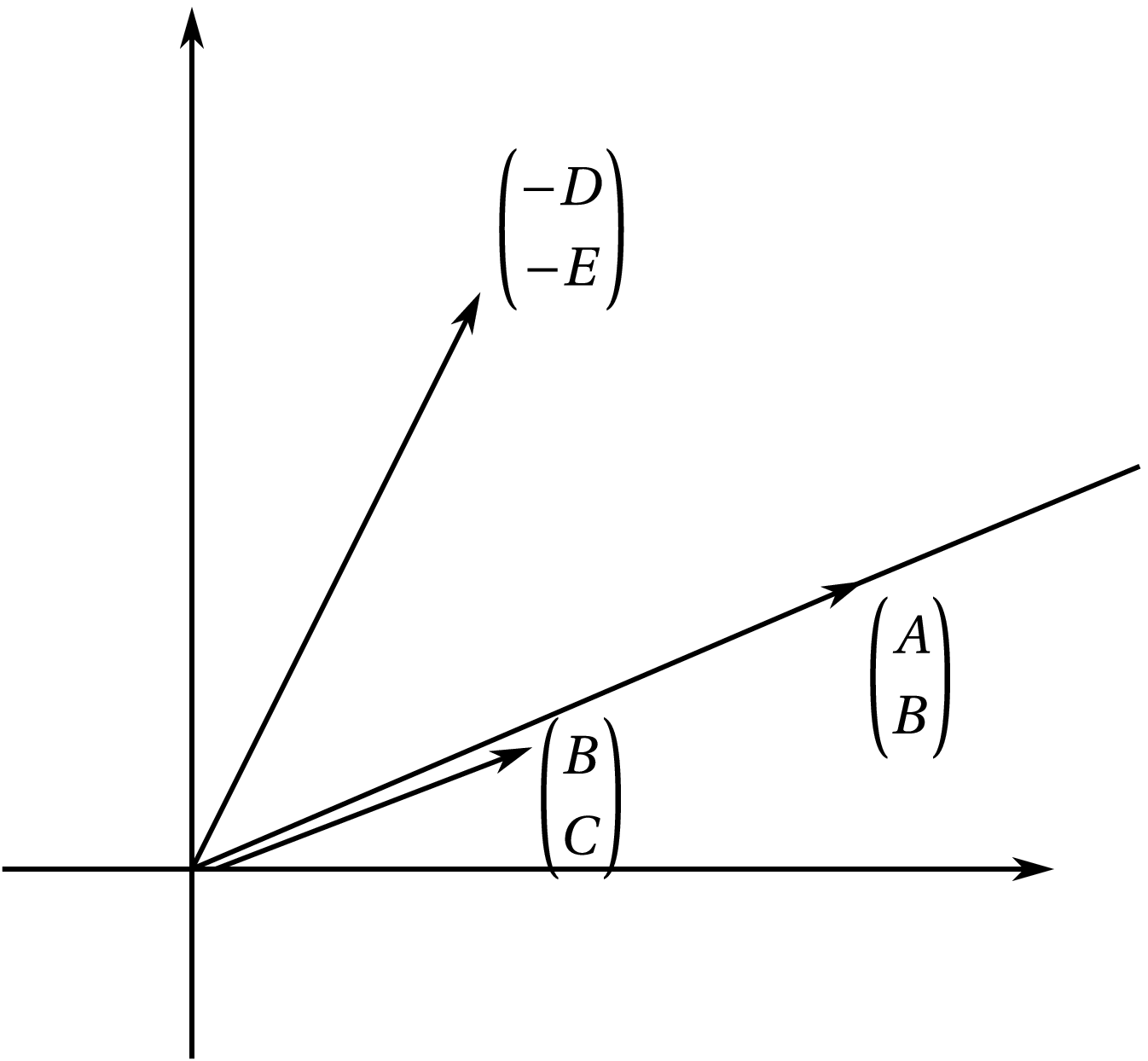}\\
\end{center}
\end{figure}
\newpage
Entonces $\delta=\left|\begin{array}{cc}A&B\\B&C\end{array}\right|=0$ pero $\nexists(h,k)$ tal que $h\dbinom{A}{B}+k\dbinom{B}{C}=\dbinom{-D}{-E}$.\\
O se aque el sistema
\begin{align*}
Ax+By&=-D\\
Bx+Cy&=-E\hspace{0.5cm}\text{No tiene solución}.
\end{align*}
Así que $\left|\begin{array}{cc}A&B\\B&C\end{array}\right|=0$ y  \underline{la cónica No tiene centro}.\\
\item[ii)] que $\dbinom{-D}{-E}\in Sg\left\{\dbinom{A}{B}\right\}=Sg\left\{\dbinom{B}{C}\right\}$.\\
Entonce
\begin{equation}
\dbinom{-D}{-E}=\beta_1\dbinom{A}{B}
\end{equation}
Ahora, como
\begin{equation}
\left.
\begin{split}
Sg\left\{\dbinom{A}{B}\right\}=Sg\left\{\dbinom{B}{C}\right\}, \dbinom{B}{C}&=\alpha\dbinom{A}{B}\\
\therefore\hspace{0.5cm}B&=\alpha A\\
C&=\alpha B
\end{split}
\right\}
\end{equation}
\begin{figure}[ht!]
\begin{center}
  \includegraphics[scale=0.5]{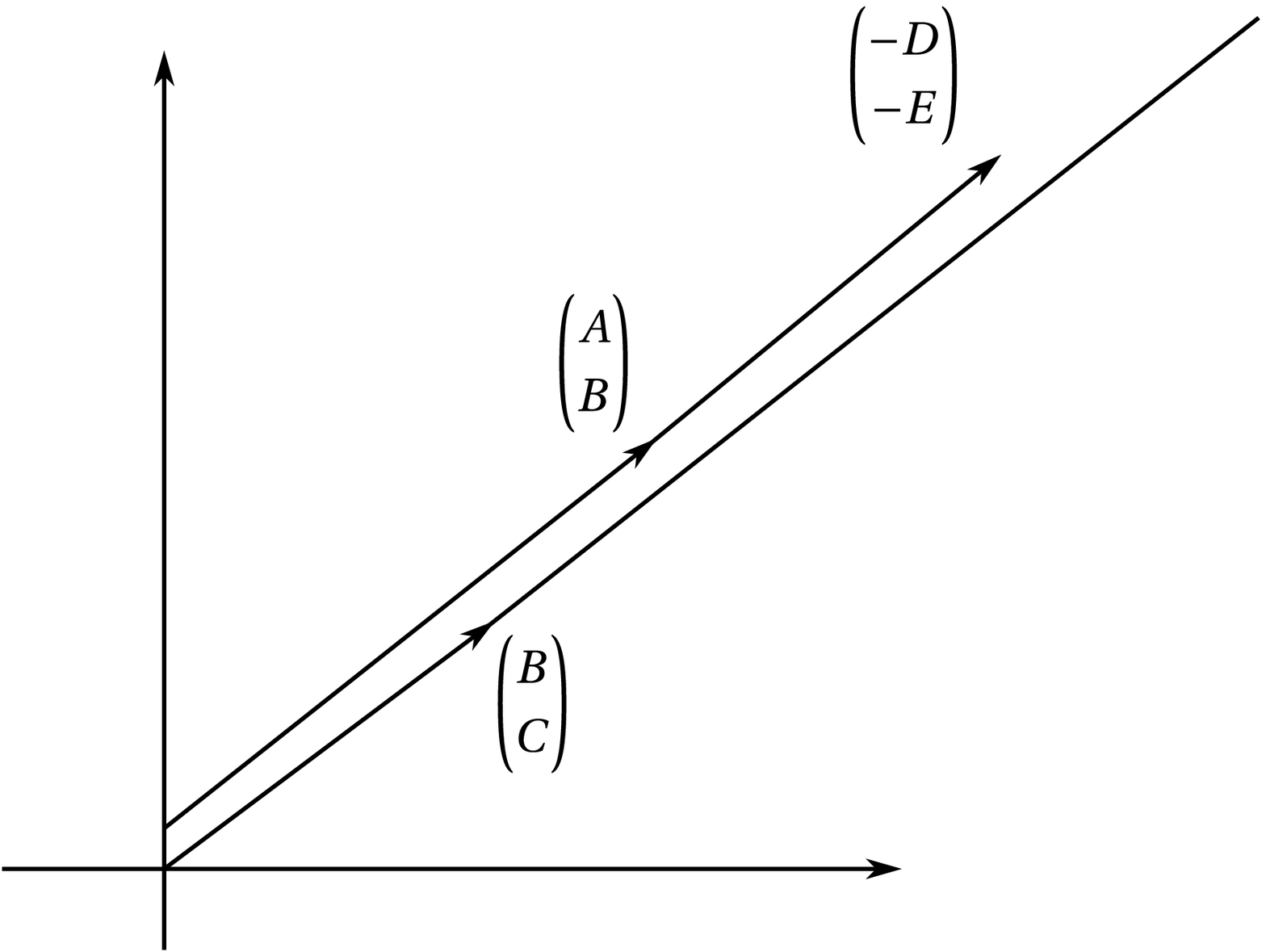}\\
\end{center}
\end{figure}
\newline
O sea que
\begin{align*}
\dbinom{-D}{-E}&=\beta_1\dbinom{A}{B}+\gamma_1\cancel{\dbinom{B}{C}}-\gamma_1\cancel{\dbinom{B}{C}},\hspace{0.5cm}\gamma_1\in\mathbb{R}\\
&=\beta_1\dbinom{A}{B}+\gamma_1\alpha\dbinom{A}{B}-\gamma_1\dbinom{B}{C}\\
&\overset{\nearrow}{(2)}\\
&=\left(\beta_1+\gamma_1\alpha\right)\dbinom{A}{B}-\gamma_1\dbinom{B}{C},\hspace{0.5cm}\text{lo que nos demuestra que}
\end{align*}
$\star\star\star\begin{cases}
Ah+Bk=-D\\
Bh+Ck=-E\hspace{0.5cm}\text{tiene $\infty$s soluciones y la \underline{cónica tiene $\infty$s centros}}
\end{cases}$\\
\item[a)] Vamos a demostrar que en este caso la $2^a$ ecuación del sistema
\begin{align*}
Ah+Bk&=-D\\
Bh+Ck&=-E\hspace{0.5cm}\text{es múltiplo de la $1^a$}.
\end{align*}
Si multilplicamos la $1^a$ por $\alpha$, $\alpha Ah+\alpha Bk=-\alpha D$.\\
Veamos que la esta ecuación es $Bh+Ck=-E$.\\
Como $\alpha A=B$ y $\alpha B=C,$ la ecuación anterior se convierte en
\begin{figure}[ht!]
\begin{center}
  \includegraphics[scale=0.5]{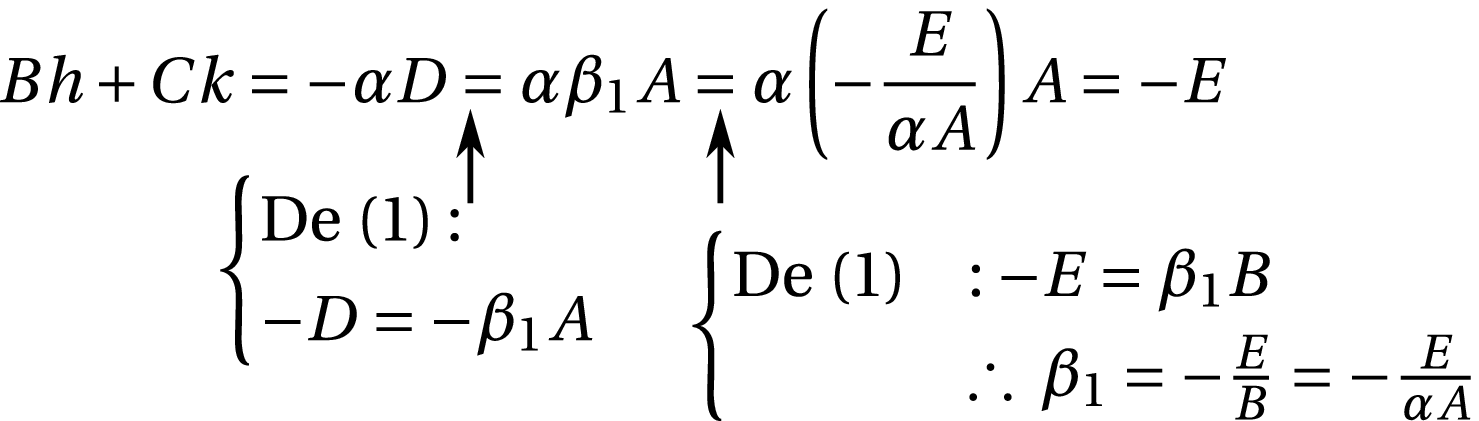}\\
\end{center}
\end{figure}
\newline
\item[b)] La segunda ecuación del sistema $\star\star\star$ es redundante y el conjunto de soluciones de
$\star\star\star$ se obtiene al resolver la \\$1^a: Ah+Bk=-D$. Entonces todos los centros de la cónica están sobre la recta $Ax+By=-D$ que se llama <<\underline{el eje de centros de la cónica}>>.\\
De esta ecuación, $k=-\dfrac{A}{B}h-\dfrac{D}{B}$.\\
Vamos a demostrar que $\forall h\in\mathbb{R}, \left(h,-\dfrac{A}{B}h-\dfrac{D}{B}\right)$ es solución al sistema.\\
Es claro que $\left(h,-\dfrac{A}{B}h-\dfrac{D}{B}\right)$ es solución a la $1^a$ ecuación.\\
Solo resta demostrar que es solución a la $2^a$, o sea que
\begin{align*}
Bh+C\left(-\dfrac{A}{B}h-\dfrac{D}{B}\right)=-E\\
Bh+C\left(-\dfrac{A}{B}h-\dfrac{D}{B}\right)&\underset{\uparrow}{=}Bh+\dfrac{B^2}{A}\left(-\dfrac{A}{B}h-\dfrac{D}{B}\right)\\
&\begin{cases}
\text{Como}\hspace{0.5cm}&AC-B^2=0,\\
&C=\dfrac{B^2}{A}
\end{cases}
\end{align*}
\end{enumerate}
\begin{figure}[ht!]
\begin{center}
  \includegraphics[scale=0.5]{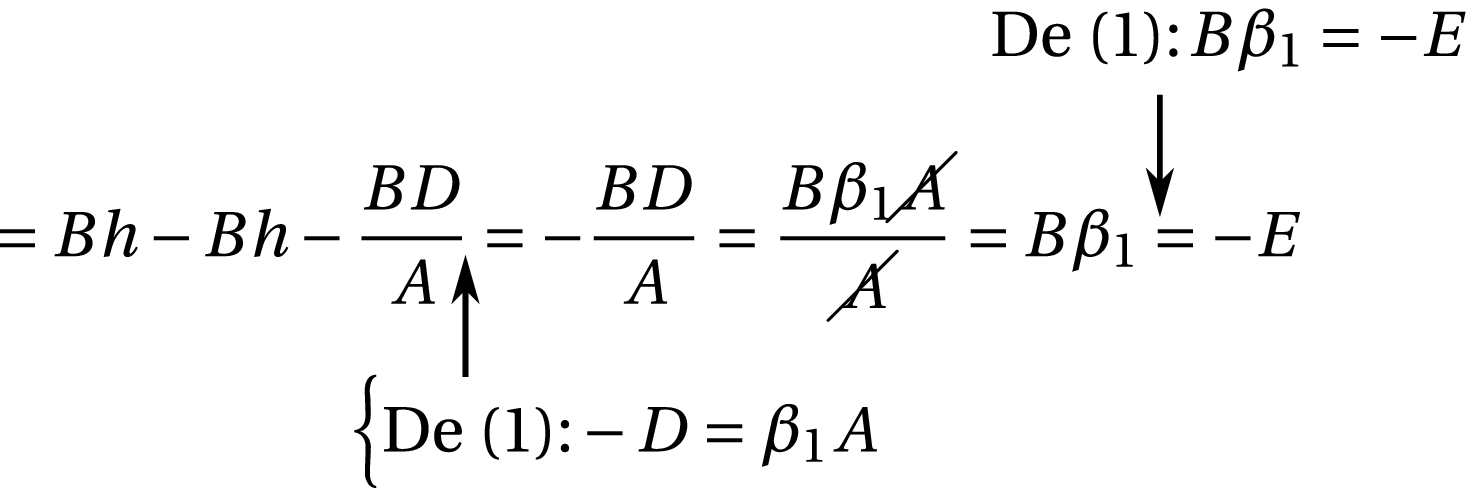}\\
\end{center}
\end{figure}
Toda la discusión nos ha demostrado que en el supuesto de que A,B,C sean cero,
$$\begin{cases}
\text{Si}\,\,\delta=AC-B^2\neq 0,\,\,\text{la cónica tiene centro único}.\\
\text{Si}\,\,\delta=AC-B^2=0,\,\,\text{la cónica no tiene centro, o tiene $\infty$s centros.}
\end{cases}$$
\end{ejer}
\begin{ejem}
Consideremos la cónica $3x^2+4x+2y+1=0$.\\
\begin{tabular}{cccc}
$A=3$&&$2D=4;$&$D=2$\\
$2B=0;$&$B=0$&\\
$C=0$&&$2E=2;$&$E=1$
\end{tabular}
\newline
El sistema
$$\begin{cases}
Ah+Bk=-D\\
Bh+Ck=-E
\end{cases}$$
es ahora
$$\begin{cases}
3h+0k=-2\\
0h+ok=-1
\end{cases}$$
La cónica \underline{No tiene centro}.
\end{ejem}
\begin{ejem}
$9x^2-12xy+4y^2+9x-6y+2=0$.\\
\begin{tabular}{cccc}
$A=9$&&&\\
$2B=-12; B=-6$&&$\dbinom{A}{B}=\dbinom{9}{-6}=3\dbinom{3}{-2}$\\
$C=4$&&&\\
$2D=9; D=\frac{9}{2}$&&$\dbinom{B}{C}=\dbinom{-6}{4}=-2\dbinom{3}{-2}$\\
$2E=-6; E=-3$&&$\dbinom{-D}{-E}=\dbinom{-9/2}{3}=-3/2\dbinom{3}{-2}$
\end{tabular}
\begin{figure}[ht!]
\begin{center}
  \includegraphics[scale=0.5]{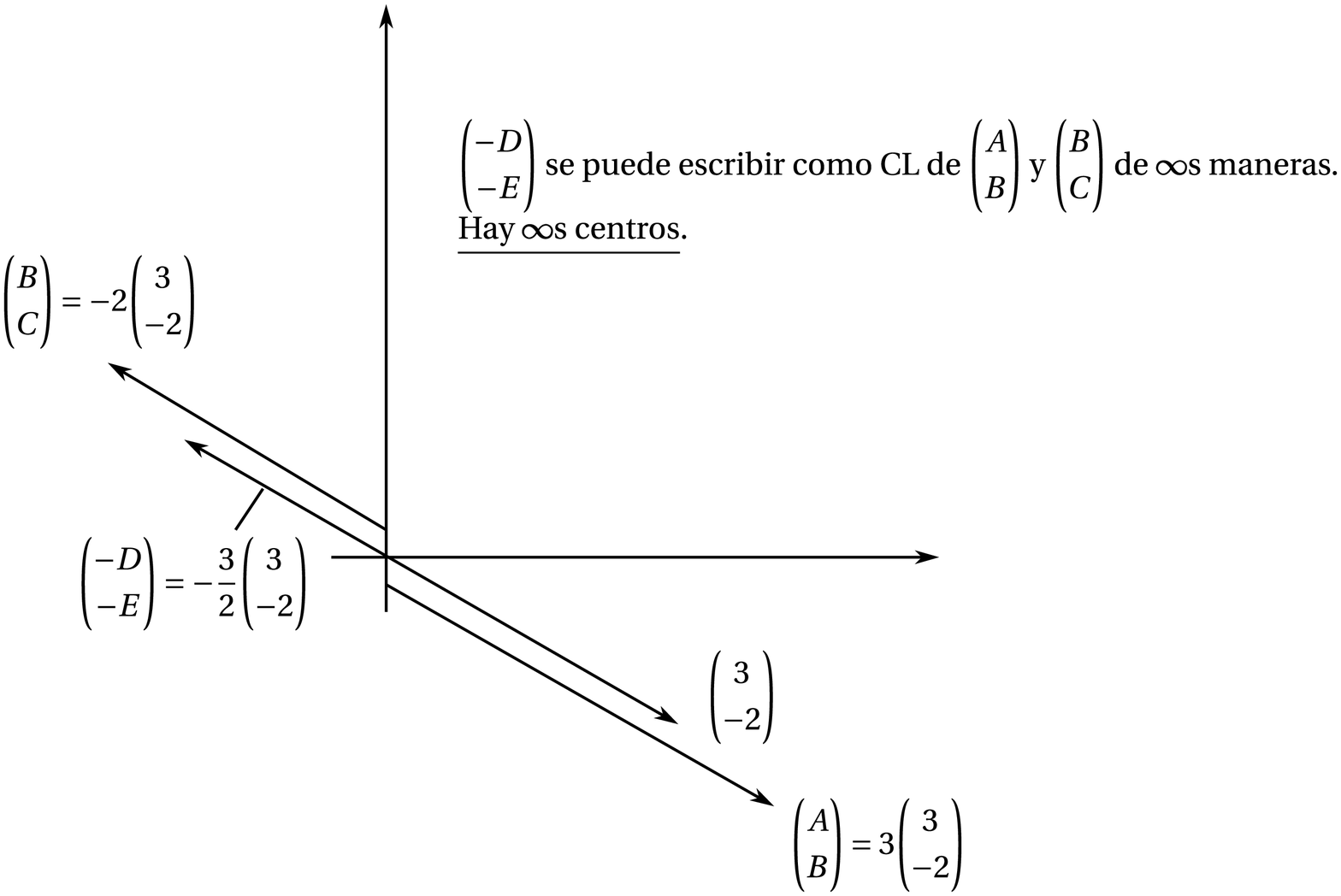}\\
\end{center}
\end{figure}
\end{ejem}
\begin{ejem}
$12x^2-12xy+3y^2+2x+4x+1=0$.\\
\begin{tabular}{cccc}
$A=12$&&&\\
$2B=-12; B=-6$&&$\dbinom{A}{B}=\dbinom{12}{-6}=-6\dbinom{-2}{1}$\\
$C=3$&&&\\
$2D=2; D=1$&&$\dbinom{B}{C}=\dbinom{-6}{3}=3\dbinom{-2}{1}; \dbinom{-D}{-E}=\dbinom{-1}{-2}$\\
$2E=4; E=2$&&&
\end{tabular}
\begin{figure}[ht!]
\begin{center}
  \includegraphics[scale=0.5]{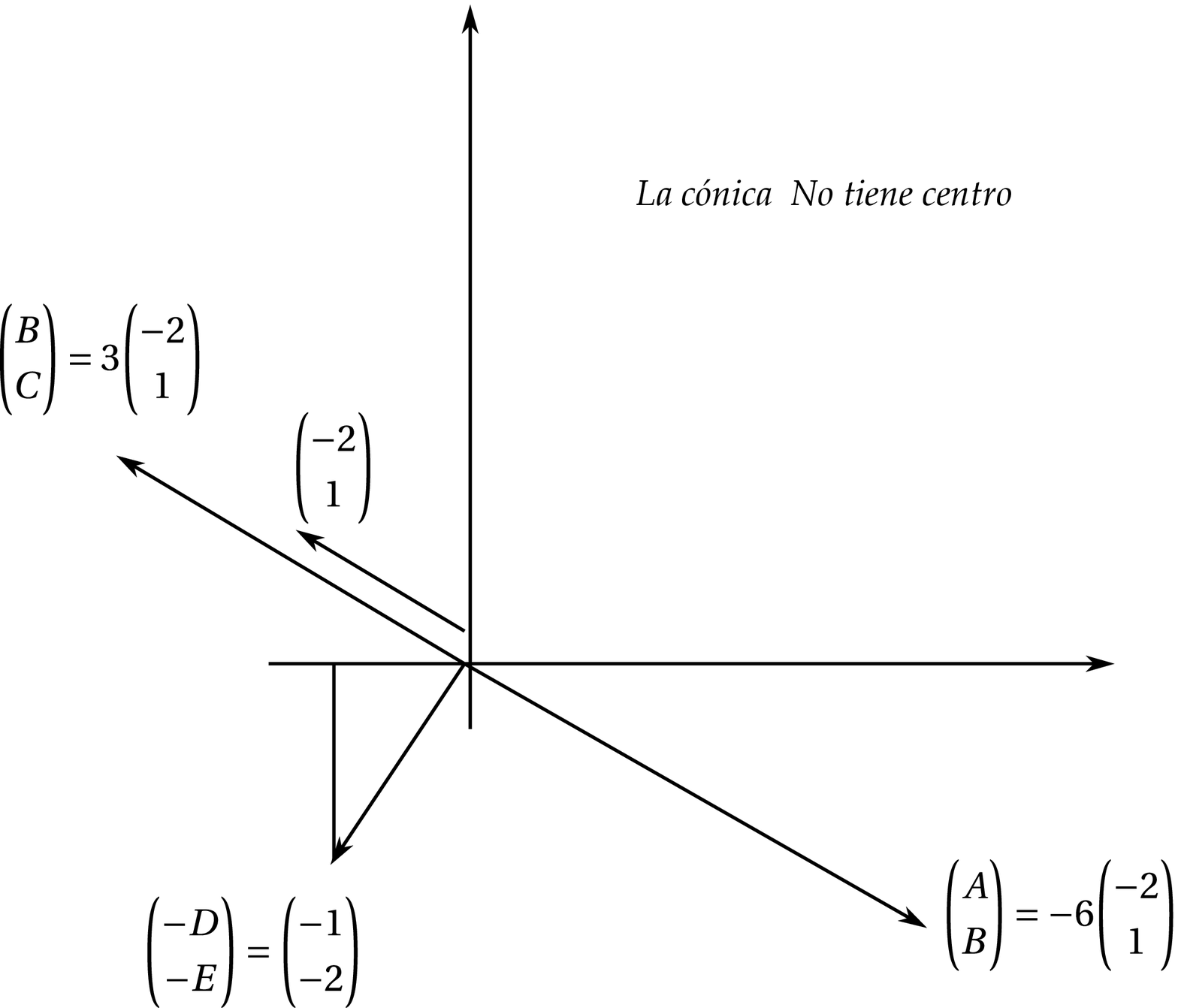}\\
\end{center}
\end{figure}
\end{ejem}
\begin{ejem}
$12x^2-12xy+3y^2+2x-y-\frac{1}{4}=0$. (Véase la diferencia con el ejemplo anterior.)\\
\begin{tabular}{cccc}
$A=12$&&&\\
$2B=-12; B=-6$&&$\dbinom{-D}{-E}=\dbinom{-1}{1/2}=\frac{1}{2}\dbinom{-2}{1}$\\
$C=3$&&&\\
$2D=2; D=1$&&&\\
$2E=-1; E=-1/2$
\end{tabular}
\newpage
\begin{figure}[ht!]
\begin{center}
  \includegraphics[scale=0.5]{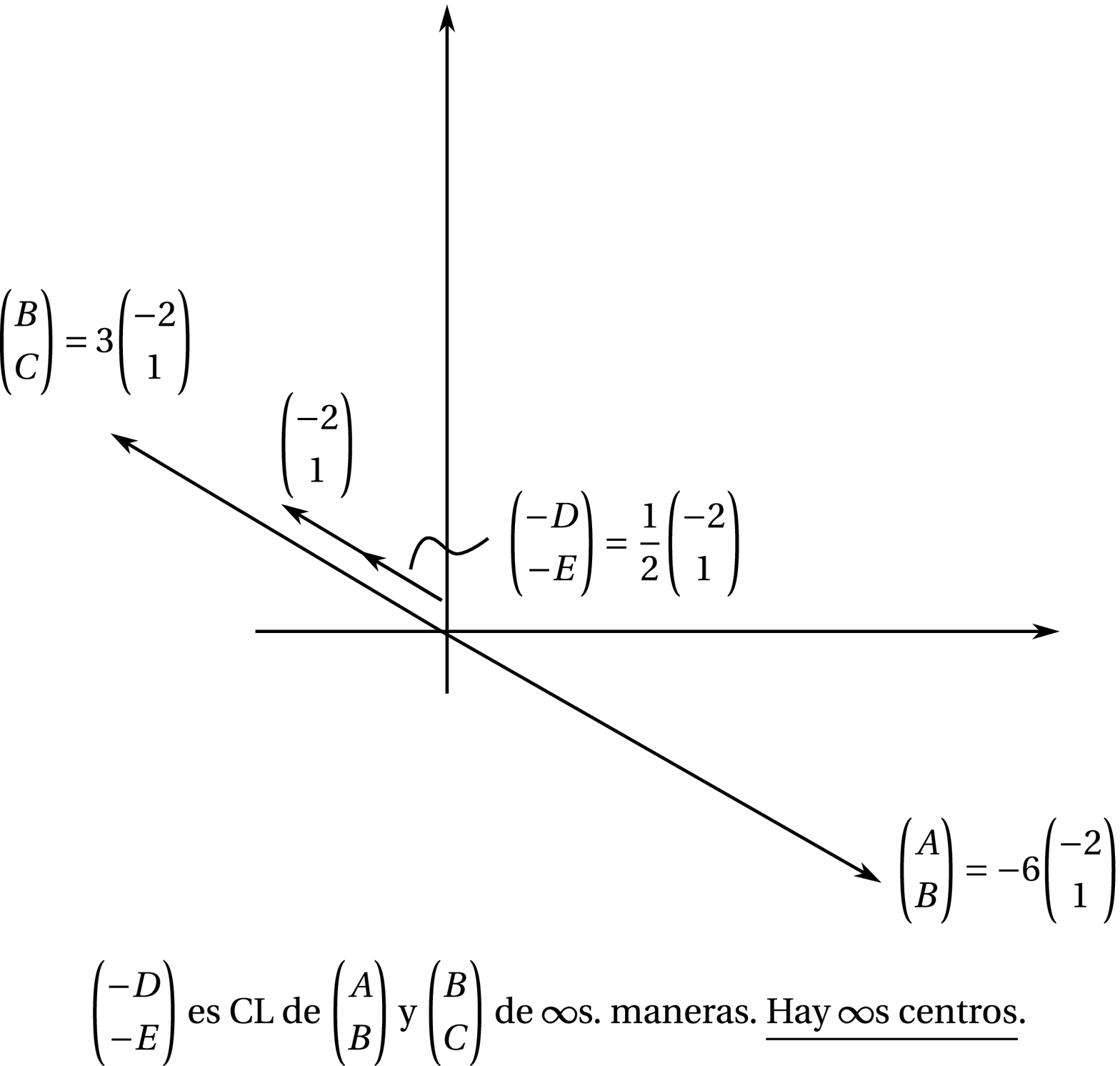}\\
\end{center}
\end{figure}
\end{ejem}
\begin{prop}
Todo centro de una cónica es centro de simetría de la curva.
\end{prop}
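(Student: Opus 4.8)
El plan es trasladar el origen al centro y aprovechar que, tras esa traslaci\'{o}n, la ecuaci\'{o}n de la c\'{o}nica pierde su parte lineal. Sea $(h,k)$ un centro de la c\'{o}nica [\ref{1}], es decir, una soluci\'{o}n del sistema [\ref{16}], con lo cual $\left.\dfrac{\partial f}{\partial x}\right)_{h,k}=0$ y $\left.\dfrac{\partial f}{\partial y}\right)_{h,k}=0$. Introducir\'{e} los ejes $XY$ con origen en $(h,k)$ mediante $x=X+h$, $y=Y+k$. Aplicando la ecuaci\'{o}n de incrementos [\ref{12}] y las dos igualdades anteriores, la ecuaci\'{o}n de la curva referida a $XY$ queda
\begin{equation*}
q(X,Y)+f(h,k)=AX^2+2BXY+CY^2+f(h,k)=0.
\end{equation*}

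El paso clave ser\'{a} observar que esta ecuaci\'{o}n s\'{o}lo contiene la forma cuadr\'{a}tica $q$ y una constante. Como $q$ es homog\'{e}nea de grado dos, es par:
\begin{equation*}
q(-X,-Y)=A(-X)^2+2B(-X)(-Y)+C(-Y)^2=AX^2+2BXY+CY^2=q(X,Y).
\end{equation*}
Por tanto, si un punto de coordenadas $(X,Y)$ satisface la ecuaci\'{o}n, entonces $q(-X,-Y)+f(h,k)=q(X,Y)+f(h,k)=0$, de modo que $(-X,-Y)$ tambi\'{e}n pertenece a la curva, y rec\'{\i}procamente. As\'{\i}, en el sistema $XY$ la curva resulta sim\'{e}trica respecto del origen.

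Finalmente traducir\'{e} esto al sistema original. En coordenadas $xy$, al punto $(X,Y)$ le corresponde $(h+X,k+Y)$ y a su sim\'{e}trico $(-X,-Y)$ le corresponde $(h-X,k-Y)=(2h-x,2k-y)$; el punto medio de ambos es precisamente $(h,k)$. Luego la reflexi\'{o}n central $(x,y)\mapsto(2h-x,2k-y)$ env\'{\i}a la c\'{o}nica en s\'{\i} misma, que es justamente la definici\'{o}n de que $(h,k)$ sea centro de simetr\'{\i}a de la curva. El \'{u}nico punto delicado es garantizar que al trasladar al centro desaparecen efectivamente los t\'{e}rminos lineales; eso ya queda asegurado por la ecuaci\'{o}n de incrementos [\ref{12}] junto con la condici\'{o}n [\ref{16}] que define el centro, por lo que el resto del argumento es inmediato y no depende de que $(h,k)$ est\'{e} o no sobre la c\'{o}nica.
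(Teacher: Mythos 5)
Tu demostraci\'{o}n es correcta y sigue esencialmente el mismo camino que la del texto: trasladar los ejes al centro $(h,k)$, donde por la ecuaci\'{o}n de incrementos y la anulaci\'{o}n de las derivadas parciales la c\'{o}nica se reduce a $q(X,Y)+f(h,k)=0$, y concluir usando que $q(-X,-Y)=q(X,Y)$. La \'{u}nica diferencia es de redacci\'{o}n: t\'{u} explicitas el regreso a las coordenadas $xy$ y la observaci\'{o}n de que $(h,k)$ no necesita estar sobre la curva, mientras que el texto verifica la paridad de $q$ en forma matricial.
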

\begin{proof}
Sea $O$ un punto de coordenadas $(h,k)\diagup xy,$ $O:$ centro de simetría de la ecuación. O sea que
\begin{equation}\label{27}
\left.\dfrac{\partial f}{\partial x}\right)_{h,k}=\left.\dfrac{\partial f}{\partial y}\right)_{h,k}=0
\end{equation}
Sea $P(X,Y)$ un punto de la cónica (Fig.) y llamamos $Q(X',Y')$ a su eje simétrico$\diagup O$. Se debe demostrar que $Q(X',Y')$ está en la cónica, o lo que es lo mismo, que $$q(X',Y')+f(h,k)=0$$
\begin{figure}[ht!]
\begin{center}
\includegraphics[scale=0.3]{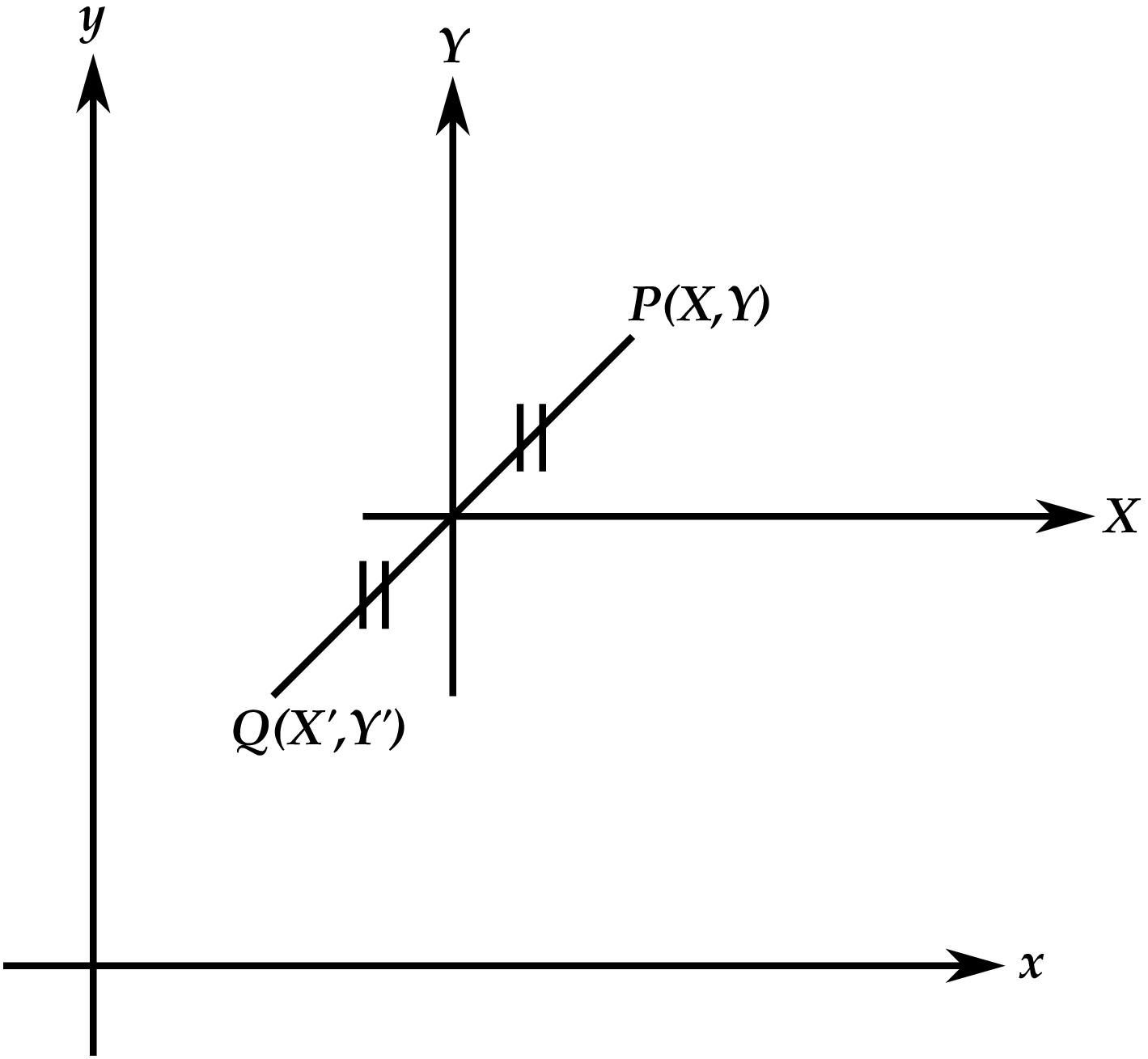}\\
\caption{}\label{}
\end{center}
\end{figure}
\newline
Como $Q(X',Y')$ es simétrico de $P(X,Y)\diagup 0$, $X'=X$ y $Y'=Y$.\\
Luego
\begin{align*}
q(X',Y')+f(h,k)&=\left(X' Y'\right)\left(\begin{array}{cc}A&B\\B&C\end{array}\right)\dbinom{X'}{Y'}+(h,k)\\
&=\left(-X -Y\right)\left(\begin{array}{cc}A&B\\B&C\end{array}\right)\dbinom{-X}{-Y}+(h,k)\\
&=\left(X Y\right)\left(\begin{array}{cc}A&B\\B&C\end{array}\right)\dbinom{X'}{Y'}+(h,k)\\
&=q(X,Y)+f(h,k)\underset{\overset{\uparrow}{[\ref{27}]}}{=}0
\end{align*}
\end{proof}
\begin{prop}
Si $(h,k)$ es un centro de simetría de la cónica, $(h,k)$ es un centro de ella.
\end{prop}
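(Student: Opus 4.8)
The plan is to run the argument of the previous proposition in reverse, using the increment equation [\ref{12}]. First I would translate the origin to the candidate point $(h,k)$: writing $X,Y$ for the new coordinates, [\ref{12}] shows that the conic is cut out by
\[
g(X,Y)=q(X,Y)+\alpha X+\beta Y+f(h,k),\qquad \alpha=\left.\frac{\partial f}{\partial x}\right|_{(h,k)},\ \beta=\left.\frac{\partial f}{\partial y}\right|_{(h,k)}.
\]
Being a center of symmetry means precisely that the locus $\{g=0\}$ is invariant under the point reflection $(X,Y)\mapsto(-X,-Y)$, so my goal is to deduce from this invariance that $\alpha=\beta=0$: those two equations are exactly the center system [\ref{16}] evaluated at $(h,k)$.

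The key computation is the even/odd split. Since the quadratic form satisfies $q(-X,-Y)=q(X,Y)$ and $f(h,k)$ is constant, the only odd part of $g$ is the linear form $\alpha X+\beta Y$; concretely $g(X,Y)-g(-X,-Y)=2(\alpha X+\beta Y)$. Hence if $(X,Y)$ lies on the conic, so that $g(X,Y)=0$, then symmetry gives $g(-X,-Y)=0$ as well, and subtracting forces $\alpha X+\beta Y=0$. In other words, the linear form $\ell(X,Y)=\alpha X+\beta Y$ must vanish at \emph{every} point of the conic.

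It remains to pass from ``$\ell$ vanishes on the whole conic'' to ``$\ell\equiv 0$'', and this is the step I expect to be the main obstacle. If $(\alpha,\beta)\ne(0,0)$ the conic would have to lie inside the single line $\{\ell=0\}$. Restricting $g$ to this line by $t\mapsto tw$ with $w=(-\beta,\alpha)$ gives $g(tw)=t^{2}q(w)+f(h,k)$, whose zero set on the line is at most two points, or empty, or the whole line. A pair of isolated points never arises as the real zero set of a single quadratic (in agreement with the list of admissible loci given in the introduction), while the ``single point'' and ``whole line'' possibilities are incompatible with $g$ carrying the nonzero linear part $\ell$ --- for instance ``whole line'' would force $(\alpha X+\beta Y)\mid g$ and hence $g=\rho\,\ell^{2}$, which has no linear term. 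The one genuinely delicate case is the empty conic: there the reflection condition holds vacuously, so every point is trivially a ``center of symmetry'', and I would therefore state the proposition for a nonempty (genuine) conic, matching the implicit hypothesis of the companion result. Granting that, $(\alpha,\beta)=(0,0)$, which is precisely the center system [\ref{16}], so $(h,k)$ is a center.
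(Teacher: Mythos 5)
Your core computation is exactly the paper's: refer the conic to axes with origin at $(h,k)$, take a point $P(X,Y)$ of the curve together with its reflection $(-X,-Y)$, and subtract the two resulting equations (the paper's [\ref{28}] and [\ref{29}]); the even parts cancel and what survives is $2(\alpha X+\beta Y)=0$ at every point of the locus, i.e.\ your even/odd split. Where you genuinely part ways is at the final step. The paper simply asserts that a vector perpendicular to $\vec{O'P}$ for every $P$ of the conic must be $(0,0)$, and this assertion, as stated, has a gap that you correctly single out as ``the main obstacle'': it fails vacuously when the locus is empty (for $x^{2}+y^{2}+1=0$ every point of the plane is a center of symmetry, yet only the origin solves the center system [\ref{16}]), and it is not immediate when the locus is the single point $O'$ or sits inside one line through $O'$. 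Your completion --- trap the locus in $\{\ell=0\}$, restrict via $g(tw)=t^{2}q(w)+f(h,k)$, and kill each of the four possible zero sets on that line --- is sound, and your addition of a nonemptiness hypothesis is a genuine correction to the statement, not pedantry. Two refinements worth noting. First, your exclusion of the two-point case appeals to the classification of conic loci, which in this paper is only established by the later reduction theory; the forward reference can be avoided by the same line-restriction trick you already use: writing $p=t_{0}w$ and taking any direction $v$ with $\ell(v)\neq 0$ and $q(v)\neq 0$, the quadratic $s\mapsto g(p+sv)$ has constant term zero, so its second root would yield a point of the locus off the line unless $2B(p,v)+\ell(v)=0$ (with $B$ the polar form of $q$); the line through $-p$ gives $-2B(p,v)+\ell(v)=0$, forcing $\ell(v)=0$, and if instead $q(v)=0$ for every $v$ off the line then $q\equiv 0$, contradicting the standing assumption that $A,B,C$ are not all zero. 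Second, in the whole-line case the divisibility $\ell\mid g$ gives only $g=\ell m$ with $m$ affine; it is the containment of the zero set in $\{\ell=0\}$ (together with $q\not\equiv 0$) that forces $m$ to be proportional to $\ell$, and only then does your contradiction $g=\rho\,\ell^{2}$, which has no linear term, apply. With those two points made explicit, your proof is strictly more complete than the one in the paper.
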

\begin{proof}
Supongamos que $(h,k)$ es centro de simetría de la curva.\\
La ecuación de la cónica referida a los eje $XY\parallel$s a $xy$ y con origen en $O'(h,k)$ es:
$$\left(\begin{array}{cc}X& Y\end{array}\right)\left(\begin{array}{cc}A&B\\B&C\end{array}\right)+\left(\begin{array}{cc}\left.\dfrac{\partial f}{\partial x}\right)_{h,k}&\left.\dfrac{\partial f}{\partial x}\right)_{h,k}\end{array}\right)\dbinom{X}{Y}+f(h,k)=0$$
\begin{figure}[ht!]
\begin{center}
  \includegraphics[scale=0.4]{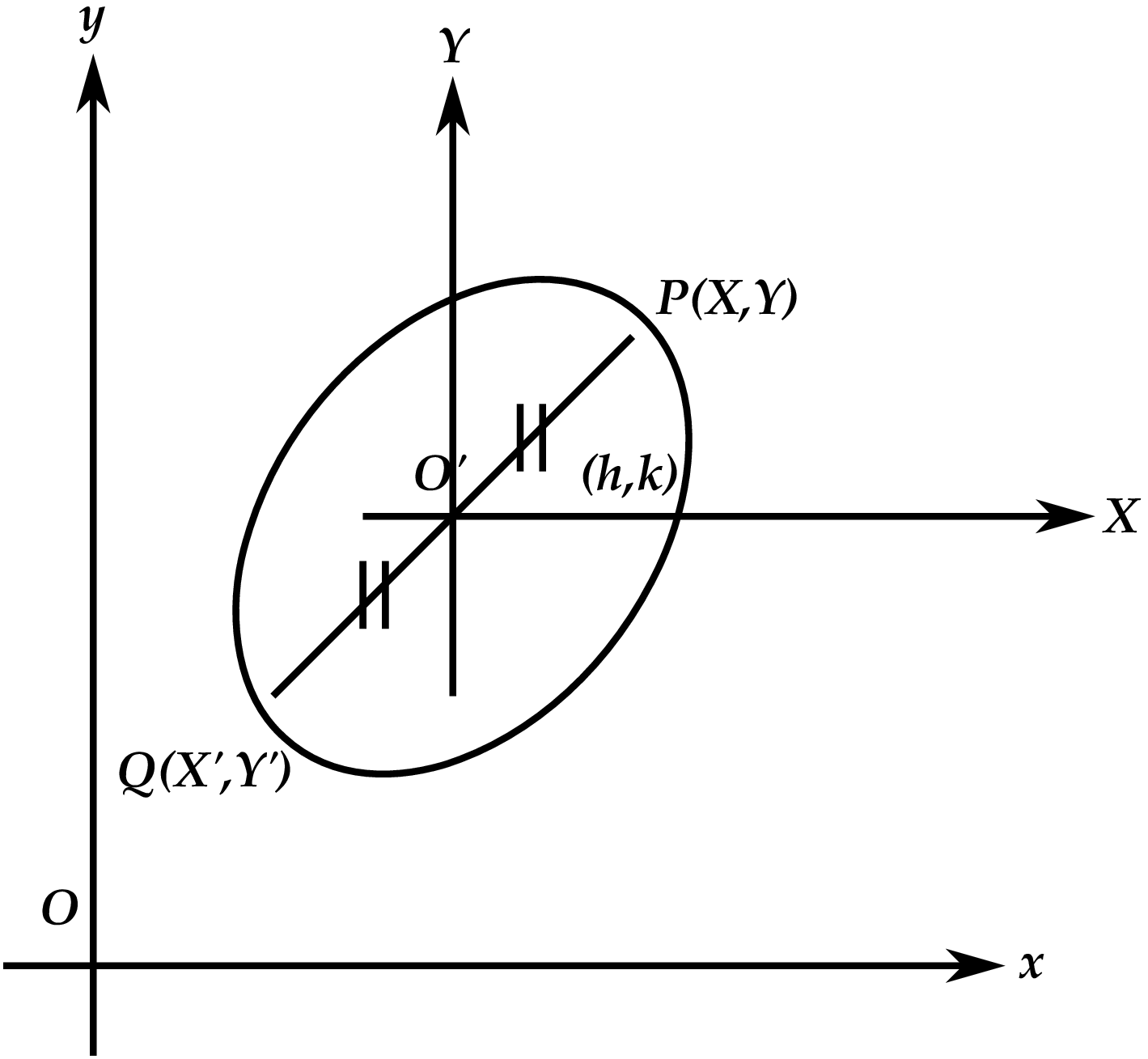}\\
\end{center}
\end{figure}
\newline
Sea $P(X,Y)$ un punto de la cónica y $Q(X',Y')$ el simétrico de $P\diagup O'$, o sea que $X'=-X,\,\,Y'=Y$.\\
Como $P(X,Y)$ está en la curva,
\begin{equation}\label{28}
\left(\begin{array}{cc}X& Y\end{array}\right)\left(\begin{array}{cc}A&B\\B&C\end{array}\right)+\left(\begin{array}{cc}\left.\dfrac{\partial f}{\partial x}\right)_{h,k}&\left.\dfrac{\partial f}{\partial x}\right)_{h,k}\end{array}\right)\dbinom{X}{Y}+f(h,k)=0
\end{equation}
Como $Q(-X,-Y)$ también está en la curva,
\begin{equation*}
\left(\begin{array}{cc}-X& -Y\end{array}\right)\left(\begin{array}{cc}A&B\\B&C\end{array}\right)+\left(\begin{array}{cc}\left.\dfrac{\partial f}{\partial x}\right)_{h,k}&\left.\dfrac{\partial f}{\partial x}\right)_{h,k}\end{array}\right)\dbinom{-X}{-Y}+f(h,k)=0
\end{equation*}
i.e.,
\begin{equation}\label{29}
\left(\begin{array}{cc}-X& -Y\end{array}\right)\left(\begin{array}{cc}A&B\\B&C\end{array}\right)\dbinom{X}{Y}-\left(\begin{array}{cc}\left.\dfrac{\partial f}{\partial x}\right)_{h,k}&\left.\dfrac{\partial f}{\partial x}\right)_{h,k}\end{array}\right)\dbinom{X}{Y}+f(h,k)=0
\end{equation}
De [\ref{28}] y [\ref{29}]: $$2\left(\left.\dfrac{\partial f}{\partial x}\right|_{h,k},\left.\dfrac{\partial f}{\partial y}\right|_{h,k}\right)\dbinom{X}{Y}=0,$$ cualquiera sea el punto $(X,Y)$ en la cónica.\\
Luego el vector $$\left(\left.\dfrac{\partial f}{\partial x}\right|_{h,k},\left.\dfrac{\partial f}{\partial y}\right|_{h,k}\right)$$ es $\perp$ a $\vec{O'P}$ cualquiera sea $P$ en la cónica y esto es posible si $$\left(\left.\dfrac{\partial f}{\partial x}\right|_{h,k},\left.\dfrac{\partial f}{\partial y}\right|_{h,k}\right)=(0,0),$$ i.e.,
$\begin{cases}
\left.\dfrac{\partial f}{\partial x}\right|_{h,k}=0\\
\left.\dfrac{\partial f}{\partial y}\right|_{h,k}=0
\end{cases}$\\
lo que significa que $(h,k)$ es un centro de la cónica.
\end{proof}
\textbf{Ejemplos.}\\
\begin{enumerate}
\item Si la cónica consta de dos rectas $\parallel$s, todo punto de la $\parallel$ media $m$ de ellas es centro de simetría de la cónica.\\
Luego la cónica tiene $\infty$s centros:
\begin{figure}[ht!]
\begin{center}
  \includegraphics[scale=0.5]{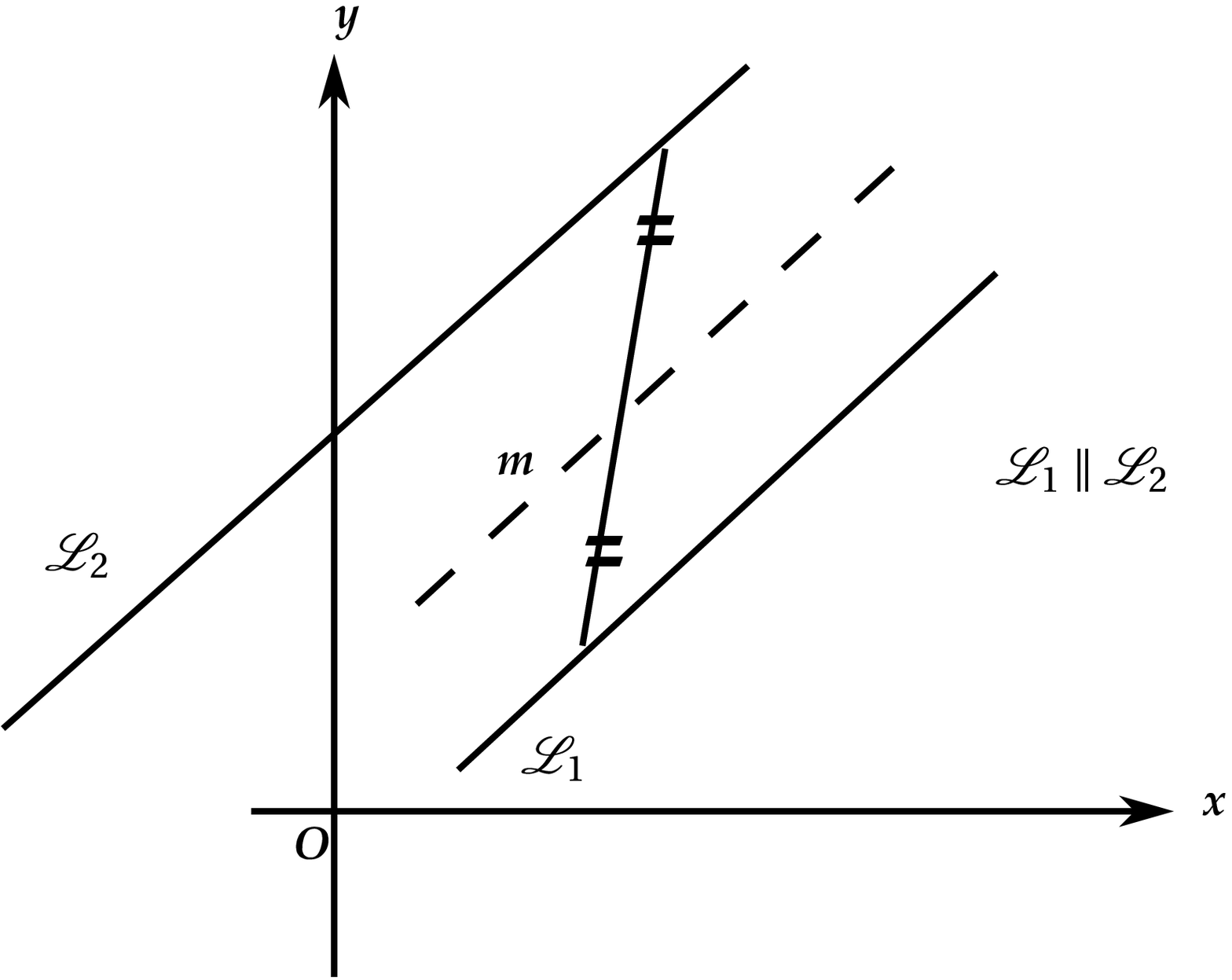}\\
\end{center}
\end{figure}
\item Si la cónica es una recta (una recta doble se dice a veces), la cónica tiene $\infty$s centros ya que todo punto de la recta es centro de simetría de ella.\\
\item Si la cónica no tiene centro de simetría, no tiene centro.\\
Por ejemplo, la parábola no tiene centro de simetría y por lo tanto, la parábola no tiene centro, pero sí tiene vértice.\\
Como se comprende, la noción de centro permite clasificar las cónicas en tres categorías\\
$\begin{cases}
\text{Cónicas con centro único}.\\
\text{Cónicas sin centro}.\\
\text{Cónicas con dos centros}.
\end{cases}$
\end{enumerate}
\section{Cónicas con centro único}
En este caso $$\delta=\left|\begin{array}{cc}A&B\\B&C\end{array}\right|=AC-B^2\neq 0,$$ $\left\{\dbinom{A}{B},\dbinom{B}{C}\right\}$ es Base de y forma un paralelogramo de área $\left|\begin{array}{cc}A&B\\B&C\end{array}\right|=AC-B^2.$ El sistema
\begin{align*}
Ah+Bk&=-D\\
Bh+Ck&=-E
\end{align*}
tiene solución única $\dbinom{\widehat{h}}{\widehat{k}}$ dada por
\begin{align*}
\dbinom{\widetilde{h}}{\widetilde{k}}&={\left(\begin{array}{cc}A&B\\B&C\end{array}\right)}^{-1}\dbinom{-D}{-E}=\dfrac{1}{AC-B^2}\left(\begin{array}{cc}C&-B\\-B&A\end{array}\right)\dbinom{-D}{-E}\\
&=\dfrac{a}{\delta}\left(\begin{array}{cc}-CD+BE\\BD-AE\end{array}\right).
\end{align*}
O sea que\\
\begin{equation}\label{30}
\left.
\begin{split}
\widetilde{h}&=\dfrac{-CD+BE}{AC-B^2}\\
\widetilde{k}&=\dfrac{BD-AE}{AC-B^2}
\end{split}
\right\}
\end{equation}
La cónica tiene centro único $(\widetilde{h},\widetilde{k})$ y éste punto es el centro de simetría de la curva. No estamos afirmando que el centro $(\widetilde{h},\widetilde{k})$ sea un punto de la cónica.\\
Como ya hemos dicho, si trasladamos los ejes $xy$ al punto $O'$ de coordenadas $(\widetilde{h},\widetilde{k})$ dadas por [\ref{30}], obtenemos otro sistema ortogonal de coordenadas $XY$, Fig.1.6, con $X\parallel x$, $Y\parallel y$ y respecto al cual la ecuación de la cónica es
\begin{equation}\label{31}
\left(\begin{array}{cc}X&Y\end{array}\right)\left(\begin{array}{cc}A&B\\C&D\end{array}\right)\dbinom{X}{Y}+f(\widetilde{h},\widetilde{k})=0
\end{equation}
\begin{figure}[ht!]
\begin{center}
  \includegraphics[scale=0.5]{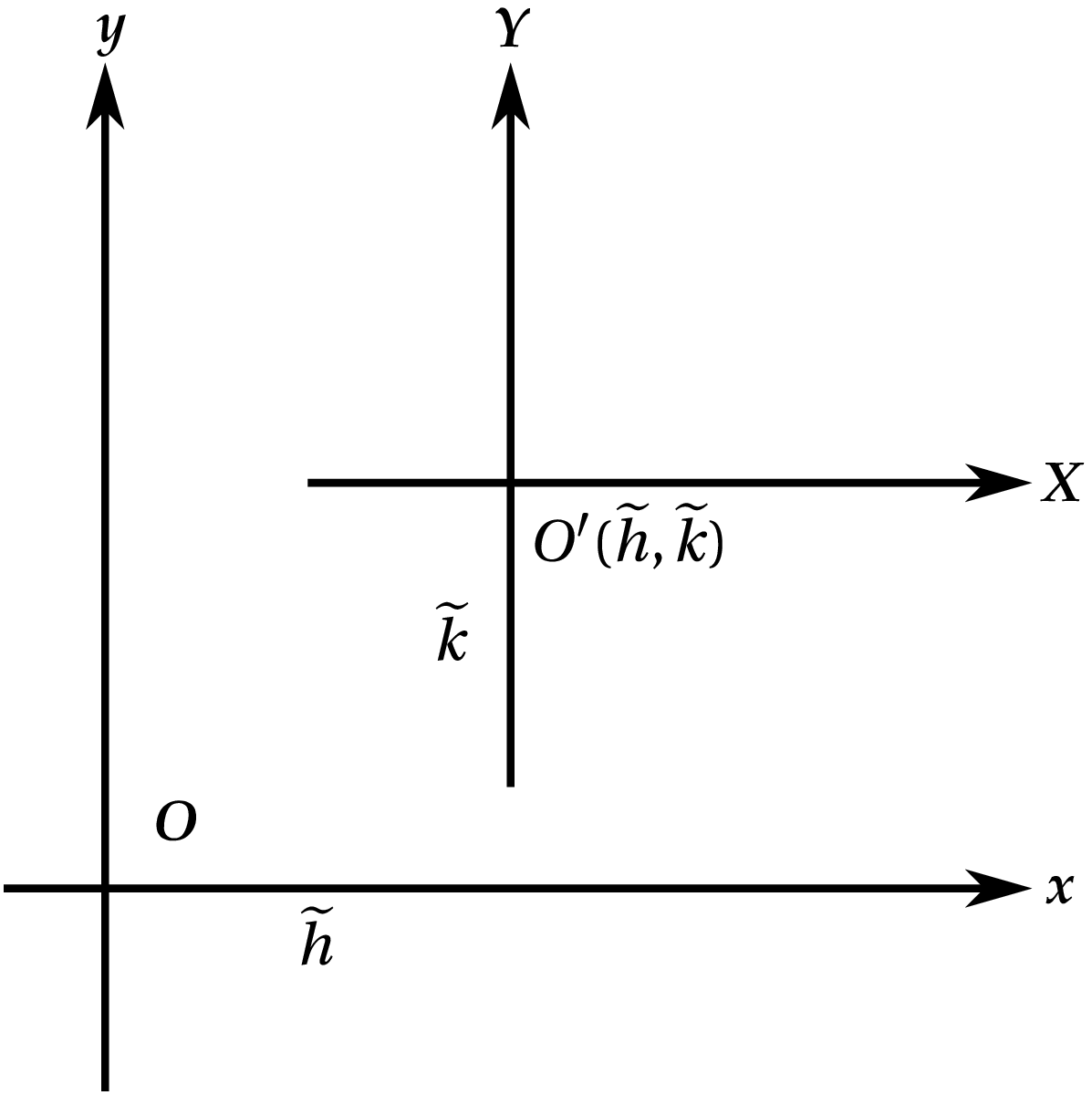}\\
\end{center}
\end{figure}
\newline
La ecuación [\ref{31}] no tiene términos en $X$ ni en $Y$. O sea, hemos conseguido eliminar la parte lineal de la ecuación [\ref{1}].\\
Hallados $(\widetilde{h},\widetilde{k})$, para tener definida [\ref{31}] debemos calcular
\begin{equation}\label{32}
f(\widetilde{h},\widetilde{k})=A\widetilde{h}^2+2B\widetilde{h}\widetilde{k}+C\widetilde{k}^2+2D\widetilde{h}+2E\widetilde{k}+F
\end{equation}
Sin embargo podemos obtener $f(\widetilde{h},\widetilde{k})$ es una manera más simple. Si tenemos en cuenta las ecuaciones [\ref{14}],
\begin{equation}\label{33}
f(\widetilde{h},\widetilde{k})=\left(A\widetilde{h}+B\widetilde{k}\right)\widetilde{h}+\left(C\widetilde{k}+B\widetilde{h}\right)\widetilde{k}+2D\widetilde{h}+2E\widetilde{k}+F
\end{equation}
Pero $$\left(\begin{array}{cc}A&B\\B&C\end{array}\right)\dbinom{\widetilde{h}}{\widetilde{k}}=\dbinom{-D}{-E}$$
Luego
\begin{equation}\label{34}
\left.
\begin{split}
A\widetilde{h}+B\widetilde{K}&=-D\\
B\widetilde{h}+C\widetilde{k}&=-E
\end{split}
\right\}\hspace{0.5cm}\text{que llevamos a [\ref{33}] obteniendo}
\end{equation}
\begin{align}\label{35}
f(\widetilde{h},\widetilde{k})&=-D\widetilde{h}-E\widetilde{k}+2D\widetilde{h}+2E\widetilde{k}+F\notag\\
&=D\widetilde{h}+E\widetilde{k}+F
\end{align}
Así que una vez hallados $\widetilde{h}$ y $\widetilde{k}$ obtenemos $f(\widetilde{h},\widetilde{h})$
a través de [\ref{35}] y no a través de [\ref{33}] que resulta más tedioso.\\ Existe otra forma de obtener $f(\widetilde{h},\widetilde{k})$ más conveniente para nuestros propósitos y sin pasar por la obtención de $(\widetilde{h},\widetilde{k})$ a través de [\ref{32}] ó de [\ref{35}]. No olvidemos que estamos analizando el caso de una cónica con centro único $(\delta\neq 0)$.\\
Según [\ref{34}]:
\begin{align*}
A\widetilde{h}+B\widetilde{k}+D&=0\\
B\widetilde{h}+C\widetilde{k}+E&=0
\end{align*}
Segun [\ref{35}]: $$D\widetilde{h}+E\widetilde{k}+F-f(\widetilde{h},\widetilde{k})=0$$
O sea que $$\widetilde{h}\left(\begin{array}{c}A\\B\\D\end{array}\right)+\widetilde{k}\left(\begin{array}{c}B\\C\\E\end{array}\right)+1\left(\begin{array}{c}D\\E\\F-f(\widetilde{h},\widetilde{k})\end{array}\right)=\left(\begin{array}{c}0\\0\\0\end{array}\right)$$
Lo que nos demuestra que el conjunto de vectores de $\mathbb{R}^3:$ $$\left\{\left(\begin{array}{c}A\\B\\D\end{array}\right)\left(\begin{array}{c}B\\C\\E\end{array}\right)\left(\begin{array}{c}D\\E\\F-f(\widetilde{h},\widetilde{k})\end{array}\right)\right\}$$ es L.D. y por lo tanto, $$\left|\begin{array}{ccc}A&B&D\\B&C&E\\D&E&F-f(\widetilde{h},\widetilde{k})\end{array}\right|=0$$
Este determinante se puede descomponer como la suma de dos determinantes así:
\begin{figure}[ht!]
\begin{center}
  \includegraphics[scale=0.5]{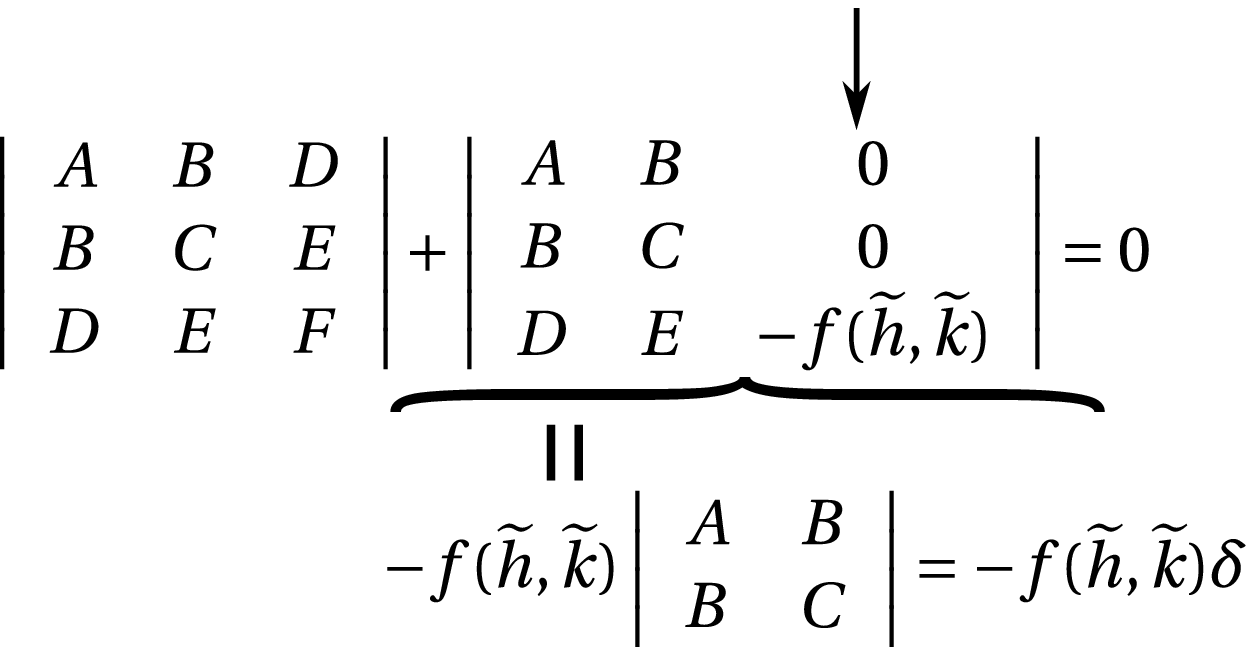}\\
\end{center}
\end{figure}
\newline
Como hemos llamado $$\Delta=\left|\begin{array}{ccc}A&B&D\\B&C&E\\D&E&F\end{array}\right|$$ se tendrá que
\begin{align*}
\Delta-f(\widetilde{h},\widetilde{k})\cdot\delta&=0\\
\therefore\hspace{0.5cm}f(\widetilde{h},\widetilde{k})&\underset{\uparrow}{=}\dfrac{\Delta}{\delta}\\
&(\delta\neq 0)
\end{align*}
De la ecuación [\ref{31}] de la cónica se escribe ahora así:
\begin{align}\label{36}
\left(\begin{array}{cc}X&Y\end{array}\right)\left(\begin{array}{cc}A&B\\C&D\end{array}\right)\dbinom{X}{Y}+\dfrac{\Delta}{\delta}&=0\hspace{0.5cm}\text{ó}\notag\\
AX^2+2BXY+CY^2+\dfrac{\Delta}{\delta}=0
\end{align}
La ecuación [\ref{36}] es la ecuación de la cónica con centro único referida a los ejes $XY$ con origen en el centro una vez realizada la traslación que permite eliminar los términos lineales.\\
Los escalares $\Delta$ y $\delta$ son invariantes $$\delta=\left|\begin{array}{cc}A&B\\B&C\end{array}\right|=AC-B^2\neq 0\hspace{0.5cm}\text{y}\hspace{0.5cm}\Delta=\left|\begin{array}{ccc}A&B&D\\B&C&E\\D&E&F\end{array}\right|$$
Nótes que
\begin{align*}
\Delta&=D\left|\begin{array}{cc}B&C\\D&E\end{array}\right|-E\left|\begin{array}{cc}A&B\\D&E\end{array}\right|+F\left|\begin{array}{cc}A&B\\B&C\end{array}\right|\\
&=D(BE-CD)-E(AE-BD)+F\delta\\
&=D\delta\widetilde{h}+E\delta\widetilde{k}+F\delta\\
&\overset{\nearrow}{[\ref{31}]}\\
&=\delta(D\widetilde{h}+E\widetilde{k}+F)\\
&\therefore\hspace{0.5cm}\dfrac{\Delta}{\delta}=D\widetilde{h}+E\widetilde{k}+F
\end{align*}
Nótese a demás que los coeficientes de la parte cuadrática de [\ref{1}] no se afectaron.
\begin{ejem}
Consideremos la cónica $2Bxy+2Dx+2Ey+F=0$ con $B\neq 0, A=C=0$.\\
$\delta=\left|\begin{array}{cc}0&B\\B&0\end{array}\right|=-B^2\neq 0$\\
\begin{figure}[ht!]
\begin{center}
  \includegraphics[scale=0.4]{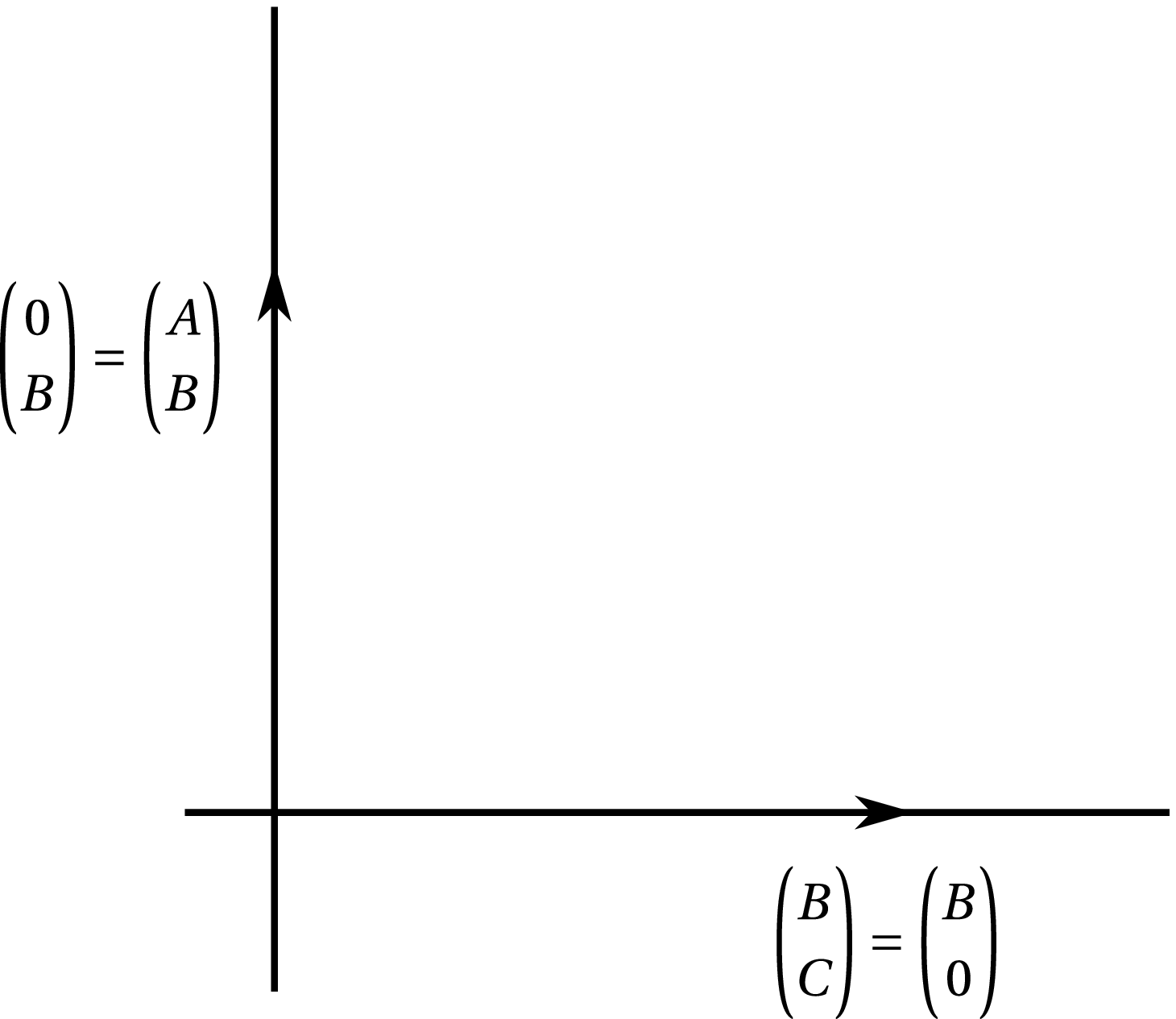}\\
\end{center}
\end{figure}
\newline
La cónica tiene centro único $O'(\widetilde{h},\widetilde{h})$.
$$\dbinom{\widetilde{h}}{\widetilde{k}}=\dfrac{1}{\delta}\left(\begin{array}{cc}-CD+BE\\BD-AE\end{array}\right)=-\dfrac{1}{B^2}\left(\begin{array}{c}BE\\BD\end{array}\right)=-\dfrac{1}{B}\dbinom{E}{D}$$
\begin{align*}
\Delta=\left|\begin{array}{ccc}0&B&D\\B&O&E\\D&E&F\end{array}\right|&=-B(BF-DE)+DBE\\
&=2BDE-B^2F
\end{align*}
Luego $$\dfrac{\Delta}{\delta}=\dfrac{2BDE-B^2F}{-B^2}=F-\dfrac{2DE}{B}.$$
La ecuación de la cónica respecto a los ejes $XY\parallel$s a $x-y$ y con origen en $\left(-\dfrac{E}{B}, -\dfrac{D}{B}\right)$ es en este caso: $$2BXY+F-\dfrac{2DF}{B}.$$
Ahora se consideraran dos casos en la ecuación [\ref{36}].\\
\begin{cas}
$$\begin{cases}
\delta&=AC-B^2\neq 0\\
B&=0
\end{cases}$$\\
Entonces $\delta=AC, A\neq 0$ y $C\neq 0$. $\left\{\dbinom{A}{B}, \dbinom{B}{C}\right\}=\left\{\dbinom{A}{0}, \dbinom{0}{C}\right\}$
\begin{figure}[ht!]
\begin{center}
  \includegraphics[scale=0.5]{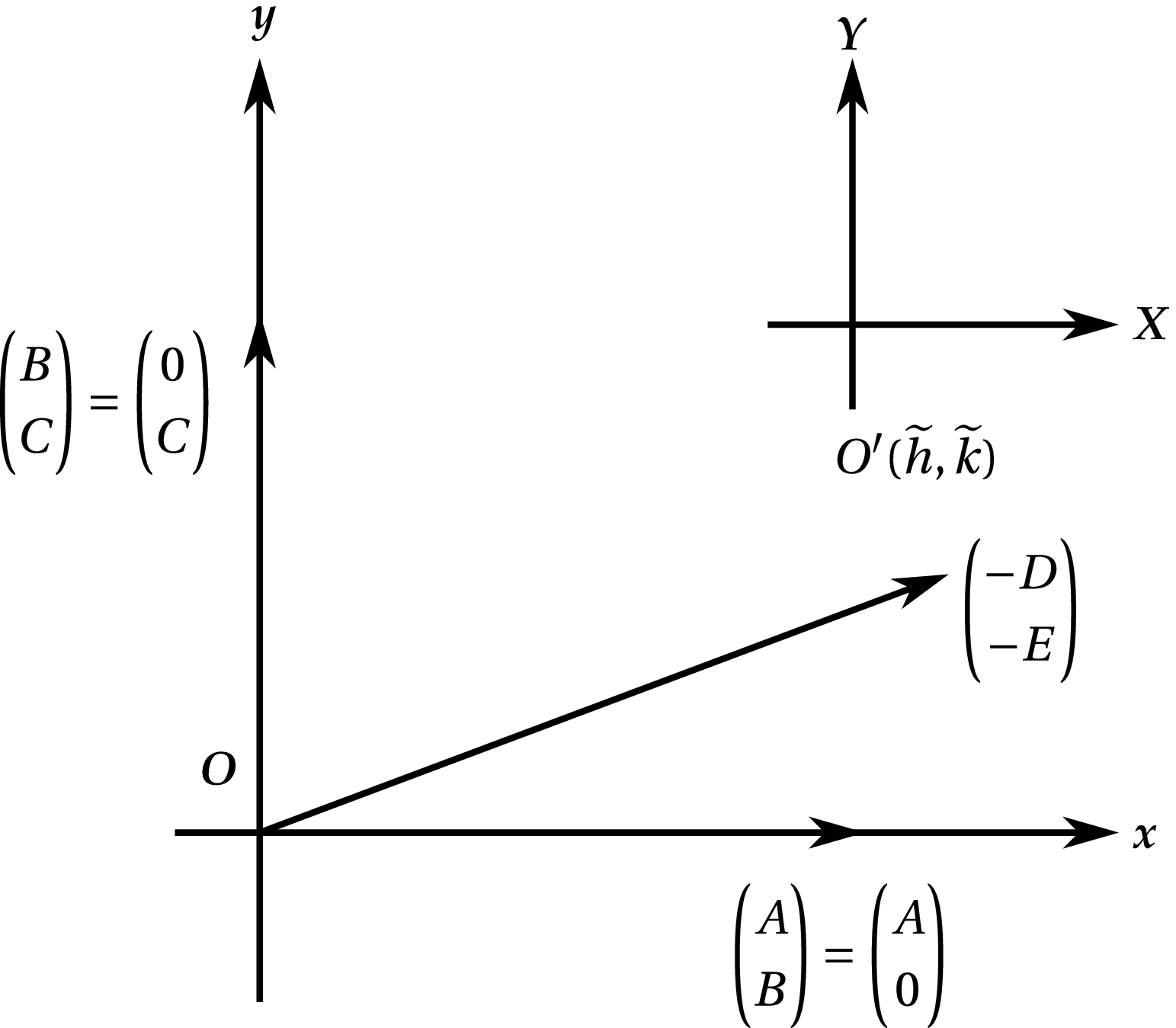}\\
\end{center}
\end{figure}
\newline
\begin{equation}\label{37}
AX^2+CY^2=-\dfrac{\Delta}{\delta}\hspace{0.5cm}\text{Nótese que está ausente el término mixto.}
\end{equation}
$$\delta=AC\neq 0,\,\, A\,\,\text{y}\,\,C\neq 0.$$
Vamos a utilizar los invariantes para definir la naturaleza del lugar representdo por [\ref{37}].
\begin{enumerate}
\item[(I)] Supongamos $\Delta>0$.\\
\item[a)] Si $\delta=AC<0,$ y $A$ y $C$ son de signo contrario y $\left(-\dfrac{\Delta}{\delta}\right)>0.$\\
El \underline{lugar es una hipérbola} de centro $O'$ y de ejes sobre $X$ y $Y$. Como en [\ref{37}] los coeficientes de $X^2$ y $Y^2$ son de signo contrario, la hipérbola puede aparecer así:
\item[i)] $AX^2-CY^2=\dfrac{\Delta}{\delta}$ y con $A$ y $C$ positivos.\\
La hipérbola tiene ramas que se abren según en eje $X$:
\newpage
\begin{figure}[ht!]
\begin{center}
  \includegraphics[scale=0.5]{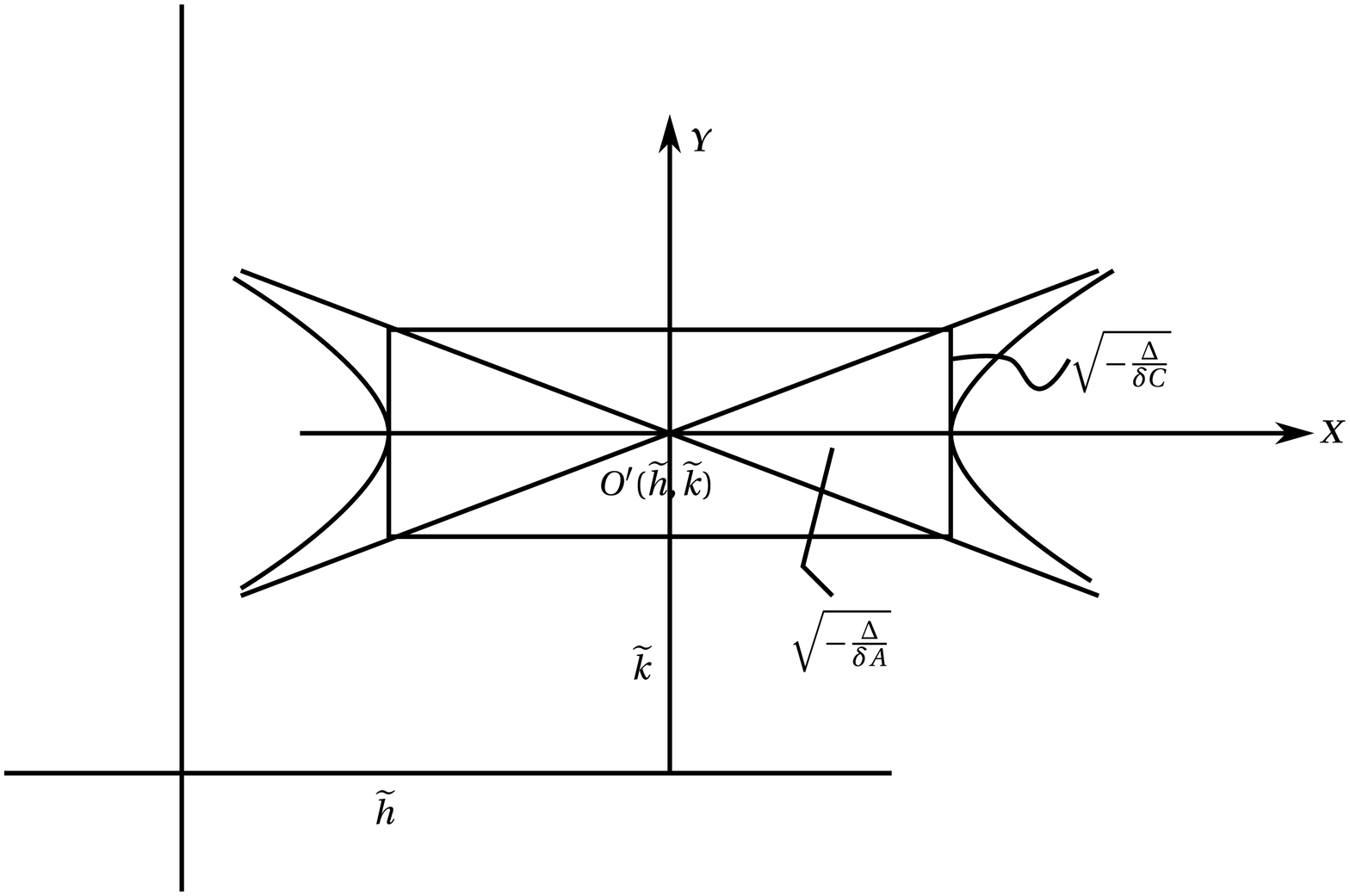}\\
\end{center}
\end{figure}
\item[ii)] $CY^2-AX^2=-\dfrac{\Delta}{\delta}$ con $A$ y $C$ positivos. La hipérbola tiene ramas que se abren según el eje $Y:$\\
\begin{figure}[ht!]
\begin{center}
  \includegraphics[scale=0.4]{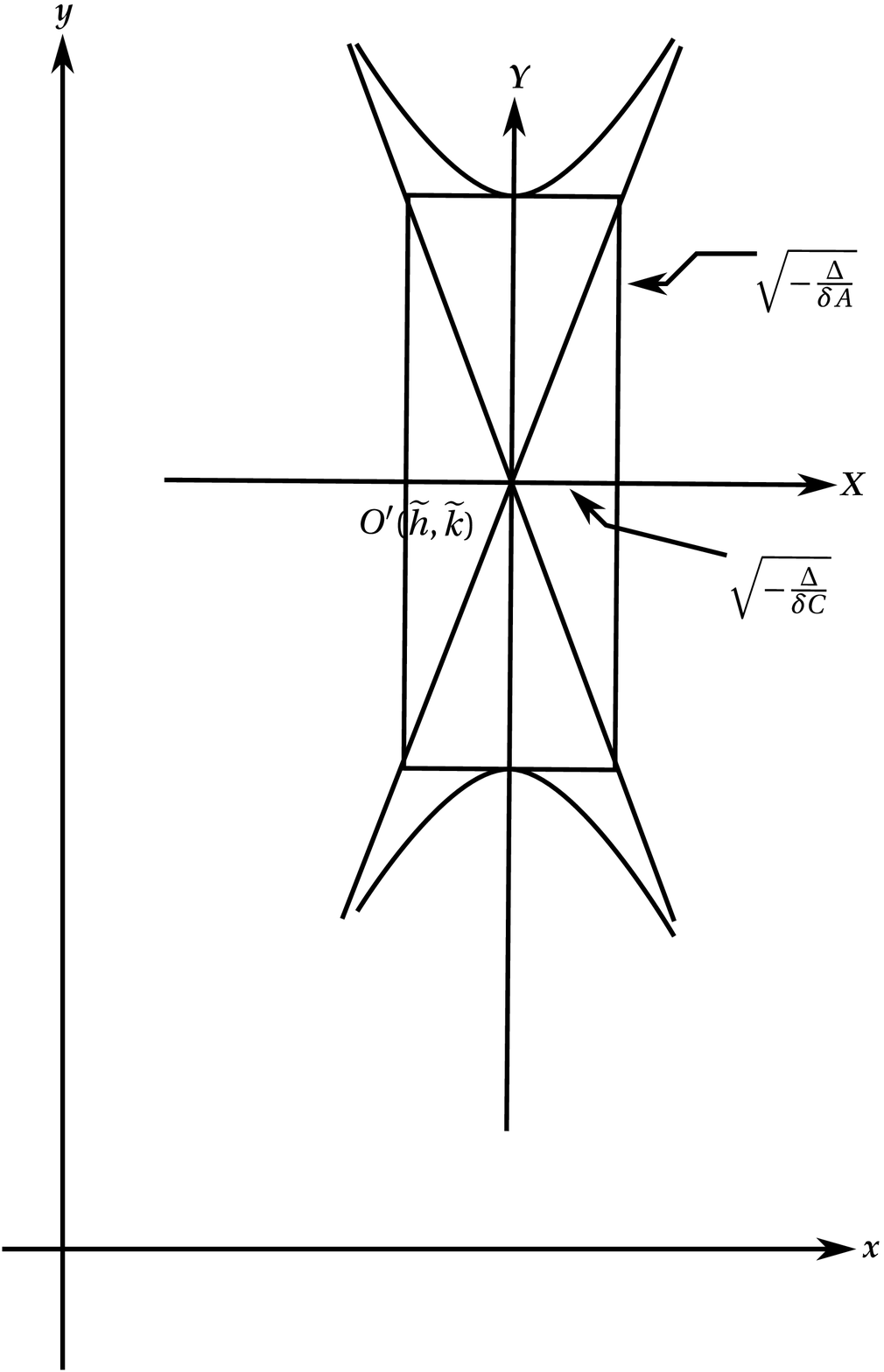}\\
\end{center}
\end{figure}
\newline
\item[b)] Si $\delta=AC>0,$ A y C tienen el mismo signo y
$\left(-\dfrac{\Delta}{\delta}\right)<0;\,\omega=A+C\neq0.$\\
\item[i)] Si $\omega>0$, A y C son ambos positivas. Así que $AX^2+CY^2=-\dfrac{\Delta}{\delta}$ con $A>0,\,C>0$
$-\dfrac{\Delta}{\delta}<0.$ El \underline{lugar es $\infty$}.\\
\item[ii)] Si $\omega<0$, A y C son ambos negativos. Luego $AX^2+CY^2=-\dfrac{\Delta}{\delta}$ con $A<0,\,C<0$ y
$-\dfrac{\Delta}{\delta}<0.$\\
\item[ii-1)] Si $A\neq C$, el \underline{lugar es una elipse} de centro $O'$ y ejes sobre $X$ y $Y$.
\begin{figure}[ht!]
\begin{center}
  \includegraphics[scale=0.5]{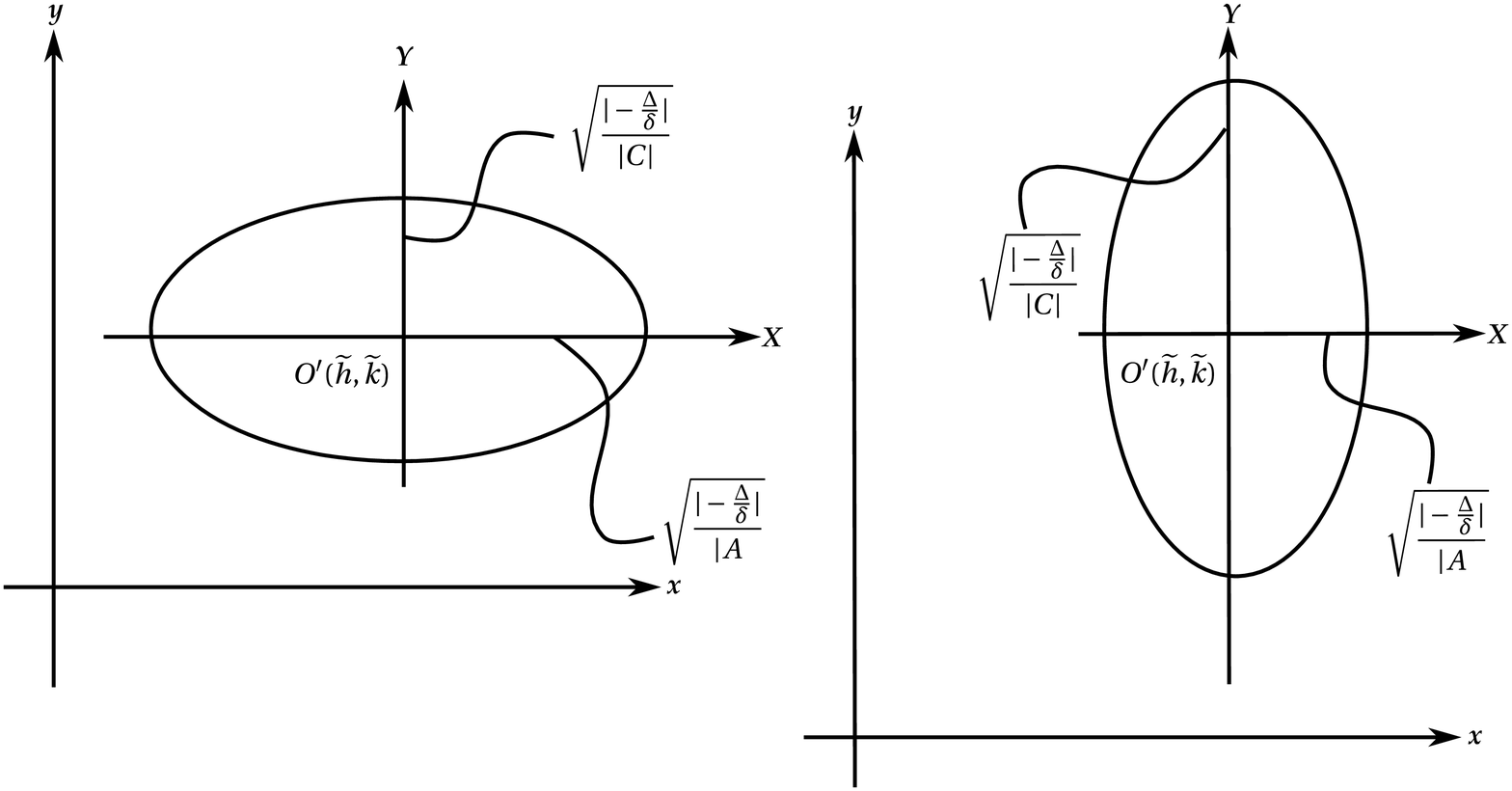}\\
\end{center}
\end{figure}
\newline
\item[ii-2)] Si $A=C$, el \underline{lugar es una circunferencia} de centro en $O'$ y radio
$$\sqrt{\dfrac{|-\frac{\Delta}{\delta}|}{|A|}}$$
\end{enumerate}
\end{cas}
\end{ejem}
En síntesis:
\begin{figure}[ht!]
\begin{center}
  \includegraphics[scale=0.5]{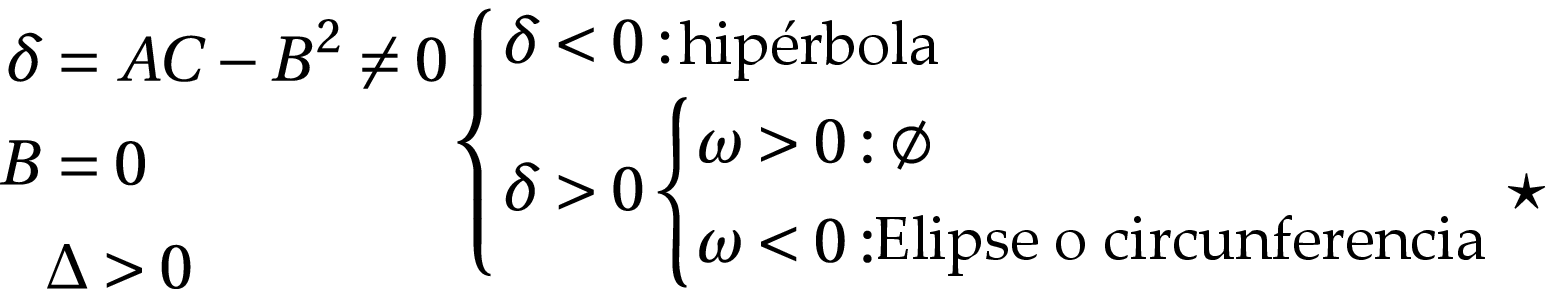}\\
\end{center}
\end{figure}
\newline
\begin{ejem}
Considere la cónica
\begin{equation}\label{38}
2x^2+y^2+4x+2y+10=0
\end{equation}
$A=2; B=0; C=1; \delta=AC-B^2=2; \omega=3.$\\
$$\Delta=\left|\begin{array}{ccc}2&0&2\\0&1&1\\2&1&10\end{array}\right|=14\hspace{1.0cm}\dbinom{A}{B}=\dbinom{2}{0}; \dbinom{B}{C}=\dbinom{0}{1}$$
\begin{figure}[ht!]
\begin{center}
  \includegraphics[scale=0.5]{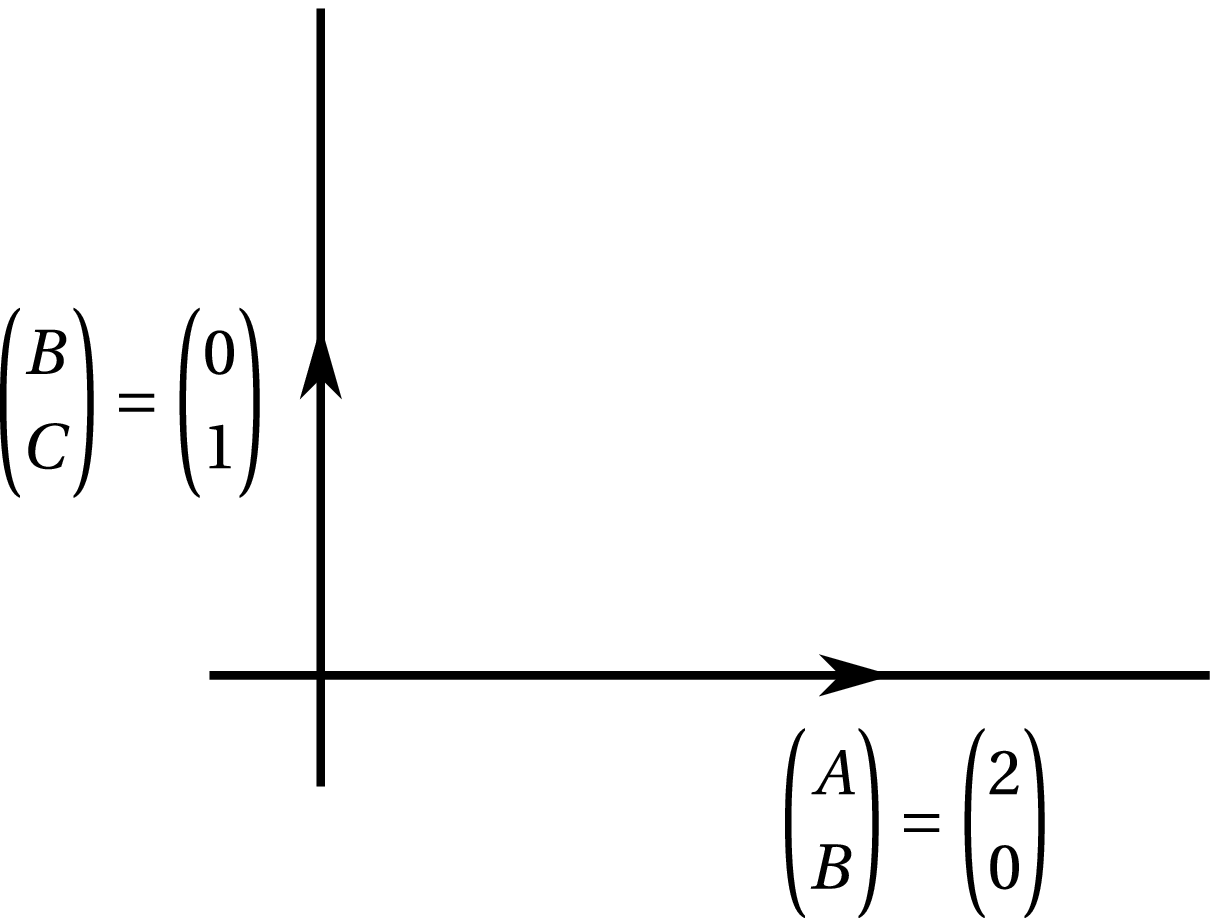}\\
\end{center}
\end{figure}
\newline
Así que $\delta\neq 0, B=0, \Delta>0, \delta>0$ y $\omega>0$.\\
En virtud de $\star$, \underline{el lugar es $\emptyset$}. O sea que no existe un punto del plano $xy$ que satisfaga [\ref{38}].\\
Vamos a comprobarlo.\\
Completando trinomios cuadrados perfectos en [\ref{38}], $$\left(2x^2+4x+2\right)+\left(y^2+2y+1\right)+10-2-1=0.$$
O sea que la ecuación de la cónica puede escribirse así: $${\left(\sqrt{2}x+\dfrac{2}{\sqrt{2}}\right)}^2+{\left(y+1\right)}^2=-7$$ lo que nos prueba que efectivamente el lugar es $\emptyset$.
\end{ejem}
\begin{enumerate}
\item[II)] Si $\Delta=0,$ [\ref{37}] se escbribe así: $AX^2+CY^2=0$.\\
Como $\delta=AC\neq 0,$ puede ocurrir:\\
\item[a)] $\delta>0$. En este caso A y C tienen el mismo signo. El \underline{lugar es el punto $O'$}.\\
\item[b)] $\delta<0$. Entonces A y C tienen signos contrarios. El lugar consta de dos rectas concurrentes en $O'$\\
Resumiendo
\begin{figure}[ht!]
\begin{center}
  \includegraphics[scale=0.5]{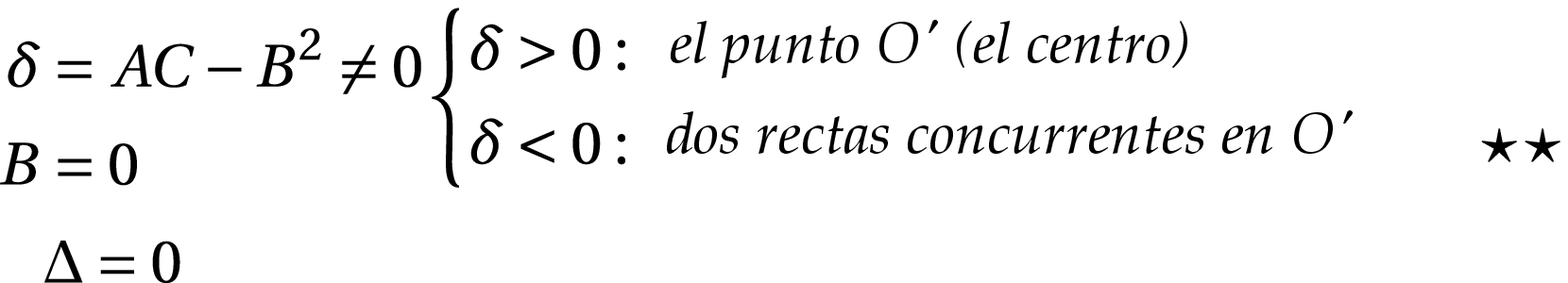}\\
\end{center}
\end{figure}
\newline
\begin{ejem}
Consideremos la cónica $x^2+y^2-4x+2y+5=0$ que podemos escribir ${\left(x-2\right)}^2+{\left(y+1\right)}=0.$\\
El lugar es el punto $(2,-1).$\\
$B=0, \delta=AC-B^2=AC=1\neq 0.$\\
$\Delta=\left|\begin{array}{ccc}1&0&-2\\0&1&1\\-2&1&5\end{array}\right|=0$\\
Como $\delta\neq 0, B=0, \Delta=0$ y $\delta>0$, la cónica, en virtud de $\star\star$ es un punto como ya sabíamos.
\end{ejem}
\begin{ejem}
Sea $f(x,y)=(3x-y+2)(3x+y-5)=p(x,y)\cdot q(x,y)=0.$\\
$f(x,y)=0\Leftrightarrow p(x,y)=0$ ó $q(x,y)=0$.\\
Las rectas $p(x,y)=3x-y+2=0$ y $q(x,y)=3x+y-5=0$ se cortan en el punto $(h,k)$ que es la solución del sistema
\begin{align*}
3x-y+2&=0\\
3x+y-5&=0
\end{align*}
Las dos rectas se corten en $\left(1/2, 7/2\right).$\\
El lugar, o sea $\left\{(x,y)\diagup f(x,y)=0\right\}$ consta de dos rectas concurrentes en $\left(1/2, 7/2\right).$\\
Vamos a verificarlo.\\
La cónica es $$(3x-y+2)(3x+y-5)=0.$$
O sea $$9x^2-y^2-9x+7y-10=0$$
Hallemos su centro. El sistema
\begin{align*}
Ax+By&=-D\\
Bx+Cy&=-E
\end{align*}
es\\
$\begin{cases}
9x&=\dfrac{9}{2}\\
-y&=-\dfrac{7}{2}\hspace{0.5cm}\therefore\hspace{0.5cm}x=1/2, \, y=7/2
\end{cases}$\\
Como $B=0, \delta=AC-B^2=-9<0$ y $$\Delta=\left|\begin{array}{ccc}9&0&-9/2\\0&-1&7/2\\-9/2&7/2&-10\end{array}\right|,$$
se tiene, en virtud de $\star\star$ que el lugar son dos rectas concurrentes en $\left(1/2, 7/2\right).$
\end{ejem}
\item[III)] Supongamos $\Delta<0.$\\
Como $\delta=AC\neq 0$, puede ocurrir:
\item[a)] $\delta<0.$ Entonces A y C tienen signos contrarios y $\left(-\dfrac{\Delta}{\delta}\right)<0.$\\
El \underline{lugar es una hipérbola}. La ecuación [\ref{37}] puede parecerce a\\
\item[i)] $AX^2-CY^2=-\dfrac{\Delta}{\delta}$ con $A>0, C>0, -\dfrac{\Delta}{\delta}<0.$\\
O sea que $CY^2-AX^2=\dfrac{\Delta}{\delta}$ con $\dfrac{\Delta}{\delta}>0$ y la cónica es una hipérbola con centro en $O'$ cuyas ramas se abren según el eje Y:
\newpage
\begin{figure}[ht!]
\begin{center}
  \includegraphics[scale=0.3]{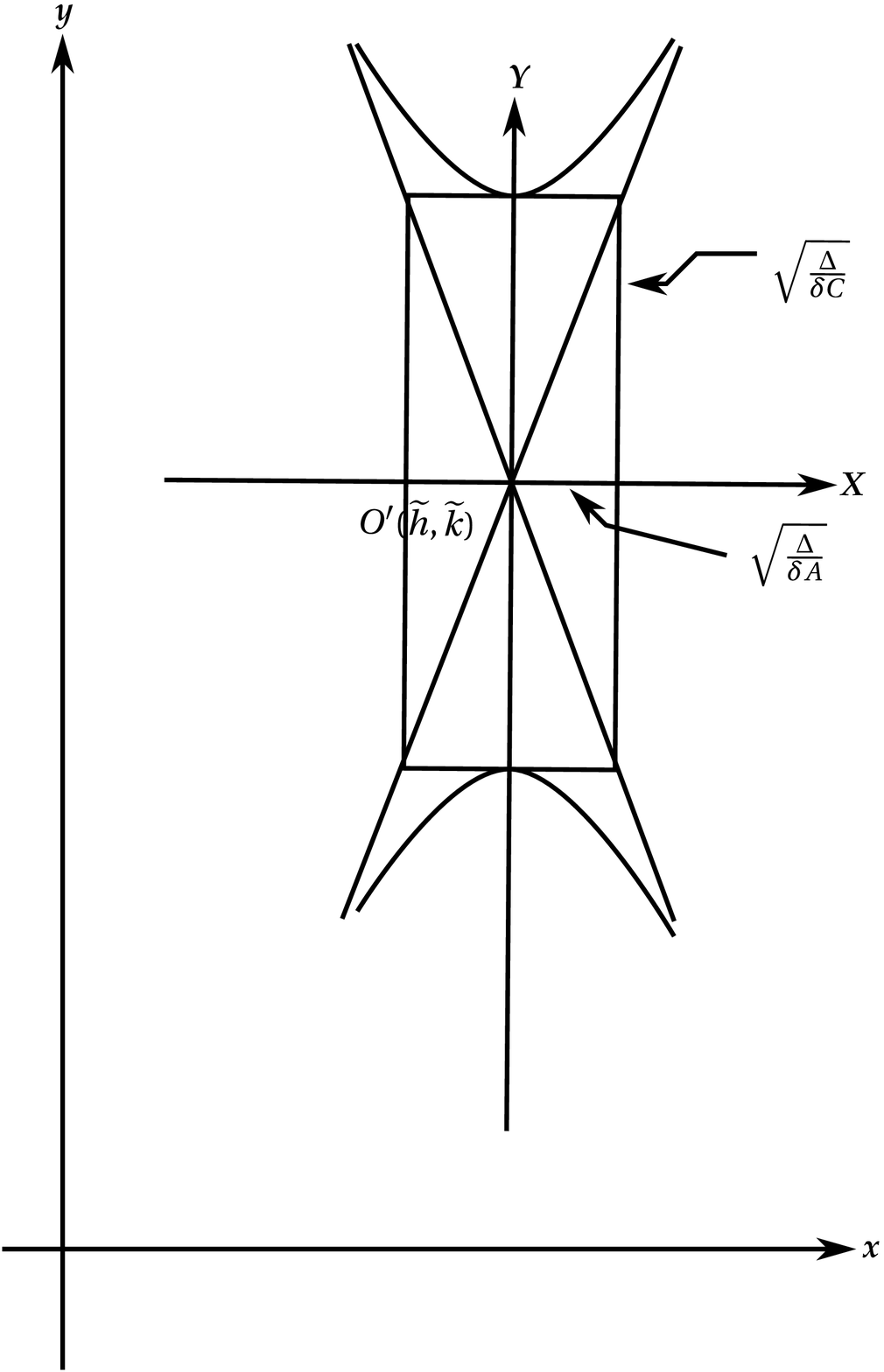}\\
\end{center}
\end{figure}
\item[ii)] $CY^2-AX^2=-\dfrac{\Delta}{\delta}$ con $A>0, C>0, -\dfrac{\Delta}{\delta}<0.$\\
Luego $AX^2-CY^2=\dfrac{\Delta}{\delta}$ con $\dfrac{\Delta}{\delta}>0$ y la cónica es una hipérbola con centro en $O'$ y cuyas ramas se abren según el eje $X:$
\begin{figure}[ht!]
\begin{center}
  \includegraphics[scale=0.5]{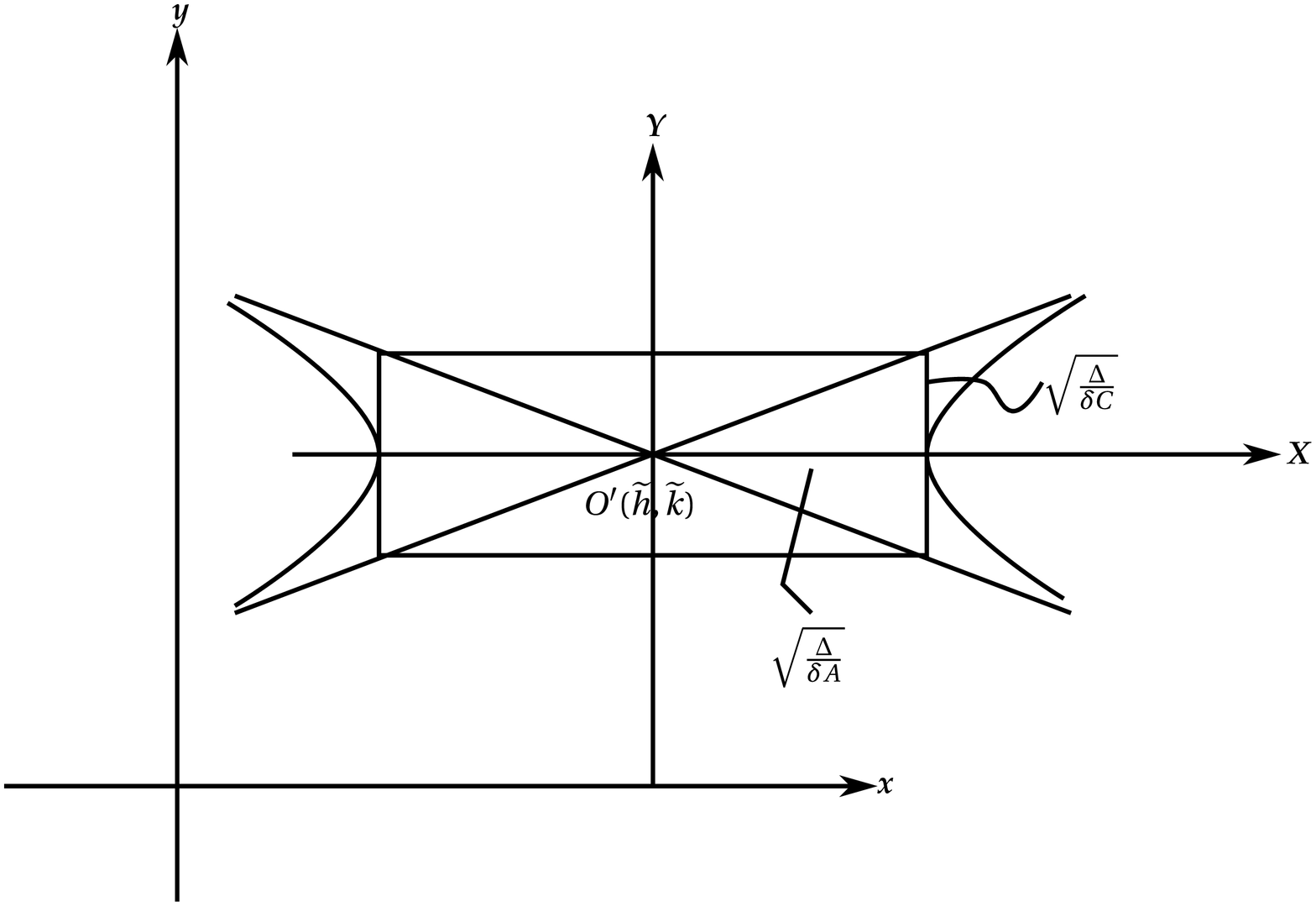}\\
\end{center}
\end{figure}
\newline
\item[b)] $\delta>0.$ En este caso A y C tienen el mismo signo y $-\dfrac{\Delta}{\delta}>0;\,\,\omega=A+C\neq 0.$
\item[i)] Si $\omega=A+C>0,$ A y C son ambos positivos y la ecuación [\ref{37}]\\ \underline{represeta una elipse ó una circunferencia}.\\
\item[ii)] Si $\omega=A+C<0$, A y C son ambos negativos. \underline{El lugar es $\emptyset$}.\\
En resumen
\begin{figure}[ht!]
\begin{center}
  \includegraphics[scale=0.5]{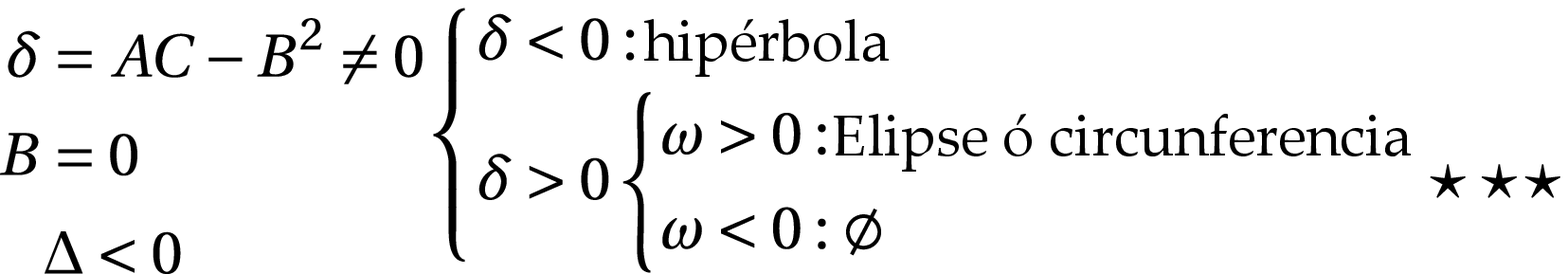}\\
\end{center}
\end{figure}
\newline
El Caso 1. puede presentarse en la siguiente tabla:
\begin{figure}[ht!]
\begin{center}
  \includegraphics[scale=0.6]{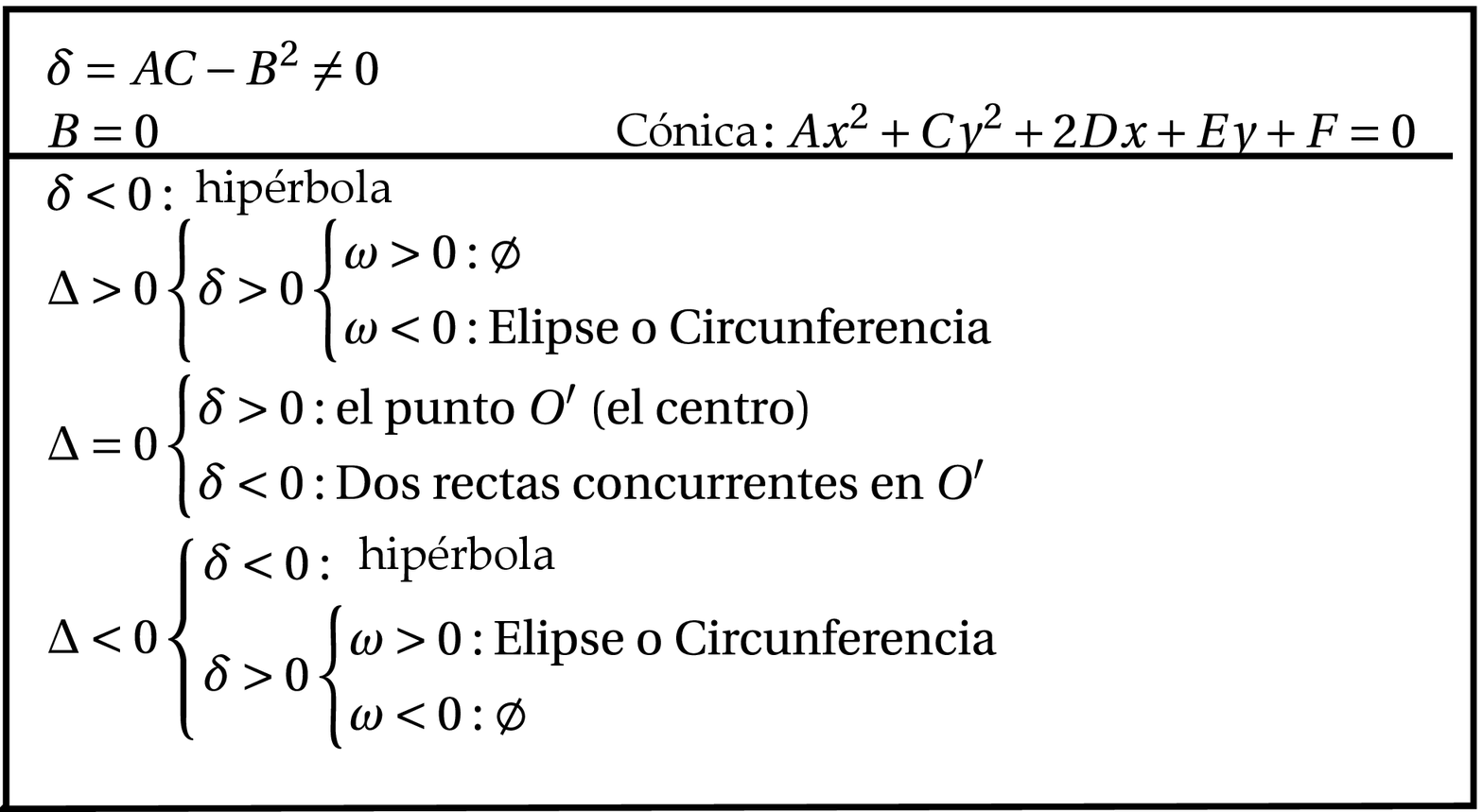}\\
\end{center}
\end{figure}
\begin{cas}
$\begin{cases}
\delta=AC-B^2\neq 0\\
B\neq 0
\end{cases}$\\
Una vez hecha la traslación que elimine los términos lineales en [\ref{1}], la ecuación de la cónica$\diagup XY$ con origen en $O'(\widetilde{h}, \widetilde{k})$ es
\begin{equation}\label{39}
AX^2+BXY+CY^2+\dfrac{\Delta}{\delta}=0\hspace{1.0cm}\text{Nótese que está presente el término mixto.}
\end{equation}
(Recuerdese que cuando se hace una traslación de ejes los coeficientes de la componente cuadratica de [\ref{1}] no se transforman).\\
Procedemos ahora a eliminar el término mixto en [\ref{39}].\\
Sea $Q(X,Y)$ un punto de la cónica y $O'(\widetilde{h}, \widetilde{k})$ su centro:
\newpage
\begin{figure}[ht!]
\begin{center}
  \includegraphics[scale=0.5]{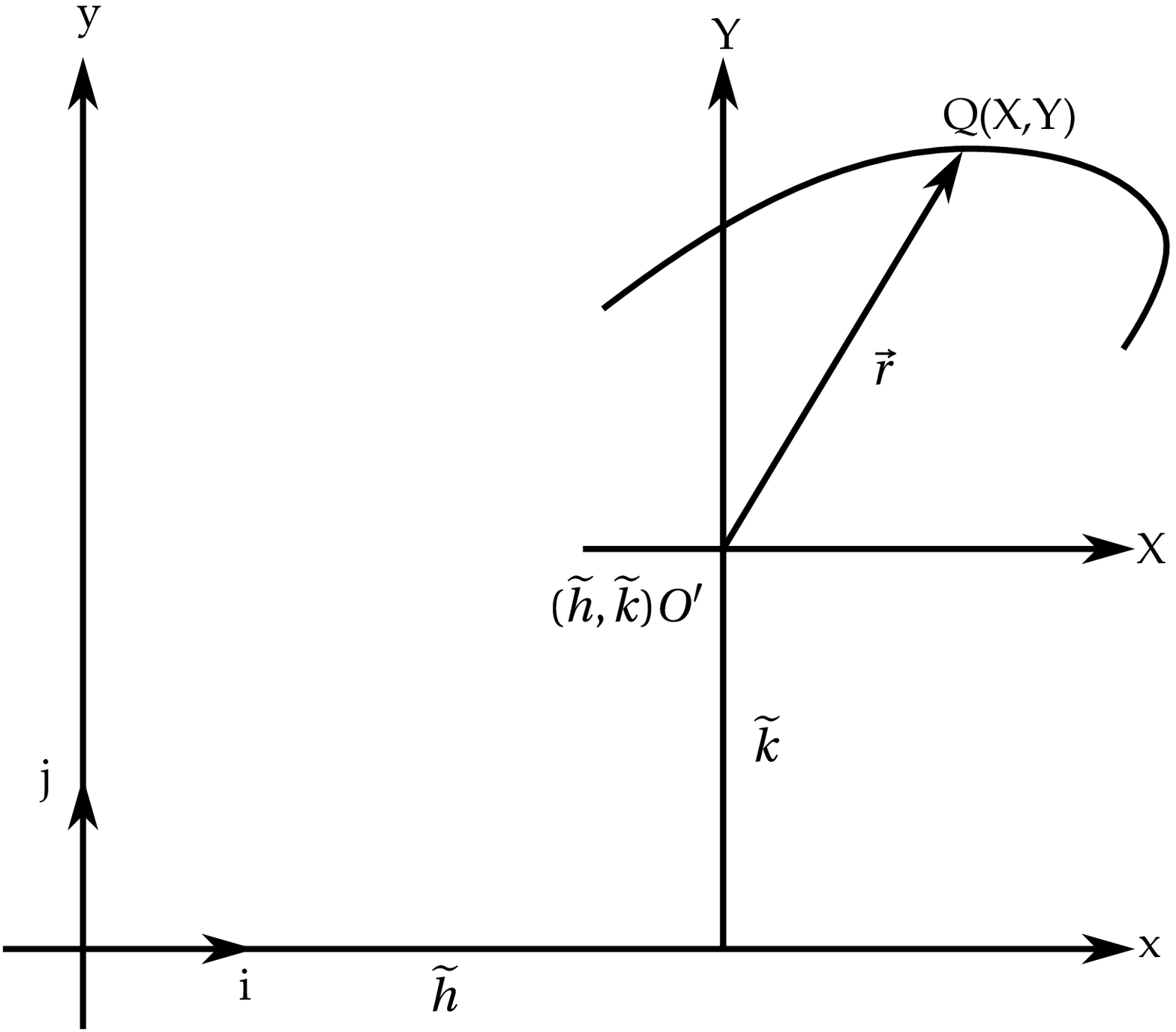}\\
\end{center}
\end{figure}
Vale la pena señalar los casos posibles que pueden presentarse con los coeficientes $A, B, C.$
\begin{figure}[ht!]
\begin{center}
  \includegraphics[scale=0.4]{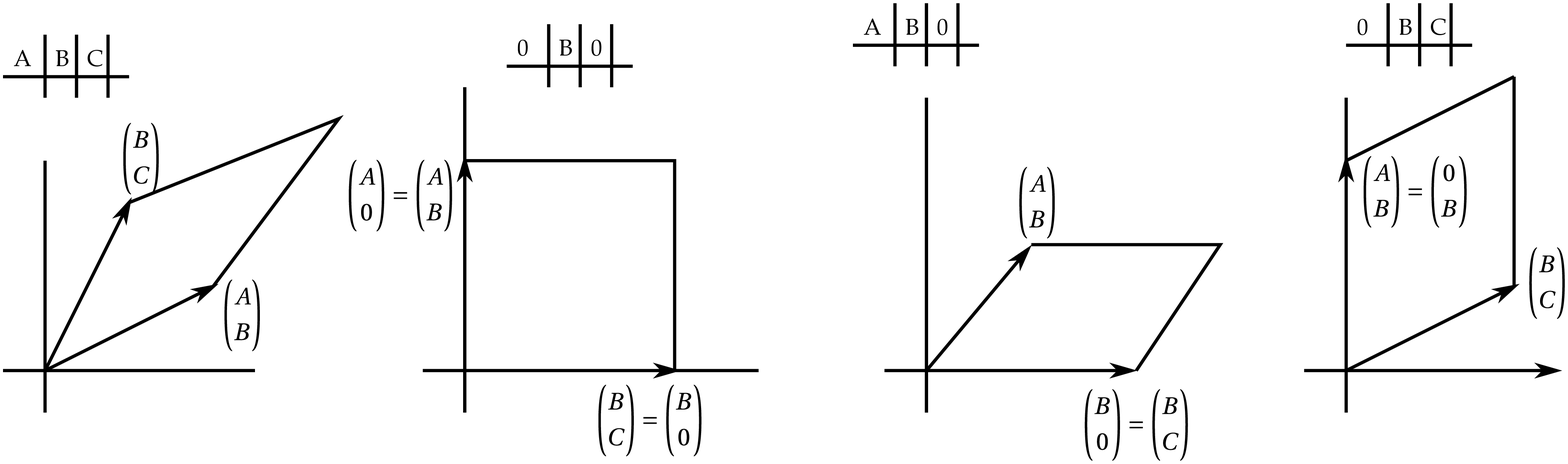}\\
\end{center}
\end{figure}
\newline
Entonces $$\vec{O'Q}=\vec{r}=X\vec{i}+Y\vec{j}.$$
O sea que $\left[\vec{r}\right]_{ij}=\dbinom{X}{Y}$ y la ecuación [\ref{39}] puede escribirse:
\begin{equation}\label{40}
\left[\vec{r}\right]_{ij}^tM\left[\vec{r}\right]_{ij}+\dfrac{\Delta}{\delta}=0\hspace{0.5cm}\text{con}\hspace{0.5cm}M=\left(\begin{array}{cc}A&B\\B&C\end{array}\right)
\end{equation}
$\left\{\dbinom{A}{B};\dbinom{B}{C}\right\}$ Base de $\mathbb{R}^2$ con $B\neq 0.$\\
Como $M$ es simétrica, sabemos, por el Teorema Espectral (ó teorema de los Ejes Principales) que $\exists P=\left(\begin{array}{cc}\underset{\downarrow}{\overset{\uparrow}{p_1}}&\underset{\downarrow}{\overset{\uparrow}{p_2}}\end{array}\right)$
matriz ortogonal, o sea $P^{-1}=P^t$, tal que $P^tMP=\left(\begin{array}{cc}\lambda_1&0\\ 0&\lambda_2\end{array}\right)$ donde $\lambda_1, \lambda_2$ son los valores propios de $M$. A demás, $\lambda_1, \lambda_2\in\mathbb{R}.$\\
Siendo $P$ ortogonal, las columnas $p_1,p_2$ de $P$ definen una base ortonormal de $\mathbb{R}^2/\langle\hspace{0.5cm}\rangle$: producto interno usual en $\mathbb{R}^2,$ base formada por vectores propios de $M.$\\
O sea que\\
$$\begin{cases}
p_i\in EP_{\lambda_i}^M,\hspace{0.5cm}\text{i.e.,}\hspace{0.5cm}M_{p_i}=\lambda_ip_i\\
\langle p_i,p_j\rangle=\delta_i^j,\,\,i,j=1,2
\end{cases}$$
Los vectores $p_1,p_2$ definen un nuevo sistema de coordenadas ortogonal $x'y'$ con origen en $O'$ y rotado respecto a $XY:$
\begin{figure}[ht!]
\begin{center}
  \includegraphics[scale=0.5]{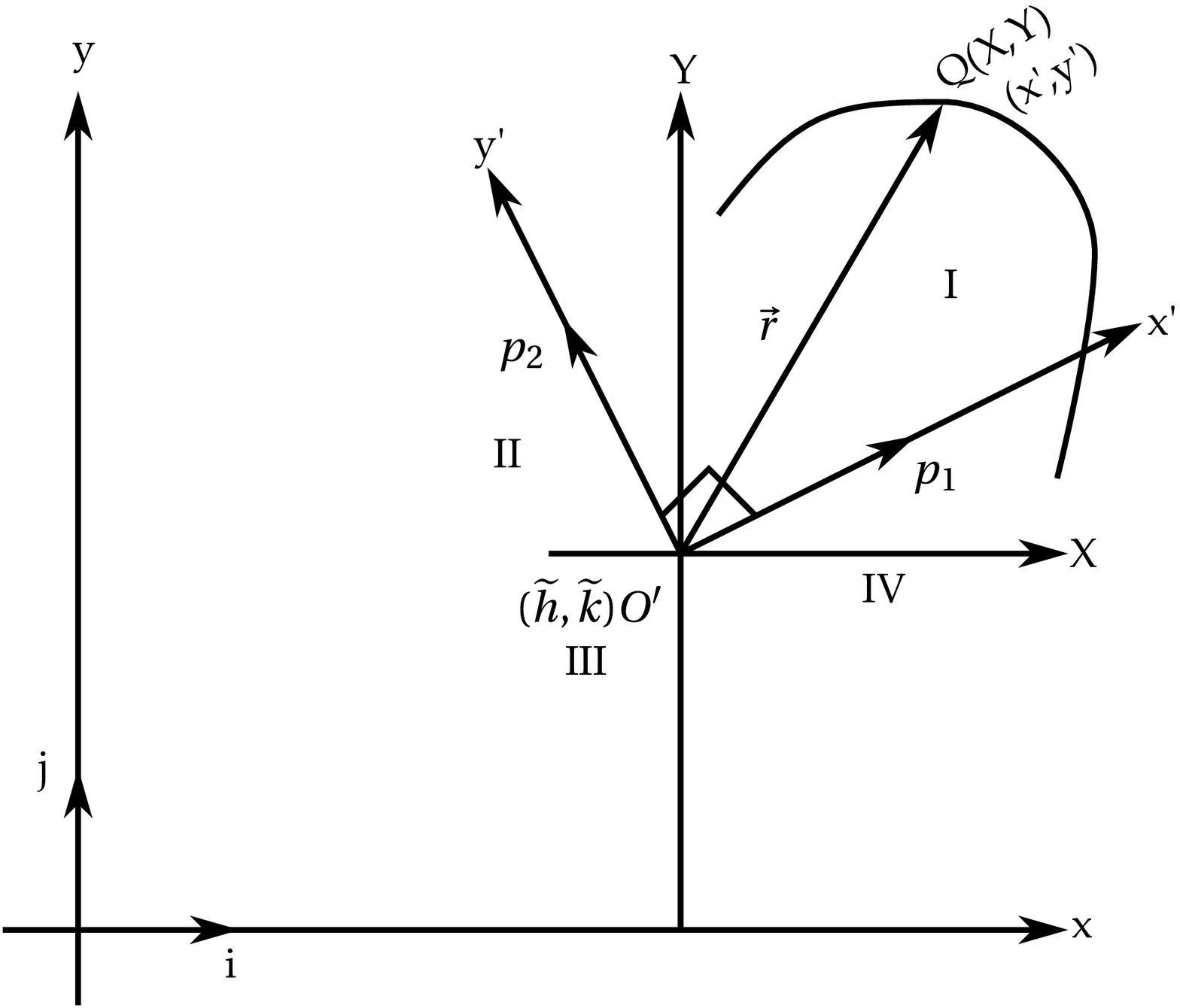}\\
\end{center}
\end{figure}
\newline
Los vectores $p_1$ y $p_2$ están en el semiplano $(I)-(II)$ que determina el eje $X$. El eje $x'$ está en el cuadrante $I$ y lo define aquel valor propio para el cual el vector propio se coloca en el cuadrante $I$. El otro vector propio está $90^\circ\curvearrowleft$ y en el cuadrante $II.$\\
Los ejes $x'y'$ se llaman los \underline{ejes principales de la cónica}.\\
Llamemos $$P=\left(\begin{array}{cc}p_1^1&p_2^1\\p_1^2&p_2^2\end{array}\right).$$
Entonces $$\left[p_1\right]_{ij}=\dbinom{p_1^1}{p_1^2},\,\,\left[p_2\right]_{ij}=\dbinom{p_2^1}{p_2^2}.$$
O sea que $$P=\left(\begin{array}{cc}\underset{\downarrow}{\overset{\uparrow}{\left[p_1\right]_{ij}}}&\underset{\downarrow}{\overset{\uparrow}{\left[p_2\right]_{ij}}}\end{array}\right)=\left[I\right]_{ij}^{p_\alpha}$$
y por lo tanto se tiene que $$\left[\vec{r}\right]_{ij}=\left[I\right]_{ij}^{p_\alpha}\left[\vec{r}\right]_{p_\alpha}=P\left[\vec{r}\right]_{p_\alpha},\hspace{0.5cm}\text{i.e.,}$$
\begin{equation}\label{41}
\dbinom{X}{Y}=P\dbinom{x'}{y'}
\end{equation}
donde $$\left[\vec{r}\right]_{p_\alpha}=\dbinom{x'}{y'}$$ siendo $(x',y')$ las coordenadas de $Q$ respecto a los ejes $x'y'$. O también,
\begin{equation}\label{42}
\dbinom{X}{Y}=\left(\begin{array}{cc}p_1^1&p_2^1\\p_1^2&p_2^2\end{array}\right)\dbinom{x'}{y'}
\end{equation}
\begin{equation}\label{43}
\begin{split}
\therefore\hspace{0.5cm}X&=p_1^1x'+p_2^1y'\\
Y&=p_1^2x'+p_2^2y'
\end{split}
\end{equation}
Siendo $P$ ortogonal, $P^{-1}=P^t$ y de [\ref{41}], $\dbinom{x'}{y'}=P^t\dbinom{X}{Y}.$\\
O sea que $$\dbinom{x'}{y'}=\left(\begin{array}{cc}p_1^1&p_2^1\\p_1^2&p_2^2\end{array}\right)\dbinom{X}{Y},\hspace{0.5cm}\text{i.e.,}$$
\begin{equation}\label{44}
\begin{split}
x'&=p_1^1X+p_1^2Y\\
y'&=p_2^1X+p_2^2Y
\end{split}
\end{equation}
Las ecuaciones [\ref{43}] y [\ref{44}] son las ecuaciones de la transformación ortogonal de coordenadas definida por la matriz $P$ que diagonaliza a M.\\
Al regresar a la ecuación [\ref{40}] se tiene que ${\left(P\left[\vec{r}\right]_{p_\alpha}\right)}^tMP\left[\vec{r}\right]_{p_\alpha}+\dfrac{\Delta}{\delta}=0.$\\
O sea que $$\left[\vec{r}\right]_{p_\alpha}^t\left(P^tMP\right)\left[\vec{r}\right]_{p_\alpha}+\dfrac{\Delta}{\delta}=0$$
Pero $$\left[\vec{r}\right]_{p_\alpha}=\dbinom{x'}{y'}$$ y $$P^tMP=\left(\begin{array}{cc}\lambda_1&0\\0&\lambda_2\end{array}\right).$$ Luego $$\left(\begin{array}{cc}x'&y'\end{array}\right)\left(\begin{array}{cc}\lambda_1&0\\0&\lambda_2\end{array}\right)\dbinom{x'}{y'}+\dfrac{\Delta}{\delta}=0.$$
Lo que nos demuestra que las ecuaciones de la cónica referida a sus ejes principales $x'y'$ con origen en el centro $O'$ de la curva es
\begin{equation}\label{45}
\lambda_1x'^2+\lambda_2y'^2+\dfrac{\Delta}{\delta}=0
\end{equation}
Antes de pasar a estudiar los lugares geométricos representados por [\ref{45}] conviene recordar que siendo $M$ una matriz simétrica, sus valores propios $\lambda_1$ y $\lambda_2$ son números reales y son raíces del
\begin{align*}
PCM(\lambda)&=\lambda^2-\left(trM\right)\lambda+\det M\\
&=\lambda^2-\left(A+C\right)\lambda+\delta=\lambda^2-\omega\lambda+\delta=0
\end{align*}
Así que
\begin{align*}
\lambda_1+\lambda_2&=\omega\\
\lambda_1\lambda_2&=\delta\neq 0
\end{align*}
lo que nos demuestra que $\lambda_1$ y $\lambda_2$ son diferentes de cero. Además, $\lambda_1\neq\lambda_2.$\\
En efecto,
\begin{align*}
\lambda^2-\left(A+C\right)\lambda+\left(AC-B^2\right)=0\\
\therefore\hspace{0.5cm}\lambda=\dfrac{\left(A+C\right)\pm\sqrt{{\left(A+C\right)}^2-4\left(AC-B^2\right)}}{2}
\end{align*}
La cantidad subradical (el discriminante) es
\begin{align*}
\omega^2-4\delta&={\left(A+C\right)}^2-4\left(AC-B^2\right)\\
&=A^2+2AC+C^2-4AC+4B^2\\
&={\left(A-C\right)}^2+2B^2\underset{\overset{\uparrow}{B\neq 0}}{>}0
\end{align*}
Esto demuestra que el discriminante es mayor que cero y por lo tanto $\lambda_1\neq\lambda_2.$ $$\lambda_2^1=\dfrac{\omega\pm\sqrt{\omega^2-4\delta}}{2}$$
Además, recuérdese que $$EP_{\lambda_1=\dfrac{\omega\pm\sqrt{\omega^2-4\delta}}{2}}^M=\mathscr{N}\left(\lambda_1I_2-M\right):\text{espacio nulo de la matriz $\lambda_1I_2-M$}$$
y que
$$EP_{\lambda_2=\dfrac{\omega\pm\sqrt{\omega^2-4\delta}}{2}}^M=\mathscr{N}\left(\lambda_2I_2-M\right):\text{espacio nulo de la matriz $\lambda_2I_2-M$}$$
Regresando a la ecuación [\ref{45}] podemos considerar:
\item[I)] $\Delta>0$. La ecuación [\ref{45}] puede escribir así:
$$\lambda_1x'^2+\lambda_2y'^2=-\dfrac{\Delta}{\delta}$$
\item[a)] Si $\delta=AC-B^2=\lambda_1\lambda_2>0$, $\lambda_1$ y $\lambda_2$ son del mismo signo, $$\omega=A+C=\lambda_1+\lambda_2\neq 0\hspace{0.5cm}\text{y}\hspace{0.5cm}-\dfrac{\Delta}{\delta}<0\hspace{0.5cm}\therefore\hspace{0.5cm}\dfrac{\Delta}{\delta}>0$$
\item[i)] Si $\omega<0,$ $\lambda_1$ y $\lambda_2$ son ambos negativos y el \underline{lugar es una  elipse} de ejes paralelos a $\vec{p_1}$ y $\vec{p_2}$ y de semiejes $\sqrt{\dfrac{\frac{\Delta}{\delta}}{|\lambda_1|}}, \sqrt{\dfrac{\frac{\Delta}{\delta}}{|\lambda_2|}}.$ Pero $$\lambda_{1,2}=\dfrac{\omega\pm\sqrt{\omega^2-4\delta}}{2}.$$ Luego los semiejes valen $$a,b=+\sqrt{\dfrac{2\Delta}{\delta\left|\omega\pm\sqrt{\omega^2-4\delta}\right|}}$$
\begin{figure}[ht!]
\begin{center}
  \includegraphics[scale=0.5]{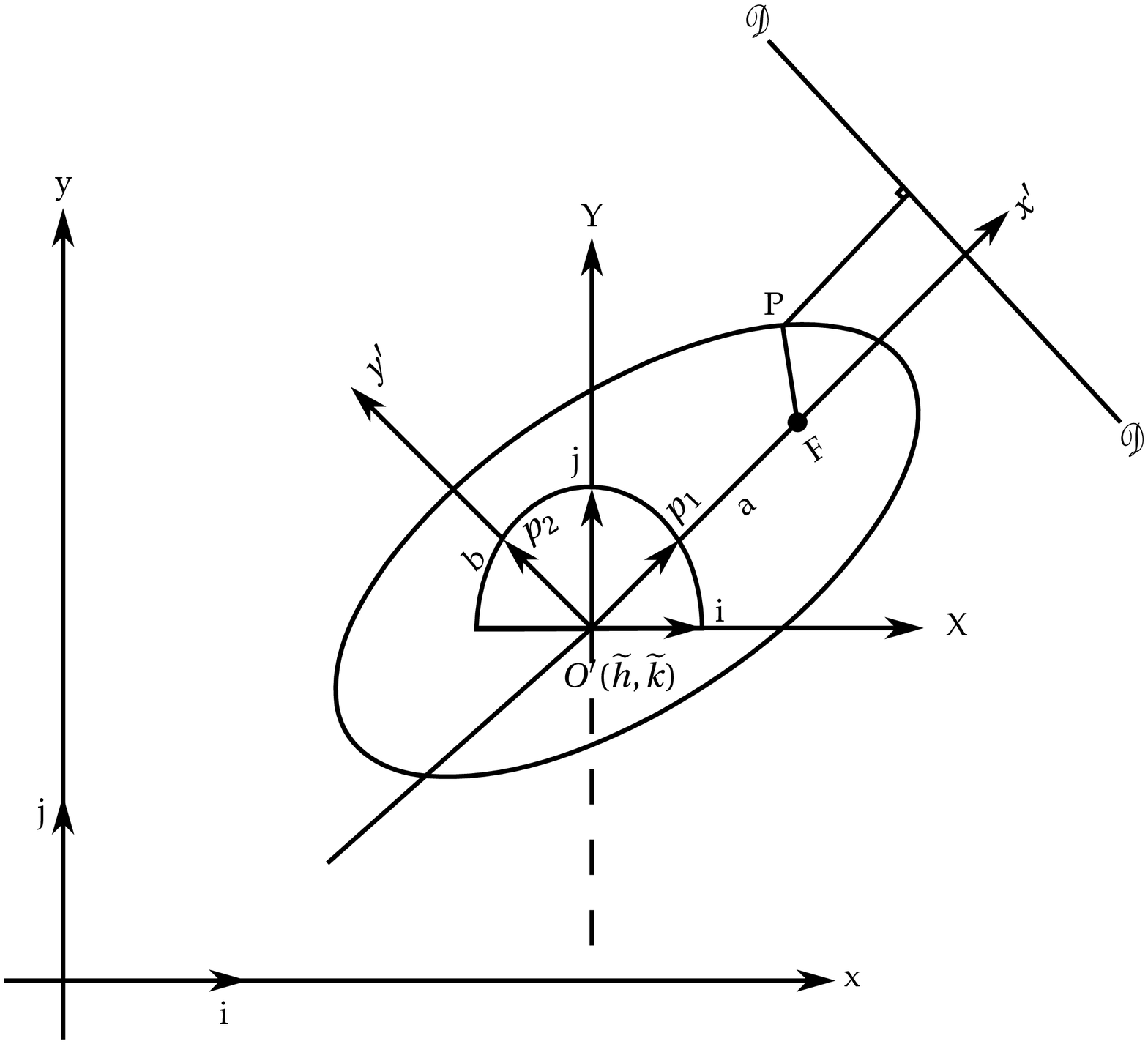}\\
\end{center}
\end{figure}
\newline
Recuérdese que $$a=\dfrac{\epsilon p}{1-\epsilon^2}\hspace{0.5cm}b=\dfrac{\epsilon p}{\sqrt{1-\epsilon^2}}$$
donde $\epsilon$ es la excentricidad y $P=d(F;\mathscr{DD})$ es la distancia foco-directriz.\\
Halle $\epsilon, p$ en términos de los invariantes $\Delta, \delta$ y $\omega.$\\
\item[ii)] Si $\omega>,$ $\lambda_1$ y $\lambda_2$ son ambos positivos y como $-\dfrac{\Delta}{\delta}<0,$ el \underline{lugar es $\emptyset$}.\\
\item[b)] Si $\delta=AC-B^2=\lambda_1\lambda_2<0,$ y $\lambda_1$ y $\lambda_2$ tienen signos contrarios y como
$-\dfrac{\Delta}{\delta}>0,$ el \underline{lugar es una hipérbola} de ejes paralelos a $\vec{p_1}$ y $\vec{p_2}.$ Los semiejes valen $\dfrac{-\frac{\Delta}{\delta}}{|\lambda_1|},\dfrac{-\frac{\Delta}{\delta}}{|\lambda_2|}$ donde $$\lambda_{1,2}=\dfrac{\omega\pm\sqrt{\omega^2-4\delta}}{2}.$$ Las ramas de la hipérbola se abren según el eje $x'$ o $y'$ de acuerdo a los signos de $\lambda_1$ y $\lambda_2.$\\
\item[II)] $\Delta=0.$ La ecuación [\ref{45}] se escribe $\lambda_1x'^2+\lambda_2y'^2=0.$\\
\item[a)] Si $\delta=AC-B^2=\lambda_1\lambda_2<0,$ $\lambda_1$ y $\lambda_2$ tienen signos contrarios.\\
El \underline{lugar consta de dos rectas concurrentes en $O'$}.\\
\item[b)] Si $\delta=AC-B^2=\lambda_1\lambda_2>0,$ $\lambda_1$ y $\lambda_2$ tienen el mismo signo.\\
El \underline{lugar es el punto $O'$}.\\
\item[III)] $\Delta<0.$ La ecuación [\ref{45}] es: $\lambda_1x'^2+\lambda_2y'^2=-\dfrac{\Delta}{\delta}.$\\
\item[a)] Si $\delta=AC-B^2=\lambda_1\lambda_2>0,$ $\lambda_1$ y $\lambda_2$ tienen el mismo signo, $\omega=\lambda_1+\lambda_2\neq 0$ y $-\dfrac{\Delta}{\delta}>0.$\\
\item[i)] Si $\omega<0,$ $\lambda_1$ y $\lambda_2$ son ambos negativos \underline{el lugar es $\emptyset$}.\\
\item[ii)] Si $\omega>0,$ $\lambda_1$ y $\lambda_2$ son ambos positivos. El \underline{lugar es una elipse} de ejes $\parallel$s a $p_1, p_2$ y de semiejes $\sqrt{\dfrac{-\frac{\Delta}{\delta}}{\lambda_1}},\sqrt{\dfrac{-\frac{\Delta}{\delta}}{\lambda_2}}$ donde $\lambda_{1,2}=\dfrac{\omega\pm\sqrt{\omega^2-4\delta}}{2}$.\\
\item[b)] Si $\delta=AC-B^2=\lambda_1\lambda_2<0,$ $\lambda_1$ y $\lambda_2$ tienen signos contrarios y siendo $-\dfrac{\Delta}{\delta}<0$ el \underline{lugar es una  hipérbola} de ejes $\parallel$s a $p_1$ y $p_2$ y semiejes $\sqrt{\dfrac{-\frac{\Delta}{\delta}}{|\lambda_1|}},\sqrt{\dfrac{-\frac{\Delta}{\delta}}{|\lambda_2|}}$ donde $\lambda_{1,2}=\dfrac{\omega\pm\sqrt{\omega^2-4\delta}}{2}$.
\end{cas}
El \textbf{Caso 1.6.2} se resume en la tabla siguiente:
\begin{figure}[ht!]
\begin{center}
  \includegraphics[scale=0.5]{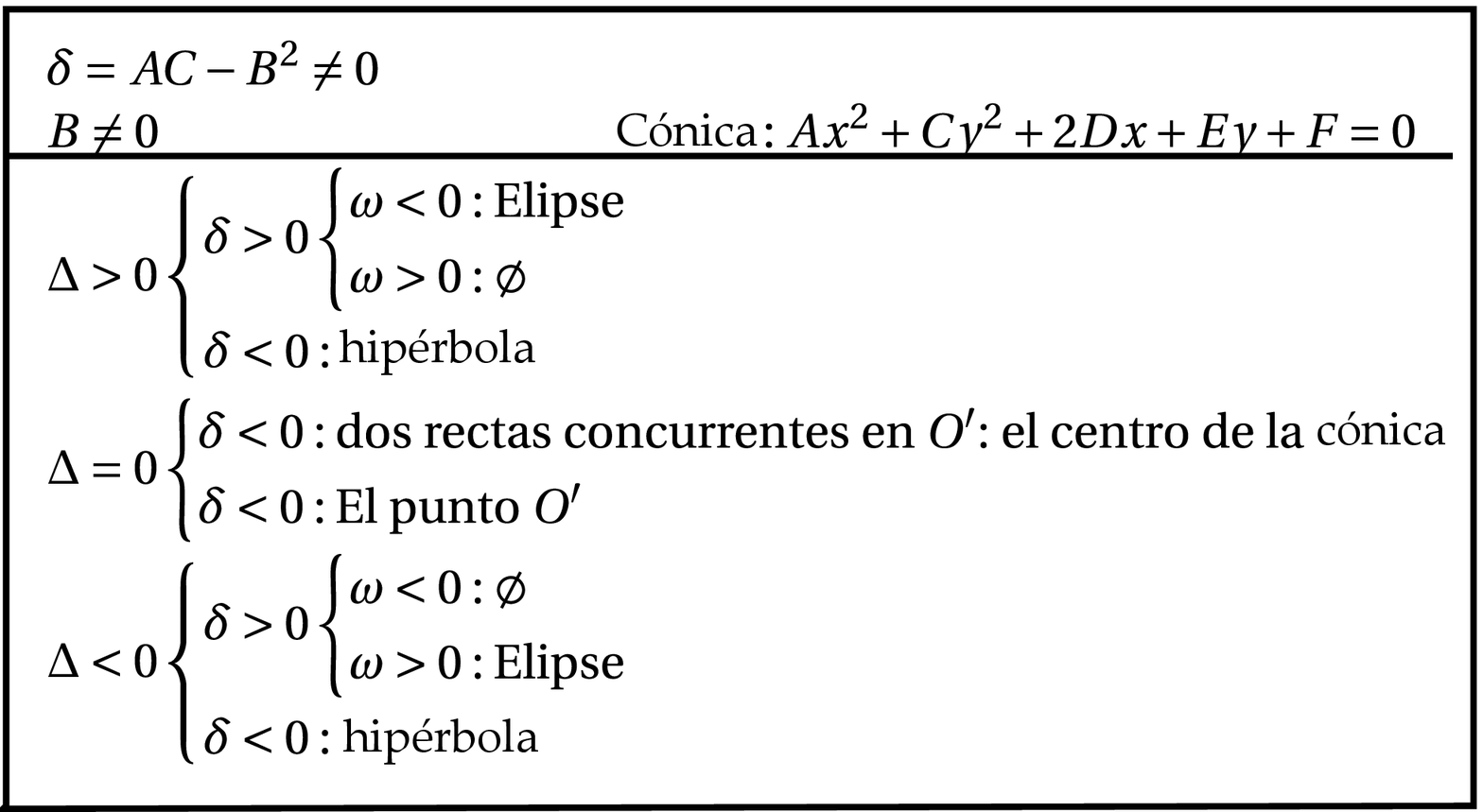}\\
\end{center}
\end{figure}
\newline
Las dos tablas anteriores se juntan en la tabla de página siguiente.\
\begin{obser}
Nótese que la parábola no tiene centro.\\
Los casos en que la cónica representa dos rectas $\parallel$s o una recta son casos en que hay $\infty$s centros. Por esa razón los casos degenerados que se obtienen en la tabla siguiente son dos rectas o un punto.
\end{obser}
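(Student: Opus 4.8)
The plan is to read all three assertions as corollaries of the center analysis already carried out, organized around the single scalar $\delta=AC-B^{2}$. The established dichotomy is that the center system [\ref{16}] has a unique solution exactly when $\left\{\binom{A}{B},\binom{B}{C}\right\}$ is linearly independent, i.e. when $\delta\neq0$, whereas when $\delta=0$ the two columns are parallel and [\ref{16}] is either inconsistent (no center) or carries a redundant second equation (a whole line of centers). Throughout I would invoke the two preceding propositions, by which a point is a center if and only if it is a center of symmetry, so that the counts of centers and of centers of symmetry may be used interchangeably.

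First I would dispatch the parabola, which lives in the regime $\delta=0$, $\Delta\neq0$. The cleanest argument uses the invariance of $\Delta$: if a center $(h,k)$ existed, translating the origin to it would kill the linear part and make the associated $3\times3$ matrix block diagonal with lower entry $f(h,k)$, forcing $\Delta=\delta\,f(h,k)=0$; since $\Delta\neq0$ this is impossible, so the parabola has no center, hence no center of symmetry. This is consistent with the explicit subcases, whose centered forms $Y=-\tfrac{A}{2E}X^{2}$ and $X=-\tfrac{C}{2D}Y^{2}$ arise precisely when [\ref{16}] already contains a contradictory equation $0=-E$ or $0=-D$.

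Next I would treat the remaining $\delta=0$ loci, two parallel lines and a single (double) line, which fall under $\delta=0$, $\Delta=0$. In these subcases the second equation of [\ref{16}] is a scalar multiple of the first, so the solution set is the entire line $Ax+By=-D$, the \emph{eje de centros}, and there are infinitely many centers. Geometrically this is exactly right: every point of the midline of two parallels, and every point of a double line, is a center of symmetry, and the symmetry propositions turn each such point back into an algebraic center.

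Finally I would settle the claim about the table, which concerns only the unique-center case $\delta\neq0$ with reduced form [\ref{45}], namely $\lambda_{1}x'^{2}+\lambda_{2}y'^{2}+\tfrac{\Delta}{\delta}=0$ where $\lambda_{1}\lambda_{2}=\delta\neq0$. The degenerate instance is $\Delta=0$, giving $\lambda_{1}x'^{2}+\lambda_{2}y'^{2}=0$ with both eigenvalues nonzero: if $\delta>0$ the two terms share a sign and the only real solution is $x'=y'=0$, a single point, while if $\delta<0$ the eigenvalues have opposite signs, the left side factors as a difference of squares, and the locus is two lines through $O'$. Since two parallel lines and a double line force $\delta=0$, they cannot occur when $\delta\neq0$, so the only degenerate entries of the table are two concurrent lines and a point. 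I expect the main obstacle to be bookkeeping rather than any genuine difficulty: one must confirm that the whole stratum $\delta=0$ splits exactly as claimed, the inconsistent case ($\Delta\neq0$, parabola, no center) against the redundant case ($\Delta=0$, parallel or double line, infinitely many centers), so that no $\delta=0$ locus leaks into the $\delta\neq0$ table.
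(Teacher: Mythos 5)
Your proposal is correct and follows essentially the same route as the paper: the observation is justified by the accumulated center analysis — the two propositions identifying centers with centers of symmetry, the dichotomy $\delta\neq 0$ (centro �nico) versus $\delta=0$ (sin centro o con eje de centros), and the reduced equation $\lambda_1 x'^2+\lambda_2 y'^2+\frac{\Delta}{\delta}=0$, whose case $\Delta=0$ splits exactly as you say into the point $O'$ ($\delta>0$) or two concurrent lines ($\delta<0$). Your only departure is cosmetic: you exclude a center for the parabola via the contrapositive of the invariant relation $\Delta=\delta\,f(h,k)$ — a relation the paper itself derives (as $\Delta-f(\widetilde h,\widetilde k)\,\delta=0$) in the infinite-centers section — whereas the paper argues directly that the system [\ref{16}] is inconsistent because $\dbinom{-D}{-E}\notin \mathrm{Sg}\left\{\dbinom{A}{B}\right\}$; both arguments are valid and rest on the same invariance proposition.
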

\begin{figure}[ht!]
\begin{center}
  \includegraphics[scale=0.5]{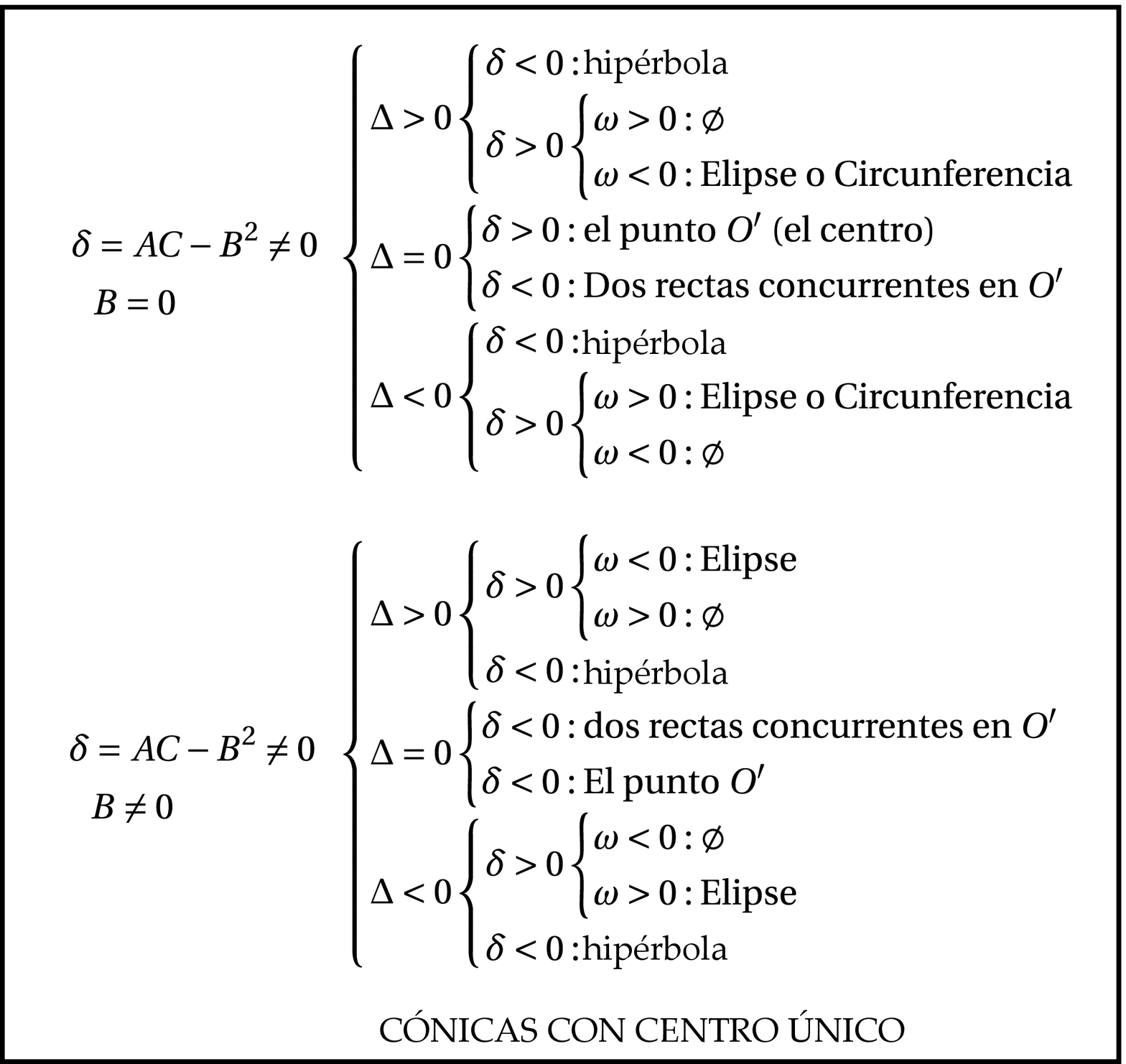}\\
\end{center}
\end{figure}
\end{enumerate}
\newpage
Esta tabla para determinar la naturaleza del lugar representado por las cónicas de centro único se puede reemplazar por la siguiente:\\
$\delta<0\begin{cases}\Delta<0:\text{hipérbola}\\\Delta=0:\text{dos rectas concurrentes en $O'$}\\\Delta>0:\text{hepérbola}\end{cases}\\
\delta>0
\begin{cases}
\Delta<0\begin{cases}\omega<0:\emptyset\\ \omega>0:\text{Elipse o circunferencia}\end{cases}\\
\Delta=0:\text{el punto $O'$}\\
\Delta>0\begin{cases}\omega<0:\emptyset\\ \omega>0:\text{Elipse}\end{cases}
\end{cases}$\\
\textbf{Ejemplos}
\begin{enumerate}
\item Consideremos la cónica $xy=k,\quad k>0,$ ó
\begin{equation}\label{A}
xy-k=0
\end{equation}
\begin{tabular}{cccc}
$A=0$&&&$M=\left(\begin{array}{cc}A&B\\B&C\end{array}\right)=\left(\begin{array}{cc}0&1/2\\1/2&0\end{array}\right),$\\
$C=0$\\
$2B=1,\quad B=1/2$\\
$D=E=0$&&&$tr(M)=0;\quad\delta=-\dfrac{1}{4}$\\
$F=-K$
\end{tabular}
\newline
Para hallar los centros debemos resolver el sistema\\
$\begin{cases}
0x+\dfrac{1}{2}y=0\\
\dfrac{1}{2}x+0y=0
\end{cases}$\\
Hay solución única: $(0,0).$\\
Se trata de una cónica con centro único en $(0,0).$\\
La cónica [\ref{A}] se puede escribir así:
\begin{align*}
&\left(\begin{array}{cc}x&y\end{array}\right)\left(\begin{array}{cc}0&1/2\\1/2&0\end{array}\right)\dfrac{x}{y}-k=0\\
&PCM(\lambda)=\lambda^2-tr(M)\lambda+\delta=\lambda^2-1/4=0\\
&\therefore\quad\left(\lambda+1/2\right)\left(\lambda-1/2\right)=0.\quad\text{Los valores propios de $M$ son $1/2,-1/2$}\\
&EP_{1/2}^M=\mathscr{N}\left(1/2I_2-M\right):\text{espacio nulo de la matriz $1/2\underset{\parallel}{I_2}-M$}\\
&\hspace{6.5cm}\dfrac{1}{2}\left(\begin{array}{cc}1&-1\\-1&1\end{array}\right)
\end{align*}
\begin{align*}
\mathscr{N}\left(\dfrac{1}{2}I_2-M\right)&=\mathscr{N}\left(\dfrac{1}{2}\left(\begin{array}{cc}1&-1\\-1&1\end{array}\right)\right)=\left\{\dbinom{x}{y}\diagup\begin{array}{cc}x-y=0\\-x+y=0\end{array}\right\}\\
&\left\{\dbinom{x}{y}\diagup x=y\right\}=Sg\left\{\dbinom{1}{1}\right\}=Sg\left\{\begin{array}{cc}\dfrac{1}{\sqrt{2}}\\\dfrac{1}{\sqrt{2}}\end{array}\right\}=EP_{1/2}^M\\
\lambda(M)&=\left\{1/2,-1/2\right\}\\
P&=\left(\begin{array}{cc}\overset{\uparrow}{\underset{\downarrow}{p_1}}&\overset{\uparrow}{\underset{\downarrow}{p_2}}\end{array}\right)\left(\begin{array}{cc}\frac{1}{\sqrt{2}}&-\frac{1}{\sqrt{2}}\\ \frac{1}{\sqrt{2}}&\frac{1}{\sqrt{2}}\end{array}\right)
\end{align*}
\begin{figure}[ht!]
\begin{center}
  \includegraphics[scale=0.5]{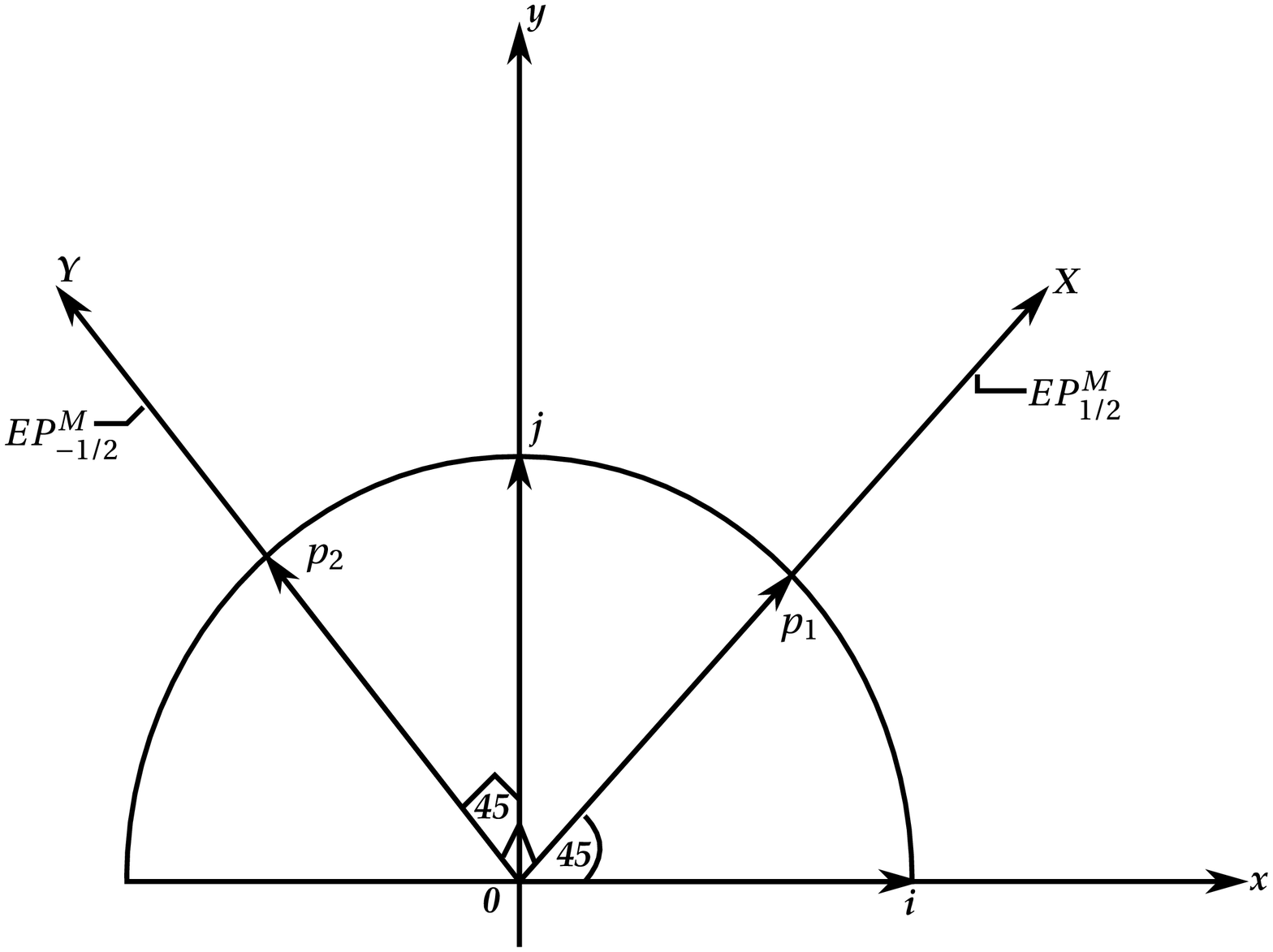}\\
\end{center}
\end{figure}
\newline
Una vez aplicado el Tma. Espectral, se tiene que la ecuación de la cónica$\diagup XY$ es $$\left(\begin{array}{cc}X&Y\end{array}\right)\left(\begin{array}{cc}1/2&0\\0&-1/2\end{array}\right)\dbinom{X}{Y}-k=0.$$
O sea, $$\dfrac{1}{2}X^2-\dfrac{1}{2}Y^2=k;\quad\dfrac{X^2}{2k}-\dfrac{Y^2}{2k}=1,\quad k>0.$$
Se trata entonces de la \underline{hipérbola equilátera} de la figura siguiente: $$a^2=2k;\quad b^2=2k;\quad a=\sqrt{2k};\quad b=\sqrt{2k}$$
\begin{figure}[ht!]
\begin{center}
  \includegraphics[scale=0.5]{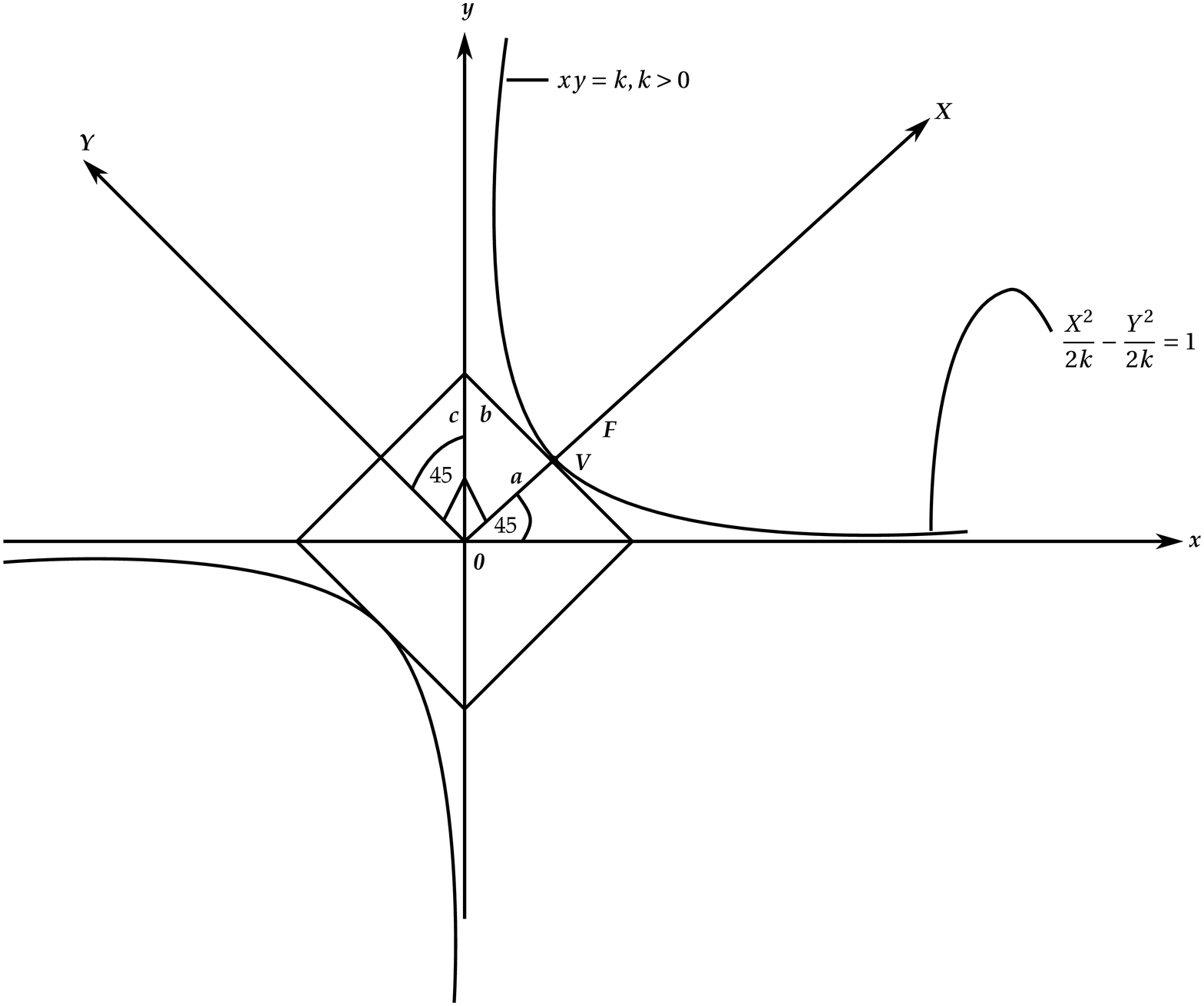}\\
\end{center}
\end{figure}
\newline
Su excentricidad es $$\epsilon=\dfrac{c}{a}=\dfrac{\sqrt{a^2+b^2}}{a}=\dfrac{\sqrt{4k}}{\sqrt{2k}}=\dfrac{2}{\sqrt{2}}=\dfrac{\cancel{2}\sqrt{2}}{\cancel{2}}=\sqrt{2}$$
Los ejes $x,y$ son las asíntotas de la hipérbola.\\
El vértice $V$ de la curva tiene coordenadas $\left(a=0V=\sqrt{2},0\right)\diagup XY$ y se comprende que mientras mayor sea $k$, más alejado está el vértice $V$ de $0$.\\
\item Identificar el lugar geométrico representado por la ecuación $$-2x^2+2xy-y^2+2y-3=0$$
$$\left|\begin{array}{ccc}-2&1&0\\1&-1&1\\0&1&-3\end{array}\right|=-1<0$$\\
$\delta=1\neq 0; \omega=-3.$ Como $\delta\neq 0, B\neq 0, \Delta<0, \delta>0$ y $\omega<0,$ el lugar es $\emptyset.$ Vamos a verificarlo.\\
De la ecuación de la cónica,
\begin{align*}
y^2-\left(2+2x\right)y+\left(3+2x^2\right)=0\\
\therefore\hspace{0.5cm}y=\dfrac{\left(2+2x\right)\pm\sqrt{{\left(2+2x\right)^2-4\left(3+2x^2\right)}}}{2}
\end{align*}
La cantidad sub-radical es
\begin{align*}
4+4x^2+8x-12-8x^2&=-4x^2+8x-8\\
&=-4\left(x^2-2x+2\right)\\
&=-4\left((x^2-2x+1)+2-1\right)\\
&=-4\left[{(x-1)}^2+1\right]<0\hspace{0.5cm}\text{cualquiera sea $x\in\mathbb{R},$}
\end{align*}
lo que demuestra que los valores de $y$ son imaginarios y por lo tanto \underline{el lugar es $\emptyset$}.\\
\item Identificar el luagar representado por la ecuación
\begin{align*}
2x^2+2xy+3y^2+6x+8y+7=0\\
\Delta=\left|\begin{array}{ccc}2&1&3\\1&3&4\\3&4&7\end{array}\right|=0
\end{align*}
$\delta=5\neq 0; B\neq 0.$ Como $\delta\neq 0, B\neq 0, \Delta=0$ y $\delta=0,$\\
\underline{el lugar es el punto $O'$: el centro de la curva.}\\
Hallemos las coordenadas del centro.\\
\begin{align*}
2x+y&=-3\\
x+3y&=-4\hspace{0.5cm}\therefore\hspace{0.5cm}y=-1, x=1.\hspace{0.5cm}\text{Luego} O'(-1,-1).
\end{align*}
$f(-1,-1)=2+2+3-6-8+7=0$ lo que demuestra que $O'$ está en la cónica. Verifiquemos que efectivamente el lugar es el punto $(-1,-1).$\\
De la ecuación de la cónica,
\begin{align*}
3y^2+\left(2x+8\right)y+\left(2x^2+6x+7\right)=0\\
\therefore\hspace{0.5cm}y=\dfrac{-(2x+8)\pm\sqrt{{(2x+8)}^2-12(2x^2+6x+7)}}{6}
\end{align*}
La cantidad sub-radical es $$4x^2+32x+64-24x^2-72x-84=-20{(x+1)}^2.$$
Luego $$y=\dfrac{-(2x+8)\pm\sqrt{-20{(x+1)}^2}}{6}.$$
La cantidad sub-radical se anula en $x=-1$ y en otro caso es negativa. Además, si $x=-1, y=\dfrac{-\left(2(-1)+8\right)}{6}=-1.$\\
El lugar es entonces el centro $O'(-1,-1)$ de la cónica.\\
\item Consideremos la cónica
\begin{align*}
3x^2-4xy+y^2+10x-2y-8=0\\
\Delta&=\left|\begin{array}{ccc}3&-2&5\\-2&1&1\\5&-1&-8\end{array}\right|=0
\end{align*}
$\delta=-3\neq 0.$ La cónica tiene centro único. Como $B\neq 0, \Delta=0$ y $\delta<0,$\\
el \underline{lugar son dos rectas concurrentes} en $O'.$ Las coordenadas del centro se obtienen resolviendo el sistema
\begin{align*}
3h-2k+5&=0\\
-2h+k-1&=0\hspace{0.5cm}\therefore\hspace{0.5cm}(\widetilde{h}, \widetilde{k})=(3,7)
\end{align*}
De la ecuación de la cónica,
\begin{align*}
&y^2-(4x+2)y+(3x^2+10x-8)=0\\
&\therefore\hspace{0.5cm}y=\dfrac{(4x+2)\pm\sqrt{{(4x+2)}^2-4(3x^2+10x-8)}}{2}
\end{align*}
La cantidad sub-radical es $${(4x-2)}^2-4(3x^2+10x-8)=4{(x-3)}^2$$
Luego $$y=\dfrac{(4x+2)\pm\sqrt{4{(x-3)}^2}}{2}=(2x+1)\pm(x-3)$$
Entonce el lugar consiste de dos rectas: $y=3x-2$ y $y=x+4$ que como puede probarse se cortan en $(3,7).$\\
Finalmente nótese que $$(3x-y-2)(x-y+4)=3x^2-4xy+y^2+10x-2y-8$$
\item Identificar y dibujar el lugar representado por la ecuación $$5x^2+6xy+5y^2-4x+4y-4=0.$$
\begin{tabular}{ccccc}
$A=5$&$\dbinom{A}{B}=\dbinom{5}{3}$&$2D=-4$&$D=-2$&$\dbinom{-D}{-E}=\dbinom{2}{-2}$\\
$2B=6; B=3\neq 0$&&$2E=4$&$E=2$&\\
$C=5$&$\dbinom{B}{C}=\dbinom{3}{5}$&&
\end{tabular}
\begin{figure}[ht!]
\begin{center}
  \includegraphics[scale=0.5]{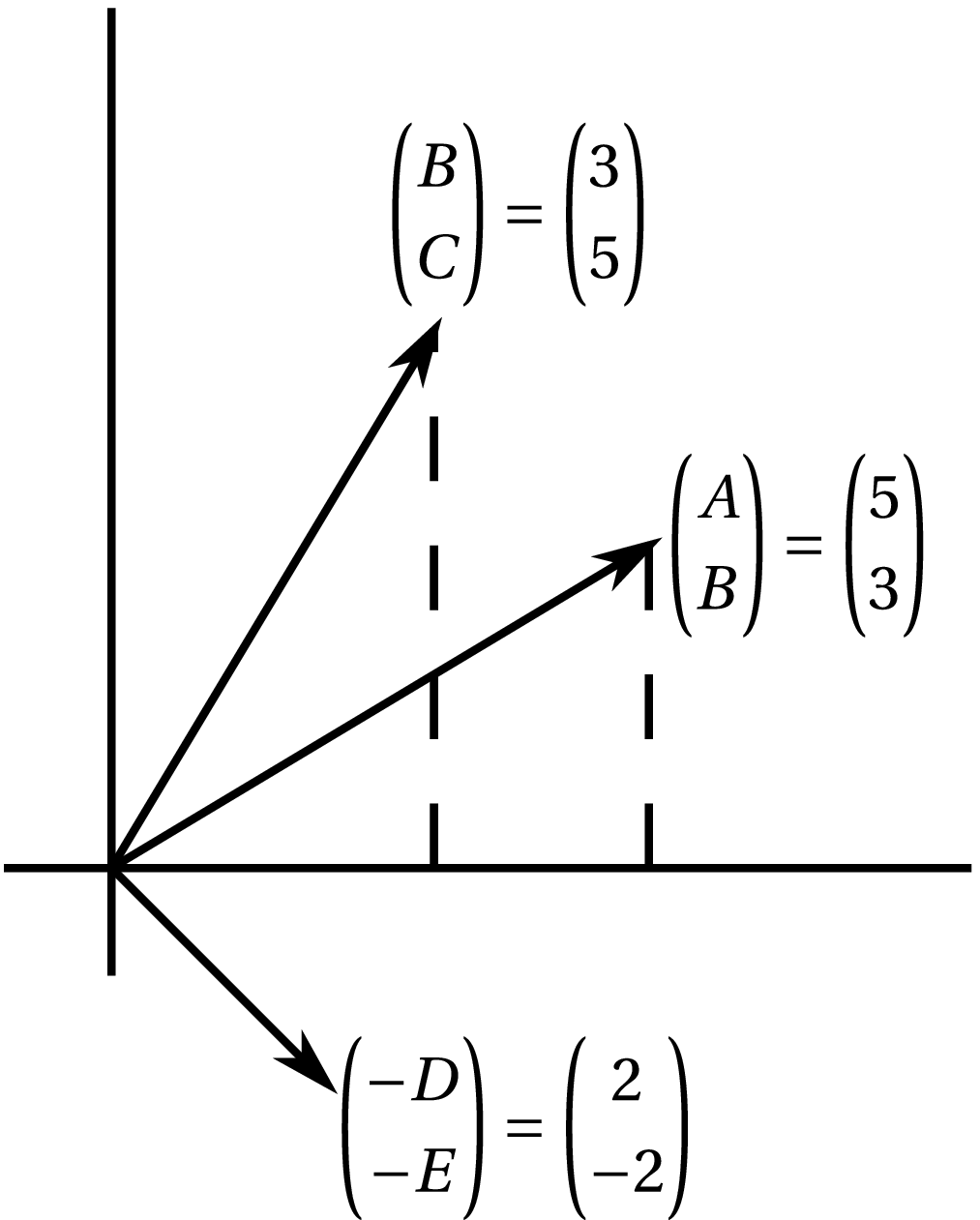}\\
\end{center}
\end{figure}
\newline
\underline{La cónica tiene centro único}.\\
$\Delta=\left|\begin{array}{ccc}5&3&-2\\3&5&2\\-2&2&-4\end{array}\right|=-128;\,\,\delta=\left|\begin{array}{cc}5&3\\3&5\end{array}\right|=16;\,\,\omega=A+C=10.$\\
Así que $\delta\neq 0, B\neq 0, \Delta<0, \delta>0$ y $\omega>0$.\\
\underline{Se trata de una elipse.} El centro es el punto $O'(\widetilde{h}, \widetilde{k})$ donde $$\dbinom{\widetilde{h}}{\widetilde{k}}=\dfrac{1}{16}\left(\begin{array}{cc}5&-3\\-3&5\end{array}\right)\dbinom{2}{-2}=\dbinom{1}{-1}:$$
\begin{figure}[ht!]
\begin{center}
  \includegraphics[scale=0.5]{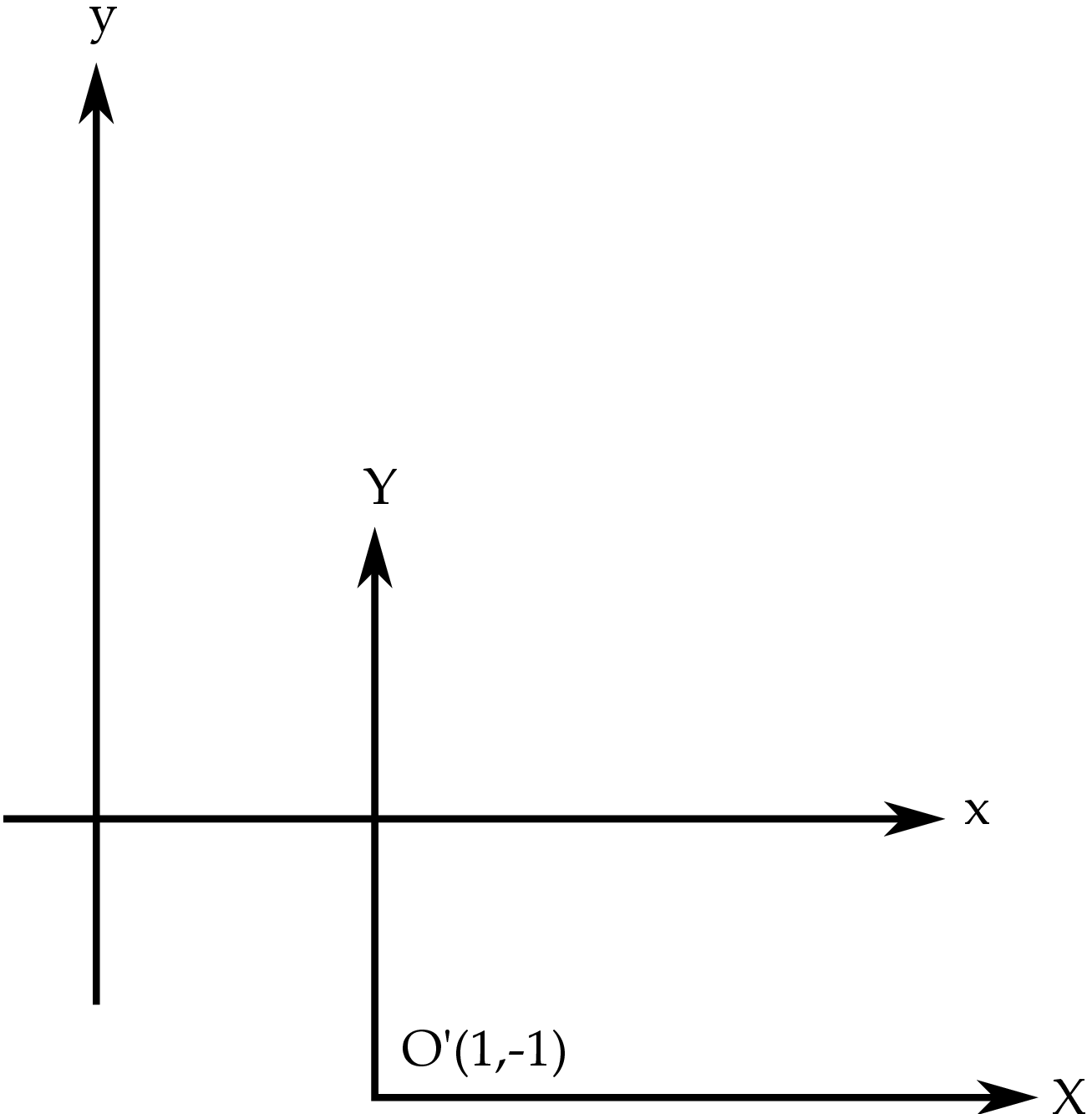}\\
\end{center}
\end{figure}
\newline
La ecuación de la elipse referida a los ejes $XY$ con origen en $O'$ es: $$5X^2+6XY+5Y^2=-\dfrac{\Delta}{\delta}=-\left(-\dfrac{128}{16}\right)=8$$
$$M=\left(\begin{array}{cc}5&3\\3&5\end{array}\right);\,\,PCM(\lambda)_{(\lambda)}=\lambda^2-10\lambda+16=0.$$
Los valores propios de $M$ son 8 y 2.\\
$EP_2^M=\mathscr{N}\left(2I_2-M\right)=\mathscr{N}\left(\begin{array}{cc}-3&-3\\-3&-3\end{array}\right);\,\,-3x-3y=0\,\,\therefore\,\,y=-x.$\\
O sea que todo vector de la forma $\dbinom{x}{-x}$ con $x\in\mathbb{R}$ está en el $EP_2^M.$ Luego, si $x=-1,\,\,\dbinom{-1}{1}\in EP_2^M.$\\
$\therefore\hspace{0.5cm}\left\{\dbinom{-\frac{1}{\sqrt{2}}}{\frac{1}{\sqrt{2}}}\right\}$ es Base del $EP_2^M$ y por lo tanto, $\left\{\dbinom{\frac{1}{\sqrt{2}}}{\frac{1}{\sqrt{2}}}\right\}$ es Base del $EP_8^M.$\\
Como éste vector está en el centro $I$, el espectro de $M$ lo ordenamos así: $\lambda={8,2}.$\\
O sea que definimos $\lambda_1=8,\,\,\lambda_2=2.$\\
$$P=\left(\begin{array}{cc}\underset{\downarrow}{\overset{\uparrow}{p_1}}&\underset{\downarrow}{\overset{\uparrow}{p_2}}\end{array}\right)=\left(\begin{array}{cc}1/\sqrt{2}&-1/\sqrt{2}\\1/\sqrt{2}&1/\sqrt{2}\end{array}\right).$$
Los ejes $x'y'$ se obtienen rotando $\overset{\curvearrowleft}{45^\circ}$ los ejes $XY$ respecto a $O':$
\begin{figure}[ht!]
\begin{center}
  \includegraphics[scale=0.5]{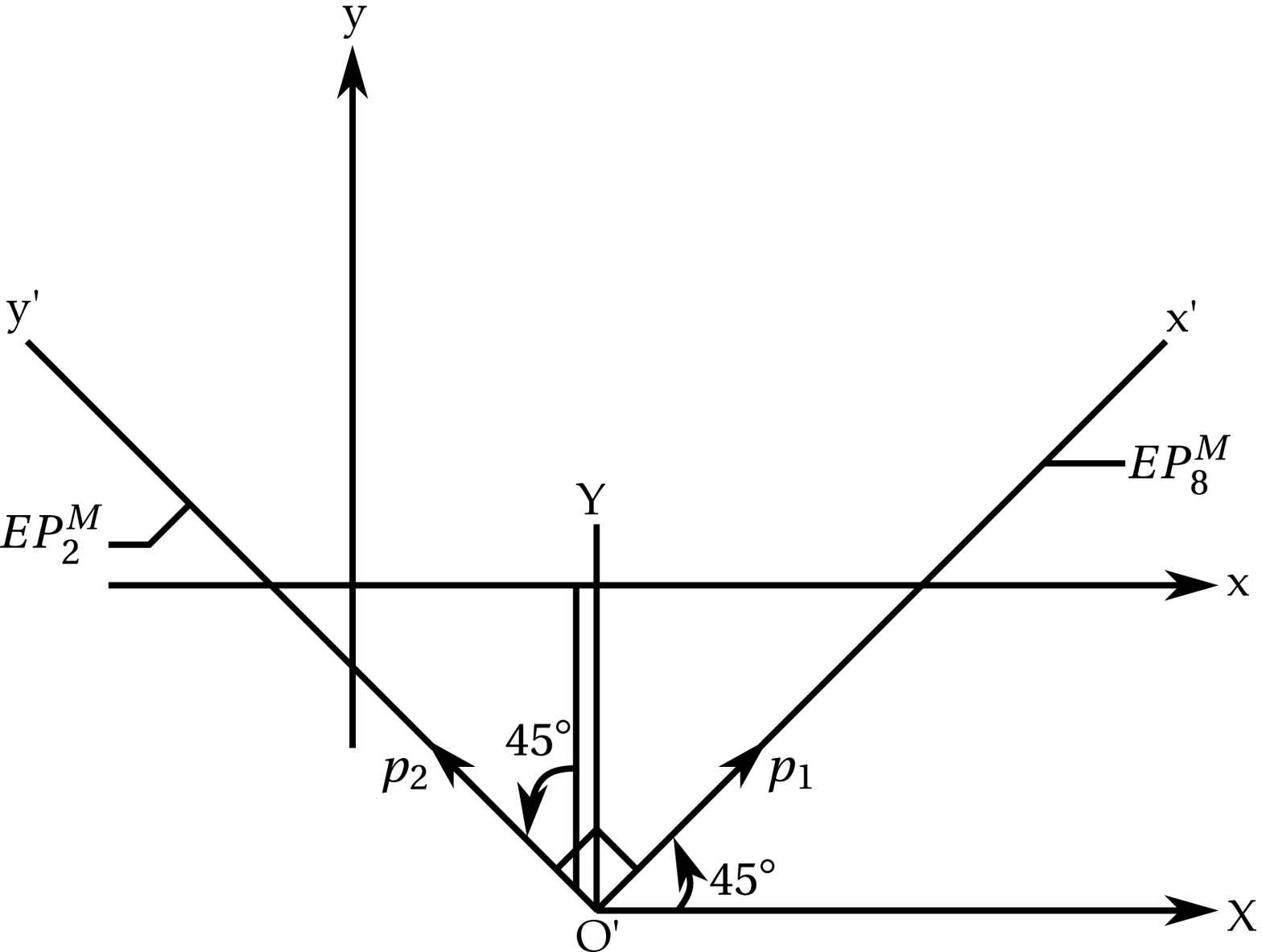}\\
\end{center}
\end{figure}
\newline
Ecuación de la cónica respecto a $x'y':$ $$\lambda_1x'^2+\lambda_2y'^2=8.$$ O sea $8x'^2+2y'^2=8$ y finalmente, $$\dfrac{x'^2}{1}+\dfrac{y'^2}{2^2}=1.$$ Esto nos dice que los semiejes valen 1 y 2 y los focos están en el eje $y':$
\newpage
\begin{figure}[ht!]
\begin{center}
  \includegraphics[scale=0.5]{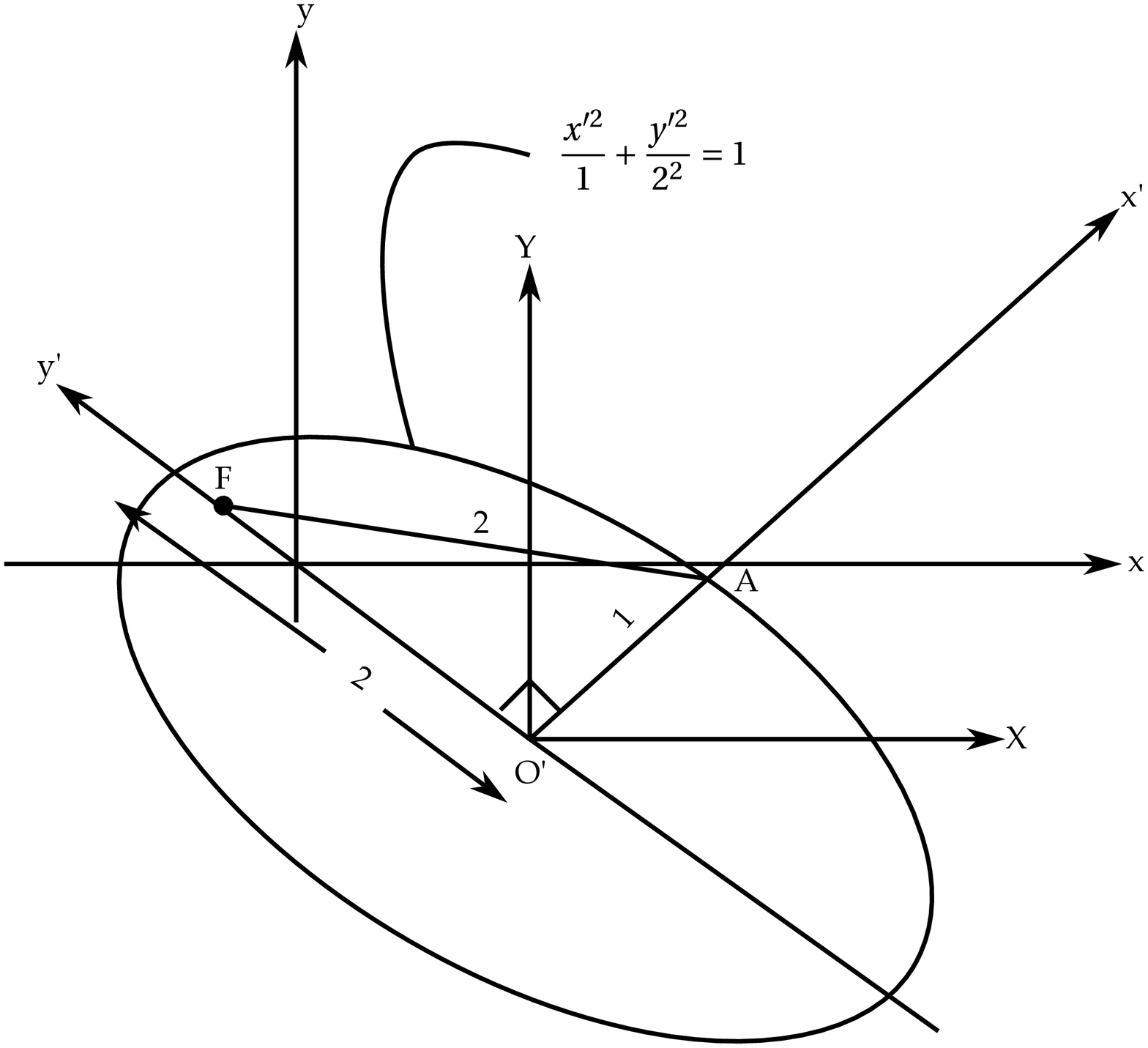}\\
\end{center}
\end{figure}
Localicemos por ejemplo el foco respecto al eje $x'y'.$\\
En el $\underset{O'AF}{\triangle},\,\,O'F=\sqrt{4-1}=\sqrt{3}.$ Luego las coordenadas de $F/x'y'$ son $(0,\sqrt{3}).$\\
Ahora,
\begin{align*}
x&=X+\widetilde{h}\\
y&=Y+\widetilde{k}
\end{align*}
O sea $$\dbinom{x}{y}=\dbinom{X}{Y}+\dbinom{\widetilde{h}}{\widetilde{k}}.$$
Pero $\dbinom{X}{Y}=P\dbinom{x'}{y'}.$ Luego $\dbinom{x}{y}=P\dbinom{x'}{y'}+\dbinom{\widetilde{h}}{\widetilde{k}}=\left(\begin{array}{cc}1/sqrt{2}&-1/sqrt{2}\\1/sqrt{2}&1/sqrt{2}\end{array}\right)\dbinom{x'}{y'}+\dbinom{1}{-1}$\\
Entonces
\begin{align*}
x&=\dfrac{1}{\sqrt{2}}x'-\dfrac{1}{\sqrt{2}}y'+1\\
y&=\dfrac{1}{\sqrt{2}}x'+\dfrac{1}{\sqrt{2}}y'-1
\end{align*}
Localizados los focos de la elipse respecto a $x'y'$, las ecuaciones anteriores nos permiten localizarlos$/xy$.\\
Encuentre la excentricidad y localice el otro foco y las directrices.\\
\item Estudiemos el lugar geométrico representado por la ecuación $$x^2-10xy+y^2+x+y+1=0$$
$$\dbinom{A}{B}=\dbinom{1}{-5};\,\,\dbinom{B}{C}=\dbinom{-5}{1};\,\,\dbinom{-D}{-E}=\dbinom{-1/2}{-1/2}$$
\begin{figure}[ht!]
\begin{center}
  \includegraphics[scale=0.5]{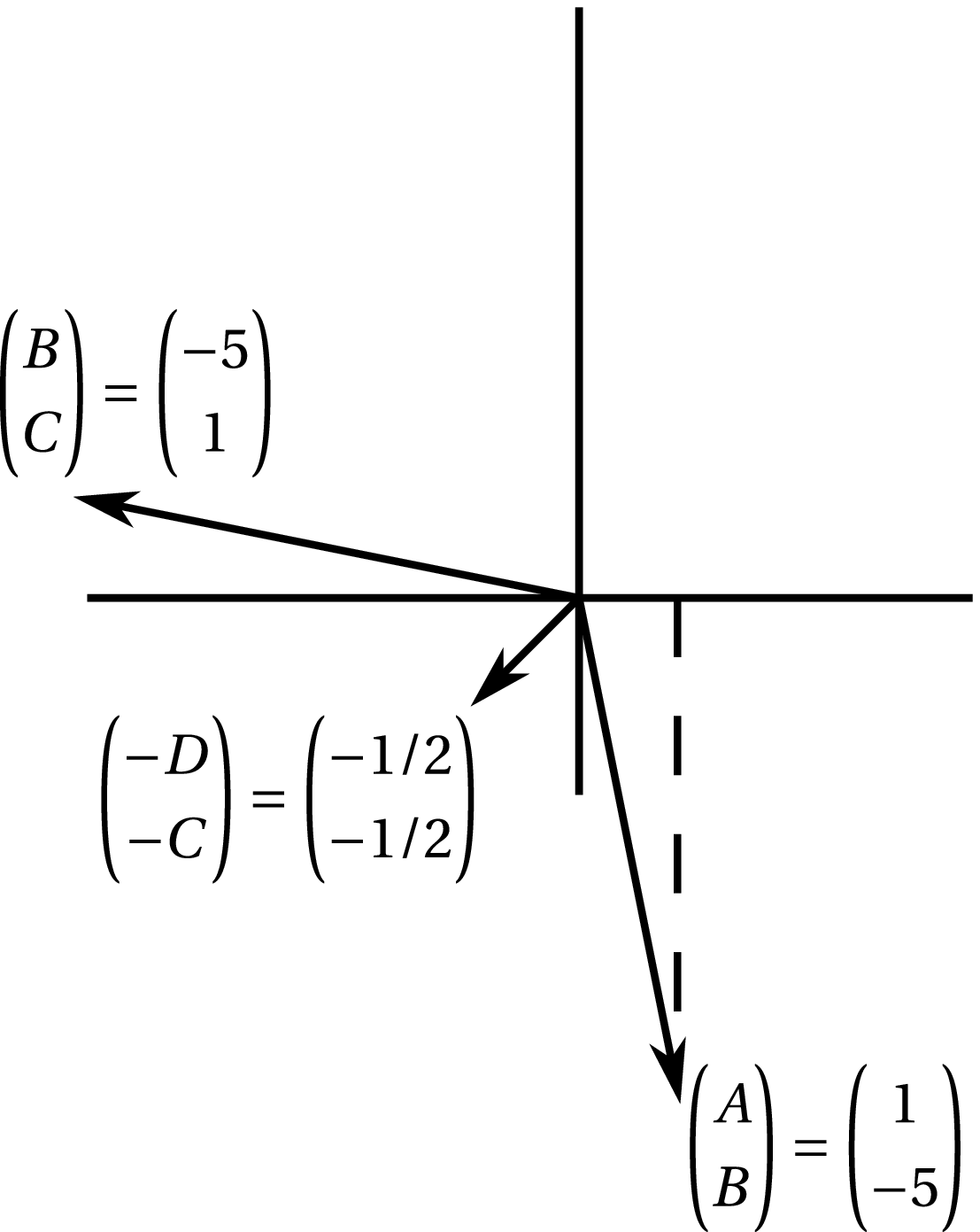}\\
\end{center}
\vspace{0.5cm}
\end{figure}
Se trata de una \underline{cónica con centro único}.\\
$$\Delta=\left|\begin{array}{ccc}1&-5&1/2\\-5&1&1/2\\1/2&1/2&1\end{array}\right|=-27<0;\,\,\delta=\left|\begin{array}{cc}1&-5\\-5&1\end{array}\right|=-24<0;\,\,B=-5\neq 0.$$
Como $B\neq 0, \Delta<0$ y $\delta<0,$ la \underline{cónica es una hipérbola}.\\
Hallemos su centro $O'(\widetilde{h}, \widetilde{k}).$\\
$$\dbinom{\widetilde{h}}{\widetilde{k}}=-\dfrac{1}{24}\left(\begin{array}{cc}1&5\\5&1\end{array}\right)\dbinom{-1/2}{-1/2}=\dbinom{1/8}{1/8}:$$
\begin{figure}[ht!]
\begin{center}
  \includegraphics[scale=0.4]{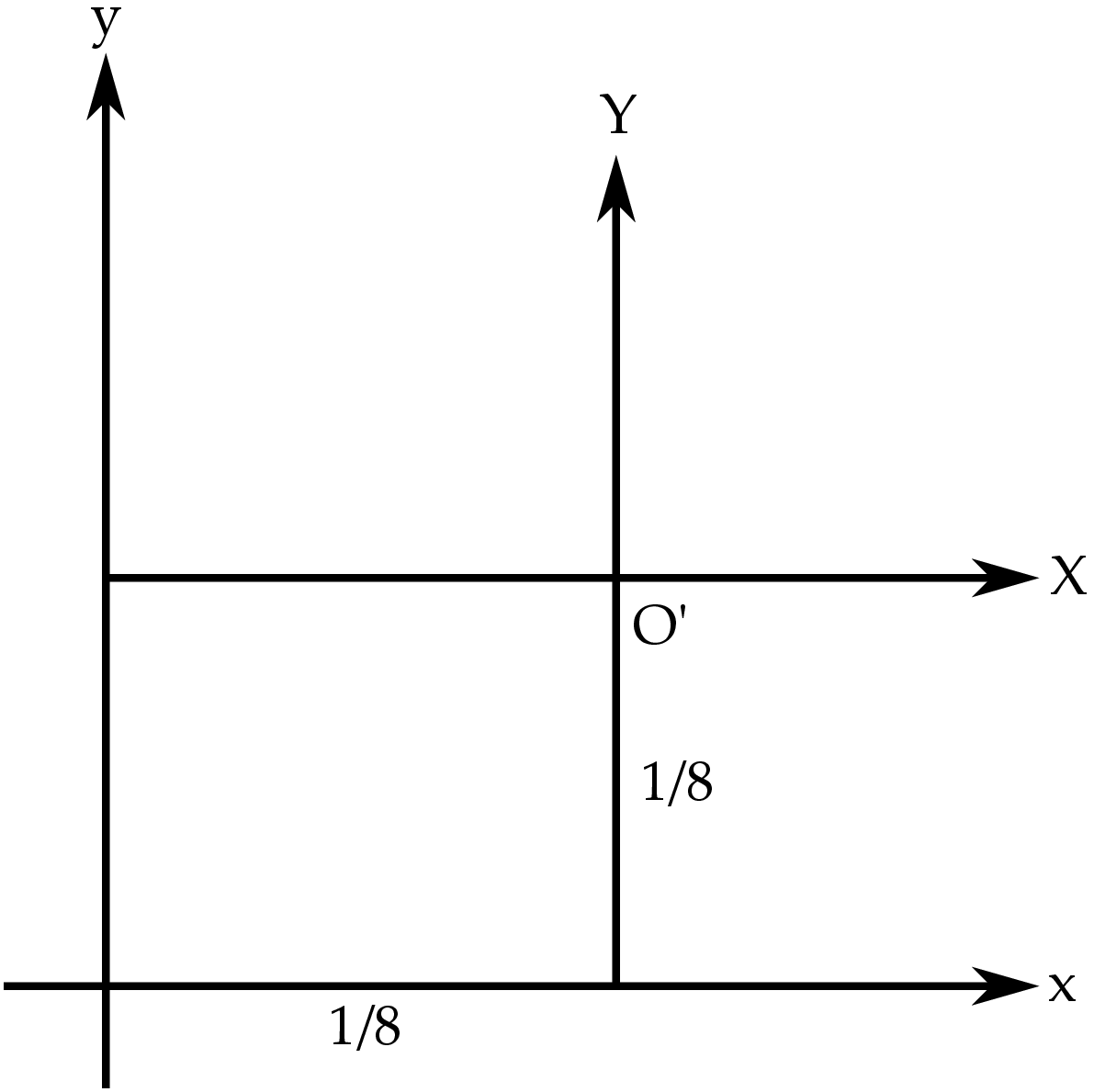}\\
\end{center}
\vspace{0.5cm}
\end{figure}
La ecuación de la hipérbola$/XY$ es: $$X^2-10XY+Y^2=-\dfrac{\Delta}{\delta}=-\dfrac{27}{24}=-\dfrac{9}{8}$$
$$M=\left(\begin{array}{cc}1&-5\\-5&1\end{array}\right);\,\,PCM(\lambda)=\lambda^2-(trM)\lambda+|M|=\lambda^2-2\lambda-24=0$$
Los valores propios de $M$ son $-4$ y $6$.\\
$$EP_{-4}^M=\mathscr{N}\left(\begin{array}{cc}-5&5\\5&-5\end{array}\right)=Sg\left\{\dbinom{1/\sqrt{2}}{1/\sqrt{2}}\right\}$$
Como $\left(\begin{array}{cc}1/\sqrt{2}\\1/\sqrt{2}\end{array}\right)$ está en el cuadrante $I,$ tenemos $\lambda_1=-4$ y $\lambda_2=6.$\\
Luego $$EP_6^M=Sg\left\{\dbinom{-1/\sqrt{2}}{1\sqrt{2}}\right\}.$$
Entonces $$P=\left(\begin{array}{cc}1/\sqrt{2}&-1/\sqrt{2}\\1/\sqrt{2}&1/\sqrt{2}\end{array}\right)$$
lo que nos indica que los ejes $x'y'$ se obtienen rotando los $XY$ un ángulo de $\overset{\curvearrowleft}{45^\circ}$ al rededor de $O':$
\begin{figure}[ht!]
\begin{center}
  \includegraphics[scale=0.5]{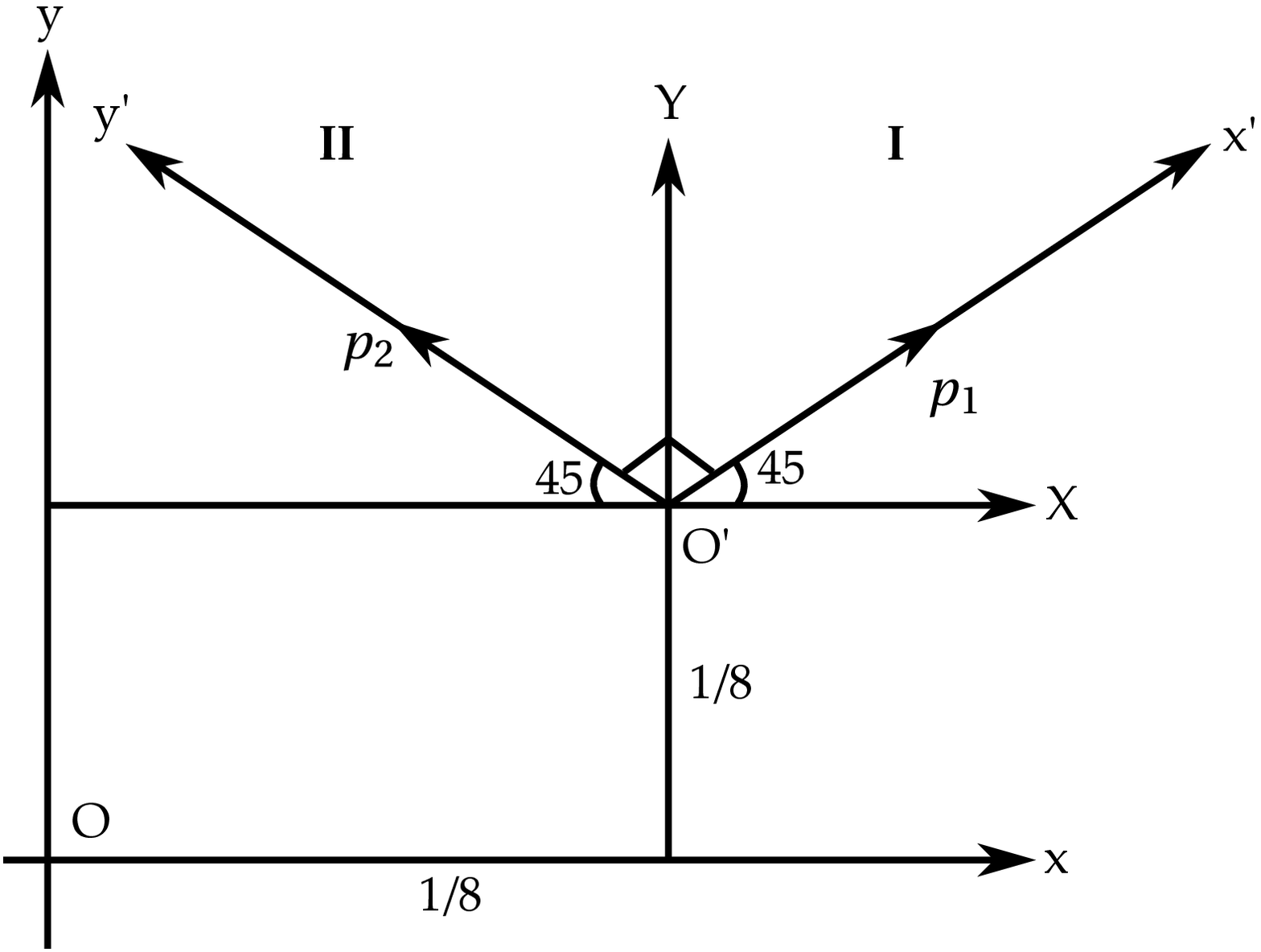}\\
\end{center}
\vspace{0.5cm}
\end{figure}
La ecuación de la hipérbola$/x'y'$ es: $$\lambda_1x'^2+\lambda_2y'^2=-\dfrac{\Delta}{\delta}=-\dfrac{9}{8}.$$
O sea $$-4x'^2+6y'^2=-\dfrac{9}{8}$$ y finalmente $$\dfrac{x'^2}{\frac{9}{32}}-\dfrac{y'^2}{\frac{3}{16}}=1$$
Las ramas de la hipérbola se abren según el eje $x'$ y los focos están sobre el mismo eje.\\
Ahora
\begin{align*}
\dbinom{x}{y}&=\dbinom{X}{Y}+\dbinom{1/8}{1/8}=P\dbinom{x'}{y'}+\dbinom{1/8}{1/8}\\
&=\left(\begin{array}{cc}1/\sqrt{2}&-1/\sqrt{2}\\ 1/\sqrt{2}&1/\sqrt{2}\end{array}\right)\dbinom{x'}{y'}+\dbinom{1/8}{1/8}\\
&=\left(\begin{array}{ccc}1/\sqrt{2}x'&-&1/\sqrt{2}y'+1/8\\\\ 1/\sqrt{2}x'&+&1/\sqrt{2}y'+1/8\end{array}\right)
\end{align*}
\begin{figure}[ht!]
\begin{center}
  \includegraphics[scale=0.5]{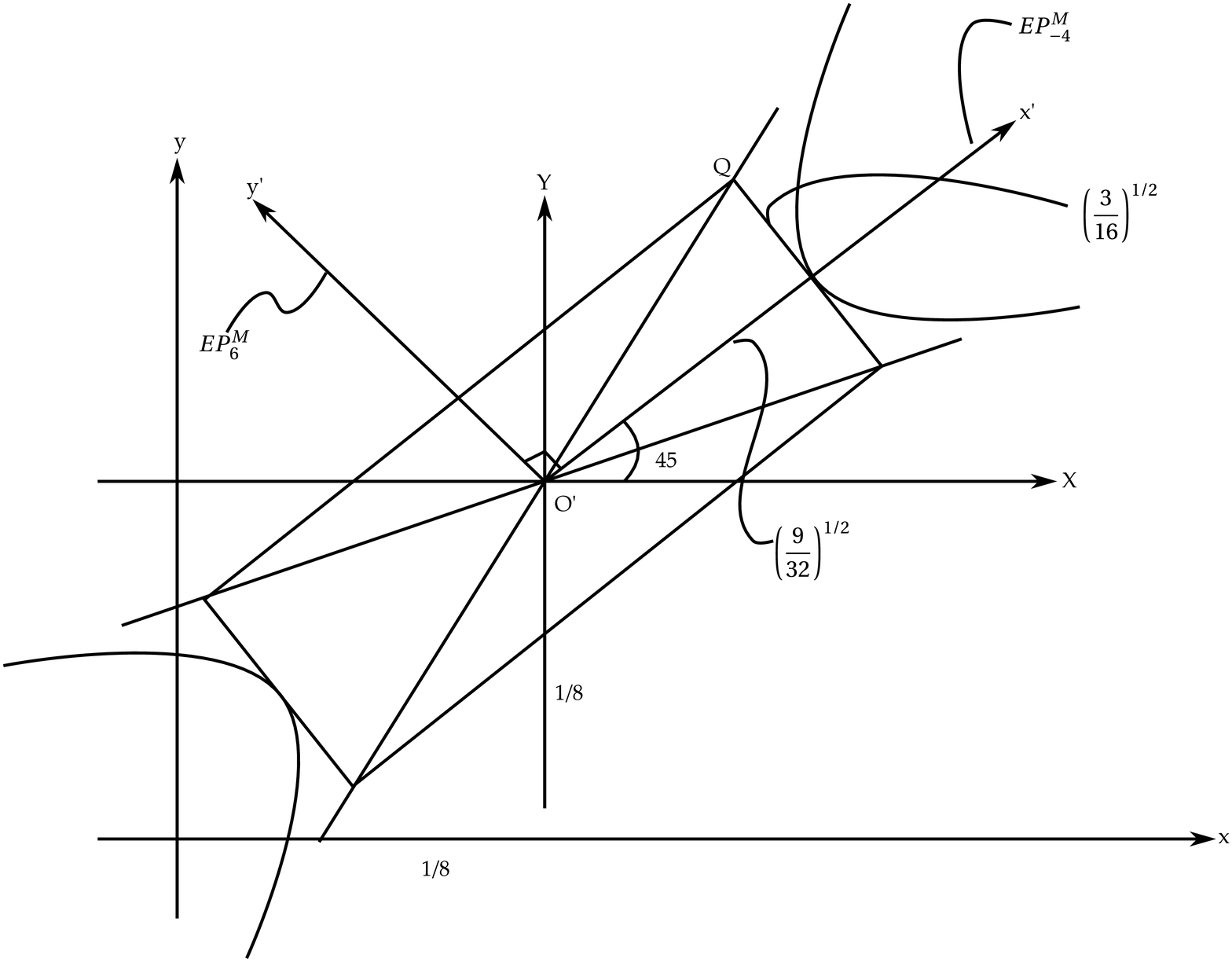}\\
\end{center}
\vspace{0.5cm}
\end{figure}
\newpage
Podemos utilizar las ecuaciones de transformación anteriores para hallar algunos elementos geométricos de la hipérbola.\\
Por ejemplo, hallemos la ecuación de la asíntota $\overset{\leftrightarrow}{OR}/xy.$\\
Esta recta tiene por ecuación$/x'y':y'=\dfrac{{\left(\frac{3}{16}\right)}^{1/2}}{{\left(\frac{9}{32}\right)}^{1/2}}x'={\left(\dfrac{3}{2}\right)}^{1/2}x'.$\\
Así que su ecuación$/xy$ es
\begin{align*}
x&=\dfrac{1}{\sqrt{2}}x'-\dfrac{1}{\sqrt{2}}{\left(\dfrac{2}{3}\right)}^{1/2}x'+\dfrac{1}{8}=\left(\dfrac{1}{\sqrt{2}}-\dfrac{1}{\sqrt{3}}\right)x'+\dfrac{1}{8}\\
y&=\dfrac{1}{\sqrt{2}}x'+\dfrac{1}{\sqrt{2}}{\left(\dfrac{2}{3}\right)}^{1/2}x'+\dfrac{1}{8}=\left(\dfrac{1}{\sqrt{2}}+\dfrac{1}{\sqrt{3}}\right)x'+\dfrac{1}{8}\\
\end{align*}
Eliminando el parámetro $x',$
\begin{align*}
x'=\dfrac{x-\dfrac{1}{8}}{\dfrac{1}{\sqrt{2}}-\dfrac{1}{\sqrt{3}}}&=\dfrac{y-\dfrac{1}{8}}{\dfrac{1}{\sqrt{2}}+\dfrac{1}{\sqrt{3}}}\\
&\overbrace{\text{ecuación de $\overset{\leftrightarrow}{OQ}/xy$}}
\end{align*}
Existe otra manera de encontrar la ecuación de las asíntotas de una hipérbola a partir de su ecuación$/xy.$\\
Esto se analiza en un problema que sigue más adelante.\\
\end{enumerate}
\begin{ejer}
Vamos a estudiar la cónica de ecuación
$$f(x,y)=Ax^2+2Bxy+Cy^2=0\hspace{0.5cm}\text{con}\hspace{0.5cm}A,B,C\neq 0$$ y $$\delta=AC-B^2\neq 0.$$
Hay centro único: el origen $O$ del sistema de coordenadas $x-y.$\\
$\Delta=\left|\begin{array}{ccc}A&B&0\\B&C&0\\0&0&0\end{array}\right|.$ Vamos a la tabla.\\
$\begin{cases}
1)\,\text{Si}\,\delta<0, \text{el lugar son dos rectas concurrentes en O}.\\
2)\,\text{Si}\,\delta<0, \text{el lugar es el punto O}.
\end{cases}$\\
\begin{enumerate}
\item[1)] Supongamos que $\delta<0$.\\
$M=\left(\begin{array}{cc}A&B\\B&C\end{array}\right); PCM(\lambda)=\lambda^2-\omega\lambda+\delta.$ Los valores propios de $M$ son reales y $\neq$s. Como $\lambda_1\lambda_2=\delta<0,$ $\lambda_1$ y $\lambda_2$ tienen signos contrarios.\\
Supongamos $\lambda_1>0$ y $\lambda_2<0.$ Al aplicar el T. Espectral, la ecuación de la cónica$/xy$ es: $\lambda_1x'^2+\lambda_2y'^2=0.$ que podemos escribir así: $$\lambda_1x'^2-|\lambda_2|y'^2=0$$ o $$\left(\sqrt{\lambda_1}x'-\sqrt{|\lambda_2|y'}\right)\left(\sqrt{\lambda_1}x'+\sqrt{|\lambda_2|y'}\right)$$
\begin{figure}[ht!]
\begin{center}
  \includegraphics[scale=0.5]{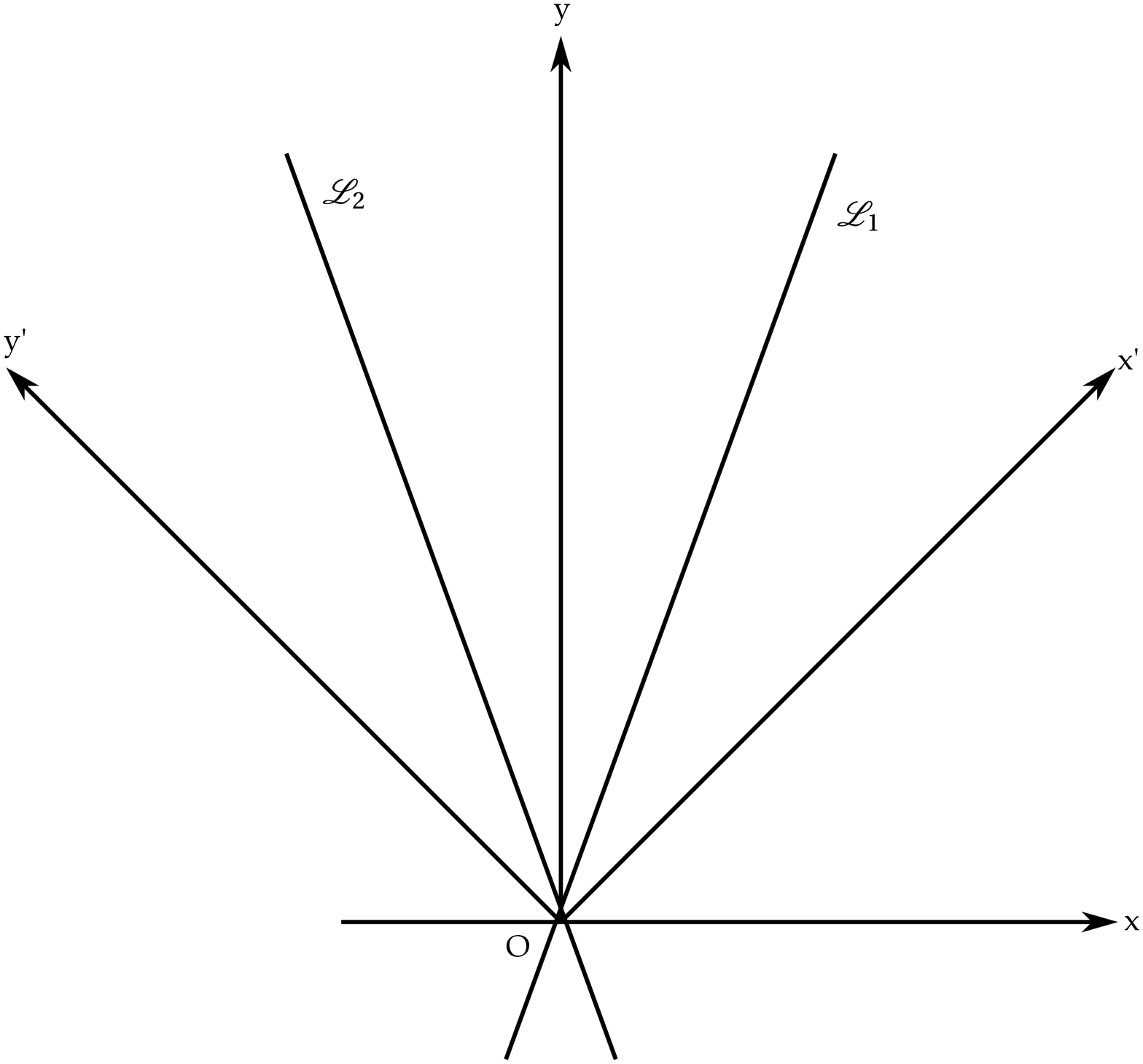}\\
\end{center}
\end{figure}
\newline
Las dos rectas son
\begin{align*}
\sqrt{\lambda_1}x'+\sqrt{|\lambda_2|}y'&=0\\
\intertext{y}\\
\sqrt{\lambda_1}x'-\sqrt{|\lambda_2|}y'&=0\hspace{0.5cm}(\text{que como se ve no son $\perp$s})
\end{align*}
o que podemos escribir
\begin{align*}
\left(\begin{array}{cc}\sqrt{\lambda_1}&\sqrt{|\lambda_2|}\end{array}\right)\dbinom{x'}{y'}=0\\
\intertext{y}\\
\left(\begin{array}{cc}\sqrt{\lambda_1}&-\sqrt{|\lambda_2|}\end{array}\right)\dbinom{x'}{y'}=0
\end{align*}
Pero $$\dbinom{x}{y}=P\dbinom{x'}{y'}\hspace{0.5cm}\therefore\hspace{0.5cm}\dbinom{x'}{y'}=P^t\dbinom{x}{y}.$$ Luego
\begin{align*}
\left(\begin{array}{cc}\sqrt{\lambda_1}&\sqrt{|\lambda_2|}\end{array}\right)P^t\dbinom{x}{y}=0\\
\intertext{y}\\
\left(\begin{array}{cc}\sqrt{\lambda_1}&-\sqrt{|\lambda_2|}\end{array}\right)P^t\dbinom{x}{y}=0
\end{align*}
son las ecuaciones de las dos rectas$/xy.$\\
Vamos a demostrar que $f(x,y)=Ax^2+2Bxy+Cy^2$ factoriza así:
\begin{equation}\label{46}
f(x,y)=Ax^2+2Bxy+Cy^2=\left(\begin{array}{cc}\sqrt{\lambda_1}&\sqrt{|\lambda_2|}\end{array}\right)P^t\dbinom{x}{y}\cdot\left(\begin{array}{cc}\sqrt{\lambda_1}&-\sqrt{|\lambda_2|}\end{array}\right)P^t\dbinom{x}{y}
\end{equation}
\begin{align}
Ax^2+2Bxy+Cy^2=\left(\begin{array}{cc}x&y\end{array}\right)\left(\begin{array}{cc}A&B\\B&C\end{array}\right)\dbinom{x}{y}&\underset{\uparrow}{=}\left(\begin{array}{cc}x&y\end{array}\right)P\left(\begin{array}{cc}\lambda_1&0\\ 0&\lambda_2\end{array}\right)P^t\dbinom{x}{y}\notag\\
&\begin{cases}\text{Por T. Espectral}, P^t\left(\begin{array}{cc}A&B\\B&C\end{array}\right)P=\left(\begin{array}{cc}\lambda_1&0\\ 0&\lambda_2\end{array}\right)\\
\therefore\hspace{0.5cm}\left(\begin{array}{cc}A&B\\B&C\end{array}\right)=P\left(\begin{array}{cc}\lambda_1&0\\ 0&\lambda_2\end{array}\right)P^t
\end{cases}\notag\\
&\underset{\nearrow}{=}\left(\begin{array}{cc}x&y\end{array}\right)P\left(\begin{array}{cc}\lambda_1&0\\ 0&-|\lambda_2|\end{array}\right)P^t\dbinom{x}{y}\label{47}\\
&\notag\begin{cases}\text{Pero}\,\,\lambda_2<0\\
\text{Luego}\,\,\lambda_2=-|\lambda_2|
\end{cases}
\end{align}
\begin{align}
&\underset{\parallel}{\underbrace{\left(\begin{array}{ccc}\sqrt{\lambda_1}&\sqrt{|\lambda_2|}\end{array}\right)P^t\dbinom{x}{y}}}\cdot\left(\begin{array}{ccc}\sqrt{\lambda_1}&-\sqrt{|\lambda_2|}\end{array}\right)P^t\dbinom{x}{y}\notag\\
&=\left(\begin{array}{ccc}x&y\end{array}\right)P\dbinom{\sqrt{\lambda_1}}{\sqrt{|\lambda_2|}}\cdot\left(\begin{array}{ccc}\sqrt{\lambda_1}&-\sqrt{|\lambda_2|}\end{array}\right)P^t\dbinom{x}{y}\notag\\
&=\left(\begin{array}{ccc}x&y\end{array}\right)P\left(\begin{array}{ccc}\lambda_1&-&\sqrt{\lambda_1|\lambda_2|}\\\sqrt{\lambda_1|\lambda_2|}&-&|\lambda_2|\end{array}\right)P^t\dbinom{x}{y}\notag\\
&\underset{\uparrow}{=}\left(\begin{array}{ccc}x&y\end{array}\right)P\left(\begin{array}{ccc}\lambda_1&0\\0&-&|\lambda_2|\end{array}\right)P^t\dbinom{x}{y}\label{48}\\
&\notag\begin{cases}
\left(\begin{array}{ccc}x&y\end{array}\right)\left(\begin{array}{ccc}A&\alpha B\\ \alpha B&C\end{array}\right)\dbinom{x}{y}=\left(\begin{array}{ccc}x&y\end{array}\right)\left(\begin{array}{ccc}A&\dfrac{\alpha+\beta}{2} B\\ \dfrac{\alpha+\beta}{2}B&C\end{array}\right)\dbinom{x}{y}
\end{cases}
\end{align}
De [\ref{47}] y [\ref{48}] se concluye [\ref{46}]
\end{enumerate}
\end{ejer}
\begin{pro}
Considremos la cónica $$2Bxy+Cy^2+2Dx+2Ey+F=0,\,\,B\neq 0,\,\,C\neq 0.$$
(Compárelo con la del problema anterior.)
\begin{figure}[ht!]
\begin{center}
  \includegraphics[scale=0.3]{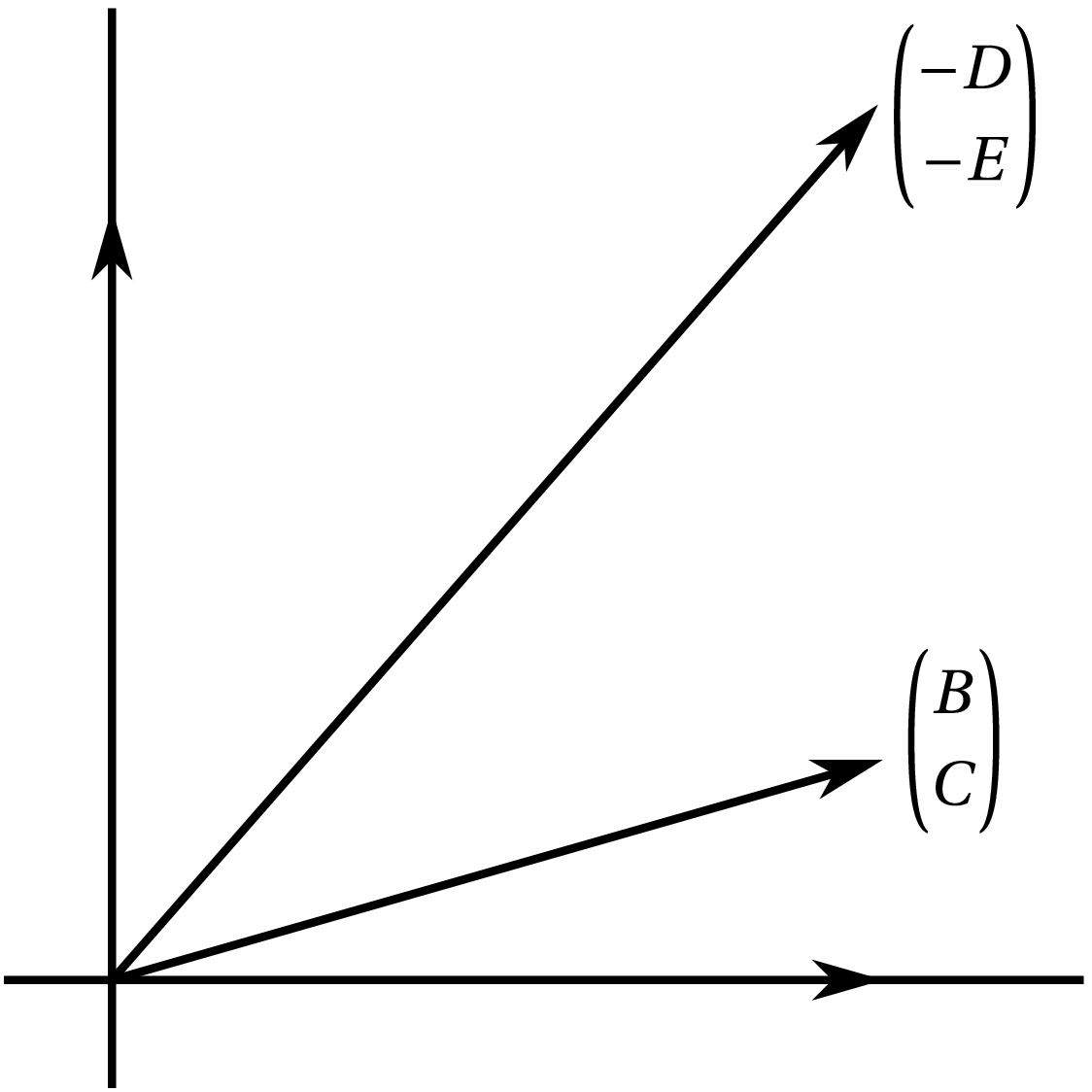}\\
\end{center}
\vspace{0.5cm}
\end{figure}
\begin{align*}
M&=\left(\begin{array}{cc}A&B\\B&C\end{array}\right)=\left(\begin{array}{cc}0&B\\B&C\end{array}\right)\\
\delta&=AC-B^2=-B^2\\
\omega&=A+C=C
\end{align*}
Como $\left\{\dbinom{A}{B}=\dbinom{0}{B}, \dbinom{B}{C}\right\}$ es L.I., \underline{la cónica tiene centro único}.\\
$\begin{cases}
Bk&=-D\\
Bh+Ck&=-E
\end{cases}$\\
De la $1^a$ ecuación, $k-=\dfrac{D}{B}$ que llevamos a la $2^a.$\\
\begin{align*}
Bh&=-Ck-E\\
\therefore\hspace{0.5cm}h&=-\dfrac{C}{D}\left(-\dfrac{D}{B}\right)-\dfrac{E}{B}=\dfrac{CD}{B^2}-\dfrac{E}{B}=\dfrac{CD-BE}{B^2}
\end{align*}
El centro $O'$ es el punto de coordenadas $$\left(\widetilde{h}, \widetilde{k}\right)=\left(\dfrac{CD-BE}{B^2},-\dfrac{D}{B}\right)$$
Una vez trasladado los ejes al punto $O'$, la ecuación de la cónica$/XY$ es $$2BXY+CY^2+f\left(\widetilde{h}, \widetilde{k}\right)=0.$$
ó $$\left(\begin{array}{cc}X&Y\end{array}\right)\left(\begin{array}{cc}0&B\\B&C\end{array}\right)\dbinom{X}{Y}+f\left(\widetilde{h}, \widetilde{k}\right)=0$$
\begin{align*}
M=\left(\begin{array}{cc}0&B\\B&C\end{array}\right);\hspace{0.5cm}PCM(\lambda)&=\lambda^2-\omega\lambda+\delta\\
&=\lambda^2-C\lambda-B^2=0\\
&\lambda=\dfrac{C\pm\sqrt{C^2+4B^2}}{2}\begin{cases}\lambda_1=\dfrac{C+\sqrt{4B^2+C^2}}{2}\\
\lambda_2=\dfrac{C-\sqrt{4B^2+C^2}}{2}\hspace{0.5cm}\therefore\hspace{0.5cm}\lambda_1\neq\lambda_2\end{cases}
\end{align*}
\begin{align*}
\lambda_1+\lambda_2&=C\\
\lambda_1\lambda_2&=-B^2\neq 0\hspace{0.5cm}\therefore\hspace{0.5cm}\lambda_1,\lambda_2\neq 0.
\end{align*}
Supongamos que $$EP_{\lambda_1}^M=\mathscr{N}\left(\lambda_1I_2-M\right)=Sg{p_1}$$ y que $$EP_{\lambda_2}^M=\mathscr{N}\left(\lambda_2I_2-M\right)=Sg{p_2}$$
donde ${p_1,p_2}:$ Base ortonormal.
\begin{figure}[ht!]
\begin{center}
  \includegraphics[scale=0.5]{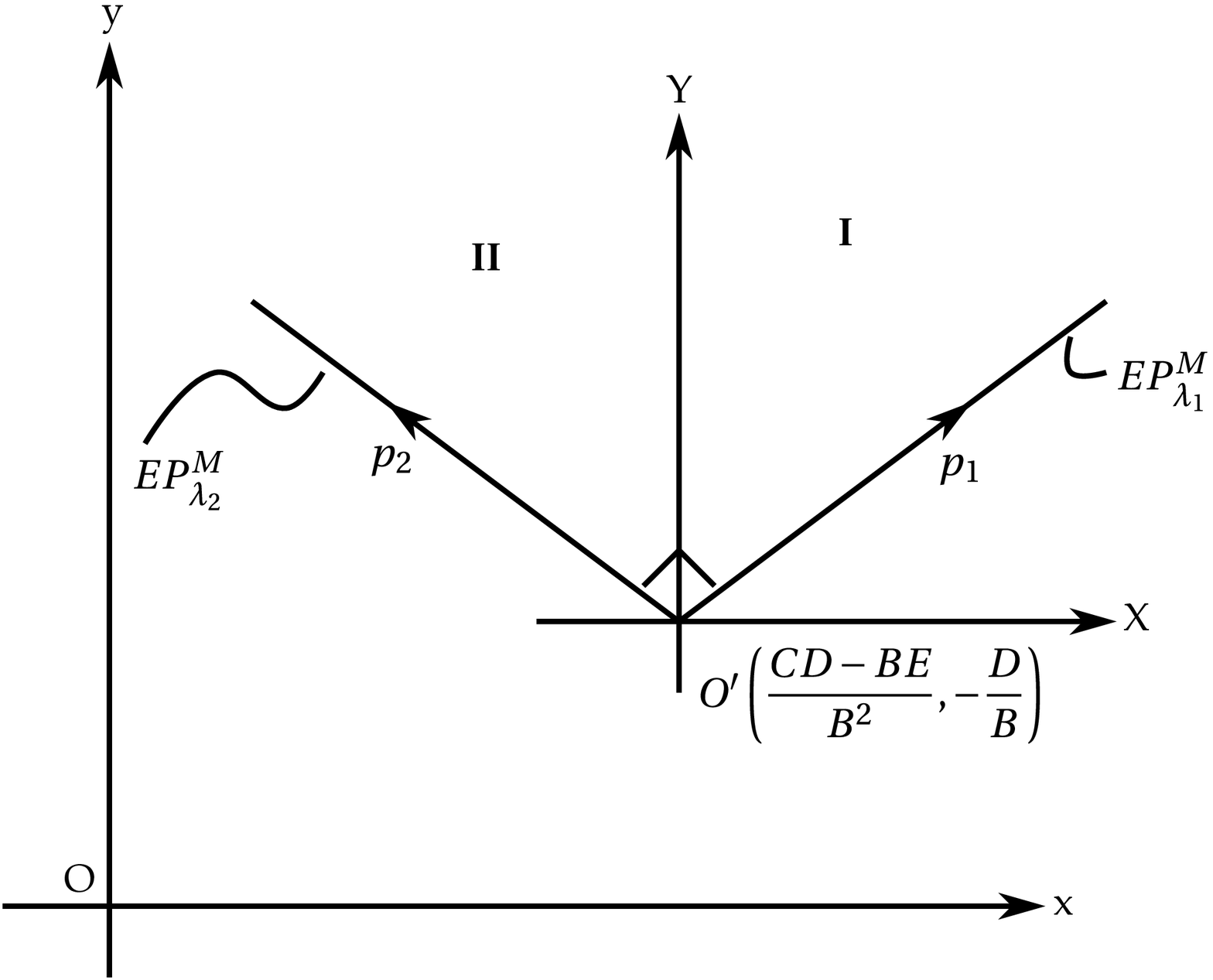}\\
\end{center}
\vspace{0.5cm}
\end{figure}
Una vez aplicado el T. Espectral, la ecuación de la cónica$/x'y'$ es: $$\left(\begin{array}{cc}x'&y'\end{array}\right)\left(\begin{array}{cc}\lambda_1&0\\0&\lambda_2\end{array}\right)\dbinom{x'}{y'}+f(h,k)=0$$
O sea $$\lambda_1x'^2+\lambda_2y'^2+f\left(\widetilde{h},\widetilde{k}\right)=0\hspace{0.5cm}\star\star.$$
Calculemos ahora el invariante $\Delta.$\\
$$\Delta=\left|\begin{array}{ccc}\lambda_1&0&0\\0&\lambda_2&0\\0&0&f\left(\widetilde{h},\widetilde{k}\right)\end{array}\right|=\lambda_1\lambda_2f\left(\widetilde{h},\widetilde{k}\right)=-B^2f\left(\widetilde{h},\widetilde{k}\right).$$
Se presentan dos casos:
\begin{enumerate}
\item[a)] Si $\Delta=0,\, f\left(\widetilde{h},\widetilde{k}\right)=0$ y al regresar a $\star\star$, la ecuación de
la cónica es: $$2BXY+CY^2=0.$$ O sea $$Y\left(2BX+CY\right)=0.$$
$$\therefore\hspace{0.5cm}Y=0\hspace{0.5cm}\text{ó}\hspace{0.5cm}Y=-\dfrac{2B}{C}X.$$ La cónica se compone de dos rectas: $$Y=0:\text{el eje X y la recta de la ecuación}\,\,Y=-\dfrac{2B}{C}X.$$
\item{b)} Si $\Delta\neq 0,\,\,f\left(\widetilde{h},\widetilde{k}\right)\neq 0$ y la ecuación $\star\star$ puede
ponerse así $$\dfrac{x'^2}{-\dfrac{f\left(\widetilde{h},\widetilde{k}\right)}{\lambda_1}}+\dfrac{y'^2}{-\dfrac{f\left(\widetilde{h},\widetilde{k}\right)}{\lambda_2}}=1$$
y el lugar, dependiendo de los signos de $f\left(\widetilde{h},\widetilde{k}\right), \lambda_1, \lambda_2$ puede representar:\\
$\begin{cases}
&\text{\underline{una elipse}}\\
&\text{ó \underline{una hipérbola}}\\
&\text{centradas en $O'$ y de ejes los ejes $x'$ y $y'$.}
\end{cases}$
\end{enumerate}
\end{pro}
\begin{pro}
El caso en que la ecuación es $Ax^2+2Bxy+2Dx+2Ey+F=0,\,\,A\neq0,\,\,B\neq0.$ se analiza de manera análoga.
\end{pro}
\begin{pro}
Consideremos la cónica $12xy-5y^2+48y-36=0.$\\
\begin{tabular}{cccc}
$A=0$&&&\\
$2B=12'\,\,B=6$&&$M=\left(\begin{array}{cc}0&6\\6&-5\end{array}\right);$&$\delta=-36$\\
$C=-5$&&&$\omega=-5$\\
$2D=0\,\,D=0$&&&\\
$2E=48\,\,E=24$&&&\\
$F=-36$&&&
\end{tabular}
\begin{figure}[ht!]
\begin{center}
  \includegraphics[scale=0.4]{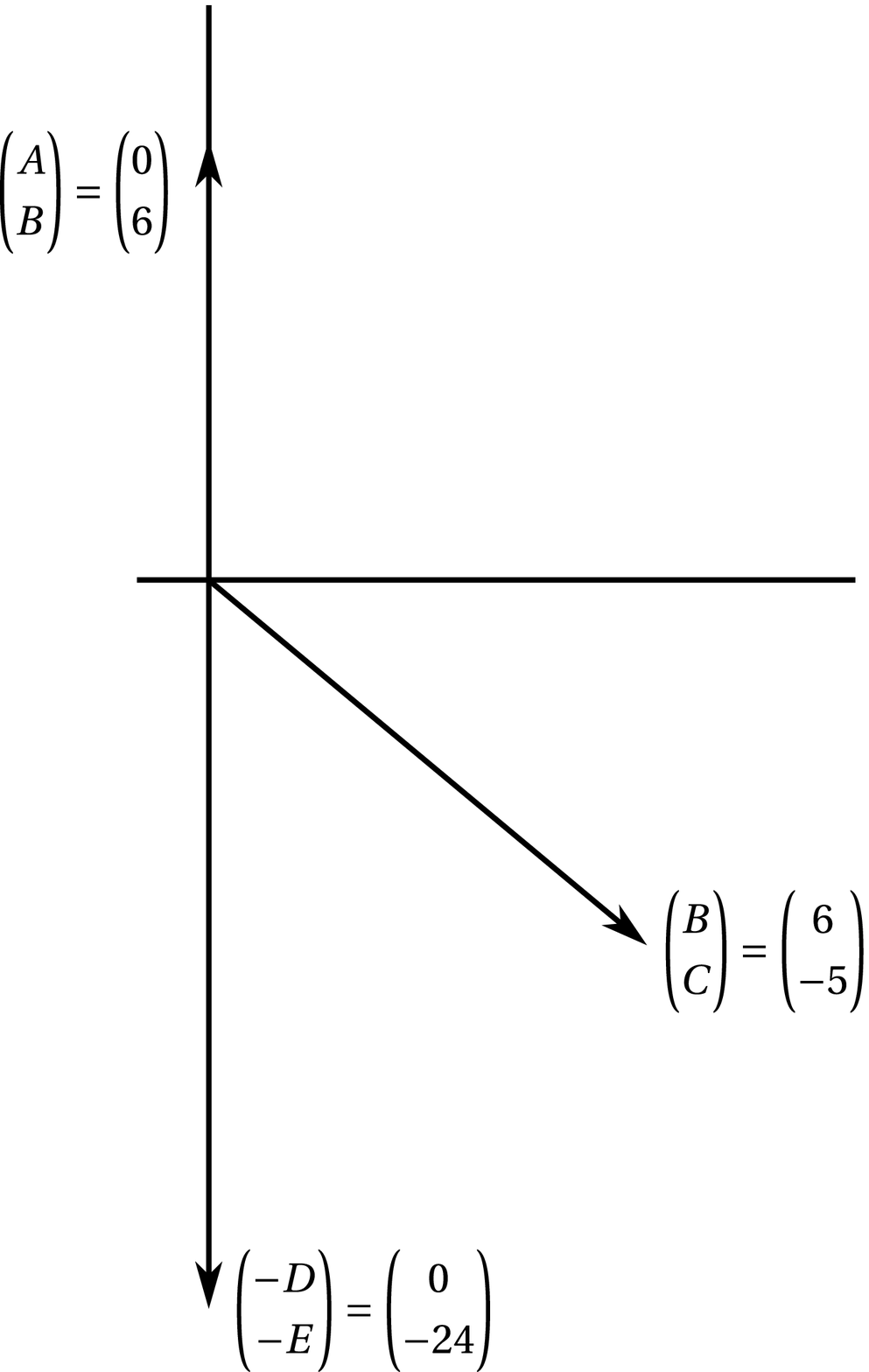}\\
\end{center}
\vspace{0.5cm}
\end{figure}
\underline{La cónica tiene centro único}.\\
$\begin{cases}
Bk=-D\\
Bh+Ck=-E\\
Bk=0\hspace{0.5cm}\therefore\hspace{0.5cm}k=0\\
h=-\dfrac{E}{B}=-\dfrac{24}{6}=-4
\end{cases}$\\
El punto $O'(-4,0)$ es el centro de la cónica.\\
Al trasladar los ejes $xy$ el punto $O'$, la ecuación de la cónica$/XY$ es $12XY-5Y^2+f(-4,0)=0$
\begin{figure}[ht!]
\begin{center}
  \includegraphics[scale=0.4]{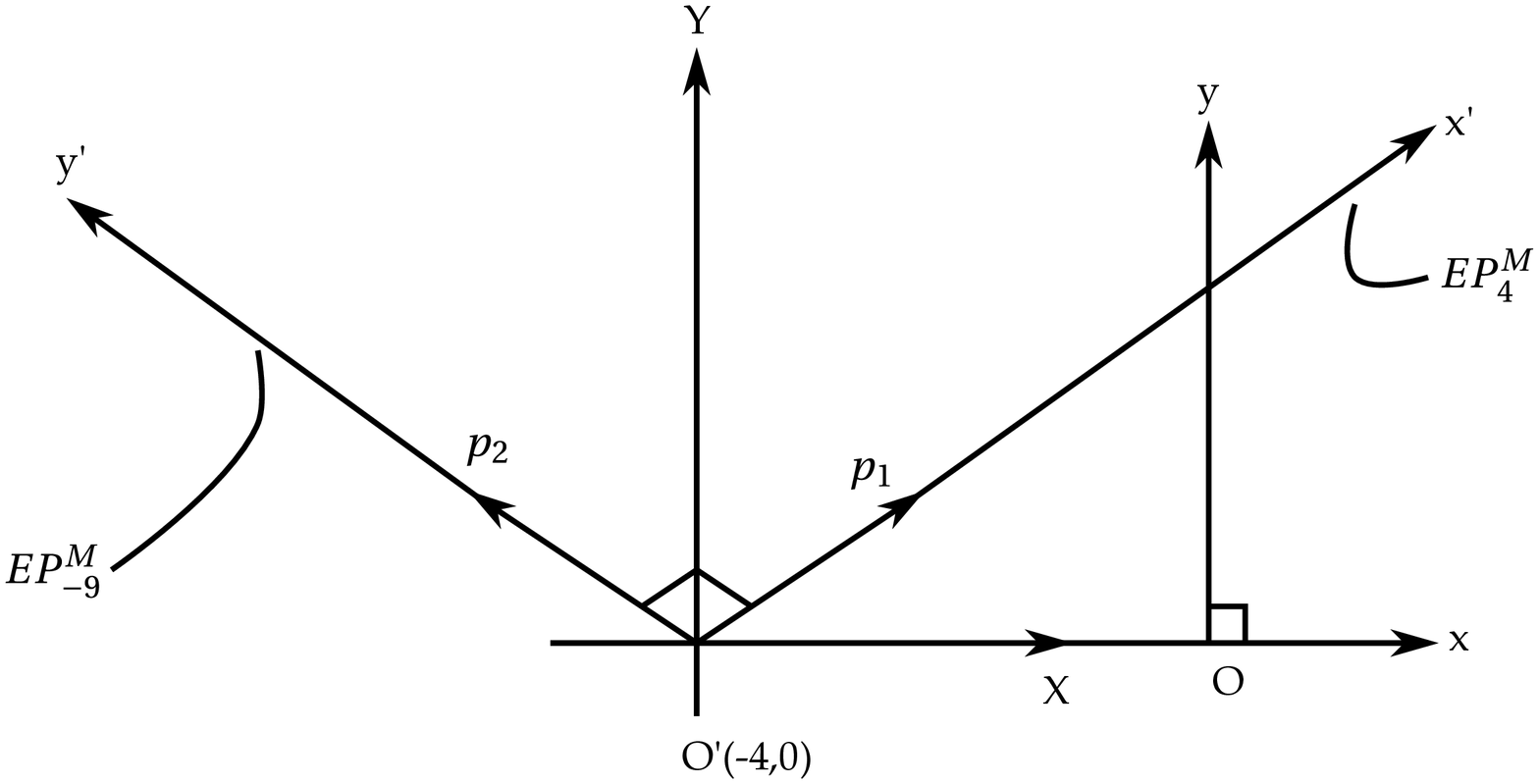}\\
\end{center}
\vspace{0.5cm}
\end{figure}
\begin{align*}
f(-4,0)&=12(-4)\cdot 0-5\cdot 0^2+48\cdot -36\\
&=-36
\end{align*}
Así que la ecuación de la cónica$/XY$ es $12XY-5Y^2-36=0.$\\
\begin{align*}
M&=\left(\begin{array}{cc}0&6\\6&-5\end{array}\right),\,\,PCM(\lambda)=\lambda^2-\omega\lambda+\delta=\lambda^2+5\lambda-36=0\\
\lambda&=\dfrac{-5\pm\sqrt{25+144}}{2}=\dfrac{-5\pm\sqrt{169}}{2}=\dfrac{-5\pm 13}{2}
\end{align*}
Los valores propios de $M$ son $4,-9.$
\begin{align*}
EP_4^M&\underset{\uparrow}{=}\mathscr{N}\left(4I_2-M\right)=\left\{\dbinom{u}{v}\diagup\begin{array}{ccc}4u&-&6v=0\\-6u&+&9v=0\end{array}\right\}\\
&\begin{cases}4I_2-M&=\left(\begin{array}{cc}4&0\\0&4\end{array}\right)-\left(\begin{array}{cc}0&6\\6&-5\end{array}\right)\\
&=\left(\begin{array}{cc}4&-6\\6-&9\end{array}\right)\end{cases}
\end{align*}
$$=\left\{\dbinom{u}{v}\diagup\begin{array}{ccc}2u&-&3v=0\\-2u&+&3v=0\end{array}\right\}=\left\{\dbinom{u}{v}\diagup 2u-3v=0\right\}$$
$v=\dfrac{2}{3}u.$ Todo vector de la forma $\left(u,\dfrac{2}{3}u\right)=u\left(1,\dfrac{2}{3}\right)=\alpha(3,2).$\\
con $\alpha\in\mathbb{R}$ está en el $EP_4^M.$
\begin{figure}[ht!]
\begin{center}
  \includegraphics[scale=0.3]{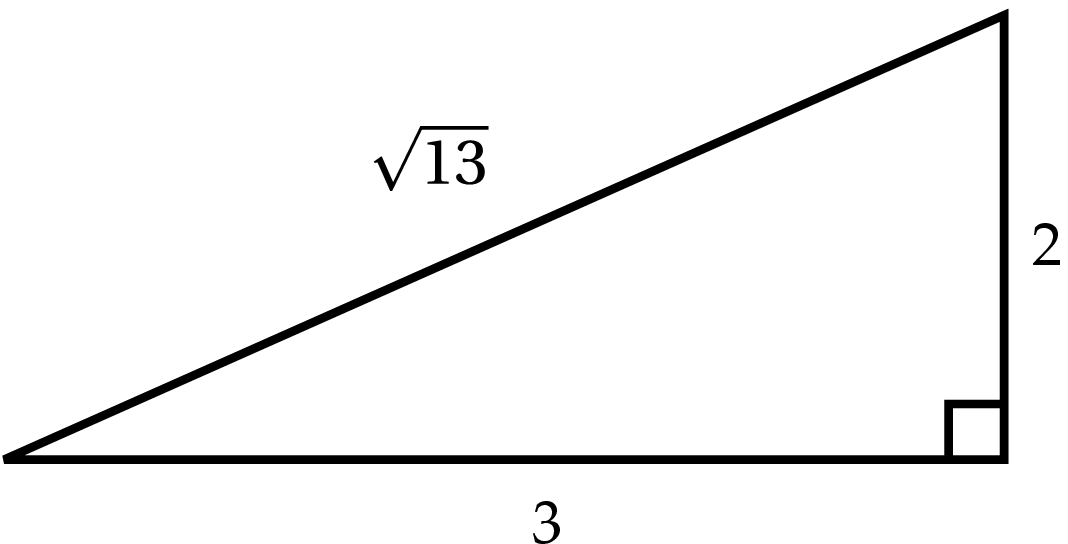}\\
\end{center}
\vspace{0.5cm}
\end{figure}
Luego $$EP_4^M=Sg\left\{\overset{\overset{p_1}{\parallel}}{\left(\begin{array}{cc}\dfrac{3}{\sqrt{13}}\\ \dfrac{2}{\sqrt{13}}\end{array}\right)}\right\};\hspace{0.5cm}EP_{-9}^M=Sg\left\{\overset{\overset{p_2}{\parallel}}{\left(\begin{array}{cc}\dfrac{-2}{\sqrt{13}}\\ \dfrac{3}{\sqrt{13}}\end{array}\right)}\right\}$$
${p_1,p_2}:$ Base ortonormal que señala las direcciones de $x'$ y $y'.$\\
Al aplicar el T. Espectral, la ecuación de la cónica$/x'y'$ es:
\begin{align*}
\left(\begin{array}{cc}x'&y'\end{array}\right)\left(\begin{array}{cc}4&0\\0&-9\end{array}\right)\dbinom{x'}{y'}-36=0\\
4x´^2-9y'^2=36;\hspace{0.5cm}\dfrac{x'^2}{3^2}-\dfrac{y'^2}{2^2}=1.\hspace{0.5cm}\text{\underline{Hipérbola}}
\end{align*}
\begin{figure}[ht!]
\begin{center}
  \includegraphics[scale=0.4]{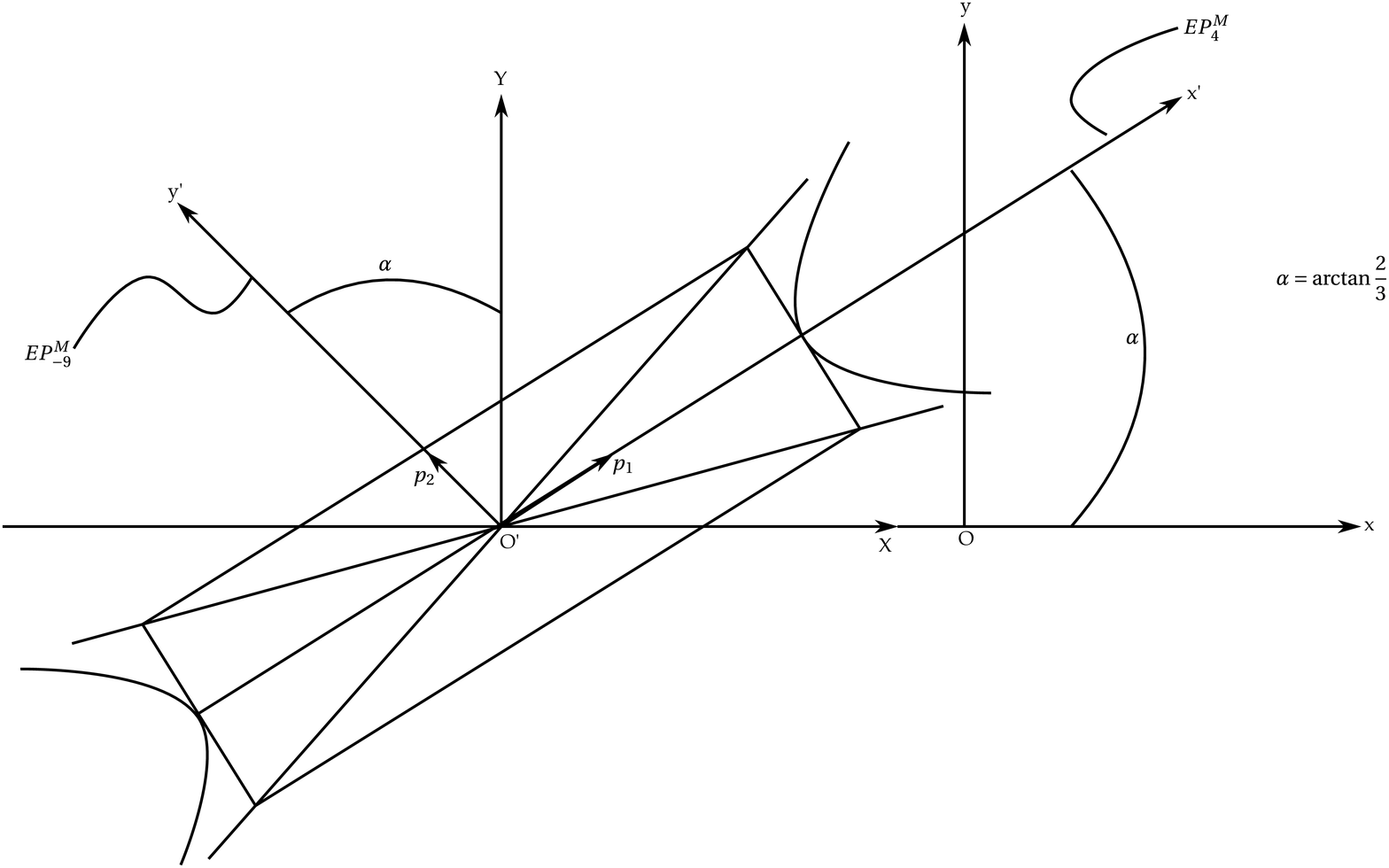}\\
\end{center}
\vspace{0.5cm}
\end{figure}
\end{pro}
\newpage
\begin{pro}
Como encontrar las asíntotas de una hipérbola a partir de su ecuación.\\
Consideremos la hipérbola de ecuación:
\begin{equation}\label{49}
h:Ax^2+2Bxy+Cy^2+2Dx+2Ey+F=0
\end{equation}
Sea $r:y=mx+b$ una asíntota de la curva. Vamos a determinar $m$ y $b$ a partir de $A, B,\cdots, F.$\\
\begin{figure}[ht!]
\begin{center}
  \includegraphics[scale=0.5]{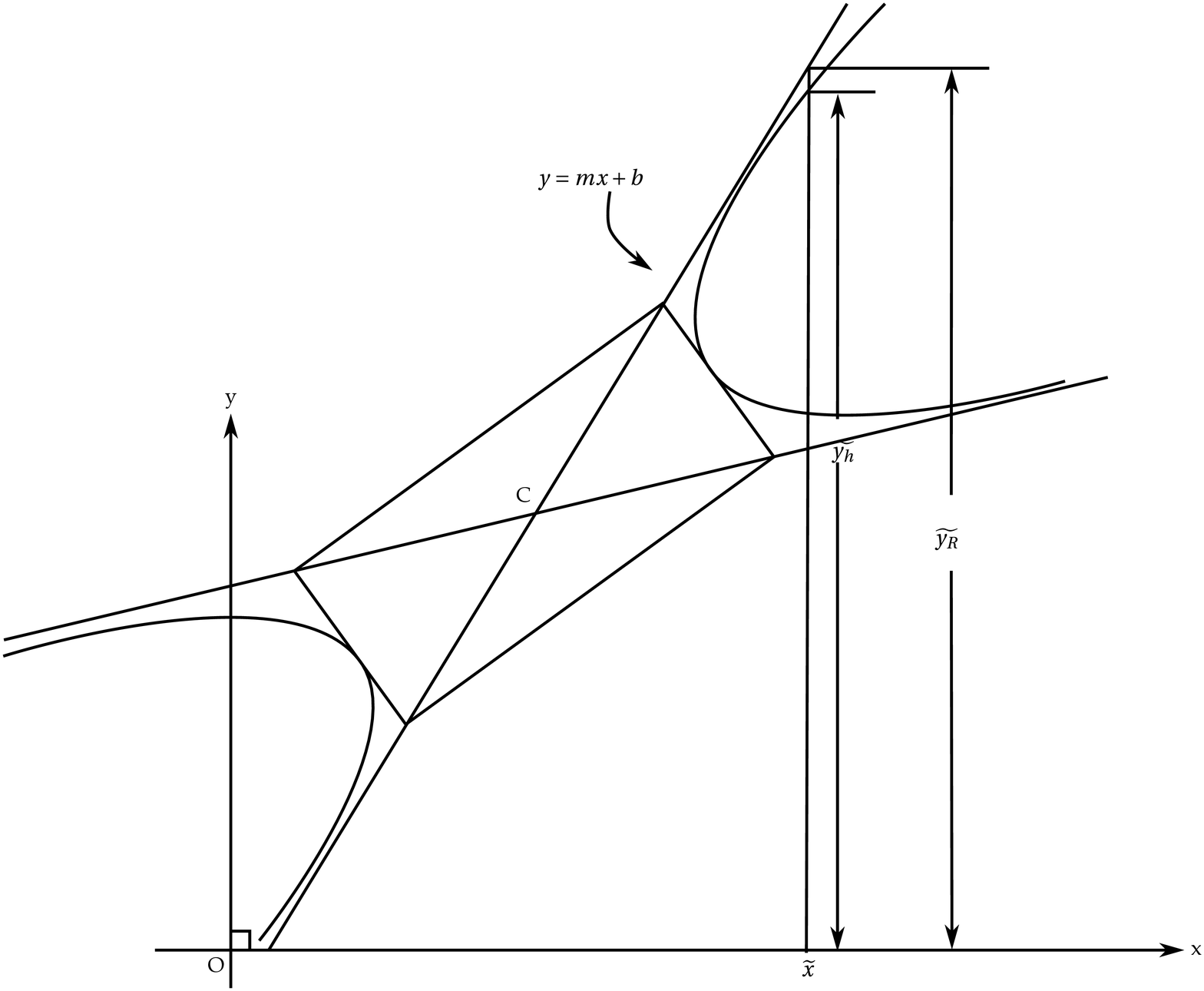}\\
\end{center}
\vspace{0.5cm}
\end{figure}
Si $\widetilde{x}\gg 0,\,\,\widetilde{y}_h\approx\widetilde{y}_R=m\widetilde{x}+b.$\\
Como $\left(x,\widetilde{y}_h\right)=\left(\widetilde{x},m\widetilde{x}+b\right)$ está en la curva, sus coordenadas satisfacen [\ref{49}].\\
O sea que
\begin{align}
A\widetilde{x}^2+2B\widetilde{x}\left(m\widetilde{x}+b\right)+C{\left(m\widetilde{x}+b\right)}^2+2D\widetilde{x}+2E\left(m\widetilde{x}+b\right)+F=0\notag\\
A\widetilde{x}^2+2Bm\widetilde{x}^2+2Bb\widetilde{x}+Cm^2\widetilde{x}^2+Cb^2+2Cmb\widetilde{x}+2D\widetilde{x}+2Em\widetilde{x}+2Eb+F=0\notag\\
\left(A+2Bm+Cm^2\right)\widetilde{x}^2+\left(2Bb+2Cmb+2D+2Em\right)\widetilde{x}+Cb^2+2Eb+F=0\label{50}
\end{align}
Dividiendo por $\widetilde{x}^2,$ $$\left(A+2Bm+Cm^2\right)+\dfrac{2Bb+2Cmb+2D+2Em}{\widetilde{x}^2}+\dfrac{Cb^2+2Eb+F}{\widetilde{x}^2}=0$$
Tomando $\lim\limits_{\widetilde{x}\rightarrow\infty},$
$$\lim\limits_{\widetilde{x}\rightarrow\infty}\left(A+2Bm+Cm^2\right)+\lim\limits_{\widetilde{x}\rightarrow\infty}\dfrac{2Bb+2Cm+2D+2Em}{\widetilde{x}}+\lim\limits_{\widetilde{x}\rightarrow\infty}\dfrac{Cb^2+2Eb+F}{\widetilde{x}^2}=0$$
O sea que $$A+2Bm+Cm^2=0\hspace{0.5cm}\star$$ y al regresar a [\ref{49}]:
$$\left(2Bb+2Cmb+2D+2Em\right)\widetilde{x}+Cb^2+2Eb+F=0.$$
Dividiendo por $\widetilde{x},$
$$\left(2Bb+2Cmb+2D+2Em\right)+\dfrac{Cb^2+2Eb+F}{\widetilde{x}}=0.$$
Tomando nuevamente $\lim\limits_{\widetilde{x}\rightarrow\infty}.$\\
$$\lim\limits_{\widetilde{x}\rightarrow\infty}\left(2Bb+2Cmb+2D+2Em\right)+\lim\limits_{\widetilde{x}\rightarrow\infty}=0.$$
Luego
\begin{align*}
Bb+Cmb+D+Em&=0\\
(B+Cm)b+D+Em&=0\\
\therefore\hspace{0.5cm}b=-\dfrac{D+Em}{B+Cm}\hspace{0.5cm}\star\star
\end{align*}
$\star$ y $\star\star$ resuelven el problema.\\
Al resolver $\star$ para $m$, hallamos las pendientes $m_1$ y $m_2$ de las asíntotas que al reemplazar en $\star\star$ permiten obterner los interseptos $b$ de cada una de ellas.\\
Tenidas las ecuaciones de las dos asíntotas, al interseptarlas es posible hallar las coordenadas del centro de la curva.\\
Regresemos a $\star.$\\
\begin{align*}
m\underset{\overset{\uparrow}{C}\neq 0}{=}\dfrac{-2B\pm\sqrt{4B^2-4AC}}{2C}=\dfrac{-B\pm\sqrt{B^2-AC}}{C}&\underset{\uparrow}{=}\dfrac{-B\pm\sqrt{-\delta}}{C}\\
&\begin{array}{cc}\text{Recuérdese que las}\\ \text{hipérbolas $\delta<0$}\end{array}
\end{align*}
$B+Cm
\begin{cases}
=B+\cancel{C}\dfrac{-B+\sqrt{-\delta}}{\cancel{C}}=\sqrt{-\delta}\hspace{0.5cm}\text{y}\hspace{0.5cm}b=-\dfrac{D+Em}{\sqrt{-\delta}}\\
=B+\cancel{C}\dfrac{-B+\sqrt{-\delta}}{\cancel{C}}=-\sqrt{-\delta}\hspace{0.5cm}\text{y}\hspace{0.5cm}b=\dfrac{D+Em}{\sqrt{-\delta}}\\
\end{cases}$\\
\end{pro}
\begin{ejem}
Consideremos la hipérbola $$x^2-3xy+2y^2-4x=0.$$
\begin{tabular}{ccc}
$A=1$&$F=0$\\
$2B=-3$&$B=-\dfrac{3}{2}$\\
$C=2$&&\\
$2D=-4$&$D=-2$
\end{tabular}
La ecuación $\star$ es
\begin{align*}
1-3m+2m^2=0\\
\therefore\hspace{0.5cm}m=\dfrac{3\pm\sqrt{9-8}}{4}=\dfrac{3\pm 1}{4}\begin{cases}m_1&=1\\ m_2&=\dfrac{1}{2}\end{cases}
\end{align*}
Pero $$m_1=1,\,\, b_1=-\dfrac{-2}{-\dfrac{3}{2}+2\cdot 1}=\dfrac{2}{\dfrac{1}{2}}=4$$
Pero $$m_2=\dfrac{1}{2},\,\, b_2=-\dfrac{-2}{-\dfrac{3}{2}+2\dfrac{1}{2}}=\dfrac{2}{-\dfrac{1}{2}}=-4$$
Las asíntotas son:\\
$\begin{cases}
y&=x+4\\
y&=\dfrac{1}{2}x-4
\end{cases}$\\
Al hacerlas simultaneas obtenemos las coordenadas$/xy$ del centro $C$ de la curva:
\begin{figure}[ht!]
\begin{center}
  \includegraphics[scale=0.3]{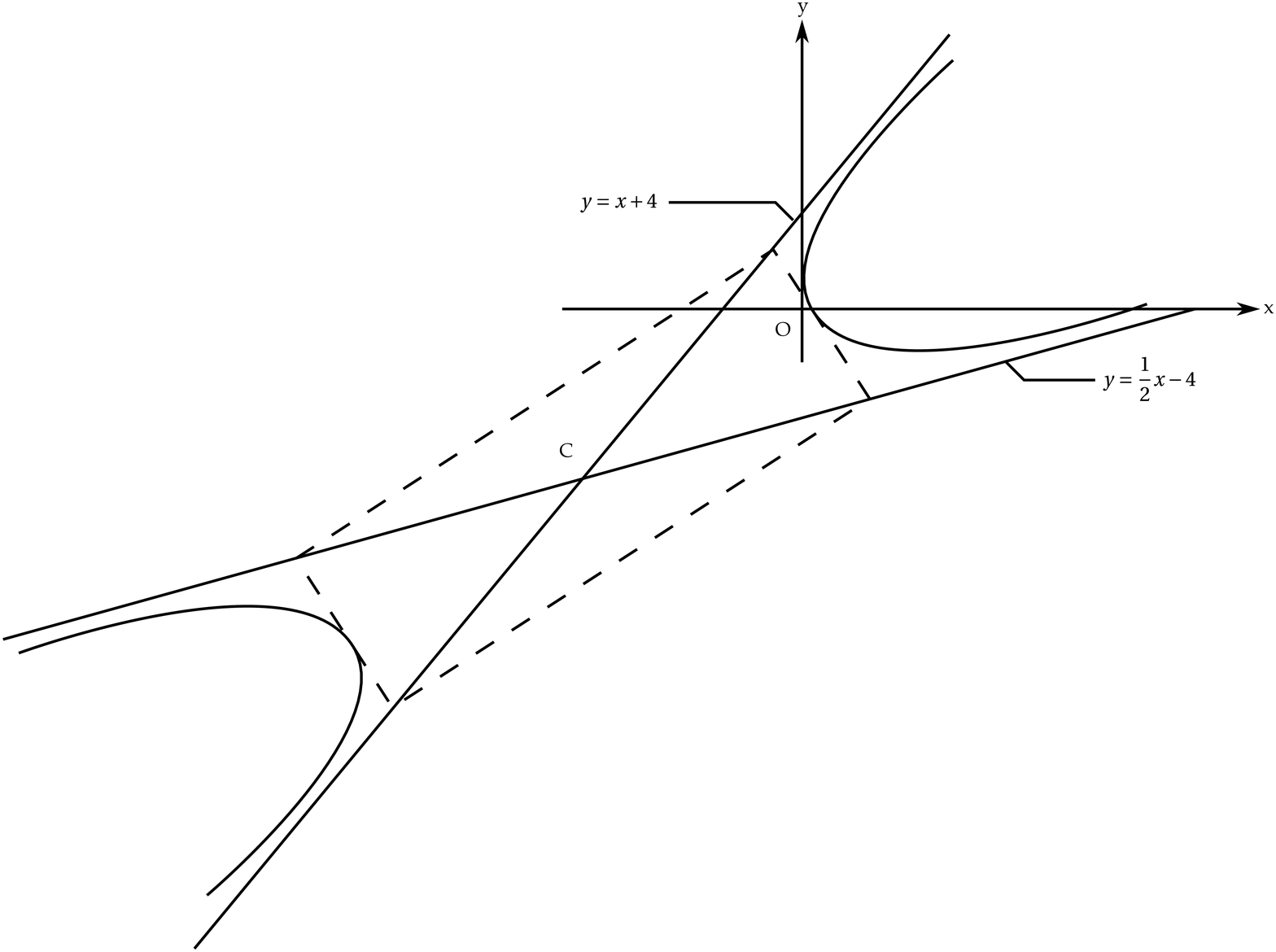}\\
\end{center}
\end{figure}
\newline
Las dos ecuaciones anteriores conducen al sistema
$\begin{cases}
x-y+4&=0\\
x-2y-8&=0
\end{cases}$\\
Cuya solución es $(-16,-12).$\\
Otra forma de conseguir el centro es resolviendo el sistema
$\begin{cases}
Ax+By&=-D\\
Bx+Cy&=-E
\end{cases}$\\
que en este caso es\\
$\begin{cases}
x-\dfrac{3}{2}y=2\\
-\dfrac{3}{2}x+2y=0\hspace{0.5cm};\hspace{0.5cm}\begin{cases}2x-3y=4\\ -3x+4y=0\hspace{0.5cm}\text{cuya solución es $(-16,-12).$}\end{cases}
\end{cases}$\\
\end{ejem}
\section{Cónicas sin centro}
Consideremos la cónica
\begin{equation}\label{51}
Ax^2+2Bxy+Cy^2+2Dx+2Ey+F=0
\end{equation}
Supongamos que los coeficientes $A,B,C\neq 0.$ Los casos en que uno de ellos (o dos) es cero ya fueron discutidos.\\
Recuérdese que en esos casos se tenía:
\begin{figure}[ht!]
\begin{center}
  \includegraphics[scale=0.5]{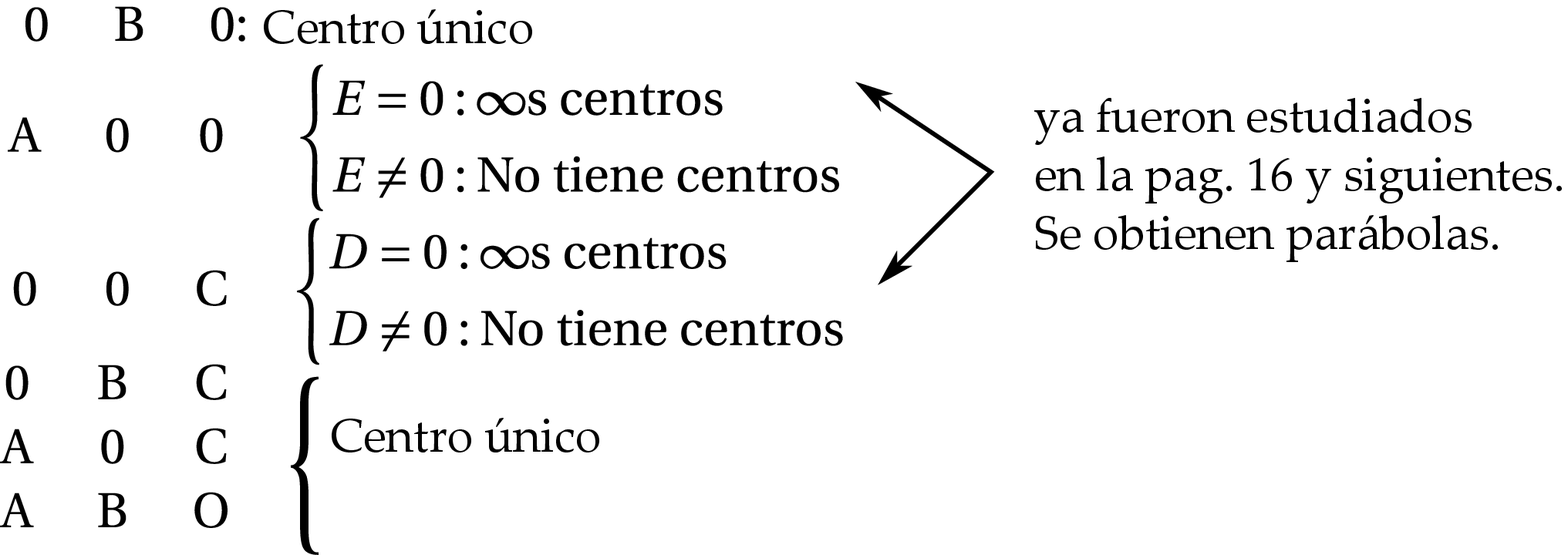}\\
\end{center}
\vspace{0.5cm}
\end{figure}
Esperamos de antemano que los lugares que se obtengan sean parábolas. No pueden ser ni elipses ni hipérbolas ya que éstas son cónicas con centro. No pueden ser rectas paralelas ó una recta ya que en estos casos hay $\infty$s centros.\\
Tampoco la cónica puede reducirse a dos rectas que se cortan ó a un punto ya que en otros casos hay centro único.\\
La situación es ésta:$$\left\{\dbinom{A}{B}, \dbinom{B}{C}\right\}\hspace{0.5cm}\text{es L.D. y}\hspace{0.5cm}\dbinom{-D}{-E}\notin Sg\left\{\dbinom{A}{B}\right\}=Sg\left\{\dbinom{B}{C}\right\}.$$\\
Además, $$\left\{\dbinom{A}{B}, \dbinom{-D}{-E}\right\}\hspace{0.5cm}\text{es L.I.}.$$ y por lo tanto área del paralelogramo de $\dbinom{A}{B}$ y $\dbinom{-D}{-E}$ $$=\left|\begin{array}{cc}A&-D\\B&-C\end{array}\right|=BD-AE\neq 0.$$ Análogamente, $$\left\{\dbinom{B}{C},\dbinom{-D}{-E}\right\}\hspace{0.5cm}\text{es L.I}.$$ y área del paralelogramo de $\dbinom{B}{C}$ y $\dbinom{-D}{-E}$ $$=\left|\begin{array}{cc}B&-D\\C&-E\end{array}\right|=CD-BE\neq 0.$$
$$M=\left(\begin{array}{cc}A&B\\B&C\end{array}\right); \delta=\left|\begin{array}{cc}A&B\\B&C\end{array}\right|=AC-B^2=\text{área del paralelogramo de $\dbinom{A}{B}\, y\,\dbinom{B}{C}$}=0.$$
\begin{figure}[ht!]
\begin{center}
  \includegraphics[scale=0.4]{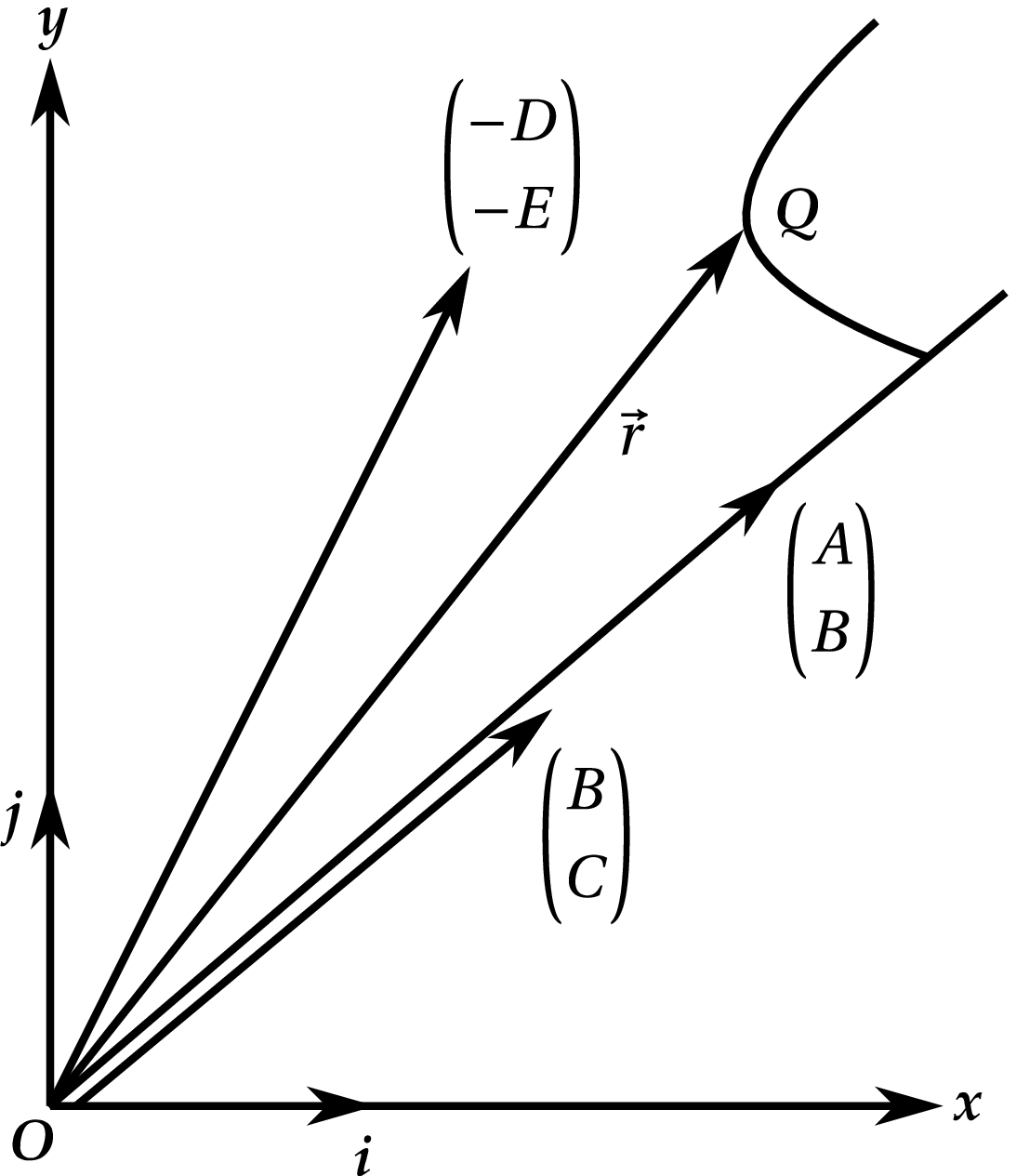}\\
\end{center}
\vspace{0.5cm}
\end{figure}
El sistema
\begin{align*}
Ax+By&=-D\\
Bx+Cy&=-E
\end{align*}
No tiene solución y por lo tanto la cónica [\ref{51}] no tiene centro.\\
Ahora, $\omega=A+C\neq 0$ porque si $\omega=0, A+C=0$ y $C=-A.$\\
Luego $0=\delta=AC-B^2=-A^2-B^2=-\left(A^2+B^2\right)$ y se tendría que $A^2+B^2=0(\rightarrow\leftarrow)$ ya que hemos asumido que ninguno de los $A,B,C$ es cero.\\
Como la cónica no tiene centro, no es posible realizar una traslación de los ejes $xy$ que elimine los términos lineales en [\ref{51}].\\
Entonces la primera transformación que hacemos es una transformación ortogonal que elimine el término mixto en [\ref{51}].\\ Sea $Q(x,y)$ un punto de la cónica.\\
Entonces sus coordenadas satisfacen [\ref{51}]. O sea que $Ax^2+2Bxy+Cy^2+2Dx+2Ey+F=0$ que podemos escribir así:
\begin{equation}\label{52}
\left(\begin{array}{cc}x&y\end{array}\right)M\dbinom{x}{y}+2\left(\begin{array}{cc}2D&2E\end{array}\right)\dbinom{x}{y}+F=0
\end{equation}
donde $$M=\left(\begin{array}{cc}A&B\\B&C\end{array}\right).$$
Llamemos $\vec{r}$ al vector de posición de $Q/O.$\\
Entonces $[\vec{r}]_{ij}=\dbinom{x}{y}$ y al regresar a [\ref{52}]:
\begin{equation}\label{53}
\left[\vec{r}\right]_{ij}^tM\left[\vec{r}\right]_{ij}+\left(\begin{array}{cc}2D&2E\end{array}\right)\left[\vec{r}\right]_{ij}+F=0
\end{equation}
Ahora, como la matriz $M=\left(\begin{array}{cc}A&B\\B&C\end{array}\right)$ es simétrica, por el Teorema Espectral,  $$\exists P=\left(\begin{array}{cc}\underset{\downarrow}{\overset{\uparrow}{p_1}}&\underset{\downarrow}{\overset{\uparrow}{p_2}}\end{array}\right)=\left(\begin{array}{cc}p_1^1&p_2^1\\p_1^2&p_2^2\end{array}\right):\text{ortogonal},$$
i.e., $$P^{-1}=P^t$$ y $$\langle p_i,p_j\rangle=\delta_{ij}$$ donde $\langle\hspace{0.5cm}\rangle:$ P.I.Usual en $\mathbb{R}^2,$ tal que
\begin{equation}\label{54}
P^tMP=\left(\begin{array}{cc}\lambda_1&0\\0&\lambda_2\end{array}\right)
\end{equation}
donde $\lambda_1$ y $\lambda_2$ son los valores propios de $M.$\\
O sea que $p_1\in EP_{\lambda_1}^M$ y $p_2\in EP_{\lambda_2}^M,$ lo que significa que
\begin{align*}
Mp_1&=\lambda_1 p_1\\
Mp_2&=\lambda_2p_2
\end{align*}
$\lambda_1$ y $\lambda_2$ son las raices del polinomio característico de $M:$ $$PCM(\lambda)=\lambda^2-\omega\lambda+\delta=0.$$ Pero $\delta=0.$\\
Luego $\lambda^2-\omega\lambda=0$ lo que dice que $\omega$ y $0$ son los valores propios de $M.$\\
Ahora, $$EP_0^M=\mathscr{N}\left(0I_2-M\right)=\underset{\begin{tabular}{cc}$\text{Espacio}$\\$\text{nulo de M}$\end{tabular}}{\underbrace{\mathscr{N}}}(M)\left\{\dbinom{u}{v}\diagup\begin{tabular}{ccc}$Au$&$+$&$Bv=0$\\$Bu$&$+$&$Cv=0$\end{tabular}\right\}$$
Consideremos el sistema
\begin{align*}
Au+Bv&=0\\
Bu+Cv&=0.
\end{align*}
Como $\left\{\dbinom{A}{B},\dbinom{B}{C}\right\}$ es L.D., $\alpha\dbinom{A}{B}.$ O sea que $B=\alpha A$ y $C=\alpha B.$\\
Ahora, toda solución de la $1^a$ ecuación es de la forma $\left(u,-\dfrac{A}{B}u\right).$\\
Como $$Bu+C\left(-\dfrac{A}{B}u\right)=\alpha Au-\dfrac{A\alpha\cancel{B}}{\cancel{B}}u=0,$$ las soluciones de la $2^a$ ecuación son las mismas de la $1^a.$ Esto explica porque la $2^a$ es redundante y por lo tanto $\mathscr{N}(M)=\left\{\dbinom{u}{v}Au+Bv=0\right\}.$\\
Todo vector de la forma $\left(u, -\dfrac{A}{B}u\right)=u\left(1, -\dfrac{A}{B}\right)=\alpha\left(B,-A\right)$ con $\alpha\in\mathbb{R}$ está en el $\mathscr{N}(M).$\\
Así que
\begin{align*}
EP_0^M=Sg\left\{\dbinom{B}{-A}\right\}&\underset{\uparrow}{=}Sg\left\{\left(\begin{array}{cc}\dfrac{B}{\sqrt{A^2+B^2}}\\-\dfrac{A}{\sqrt{A^2+B^2}}\end{array}\right)\right\}\\
&\begin{cases}
\text{Se normaliza el}\\
\text{vector $\dbinom{B}{-A}$}
\end{cases}
\end{align*}
Puede ocurrir:
\begin{enumerate}
\item[(1)] que
\begin{align*}
B>0\\
A>0
\end{align*}
Lo primero que hacemos es dibujar el vector $\left(\begin{array}{cc}\dfrac{B}{\sqrt{A^2+B^2}}\\-\dfrac{A}{\sqrt{A^2+B^2}}\end{array}\right)$ ya que él señala el $EP_0^M.$\\
El vector $\left(\begin{array}{cc}-\dfrac{B}{\sqrt{A^2+B^2}}\\\dfrac{A}{\sqrt{A^2+B^2}}\end{array}\right)$ está en el $II$ cuadrante y el vector
\begin{figure}[ht!]
\begin{center}
  \includegraphics[scale=0.6]{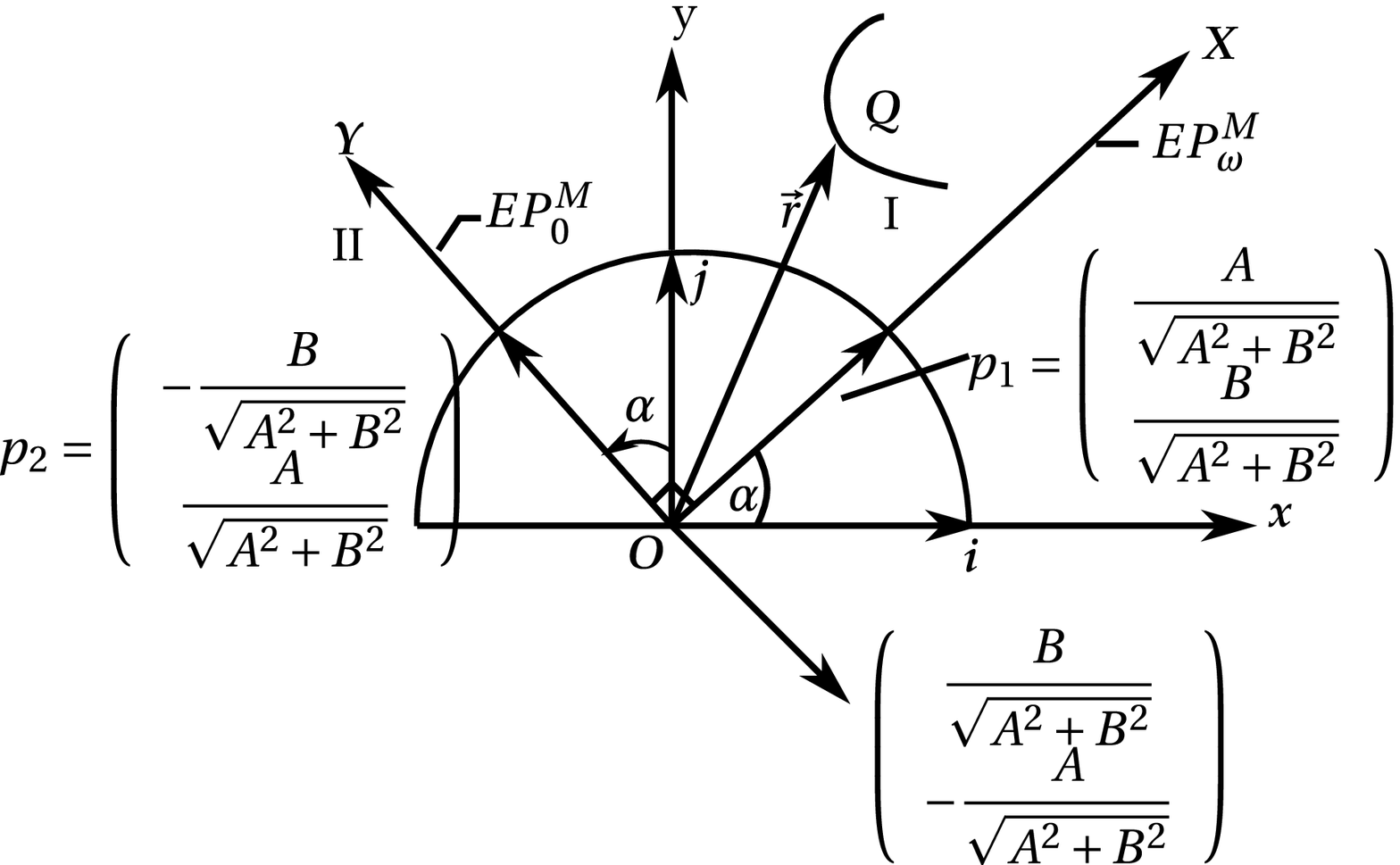}\\
\end{center}
\vspace{0.5cm}
\end{figure}
$\left(\begin{array}{cc}\dfrac{A}{\sqrt{A^2+B^2}}\\\dfrac{B}{\sqrt{A^2+B^2}}\end{array}\right)$ está en el I cuadrante. El espectro de $M$ lo ordenamos así: $$\lambda(M)={\omega,0}$$
$$EP_{\omega\ni p_1}^M=\dbinom{p_1^1}{p_1^2}=\left(\begin{array}{cc}\dfrac{A}{\sqrt{A^2+B^2}}\\\dfrac{B}{\sqrt{A^2+B^2}}\end{array}\right);\hspace{0.5cm}EP_{0\ni p_2}^M=\dbinom{p_2^1}{p_2^2}=\left(\begin{array}{cc}-\dfrac{B}{\sqrt{A^2+B^2}}\\\dfrac{A}{\sqrt{A^2+B^2}}\end{array}\right)$$
$\left\{\vec{p}_1,\vec{p}_2\right\}$ es una base ortogonal.\\
$\alpha=\arctan\dfrac{B}{A}$ es el $\sphericalangle$ que giran los ejes $xy.$\\
$$P=\left(\begin{array}{cc}\underset{\downarrow}{\overset{\uparrow}{[p_1]_{ij}}}&\underset{\downarrow}{\overset{\uparrow}{[p_2]_{ij}}}\end{array}\right)=[I]_{ij}^{p_1,p_2}$$
Los vectores $\vec{p}_1$ y $p_2$ definen los ejes $XY$ con origen en $O$.\\
Si llamamos $[\vec{r}]_{p_1,p_2}=\dbinom{X}{Y}$ se tendrá que $$[\vec{r}]_{ij}=[I]_{ij}^{p_1,p_2}[\vec{r}]_{p_1p_2}=P[\vec{r}]_{p_1p_2}$$ y al regersar a [\ref{54}]:
$${\left(P[\vec{r}]_{p_1p_2}\right)}^tMP[\vec{r}]_{p_1p_2}+\left(\begin{array}{cc}2D&2E\end{array}\right)P[\vec{r}]_{p_1p_2}+F=0$$
O sea que
\begin{equation}\label{55}
[\vec{r}]_{p_1p_2}^t\left(P^tMP\right)[\vec{r}]_{p_1p_2}+\left(\left(\begin{array}{cc}2D&2E\end{array}\right)P\right)[\vec{r}]_{p_1p_2}+F=0.
\end{equation}
Pero
\begin{align*}
P^tMP&\underset{\uparrow}{=}\left(\begin{array}{cc}\omega&0\\0&0\end{array}\right)\\
&\text{Tma. Espectral}
\end{align*}
Llamemos
\begin{align*}
\left(\begin{array}{cc}2D'&2E'\end{array}\right)&=\left(\begin{array}{cc}2D&2E\end{array}\right)P\\
&=\left(\begin{array}{cc}2D&2E\end{array}\right)\left(\begin{array}{cc}\dfrac{A}{\sqrt{A^2+B^2}}&-\dfrac{B}{\sqrt{A^2+B^2}}\\ \dfrac{B}{\sqrt{A^2+B^2}}&\dfrac{A}{\sqrt{A^2+B^2}}\end{array}\right)\\
&=\left(\begin{array}{cc}\dfrac{2AD+2BE}{\sqrt{A^2+B^2}}&\dfrac{-2BD+2AE}{\sqrt{A^2+B^2}}\end{array}\right)
\end{align*}
Osea que
\begin{equation}\label{56}
D'=\dfrac{AD+BE}{\sqrt{A^2+B^2}},\hspace{0.5cm}E'=\dfrac{AE-BD}{\sqrt{A^2+B^2}}\hspace{0.5cm}\text{con $E'\neq 0$ ya que $BD-AE\neq 0.$}
\end{equation}
Volviendo a [\ref{55}] se tiene:
$$\left(\begin{array}{cc}X&Y\end{array}\right)\left(\begin{array}{cc}\omega&0\\0&0\end{array}\right)\dbinom{X}{Y}+2D'X+2E'Y+F=0.$$
O sea que
\begin{equation}\label{57}
\omega X^2+2D'X+2E'Y+F=0
\end{equation}
con $D'$ y $E', E'\neq 0,$ dados por [\ref{56}], es la ecuación de la cónica$\diagup XY.$\\
Ahora, $$\Delta=\left|\begin{array}{ccc}\omega&0&D'\\ 0&0&E'\\ D'&E'&F\end{array}\right|=-\omega E'^2$$
y como $\omega\neq 0$ y $E'\neq 0, \Delta\neq 0.$\\
En [\ref{57}], $E\neq 0.$\\
Vamos a demostrar que es posible definir una traslación de los ejes XY a un punto de coordenadas $(a,b)\diagup XY,$ \underline{a y b a determinar}, de modo que en [\ref{57}] se anulan el término en $X$ y el término independiente:
\begin{figure}[ht!]
\begin{center}
  \includegraphics[scale=0.4]{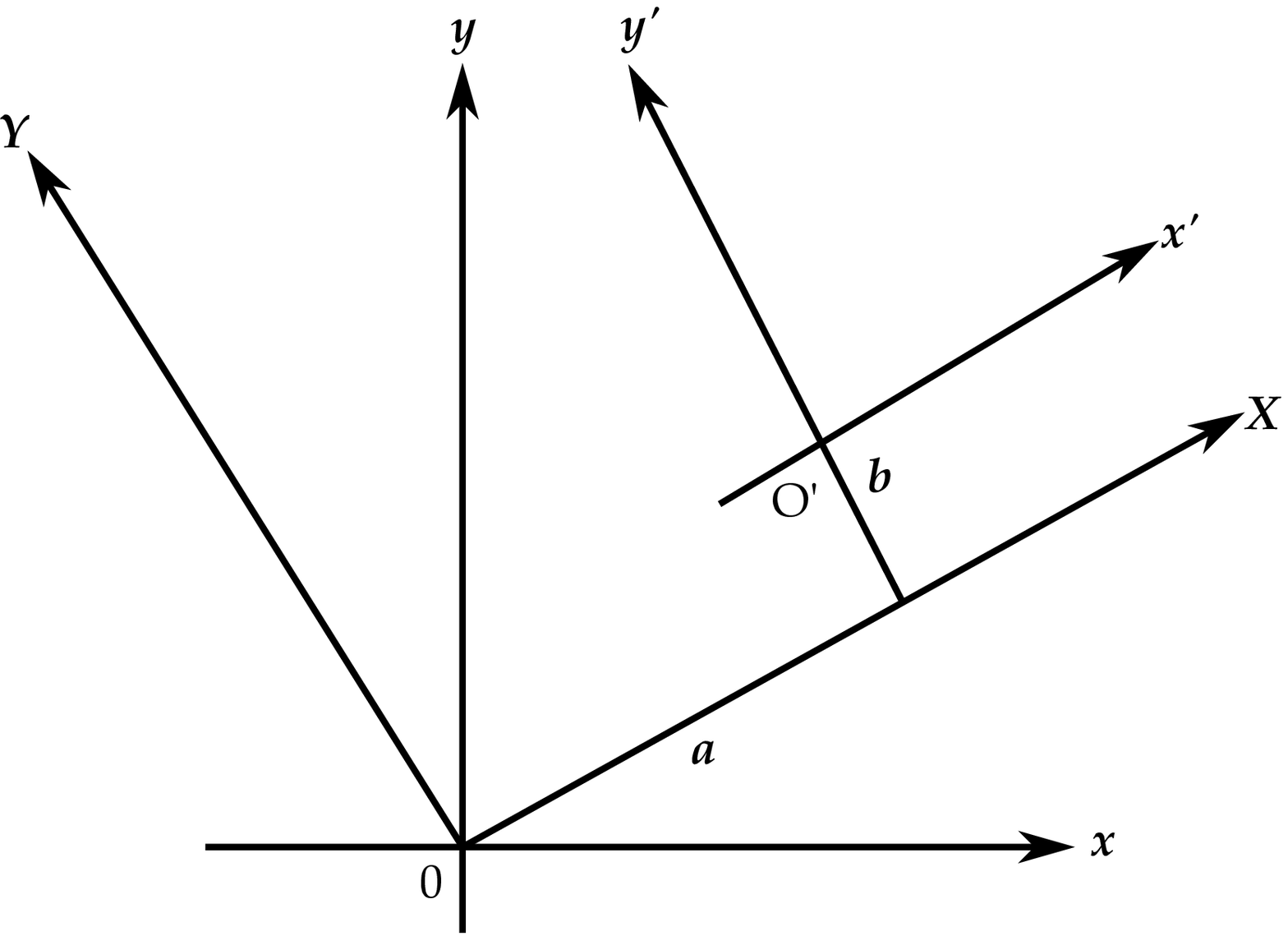}\\
\end{center}
\end{figure}
\newline
Supongamos, pues, que trasladamos los ejes $X-Y$ a un punto $O'$ de coordenadas $(a,b)\diagup XY.$\\
Se definen así dos nuevos ejes $x',y'$ con $x'\parallel X, y'\parallel Y.$\\
Las ecuaciones de la transformación son:
\begin{align*}
X&=x'+a\\
Y&=y'+b
\end{align*}
que llevamos a [\ref{57}]:
\begin{align*}
&\omega{(x'+a)}^2+2D'(x+a)+2E'(y'+b)+F=0\\
&\omega x'^2+2a\omega x'+\omega a^2+2D'x'+2D'a+2E'y'+2E'b+F=0\\
&\omega x'^2+(2a\omega+2D')x'+2E'y'+(\omega a^2+2D'a+2E'b+F)=0
\end{align*}
Para lo que buscamos, debe tenerse que\\
$$\begin{cases}
2a\omega+2D'=0\\
\omega a^2+2D'a+2E'b+F=0
\end{cases}$$
De la $1^a$ ecuación, $a=-\dfrac{D'}{\omega}$ que llevamos a la $2^a$:
\begin{align*}
2E''b&=-(\omega a^2+2D'a+F)\\
\therefore\,\,b&=-\dfrac{\omega a^2+2D'a+F}{2E'}\\
&=-\dfrac{\dfrac{\omega D'^2}{\omega^2}-\dfrac{2D'^2}{\omega}+F}{2E'}=-\dfrac{F-\dfrac{D'^2}{\omega}}{2E'}
\end{align*}
Si trasladamos los ejes $XY$ al punto $O'$ de coordenadas $$\left(-\dfrac{D'}{\omega}, -\dfrac{F-\dfrac{D'^2}{\omega}}{2E'}\right)\diagup XY,$$ donde
\begin{align*}
D'&=\dfrac{AE+BE}{\sqrt{A^2+B^2}}\\
E'&=\dfrac{AE-BD}{\sqrt{A^2+B^2}}
\end{align*}
La ecuación de la cónica$\diagup x'y'$ es: $$\omega x'^2+2E'y'=0\hspace{0.5cm}\therefore\hspace{0.5cm}y'=-\dfrac{\omega}{2E'}x'^2,\hspace{0.5cm}\text{con}\,\, E'=\dfrac{AE-BD}{\sqrt{A^2+B^2}}\neq 0.$$
El \underline{lugar es una parábola} que se abre según el eje $y'.$\\
El punto $O'$ de coordenadas $\left(-\dfrac{D'}{\omega}, -\dfrac{F-\dfrac{D'^2}{\omega}}{2E'}\right)\diagup XY$ es el vértice de la parábola.
\begin{figure}[ht!]
\begin{center}
  \includegraphics[scale=0.4]{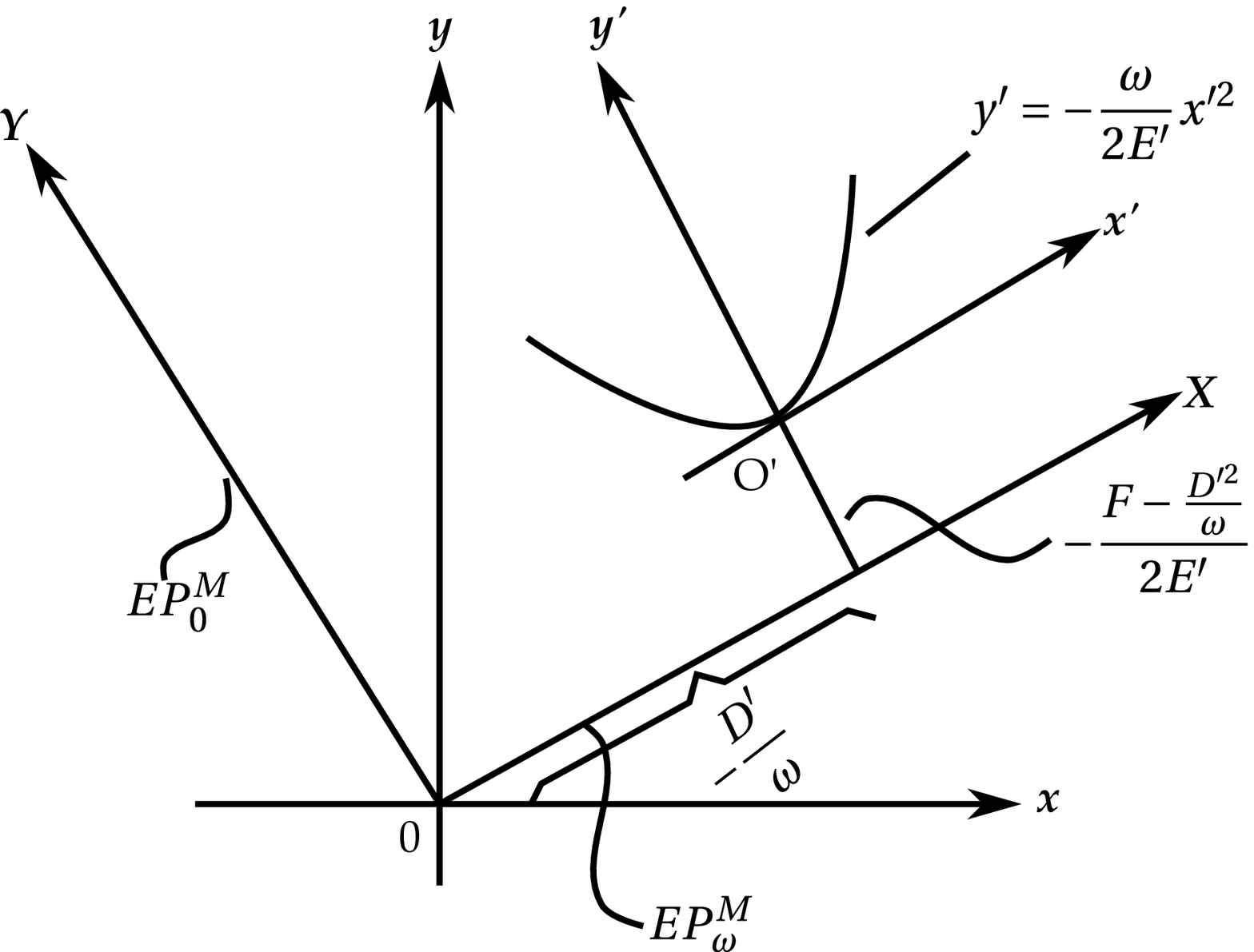}\\
\end{center}
\vspace{0.5cm}
\end{figure}
\item[2)]
\begin{align*}
B>0\\
A<0
\end{align*}
Lo primero que hacemos es dibujar el vector $\left(\begin{array}{cc}\dfrac{B}{\sqrt{A^2+B^2}}\\-\dfrac{A}{\sqrt{A^2+B^2}}\end{array}\right)$ ya que es el que señala el $EP_0^M.$
\begin{figure}[ht!]
\begin{center}
  \includegraphics[scale=0.5]{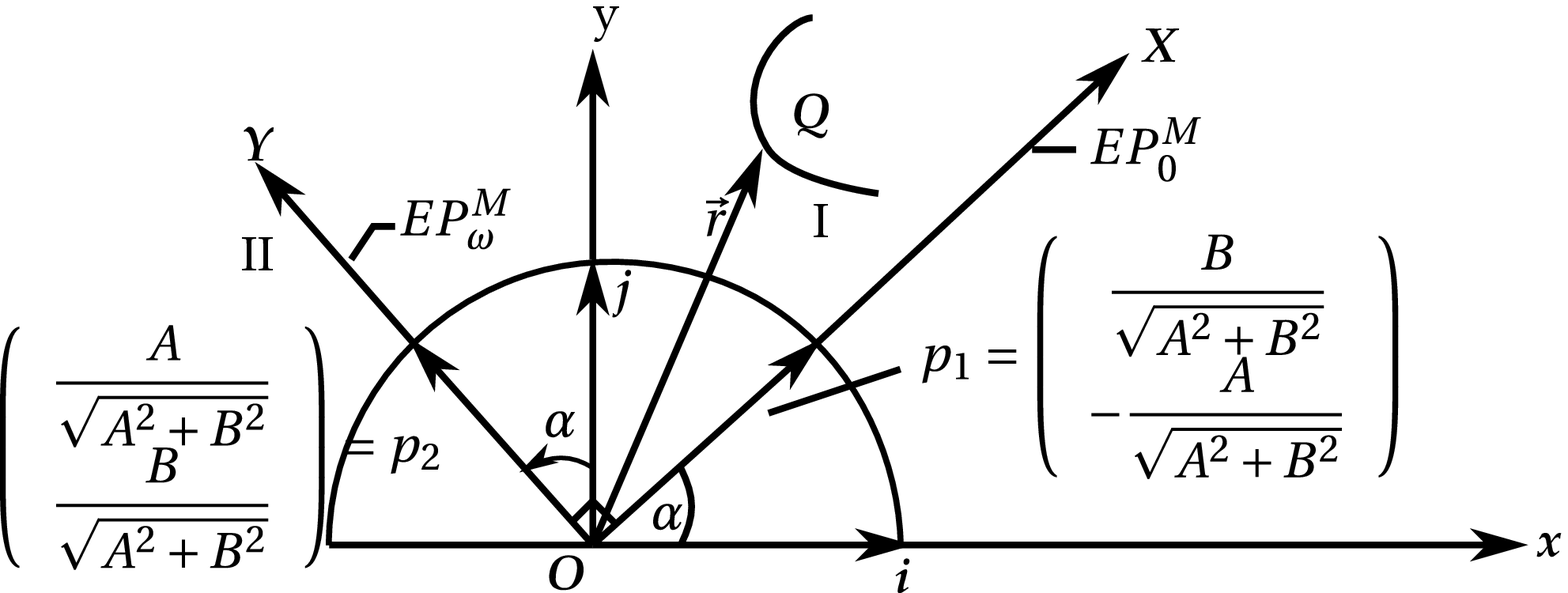}\\
\end{center}
\vspace{0.5cm}
\end{figure}
\newline
El espectro de $M$ lo ordenamos en este caso así: $\lambda(M)=\left\{0,\omega\right\}$
\begin{align*}
EP_0^M\ni p_1=\dbinom{p_1^1}{p_1^2}&=\left(\begin{array}{cc}\dfrac{B}{\sqrt{A^2+B^2}}\\-\dfrac{A}{\sqrt{A^2+B^2}}\end{array}\right)\\
EP_\omega^M\ni p_2\dbinom{p_2^1}{p_2^2}&=\left(\begin{array}{cc}\dfrac{A}{\sqrt{A^2+B^2}}\\ \dfrac{B}{\sqrt{A^2+B^2}}\end{array}\right); {p_1, p_2}:\text{Base ortonormal.}\,\, \alpha=\arctan\left(\dfrac{-A}{B}\right).\\
P=\left(\begin{array}{cc}\underset{\downarrow}{\overset{\uparrow}{p_1}}&\underset{\downarrow}{\overset{\uparrow}{p_2}}\end{array}\right)&=\left(\begin{array}{cc}\dfrac{B}{\sqrt{A^2+B^2}}&\dfrac{A}{\sqrt{A^2+B^2}}\\ -\dfrac{A}{\sqrt{A^2+B^2}}&\dfrac{B}{\sqrt{A^2+B^2}}\end{array}\right)=[I]_{ij}^{p_1,p_2}
\end{align*}
La ecuación de la cónica$\diagup XY$ es ahora
\begin{equation}\label{58}
[\vec{r}]_{p_1p_2}^tP^tMP[\vec{r}]_{p_1p_2}+\left(\left(\begin{array}{cc}2D&2E\end{array}\right)P\right)[\vec{r}]_{p_1p_2}+F=0.
\end{equation}
Por el Teorema Espectral, $$P^tMP=\left(\begin{array}{cc}0&0\\ 0&\omega\end{array}\right)$$
Llamando
\begin{align*}
\left(\begin{array}{cc}2D'&2E'\end{array}\right)&=\left(\begin{array}{cc}2D&2E\end{array}\right)P\\
&=\left(\begin{array}{cc}2D&2E\end{array}\right)\left(\begin{array}{cc}\dfrac{B}{\sqrt{A^2+B^2}}&\dfrac{A}{\sqrt{A^2+B^2}}\\ -\dfrac{A}{\sqrt{A^2+B^2}}&\dfrac{B}{\sqrt{A^2+B^2}}\end{array}\right)
\end{align*}
\begin{equation}\label{59}
D'=\dfrac{DB-AE}{\sqrt{A^2+B^2}}, E'=\dfrac{AD+BE}{\sqrt{A^2+B^2}}
\end{equation}
con $D'\neq 0$ ya que $BD-AE\neq 0.$\\
Al regresar a [\ref{58}] se tiene que
$$\left(\begin{array}{cc}X&Y\end{array}\right)\left(\begin{array}{cc}0&0\\0&\omega\end{array}\right)\dbinom{X}{Y}+2D'X+2E'Y+F=0$$
O sea
\begin{equation}\label{60}
\omega Y^2+2D'X+2E'Y+F=0
\end{equation}
con $D\neq 0$ y $E'$ dados por [\ref{59}] es la ecuación de la cónica$\diagup XY.$\\
$$\left|\begin{array}{ccc}0&0&D'\\ 0&\omega&E'\\ D'&E'&F\end{array}\right|=-\omega D'^2\,\,\text{y como $\omega\neq 0$\,\,y\,\,$D'\neq0$, $\Delta\neq 0.$}$$
En [\ref{60}], $D'\neq0$ ya que $BD-AE\neq0.$\\
Veamos que es posible definir una traslación de los ejes $XY$ a un punto $O'$ de coordenadas $(a,b)\diagup XY,$ \underline{a y b a determinar}, de modo que se elimien en [\ref{60}] el término en $Y$ y el término independiente:
\begin{figure}[ht!]
\begin{center}
  \includegraphics[scale=0.5]{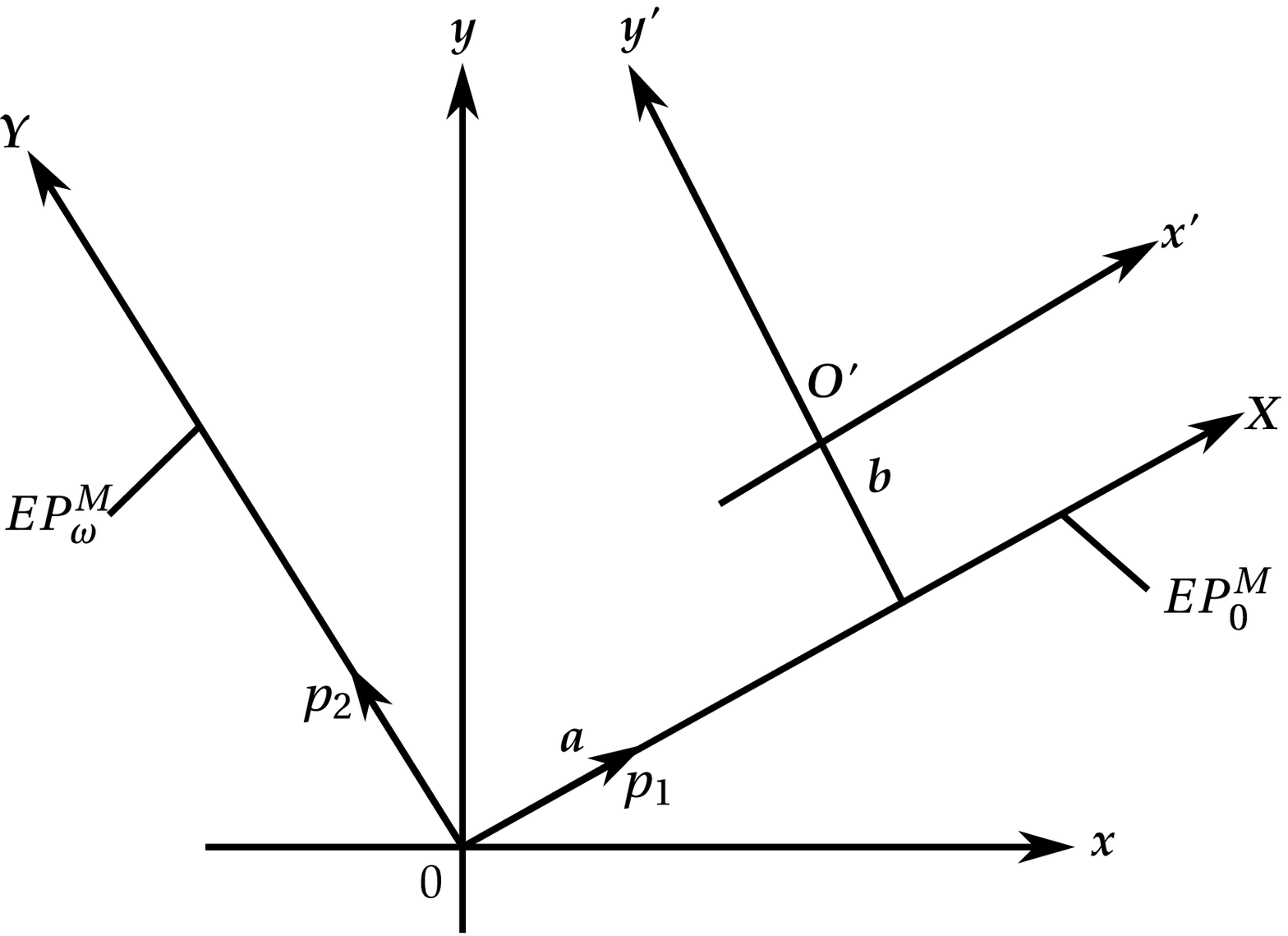}\\
\end{center}
\end{figure}
\newline
Supongamos, pues, que trasladamos los ejes $XY$ a un punto $O'$ de coordenadas $(a,b)\diagup XY.$\\
Se definen así dos nuevos ejes $x'y'\parallel XY.$\\
Las ecuaciones de la trasformación son:
\begin{align*}
X&=x'+a\\
Y&=y'+b\hspace{0.5cm}\text{que llevamos a [\ref{60}]}
\end{align*}
\begin{align*}
\omega{(y'+b)}^2+2D'(x'+a)+2E'(y'+b)+F=0\\
\omega y'^2+2b\omega y'+\omega b^2+2D'x'+2D'a+2E'y'+2E'b+F=0\\
\omega y'^2+(2b\omega+2E')y'+2D'x'+(\omega b^2+2D'a+2E'b+F)=0
\end{align*}
Para lo que buscamos debe tenerse que
$\begin{cases}
2b\omega+2E'=0\\
\omega b^2+2D'a+2E'b+F=0
\end{cases}$\\
De la $1^a$ ecuación, $b=-\dfrac{E'}{\omega}$ que llevamos a la $2^a$
\begin{align*}
2D'a&=-(\omega b^2+2E'b+F)\\
\therefore\hspace{0.5cm}a&=-\dfrac{\omega b^2+2E'b+F}{2D'}\\
&=-\dfrac{\frac{\omega E'^2}{\omega^2}-\frac{2E'^2}{\omega}+F}{2D'}=-\dfrac{F-\frac{E'^2}{\omega}}{2D'}.
\end{align*}
De esta manera, si trasladamos los ejes $XY$ al punto $O'$ de coordenadas $\left(-\dfrac{F-\frac{E'^2}{\omega}}{2D'}, -\dfrac{E'}{\omega}\right)\diagup XY,$ la ecuación de la cónica$\diagup x'y'$ es $$\omega y'^2+2D'x'=0.$$
$\therefore\hspace{0.5cm}x'=-\dfrac{\omega}{2D'}y'^2$ con $D'=\dfrac{BD-AE}{\sqrt{A^2+B^2}}\neq 0.$\\
El lugar es una parábola que se abre según el eje $x':$
\begin{figure}[ht!]
\begin{center}
  \includegraphics[scale=0.5]{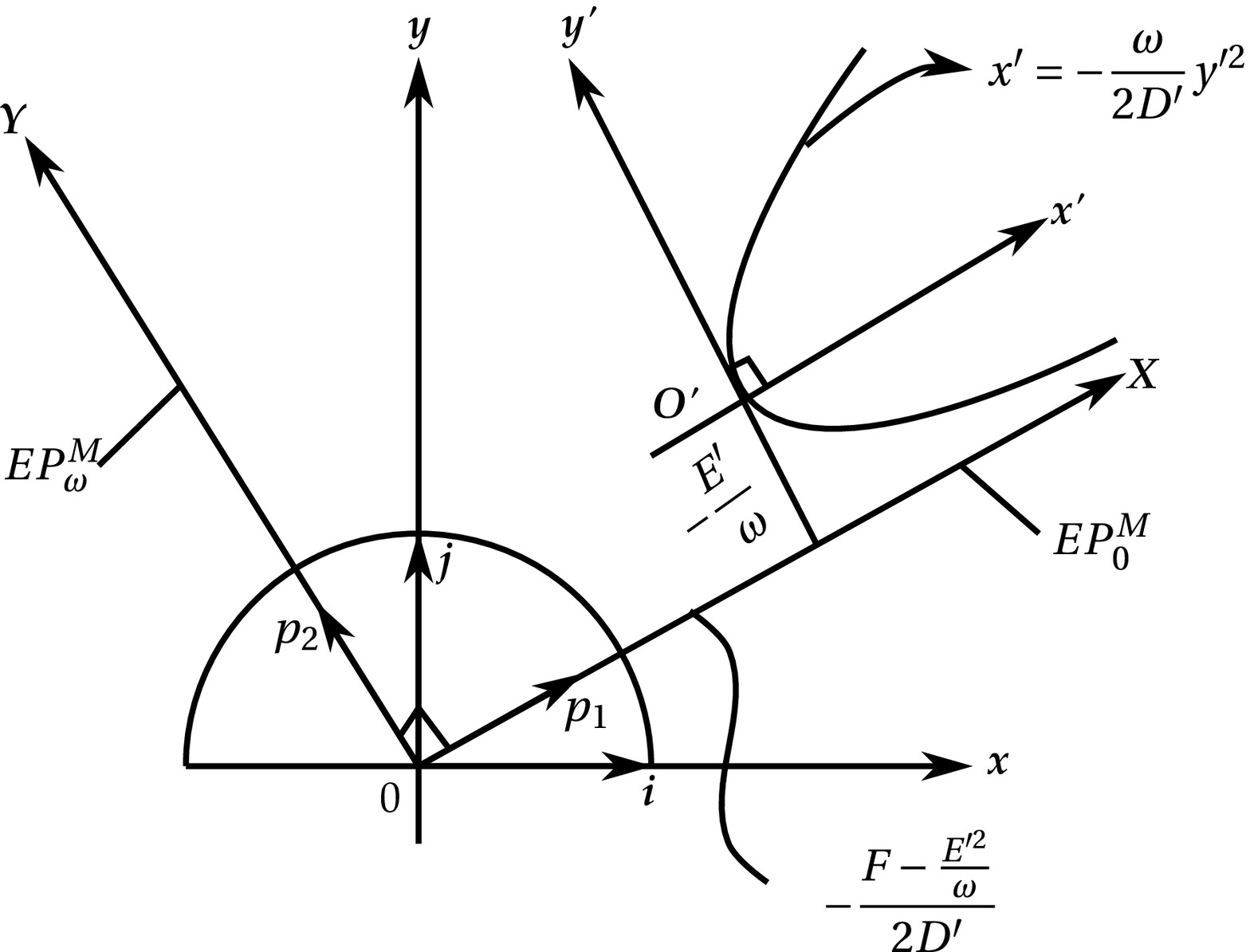}\\
\end{center}
\end{figure}
\newline
El punto $O'$ de coordenadas $\left(-\dfrac{F-\frac{E'^2}{\omega}}{2D'}, -\dfrac{E'}{\omega}\right)\diagup XY$ es el vértice de la parábola.\\
\item[(3)]
\begin{align*}
B<0\\
A<0
\end{align*}
\begin{figure}[ht!]
\begin{center}
  \includegraphics[scale=0.5]{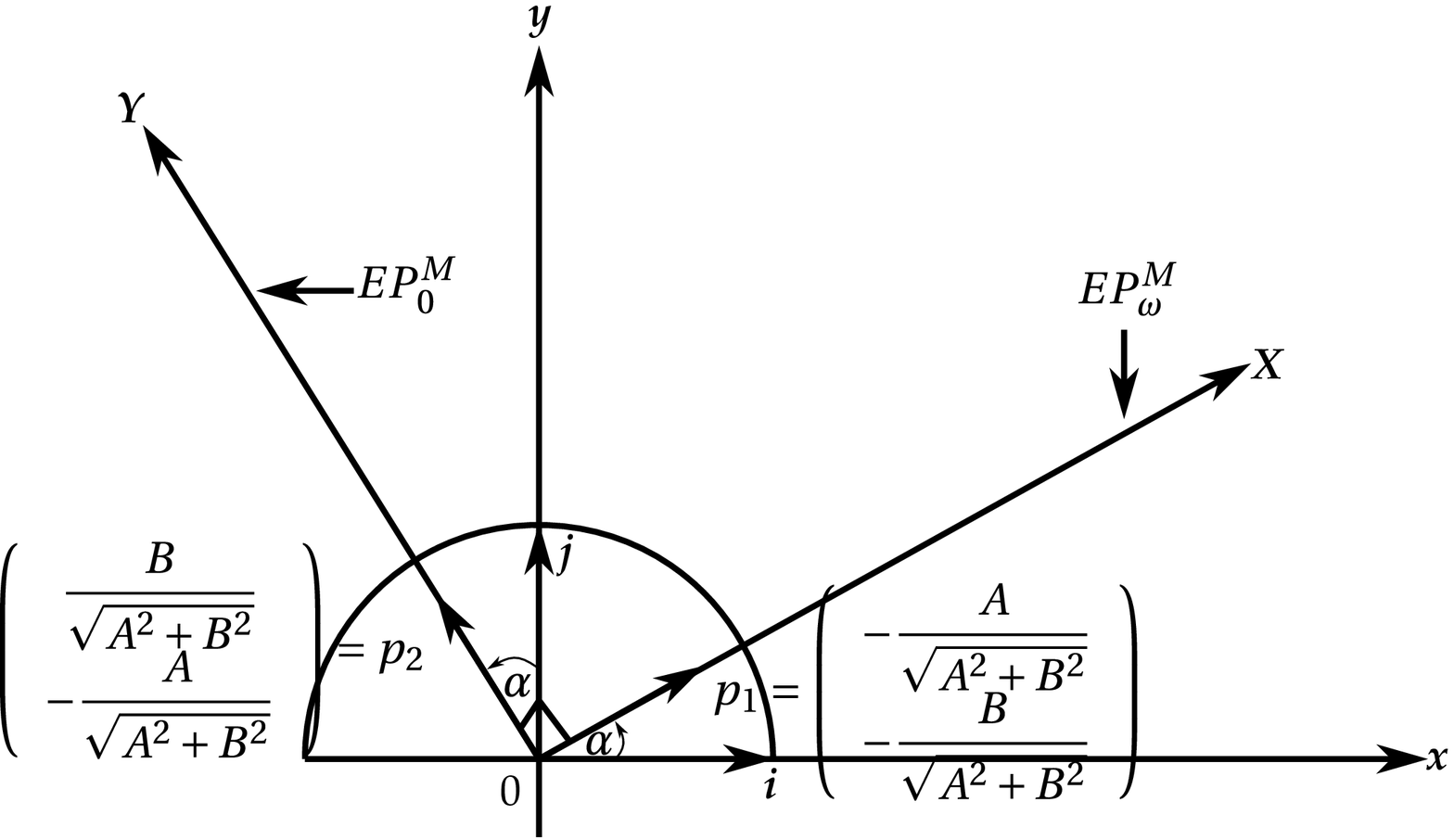}\\
\end{center}
\end{figure}
\newline
Definimos $\lambda(M)=\left\{\omega, 0\right\}.$
\begin{align*}
EP_\omega^M\ni p_1=\dbinom{p_1^1}{p_1^2}&=\left(\begin{array}{cc}-\dfrac{A}{\sqrt{A^2+B^2}}\\ -\dfrac{B}{\sqrt{A^2+B^2}}\end{array}\right);\hspace{0.5cm}{p_1,p_2}:\text{Base ortonormal.}\,\, \alpha=\arctan\dfrac{A}{B}\\
EP_0^M\ni p_2=\dbinom{p_1^1}{p_1^2}&=\left(\begin{array}{cc}\dfrac{B}{\sqrt{A^2+B^2}}\\ -\dfrac{A}{\sqrt{A^2+B^2}}\end{array}\right)\\
p&=\left(\begin{array}{cc}-\dfrac{A}{\sqrt{A^2+B^2}}&\dfrac{B}{\sqrt{A^2+B^2}}\\
-\dfrac{B}{\sqrt{A^2+B^2}}&-\dfrac{A}{\sqrt{A^2+B^2}}\end{array}\right)=\left[I\right]_{ij}^{p_1,p_2}
\end{align*}
La ecuación de la cónica$\diagup XY$ después de aplicar el teorema Espectral es:
\begin{equation}\label{61}
\omega X^2+2D'X+2E'Y+F=0
\end{equation}
con $$\left(\begin{array}{cc}2D'&2E'\end{array}\right)=\left(\begin{array}{cc}2D&2E\end{array}\right)\left(\begin{array}{cc}-\dfrac{A}{\sqrt{A^2+B^2}}&\dfrac{B}{\sqrt{A^2+B^2}}\\
-\dfrac{B}{\sqrt{A^2+B^2}}&-\dfrac{A}{\sqrt{A^2+B^2}}\end{array}\right)$$
O sea,
\begin{align*}
D'&=-\dfrac{AD+BE}{\sqrt{A^2+B^2}}\\
E'&=\dfrac{BD-AE}{\sqrt{A^2+B^2}}\neq 0\hspace{0.5cm}\text{ya que}\,\,BD-AE\neq 0.\\
\Delta&=\left|\begin{array}{ccc}\omega&0&D'\\ 0&0&E'\\ D'&E'&F\end{array}\right|=-\omega E'^2\,\,\text{y como}\,\,\omega\neq 0\,\,\text{y}\,\,E'\neq 0,\,\,\Delta\neq0.
\end{align*}
En [\ref{61}], $E\neq0.$\\
De nuevo se demuestra, como en el caso (1), que si se trasladan los ejes $XY$ al punto $0'$ de coordenadas$\diagup XY:$ $$\left(-\dfrac{D'}{\omega}, -\dfrac{F-\dfrac{D'^2}{\omega}}{2E'}\right),$$ en [\ref{61}] se eliminan el término lineal en $X$ y el término independiente.\\
La cónica referida a los ejes $x'y'$ es: $$\omega x'^2+2E'y'=0\hspace{0.5cm}\therefore\hspace{0.5cm}y'=-\dfrac{\omega}{2E'}\,\,\text{con}\,\,E'=\dfrac{BD-AE}{\sqrt{A^2+B^2}}.$$
\underline{El lugar es una parábola} que se abre según el eje $y'.$\\
\item[(4)]
\begin{align*}
B<0\\
A>0
\end{align*}
Primero ubicamos el vector $\left(\begin{array}{cc}\dfrac{B}{\sqrt{A^2+B^2}}\\ -\dfrac{A}{\sqrt{A^2+B^2}}\end{array}\right)$ que nos da el $EP_0^M.$
\begin{figure}[ht!]
\begin{center}
  \includegraphics[scale=0.5]{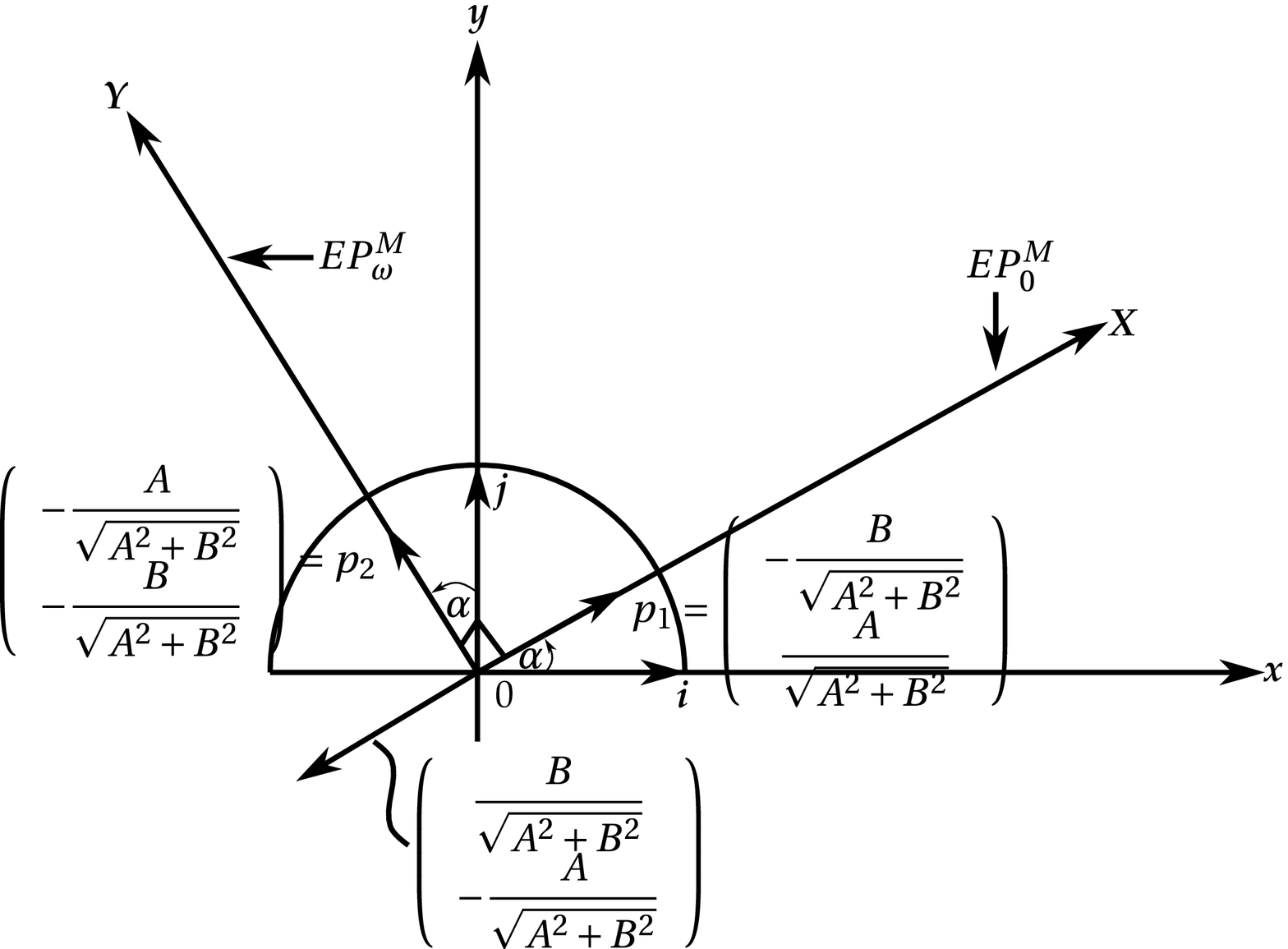}\\
\end{center}
\end{figure}
\newline
Definimos $\lambda(M)=\left\{\omega, 0\right\}.$
\begin{align*}
EP_0^M\ni p_1=&\left(\begin{array}{cc}-\dfrac{B}{\sqrt{A^2+B^2}}\\ \dfrac{A}{\sqrt{A^2+B^2}}\end{array}\right);\hspace{0.5cm}{p_1,p_2}:\text{Base ortonormal.}\,\, \alpha=\arctan\dfrac{A}{B}\\
EP_\omega^M\ni p_2&=\left(\begin{array}{cc}-\dfrac{A}{\sqrt{A^2+B^2}}\\ -\dfrac{B}{\sqrt{A^2+B^2}}\end{array}\right)\\
p&=\left(\begin{array}{cc}-\dfrac{B}{\sqrt{A^2+B^2}}&-\dfrac{A}{\sqrt{A^2+B^2}}\\
\dfrac{A}{\sqrt{A^2+B^2}}&-\dfrac{B}{\sqrt{A^2+B^2}}\end{array}\right)=\left[I\right]_{ij}^{p_1,p_2}
\end{align*}
Después de aplicar el teorema Espectral, la ecuación de la cónica$\diagup XY$ es:
\begin{equation}\label{62}
\omega X^2+2D'X+2E'Y+F=0
\end{equation}
Donde,
\begin{align*}
D'&=\dfrac{AE-BD}{\sqrt{A^2+B^2}}\\
E'&=-\dfrac{AD+BE}{\sqrt{A^2+B^2}}\neq 0\hspace{0.5cm}\text{ya que}\,\,AE-BD\neq 0.\\
\Delta&=\left|\begin{array}{ccc}0&0&D'\\ 0&\omega&E'\\ D'&E'&F\end{array}\right|=-\omega D'^2\,\,\text{y como}\,\,\omega\neq 0\,\,\text{y}\,\,D'\neq 0,\,\,\Delta\neq0.
\end{align*}
En [\ref{62}], $D'\neq0.$\\
como se hizo en el caso (2), si trasladamos los ejes $XY$ al punto $0'$ de coordenadas$\diagup XY:$ $$\left( -\dfrac{F-\dfrac{E'^2}{\omega}}{2D'}, -\dfrac{E'}{\omega}\right)\diagup XY,$$
la ecuación de la cónica$\diagup x'y'$ es: $$\omega y'^2+2D'y'=0\hspace{0.5cm}\therefore\hspace{0.5cm}x'=-\dfrac{\omega}{2D'}\,\,\text{con}\,\,D'=\dfrac{AE-BD}{\sqrt{A^2+B^2}}\neq0.$$
\underline{El lugar es una parábola} que se abre según el eje $x'.$\\
\end{enumerate}
\framebox[1.2\width]{\parbox[2.6\height]{8.1cm}{En resumen, el criterio para identificar las cónicas sin centro es este:\\
Consideremos la cónica $Ax^2+2Bxy+2Dx+2Ey+F=0$ en la que $A,B,C\neq0.$\\
Supongamos que $\left\{\dbinom{A}{B}\dbinom{B}{C}\right\}$ es L.D., i.e., $\delta=\left|\begin{array}{cc}A&B\\B&C\end{array}\right|=0.$ y que $\dbinom{-D}{-E}\notin Sg\left\{\dbinom{A}{B}\right\},$ o sea que $\left\{\dbinom{A}{B},\dbinom{-D}{-E}\right\}$ es L.I., y por lo tanto, $\delta=\left|\begin{array}{cc}A&B\\B&C\end{array}\right|=BD-AE\neq0.$\\
La cónica es una parábola. En este caso $\Delta\neq0.$ }}
\begin{ejem}
Consideremos la cónica de ecuación
\begin{equation}\label{63}
4x^2-4xy+y^2-2x-14y+7=0
\end{equation}
\begin{tabular}{cccc}
$A=4$&&$2D=-2; D=-1$\\
$2B=-4; B=-2$&$2E=-14; -E=7$\\
$C=1$
\end{tabular}
\begin{align*}
M&=\left(\begin{array}{cc}4&-2\\-2&1\end{array}\right)\\
\delta&=\left|\begin{array}{cc}4&-2\\-2&1\end{array}\right|=0
\end{align*}
\begin{figure}[ht!]
\begin{center}
  \includegraphics[scale=0.4]{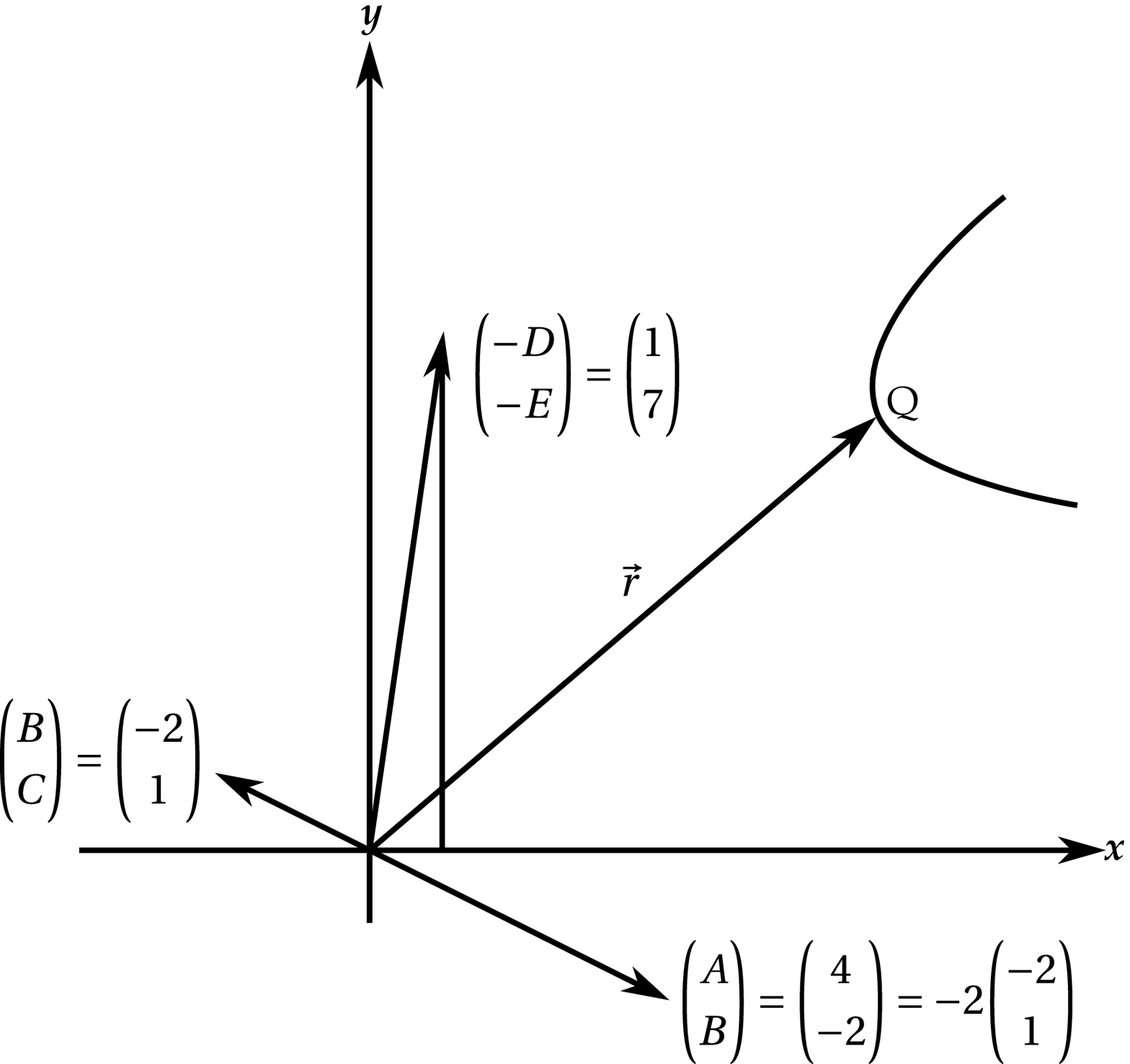}\\
\end{center}
\end{figure}
$\left\{\dbinom{A}{B}=\dbinom{4}{-2}, \dbinom{B}{C}=\dbinom{-2}{1}\right\}$ es L.D.\\
$\dbinom{-D}{-E}=\dbinom{1}{7}\notin Sg\left\{\dbinom{4}{-2}\right\}=Sg\left\{\dbinom{-2}{1}\right\}$\\
\underline{La cónica no tiene centro.} El sistema
\begin{align*}
Ah+Bk&=-D\\
Bh+Ck&=-E,\hspace{0.5cm}\text{o sea,}\\
Ah-2k&=1\\
-2h+k&7\hspace{0.5cm}\text{no tiene solución.}
\end{align*}
En virtud de la tabla anterior, \underline{la cónica es una parábola.}\\
Dado que la cónica [\ref{63}] no tiene centro, no podemos realizar una traslación que elimine los términos lineales en [\ref{63}].\\
Para determinar el lugar debemos realizar primero una transformación ortogonal que elimine el término mixto.\\
Sea $Q(x,y)$ un punto de la cónica y $[\vec{r}]_{ij}=\dbinom{x}{y}$ su vector de posición$\diagup O.$ La ecuación de [\ref{63}]$\diagup xy$ se escribe así:
\begin{equation}\label{64}
\left(\begin{array}{cc}x&y\end{array}\right)M\dbinom{x}{y}+\left(\begin{array}{cc}2D&2E\end{array}\right)\dbinom{x}{y}+F=0
\end{equation}
$PCM(\lambda)=\lambda^2-\omega\lambda+\delta=0.$ Como $\delta=0$ y $\omega=5,$ $\lambda^2-5\lambda=0.$ Luego los valores propios de $M$ son 5 y 0.\\
$$EP_0^M=\mathscr{N}(M)=\left\{(u,v)\diagup\left(\begin{array}{cc}4&-2\\-2&1\end{array}\right)\dbinom{u}{v}=\dbinom{0}{0}\right\}$$
Dado que la $1^a$ ecuación es 2 veces la $2^a,$ $$\mathscr{N}(M)=\left\{(u,v)\diagup -2u+v=0\right\}.$$
De $-2u+v=0, v=2u.$ Luego todo vector de la forma $(u,2u)=u(1-2)$ con $u\in\mathbb{R}$ está en el $EP_0^M=\mathscr{N}(M).$\\
Así que
\begin{align*}
EP_0^M=Sg\left\{\dbinom{1}{2}\right\}&\underset{\uparrow}{=}Sg\left\{\left(\begin{array}{cc}\overset{\underset{\parallel}{p_1}}{\dfrac{1}{\sqrt{5}}}\\ \dfrac{2}{\sqrt{5}}\end{array}\right)\right\}\\
&\begin{cases}
\text{Se normaliza $\dbinom{1}{2}$}\\
\text{para hallar $p_1$.}
\end{cases}\\
EP_5^M&=Sg\left\{\left(\begin{array}{cc}-\overset{\underset{\parallel}{p_2}}{\dfrac{2}{\sqrt{5}}}\\ \dfrac{1}{\sqrt{5}}\end{array}\right)\right\}
\end{align*}
$$P^tMP=\left(\begin{array}{cc}0&0\\0&5\end{array}\right).$$ Al aplicar el Teorema Espectral se encuentra que la ecuación de la cónica$\diagup XY$ es:
\begin{equation}\label{65}
\left(\begin{array}{cc}X&Y\end{array}\right)\left(\begin{array}{cc}0&0\\0&5\end{array}\right)\dbinom{X}{Y}+\left(\begin{array}{cc}2D'&2E'\end{array}\right)\dbinom{X}{Y}+7=0
\end{equation}
con
\begin{align*}
2D'&=\left(\begin{array}{cc}2D&2E\end{array}\right)\dbinom{p_1^1}{p_2^1}=\left(\begin{array}{cc}-2&-12\end{array}\right)\left(\begin{array}{cc}\dfrac{1}{\sqrt{5}}\\\dfrac{2}{\sqrt{5}}\end{array}\right)=-6\sqrt{5}\\
2E'&=\left(\begin{array}{cc}2D&2E\end{array}\right)\dbinom{p_2^1}{p_2^2}=\left(\begin{array}{cc}-2&-12\end{array}\right)\left(\begin{array}{cc}-\dfrac{2}{\sqrt{5}}\\\dfrac{1}{\sqrt{5}}\end{array}\right)=-2\sqrt{5}
\end{align*}
Regresando a [\ref{65}]:
\begin{equation}\label{66}
5Y^2-6\sqrt{5}X-2\sqrt{5}-2\sqrt{5}Y+7=0
\end{equation}
es la ecuación de la cónica$\diagup XY.$
\newpage
\begin{figure}[ht!]
\begin{center}
  \includegraphics[scale=0.5]{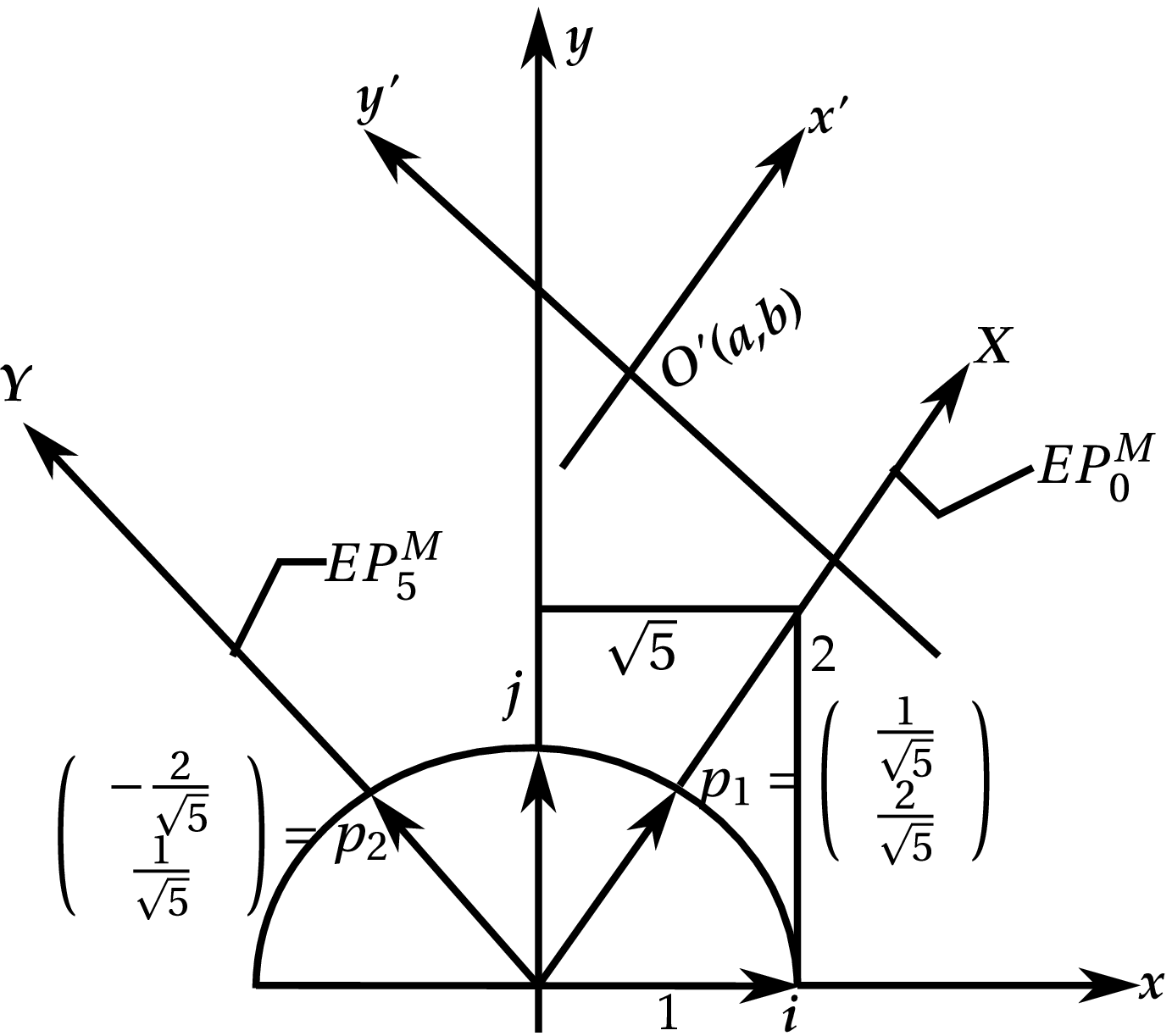}\\
\end{center}
\end{figure}
El próximo paso en la reducción es definir una traslación de los eje $XY$ a un punto $O'$ de coordenadas $(a,b)\diagup XY,$ \underline{a,b a determinar,} de manera que se eliminan en [\ref{66}] el coeficiente en $Y$ y el término independiente.\\
Supongamos que realizamos una traslación de los ejes $XY$ al punto $O'$ de coordenadas $(a,b)\diagup XY.$\\
Ecuaciones de la transformación:
\begin{align*}
X&=x'+a\\
Y&=y'+b\hspace{0.5cm}\text{que llevamos a [\ref{66}]}:
\end{align*}
\begin{align*}
5{(y'+b)}^2-6\sqrt{5}(x'+a)-2\sqrt{5}(y'+b)+7=0\\
5y'^2+10by'+5b^2-6\sqrt{5}x'-6\sqrt{5}a-2\sqrt{5}y'-2\sqrt{5}b+7=0\\
5y'^2-6\sqrt{5}x'+(10b-2\sqrt{5})y'+(5b^2-6\sqrt{5}a-2\sqrt{5}b+7)=0
\end{align*}
Para conseguir lo que se quiere,
$\begin{cases}
10b-2\sqrt{5}&=0\\
5b^2-6\sqrt{5}a-2\sqrt{5}b+7&=0
\end{cases}$\\
De la $1^a$ ecuación, $b=\dfrac{\sqrt{5}}{5}$ que llevamos a la $2^a$
$$6\sqrt{5}a=5b^2-2\sqrt{5}b+7$$
\begin{align*}
\therefore\hspace{0.5cm}a=\dfrac{5b^2-2\sqrt{5}b+7}{6\sqrt{5}}&\underset{\uparrow}{=}\dfrac{\sqrt{5}}{5}\\
&b=\dfrac{\sqrt{5}}{5}
\end{align*}
Luego si trasladamos los ejes $XY$ al punto $O'$ de coordenadas $\left(\dfrac{\sqrt{5}}{5},\dfrac{\sqrt{5}}{5}\right)\diagup XY$ se obtienen los eje $x'y'$ y la ecuación de la cónica$\diagup x'y'$ es  $$5y'2-6\sqrt{6}x'=0,\hspace{0.5cm}x'=\dfrac{5}{6\sqrt{5}}y'^2$$ y finalmente, $x'=\dfrac{\sqrt{5}}{6}y'^2:\text{\underline{parábola} que se abre como se muestra en la figura:}$
\begin{figure}[ht!]
\begin{center}
  \includegraphics[scale=0.5]{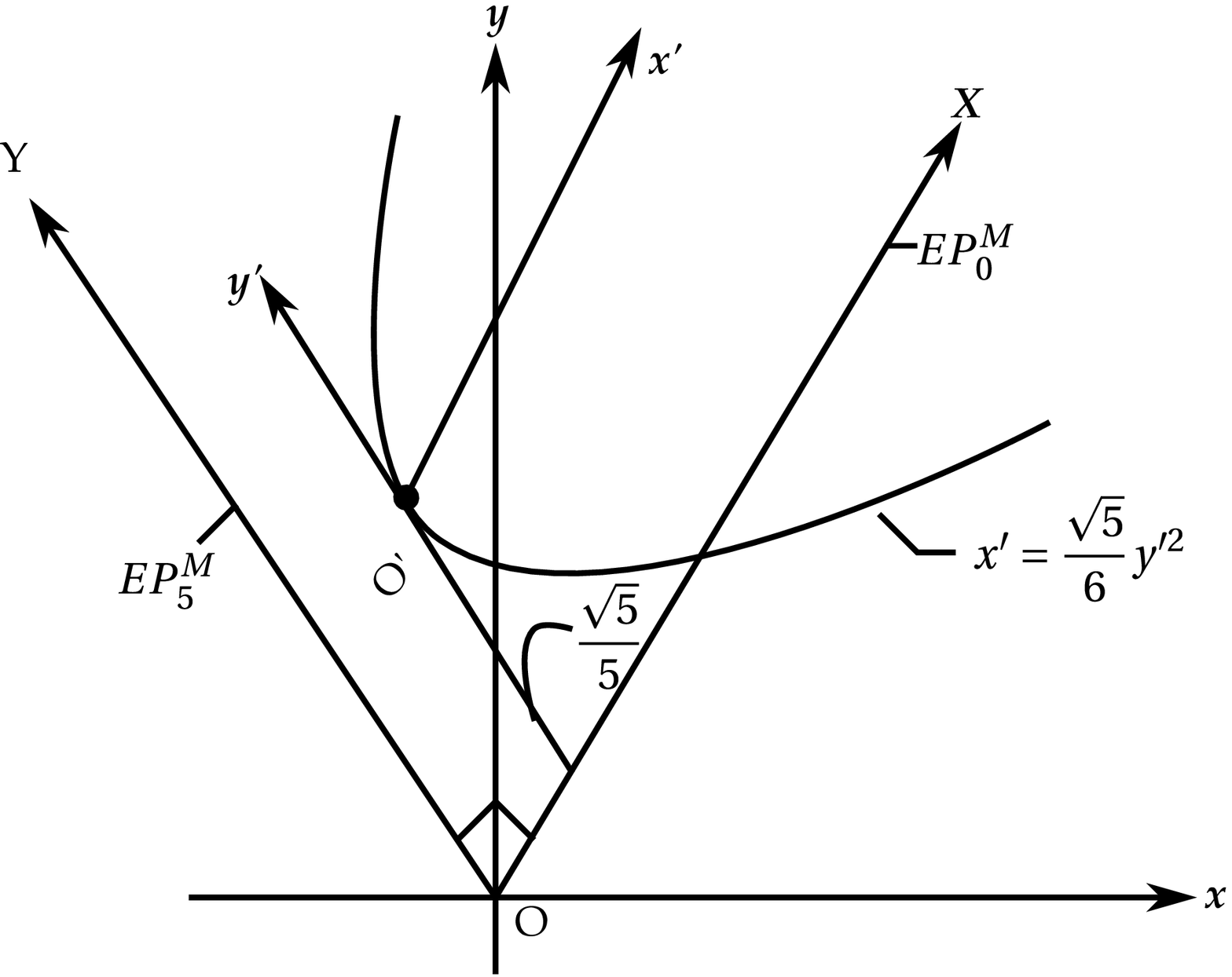}\\
\end{center}
\end{figure}
\end{ejem}
\section{Cónicas con infinitos centros}
Consideremos la cónica de ecuación
\begin{equation}\label{67}
Ax^2+2Bxy+Cy^2+2Dx+2Ey+F=0
\end{equation}
Como en el caso de las cónicas sin centro, vamos a asumir que los coeficientes A,B,C son $\neq 0$ ya que el en que uno o dos de ellos sea cero y conduzca a una cónica con infinitos centros ya fue analizado.\\
Esperamos que el estudio nos lleve a establecer que el lugar consta de dos rectas $\parallel$s, de una recta, o que sea $\emptyset.$\\
Si la cónica se reduce a dos rectas $\parallel$s, todo punto de la $\parallel$ media de ellos es centro de simetría.\'
Si la cónica se reduce a una recta, todo punto de ella es un centro de simetría de la recta.\\
Si la cónica se reduce a dos rectas concurrentes, tiene centro único: el punto donde concurren ambas rectas.\\
En las cónicas con infinitos centros, $\left\{\dbinom{A}{B},\dbinom{B}{C}\right\}$ es L.D. y $\dbinom{-D}{-E}\in Sg\left\{\dbinom{A}{B}\right\}=Sg\left\{\dbinom{B}{C}\right\}.$\\
O sea que $\left\{\dbinom{A}{B},\dbinom{-D}{-E}\right\}$ L.D. y por tanto $\left|\begin{array}{cc}A&-D\\B&-E\end{array}\right|=BD-AE=0\hspace{0.5cm}\therefore\hspace{0.5cm}BD=AE.$\\
Tomemos $\left\{\dbinom{B}{C}, \dbinom{-D}{-E}\right\}$ L.D. y por tanto $\left|\begin{array}{cc}B&-D\\C&-E\end{array}\right|=CD-BE=0\hspace{0.5cm}\therefore\hspace{0.5cm}CD=BE.$\\
\begin{figure}[ht!]
\begin{center}
  \includegraphics[scale=0.5]{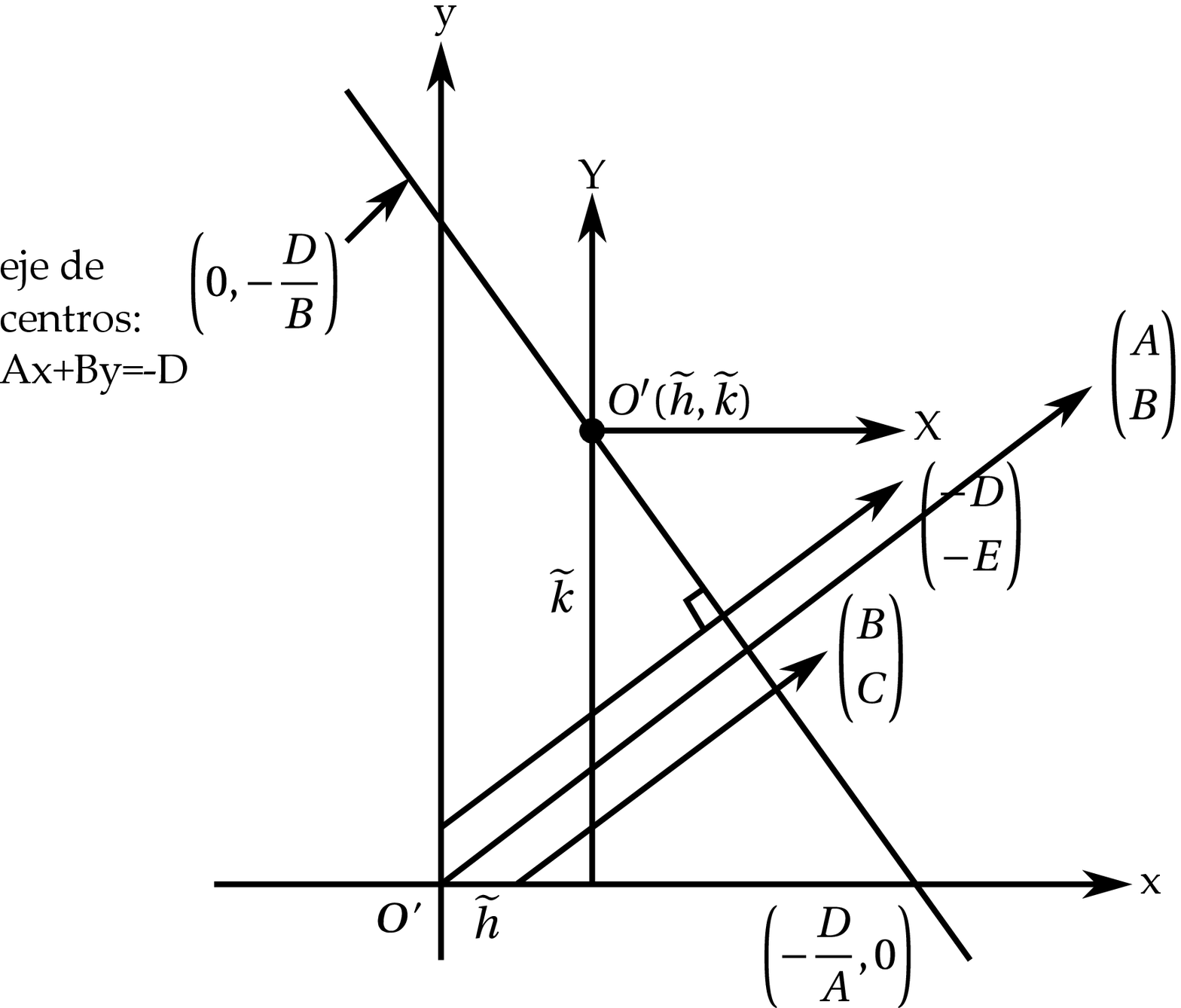}\\
\end{center}
\end{figure}
\newline
\begin{align*}
M=\left(\begin{array}{cc}A&B\\B&C\end{array}\right);\,\,\delta&=AC-B^2=0.\\
&\therefore\,\,AC=B^2\hspace{0.5cm}\text{y como $B\neq 0$ A y C son $\neq0$ y tienen el mismo signo.}
\end{align*}
$\omega=A+C\neq0$ ya que si $\omega=A+C=0, C=-A$ y $0=\delta=AC-B^2=-\left(A^2+B^2\right).$ O sea que $A^2+B^2=0, (\rightarrow\leftarrow),$ ya que $A$ y $B\neq0.$\\
Además, signo de $A=\text{signo de C=signo de $\omega$}.$\\
La cónica tiene infinitos centros porque el vector $\dbinom{-D}{-E}$ se puede escribir de infinitas maneras como una C.L. de $\dbinom{A}{B}$ y $\dbinom{B}{C}.$\\
Hay un eje de centros: la recta
\begin{equation}\label{68}
Ax+By=-D
\end{equation}
Probemos esto. Sea $\left(h,-\dfrac{A}{B}h-\dfrac{D}{B}\right)$ una solución de [\ref{68}]. Veamos que también es solución de $Bx+Cy=-E.$
\begin{align*}
Bh+C\left(-\dfrac{A}{B}h-\dfrac{D}{B}\right)=Bh+\dfrac{C}{B}\left(-Ah-D\right)=\dfrac{(\overset{\overset{0}{\parallel}}{B^2-AC})h-CD}{B}&\underset{\uparrow}{=}-\dfrac{CD}{B}=-\dfrac{\cancel{B}E}{\cancel{B}}=-E\\
&CD=BE
\end{align*}
Esto demuestra que $\forall h\in\mathbb{R},$ el punto de coordenadas $\left(h,-\dfrac{A}{B}h-\dfrac{D}{B}\right)\diagup xy$ es un centro de la cónica.\\
Tomemos un punto $O'$ de coordenadas $(\widetilde{h},\widetilde{k})\diagup xy$ en el eje de centros, $O'$ fijo y realicemos la traslación de los ejes $x-y$ al punto $O'.$\\
Se definen así los ejes $X-Y\parallel$s a $x-y.$\\
Ecuaciones de la transformación:
\begin{align*}
X&=x+\widetilde{h}\\
Y&=y+\widetilde{k}\hspace{0.5cm}\text{con}\,\,\widetilde{k}=-\dfrac{A}{B}\widetilde{h}-\dfrac{D}{B}.
\end{align*}
Como $O'$ es un centro de la cónica, al realizar dicha traslación se eliminan en [\ref{67}] los términos lineales y el término constante se transforma en $f(\widetilde{h},\widetilde{k}).$\\
La ecuación de la cónica$\diagup XY$ es:
\begin{align}
\left(\begin{array}{cc}X&Y\end{array}\right)M\dbinom{X}{Y}+f(\widetilde{h},\widetilde{k})&=0,\notag\\
\left(\begin{array}{cc}A&B\\B&C\end{array}\right)M\dbinom{X}{Y}+f(\widetilde{h},\widetilde{k})&=0,\notag\\
\intertext{ó}\notag\\
AX^2+2BXY+CY^2+f(\widetilde{h},\widetilde{k})\label{69}
\end{align}
y hemos conseguido anular los términos lineales en [\ref{67}].\\
Vamos ahora a demostrar que $\Delta=0.$\\
Recuerdese que hemos trasladado la cónica [\ref{67}] a un punto $O'(\widetilde{h},\widetilde{k})$ que es un centro de la misma y por tanto, la ecuación de la cónica$\diagup XY$ es:
\begin{align*}
\left(\begin{array}{cc}A&B\\B&C\end{array}\right)\dbinom{X}{Y}+f(\widetilde{h},\widetilde{k})=0\hspace{0.5cm}\text{con}\\
f(\widetilde{h},\widetilde{k})=\left(A\widetilde{h}+B\widetilde{k}\right)\widetilde{h}+\left(C\widetilde{k}+B\widetilde{h}\right)\widetilde{k}+2D\widetilde{h}+2E\widetilde{k}+F\hspace{0.5cm}\star.
\end{align*}
Como $O'(\widetilde{h},\widetilde{k})$ es centro de la cónica,
\begin{align*}
A\widetilde{h}+B\widetilde{k}&=-D\\
B\widetilde{h}+C\widetilde{k}&=-E\hspace{0.5cm}\text{que llevamos a $\star$\,\, nos da:}\\
f(\widetilde{h},\widetilde{k})&=-D\widetilde{h}-E\widetilde{k}+2D\widetilde{h}+2D\widetilde{k}+F\\
&=D\widetilde{h}+E\widetilde{k}+F.
\end{align*}
De esta manera,\\
$\begin{cases}
A\widetilde{h}+B\widetilde{k}+D=0\\
B\widetilde{h}+C\widetilde{k}+E=0\\
D\widetilde{h}+E\widetilde{k}+F-f(\widetilde{h},\widetilde{k})=0\hspace{0.5cm}\text{y por lo tanto, pag. 39, 40}\\
\Delta-f(\widetilde{h},\widetilde{k})\delta=0\hspace{0.5cm}\text{y como $\delta=0, \Delta=0$.}
\end{cases}$\\
El siguiente paso es aplicar el Teorema Espectral para eliminar el término mixto en la ecuación [\ref{69}].\\
$PCM(\lambda)=\lambda^2-\omega\lambda+\delta=0.$\\
Como $\delta=0, \lambda^2-\omega\lambda=0.$ Luego los valores propios de $M$ en $O$ son 0 y $\omega.$
\begin{align*}
EP_0^M=\mathscr{N}(M)=\left\{\dbinom{u}{v}\diagup\begin{array}{cc}Au+Bv=0\\ Bu+Cv=0\end{array}\right\}&\underset{\uparrow}{=}\left\{\dbinom{u}{v}\diagup Au+Bv=0\right\}\\
&\begin{cases}
\text{Como}\,\,\left\{\dbinom{A}{B},\dbinom{B}{C}\right\}\,\,\text{es L.D.},\\
\text{la $2^a$\,\, ecuación del sistema}\\
\text{es redundante (veáse dm. pag. 74.)}
\end{cases}
\end{align*}
De $Au+Bv=0,\,\,v=-\dfrac{A}{B}u.$\\
Luego todo vector de la forma $\left(u,-\dfrac{A}{B}u\right)=u\left(1,-\dfrac{A}{B}\right)=\alpha\left(B,-A\right)$ con $\alpha\in\mathbb{R}$ está en el $\mathscr{N}(M).$\\
Esto demuestra que
\begin{align*}
EP_0^M=Sg\left\{\dbinom{B}{-A}\right\}&\underset{\uparrow}{=}Sg\left\{\left(\begin{array}{c}\dfrac{B}{\sqrt{A^2+B^2}}\\-\dfrac{A}{\sqrt{A^2+B^2}}\end{array}\right)\right\}\\
&\begin{cases}
\text{Se normaliza}\\
\text{el vector $\dbinom{-B}{A}$}
\end{cases}
\end{align*}
Supongamos $D>0.$ Puede tenerse:
\begin{enumerate}
\item[(1)] que
\begin{align*}
B<0\\
A<0
\end{align*}
Lo primero es dibujar el eje de centros y luego el vector $\left(\begin{array}{c}\dfrac{B}{\sqrt{A^2+B^2}}\\-\dfrac{A}{\sqrt{A^2+B^2}}\end{array}\right)$ que nos da el $EP_0^M.$ Nótese que el vector es paralelo al eje de centros $\left(\begin{array}{c}\dfrac{B}{\sqrt{A^2+B^2}}\\-\dfrac{A}{\sqrt{A^2+B^2}}\end{array}\right)$
\begin{figure}[ht!]
\begin{center}
  \includegraphics[scale=0.4]{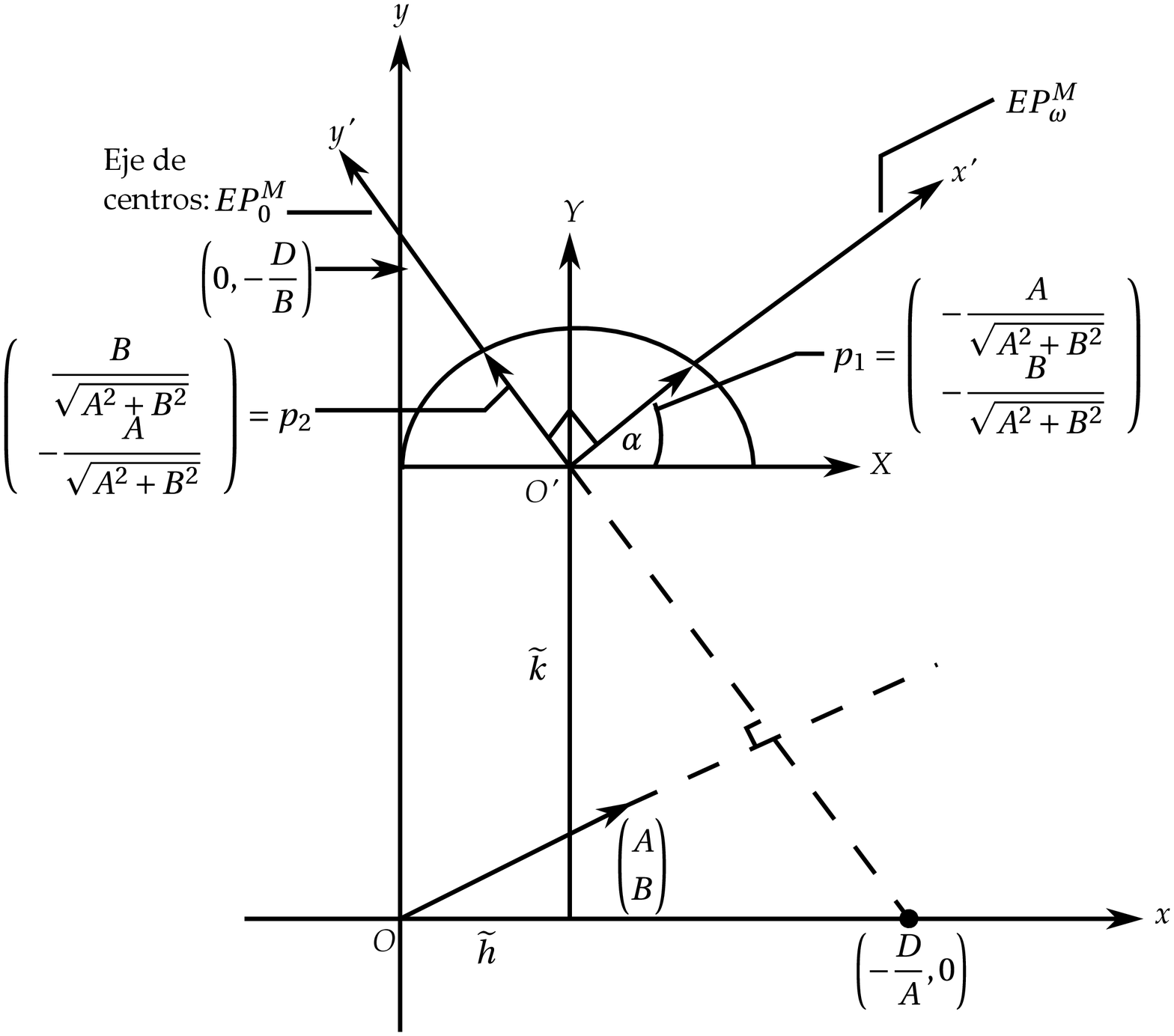}\\
\end{center}
\end{figure}
\newpage
El espectro de $M$ lo ordenamos así:
\begin{align*}
\lambda(M)&={\omega,0}\\
EP_\omega^M\ni p_1&=\left(\begin{array}{c}-\dfrac{A}{\sqrt{A^2+B^2}}\\-\dfrac{B}{\sqrt{A^2+B^2}}\end{array}\right)\\
EP_0^M\ni p_2&=\left(\begin{array}{c}\dfrac{B}{\sqrt{A^2+B^2}}\\-\dfrac{A}{\sqrt{A^2+B^2}}\end{array}\right); {p_1,p_2}:\text{Base ortonormal},\,\, \alpha=\arctan\dfrac{B}{A}=\text{ángulo que giran los ejes}\\
P&=\left(\begin{array}{ccc}-\dfrac{A}{\sqrt{A^2+B^2}}&+&\dfrac{B}{\sqrt{A^2+B^2}}\\-\dfrac{B}{\sqrt{A^2+B^2}}&-&-\dfrac{A}{\sqrt{A^2+B^2}}\end{array}\right)=[I]_{ij}^{p_1,p_2};\,\,\dbinom{X}{Y}=P\dbinom{X'}{Y'}:\text{ecuación de la rotación}
\end{align*}
Después de aplicar el Teorema Espectral a la cónica definida por [\ref{69}], la ecuación de esta$\diagup x'y'$ es: $$\left(\begin{array}{cc}x'&y'\end{array}\right)\left(\begin{array}{cc}\omega&0\\0&0\end{array}\right)\dbinom{x'}{y'}+f(\widetilde{h},\widetilde{k})=0.$$
O sea
\begin{equation}\label{70}
\omega x'^2+f(\widetilde{h},\widetilde{k})=0\hspace{0.5cm}\therefore\hspace{0.5cm}x'^2=-\dfrac{f(\widetilde{h},\widetilde{k})}{\omega}
\end{equation}
Surge ahora una pregunta:
?`Si hubiesemos elegido otro punto $O''(\widetilde{\widetilde{h}},\widetilde{\widetilde{k}})$ en el eje de centros y se hubiese realizado la reducción se obtendría que la ecuación del lugar$\diagup x''y''$ sería: $x''^2=-\dfrac{\widetilde{\widetilde{h}},\widetilde{\widetilde{k}}}{\omega}$?
\begin{figure}[ht!]
\begin{center}
  \includegraphics[scale=0.5]{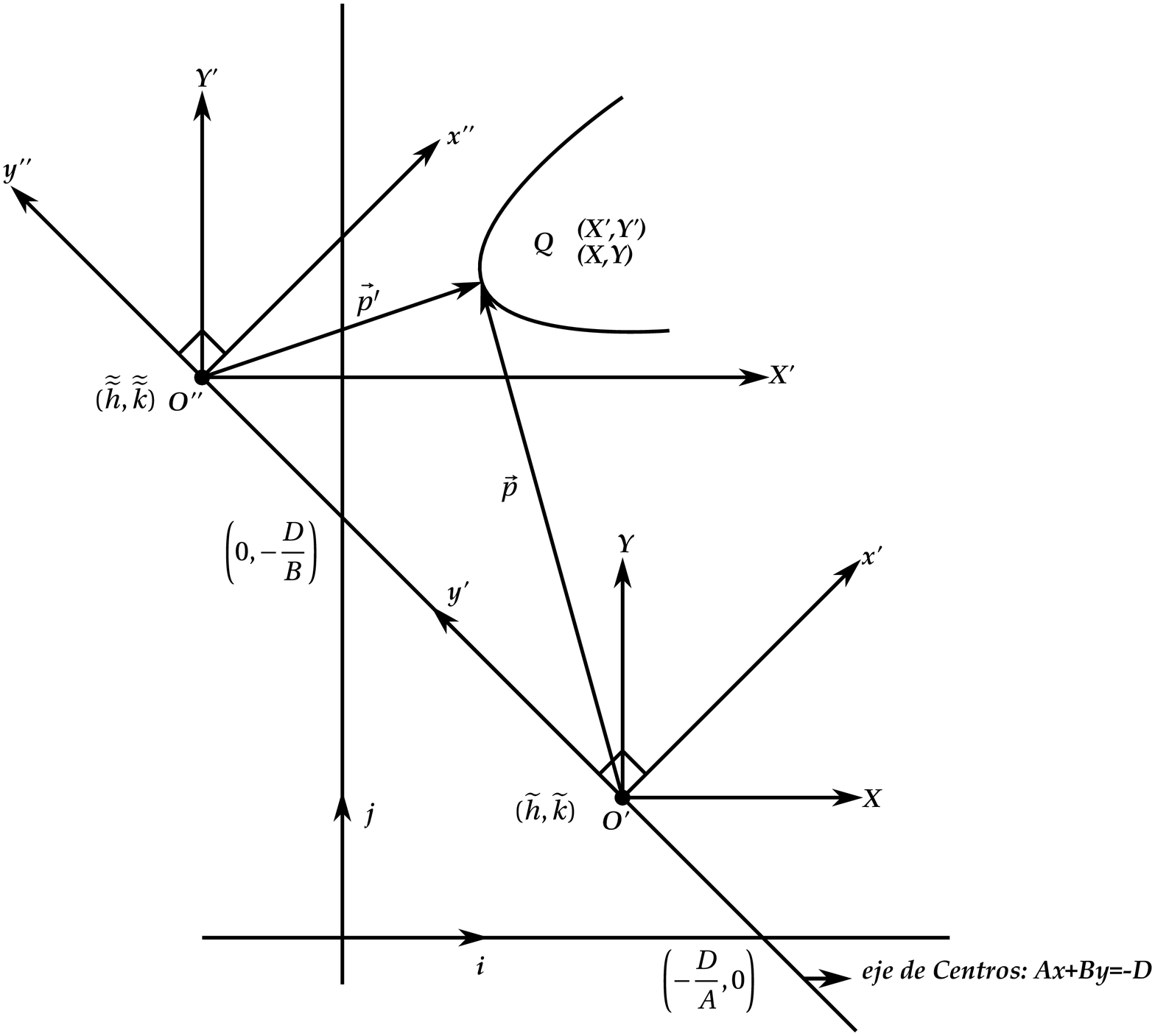}\\
\end{center}
\end{figure}
\newline
Vamos a demostrar que $f(\widetilde{\widetilde{h}},\widetilde{\widetilde{k}})=f(\widetilde{h},\widetilde{k})$ con lo que quedará probado que la ecuación [\ref{70}] del lugar no depende del punto elegido en el eje de centros lo cual sugiere que, por comodidad en los cálculos, para encontrar la ecuación del lugar debe utilizarse preferiblemente los puntos donde el eje de centros corte al eje $x$ o $y.$ En las cónicas con infinitos centros hay un invariante adicional ya que $\forall(\widetilde{h},\widetilde{k})$ que esté en el eje de centros, $f(\widetilde{h},\widetilde{k}):\text{cte.}$\\
Ecuación del lugar$\diagup X'Y'$ con origen en $O'':$
\begin{equation}\label{71}
\left(\begin{array}{cc}X'&Y'\end{array}\right)\left(\begin{array}{cc}A&B\\B&C\end{array}\right)\dbinom{X'}{Y'}+f(\widetilde{\widetilde{h}},\widetilde{\widetilde{k}})=0
\end{equation}
Ecuación del lugar$\diagup XY$ con origen en $O':$
\begin{equation}\label{72}
\left(\begin{array}{cc}X&Y\end{array}\right)\left(\begin{array}{cc}A&B\\B&C\end{array}\right)\dbinom{X}{Y}+f(\widetilde{h},\widetilde{k})=0
\end{equation}
Si $Q$ es un punto del lugar, $\vec{p}=\vec{O'O''}+\vec{p'}$ y $[\vec{p}]_{ij}=[\vec{p'}]_{ij}+[\vec{O'O''}]_{ij}$. O sea que $$\dbinom{X}{Y}=\dbinom{X'}{Y'}+\lambda p_2$$ y al transponer, $$\left(\begin{array}{cc}X&Y\end{array}\right)=\left(\begin{array}{cc}X'&Y'\end{array}\right)+\lambda\left(\begin{array}{ccc}\dfrac{B}{\sqrt{A^2+B^2}}&-&\dfrac{A}{\sqrt{A^2+B^2}}\end{array}\right)$$ que llevamos a [\ref{72}] $$\left(\left(\begin{array}{cc}X'&Y'\end{array}\right)+\lambda\left(\begin{array}{ccc}\dfrac{B}{\sqrt{A^2+B^2}}&-&\dfrac{A}{\sqrt{A^2+B^2}}\end{array}\right)\right)\left(\begin{array}{cc}A&B\\B&C\end{array}\right)\left(\dbinom{X'}{Y'}+\lambda\left(\begin{array}{ccc}\dfrac{B}{\sqrt{A^2+B^2}}\\-\dfrac{A}{\sqrt{A^2+B^2}}\end{array}\right)\right)f(\widetilde{h},\widetilde{k})=0$$
Teniendo en cuenta [\ref{71}]
\begin{align*}
&-f(\widetilde{\widetilde{h}},\widetilde{\widetilde{k}})+\lambda\left(\begin{array}{cc}X&Y\end{array}\right)\cancel{\left(\begin{array}{cc}A&B\\B&C\end{array}\right)}\left(\begin{array}{ccc}\dfrac{B}{\sqrt{A^2+B^2}}\\-\dfrac{A}{\sqrt{A^2+B^2}}\end{array}\right)+\\
&+\lambda\left(\begin{array}{ccc}\dfrac{B}{\sqrt{A^2+B^2}}&-&\cancel{\dfrac{A}{\sqrt{A^2+B^2}}}\end{array}\right)\left(\begin{array}{cc}A&B\\B&C\end{array}\right)\dbinom{X'}{Y'}+\\
&+\lambda^2\left(\begin{array}{ccc}\dfrac{B}{\sqrt{A^2+B^2}}&-&\cancel{\dfrac{A}{\sqrt{A^2+B^2}}}\end{array}\right)\left(\begin{array}{cc}A&B\\B&C\end{array}\right)\left(\begin{array}{ccc}\dfrac{B}{\sqrt{A^2+B^2}}\\-\dfrac{A}{\sqrt{A^2+B^2}}\end{array}\right)+f(\widetilde{h},\widetilde{k})=0
\end{align*}
Los tres últimos sumandos se anulan porque $\left(\begin{array}{cc}\dfrac{B}{\sqrt{}}\\-\dfrac{A}{\sqrt{}}\end{array}\right)$ está en el $EP_0^M.$\\
Así que $$-f(\widetilde{\widetilde{h}},\widetilde{\widetilde{k}})+f(\widetilde{h},\widetilde{k})=0\hspace{0.5cm}\therefore\hspace{0.5cm}f(\widetilde{\widetilde{h}},\widetilde{\widetilde{k}})=f(\widetilde{h},\widetilde{k}).$$
Consideremos el eje de centros $Ax+By=-D$ y sus interceptos con los ejes $x-y.$
\begin{figure}[ht!]
\begin{center}
  \includegraphics[scale=0.5]{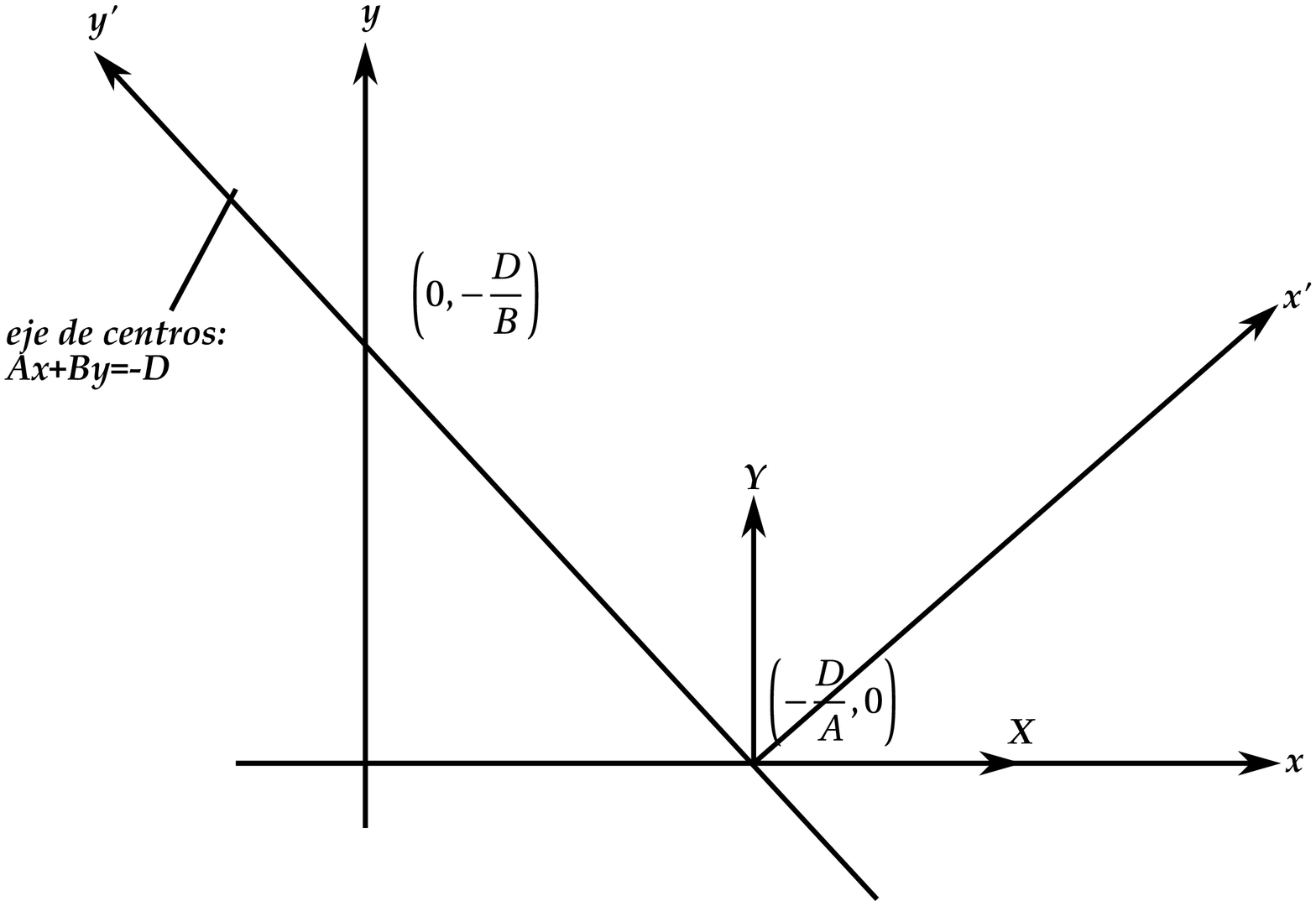}\\
\end{center}
\end{figure}
\newline
Según acaba de probarse, $f\left(-\dfrac{D}{A},0\right)=f\left(0,-\dfrac{D}{B}\right)=f(\widetilde{h},\widetilde{k}),$ cualquiera sea el punto $(\widetilde{h},\widetilde{k})$ que se tiene sobre el eje de centros.\\
Si utilizamos los eje $XY$ y $x'y'$ con origen en $\left(-\dfrac{D}{A},0\right)$ la ecuación de la cónica se escribre así $$x'^2=-\dfrac{f\left(-\dfrac{D}{A},0\right)}{\omega}$$
Ahora,
\begin{align*}
f\left(-\dfrac{D}{A},0\right)&\underset{\uparrow}{=}A{\left(-\dfrac{D}{A}\right)}^2+2D\left(-\dfrac{D}{A}\right)+F\\
&(1)
\end{align*}
\begin{align*}
=\dfrac{D^2}{A}-\dfrac{2D^2}{A}+F=-\dfrac{D^2}{A}+F=\dfrac{AF-D^2}{A}&\underset{\uparrow}{=}\dfrac{M_{22}}{A}\\
&\begin{cases}
\Delta=\left|\begin{array}{ccc}A&B&D\\ B&C&E\\ D&E&F\end{array}\right|; M_{22}=AF-D^2
\end{cases}
\end{align*}
Luego $$x'^2=-\dfrac{f\left(-\dfrac{D}{A},0\right)}{\omega}=-\dfrac{M_{22}}{A\omega}$$
Como $A$ y $\omega$ tienen el mismo signo, $A\omega>0$ y por tanto,\\
$\begin{cases}
\text{Si}\,\,M_{22}<0,\,\,\textit{el lugar consta de dos rectas paralelas al eje $y'$ (el eje de centros).}\\
\text{Si}\,\,M_{22}>0,\,\,\textit{el lugar es $\emptyset$}.\\
\text{Si}\,\,M_{22}=0\,\,\textit{el lugar es el eje $y'$ ó eje de centros.}
\end{cases}$\\
Así que el lugar ó es el eje de centros ó consta de dos rectas $\parallel$s al eje de centros siendo este la $\parallel$ media de las dos rectas.\\
\begin{obser}
Nótese que no hemos demostrado que $M_{22}$ es un invariante. $M_{22}$ es el menor principal $2-2$ de la matriz $\left(\begin{array}{ccc}A&B&D\\ B&C&E\\ D&E&F\end{array}\right)$ y no de otra.\\
La ecuación también puede escribirse así: $x'^2=-f\left(0,-\dfrac{D}{B}\right)$ respecto a los ejes de la figura siguiente:
\begin{figure}[ht!]
\begin{center}
  \includegraphics[scale=0.5]{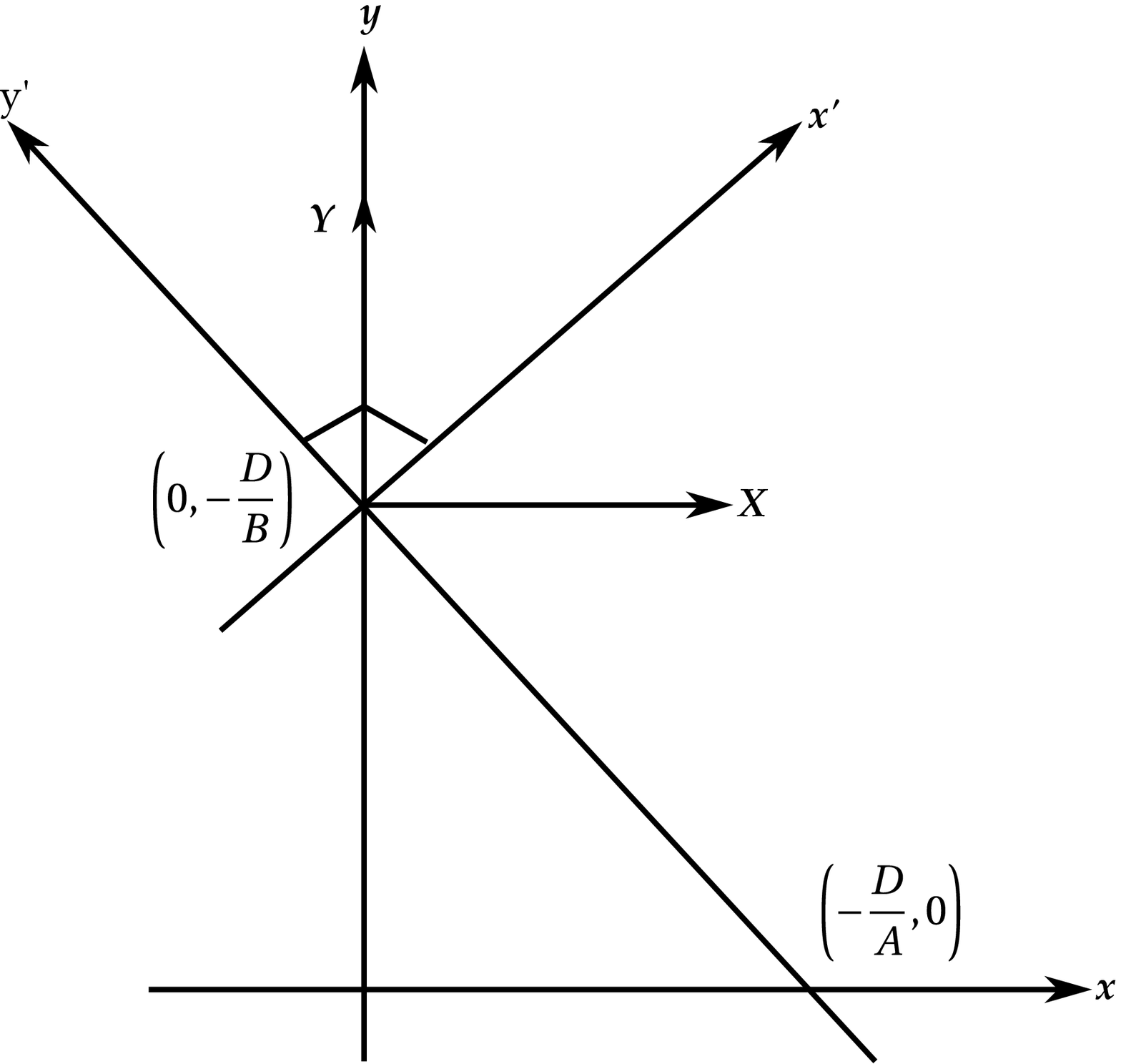}\\
\end{center}
\end{figure}
\newline
$$f\left(0,-\dfrac{D}{B}\right)=C{\left(-\dfrac{D}{B}\right)}^2+2E\left(-\dfrac{D}{B}\right)+F=C{\left(\dfrac{D}{B}\right)}^2-2\dfrac{D}{B}E+F\hspace{0.5cm}\star$$
Ahora, como eje de centros puede tomarse la recta $Ax+By=-D$ ó la recta $Bx+Cy=-E$ ya que son la misma curva y por lo tanto tienen los mismos interceptos con los ejes.\\
Si $x=0, y=-\dfrac{D}{B}$ con la $1^a$ ecuación.\\
Si $x=0, y=-\dfrac{F}{C}$ con la $2^a$ ecuación. Luego $\dfrac{D}{B}=\dfrac{E}{C}$ y al regresar a $\star$,
\begin{align*}
f\left(0,-\dfrac{D}{B}\right)&=C{\left(\dfrac{E}{C}\right)}^2-2\dfrac{E}{C}E+F=\dfrac{E^2}{C}-\dfrac{2E^2}{C}+F\\
&=F-\dfrac{E^2}{C}=\dfrac{CF-E^2}{C}=\dfrac{M_{11}}{C}:\text{menor principal $1-1$ de $\Delta$.}\\
\end{align*}
Así que la ecuación del lugar puede escribirse, respecto a los ejes de la figura anterior así:\\
$x'^2=-\dfrac{M_{11}}{C\omega}$ y como signo de C es el mismo de $\omega, C\omega>0.$\\
Si $M_{11}<0,$ \textit{el lugar consta de dos rectas $\parallel$s al eje $y'$ de la fig. anterior.}\\
Si $M_{11}>0$ \textit{el lugar es $\emptyset$.}\\
Si $M_{11}=0$\textit{el lugar es el eje $y'$ de la fig. anterior.}\\
Para el cálculo de $M_{11}$ téngase en cuenta la obs. anterior.\\
Osea que $M_{11}$ debe tomarse como el menor $1-1$ de la matriz $\left(\begin{array}{ccc}A&B&D\\ B&C&E\\ D&E&F\end{array}\right).$
\end{obser}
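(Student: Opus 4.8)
The plan is to build directly on the reduction already carried out in this section. A translation of the axes to any center $O'=(\tilde h,\tilde k)$ lying on the axis of centers, followed by the spectral diagonalization of $M$, was shown to bring the equation to $\omega x'^2+f(\tilde h,\tilde k)=0$, that is $x'^2=-f(\tilde h,\tilde k)/\omega$. Since we have already established that $f(\tilde h,\tilde k)$ is constant as $(\tilde h,\tilde k)$ ranges over the axis of centers, I would evaluate this constant at the two intercepts of that axis with the coordinate axes and read the classification off its sign.

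First I would take the intercept $(-D/A,0)$ and compute $f(-D/A,0)=(AF-D^2)/A=M_{22}/A$, so that the reduced equation reads $x'^2=-M_{22}/(A\omega)$. To control the denominator, note that $\delta=AC-B^2=0$ with $B\neq0$ forces $AC=B^2>0$, so $A$ and $C$ have a common sign and $\omega=A+C$ shares it; hence $A\omega>0$. The trichotomy is then immediate: the locus is two lines parallel to the axis of centers when $M_{22}<0$, is that axis itself (one line) when $M_{22}=0$, and is $\emptyset$ when $M_{22}>0$.

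To obtain the equivalent description through $M_{11}$, I would use that the axis of centers admits the two presentations $Ax+By=-D$ and $Bx+Cy=-E$, which coincide because the rows of $M$ are proportional and the standing hypothesis $(-D,-E)\in\mathrm{Sg}\{(A,B)\}$ makes the second equation a scalar multiple of the first. Equating the $y$-intercepts of these two forms gives $D/B=E/C$; substituting into $f(0,-D/B)$ yields $f(0,-D/B)=(CF-E^2)/C=M_{11}/C$, whence $x'^2=-M_{11}/(C\omega)$ with $C\omega>0$. Finally, the constancy of $f$ along the axis of centers gives $M_{22}/A=f(-D/A,0)=f(0,-D/B)=M_{11}/C$, so $M_{22}=(A/C)M_{11}$ with $A/C>0$; thus $M_{11}$ and $M_{22}$ always have the same sign and deliver the same classification.

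The main obstacle is bookkeeping rather than conceptual. One must verify that $D/B=E/C$ really follows from the infinitely-many-centers hypothesis (it repackages the relations $BD=AE$ and $CD=BE$ recorded at the start of the section) and is not an independent assumption, and one must check the sign relations $A\omega>0$ and $C\omega>0$ in each sub-case. Once the invariance of $f$ along the axis of centers is granted, the agreement of the two minor-based descriptions costs nothing further, and no separate proof that $M_{11}$ or $M_{22}$ is a rotation invariant is needed.
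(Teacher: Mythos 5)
Your proposal is correct and follows essentially the same route as the paper: reduce to $x'^2=-f(\tilde h,\tilde k)/\omega$, invoke the already-proved constancy of $f$ along the axis of centers, evaluate at the intercepts $(-D/A,0)$ and $(0,-D/B)$ using $D/B=E/C$ (which, as you note, is just $CD=BE$ restated), and settle the signs via $AC=B^2>0$ so that $A\omega>0$ and $C\omega>0$. Your closing remark that $M_{22}=(A/C)M_{11}$ with $A/C>0$, making the two minor-based classifications agree by direct proportionality, is a small but welcome sharpening that the paper leaves implicit; you also correctly use $y=-E/C$ where the paper's text misprints $-F/C$.
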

Consideremos el caso:
\item[(2)]
\begin{align*}
&B>0\\
&A<0\,\,\text{(Recuérdese que hemos asumido $D>0$.)}
\end{align*}
Primero dibujemos el eje de centros: $Ax+By=-D.$\\
Luego tomemos un punto $O'(\widetilde{h},\widetilde{k})$ en dicho eje.
\begin{figure}[ht!]
\begin{center}
  \includegraphics[scale=0.4]{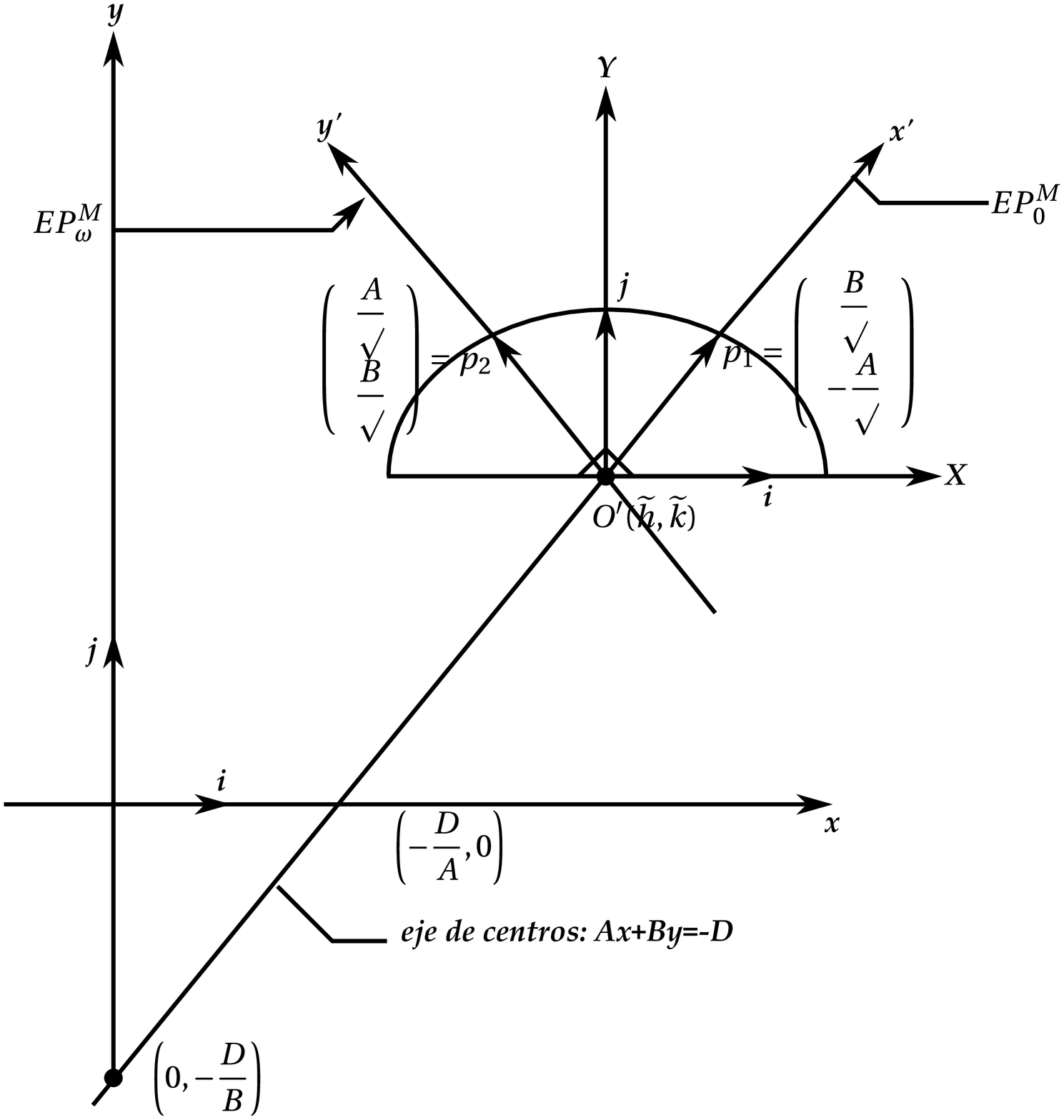}\\
\end{center}
\end{figure}
\newline
Una vez hecha la traslación al punto $O'$, la ecuación de la cónica$\diagup XY$ es $$AX^2+2BXY+CY^2+f(\widetilde{h},\widetilde{k})=0.$$
Dibujamos el vector $$[p_1]_{ij}=\left(\begin{array}{cc}\dfrac{B}{\sqrt{}}\\ -\dfrac{A}{\sqrt{}}\end{array}\right)\in EP_0^M;\hspace{0.5cm}[p_2]_{ij}=\left(\begin{array}{cc}\dfrac{A}{\sqrt{}}\\ \dfrac{B}{\sqrt{}}\end{array}\right)\in EP_\omega^M$$
Nótese que $\vec{p_1}$ es $\parallel$ al eje de centros.\\
El espectro de $M$ lo ordenamos así:
\begin{align*}
\lambda(M)&={0,\omega}\\
P&=\left(\begin{array}{cc}\dfrac{B}{\sqrt{}}&\dfrac{A}{\sqrt{}}\\ -\dfrac{A}{\sqrt{}}&\dfrac{B}{\sqrt{}}\end{array}\right)=[I]_{ij}^{p_1,p_2}:\text{ecuación para la rotación.}\\
\dbinom{X}{Y}&=P\dbinom{x'}{y'}
\end{align*}
Después de aplicar el T. Espectral, la ecuación de la cónica$\diagup x'y'$ es: $$\left(\begin{array}{cc}x'&y'\end{array}\right)\left(\begin{array}{cc}0&0\\0&\omega\end{array}\right)\dbinom{x'}{y'}+f(\widetilde{h},\widetilde{k})=0$$
O sea
\begin{align*}
&\omega y'^2+f(\widetilde{h},\widetilde{k})=0\\
&\therefore\,\,y'^2=-\dfrac{f(\widetilde{h},\widetilde{k})}{\omega}\hspace{0.5cm}\star.
\end{align*}
Nos hacemos la misma pregunta que nos hicimos en el caso $(1)$.\\
?`Si hubiesemos elegido otro punto $O''(\widetilde{\widetilde{h}},\widetilde{\widetilde{k}})$ en el eje de centros y se hubiese realizado la reducción la ecuación de la cónica sería $y''^2=-\dfrac{f(\widetilde{\widetilde{h}},\widetilde{\widetilde{k}})}{\omega}$?
\begin{figure}[ht!]
\begin{center}
  \includegraphics[scale=0.5]{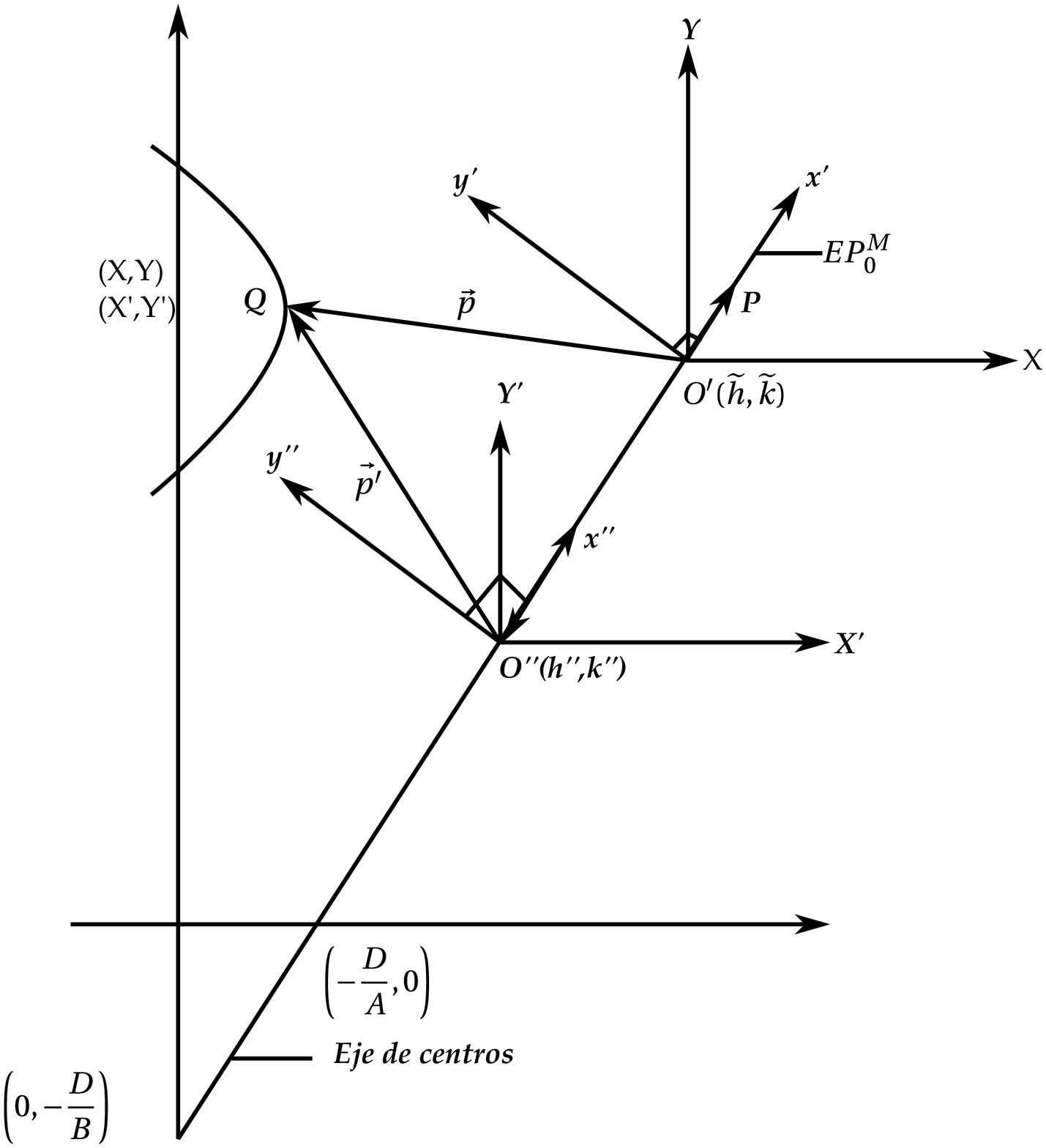}\\
\end{center}
\end{figure}
\newpage
Vamos a demostrar que $f(\widetilde{\widetilde{h}},\widetilde{\widetilde{k}})=f(\widetilde{h},\widetilde{k})$ con lo que quedará demostrado que la ecuación $\star$ del lugar es independiente del punto elegido en el eje de centros.\\
Ecuación del lugar$\diagup X'Y'$ con origen en $O'':$
\begin{equation}\label{73}
\left(\begin{array}{cc}X'&Y'\end{array}\right)\left(\begin{array}{cc}A&B\\B&C\end{array}\right)\dbinom{X'}{Y'}+f(\widetilde{\widetilde{h}},\widetilde{\widetilde{k}})=0
\end{equation}
Ecuación del lugar$\diagup XY$ con origen en $O':$
\begin{equation}\label{74}
\left(\begin{array}{cc}X'&Y'\end{array}\right)\left(\begin{array}{cc}A&B\\B&C\end{array}\right)\dbinom{X'}{Y'}+f(\widetilde{h},\widetilde{k})=0
\end{equation}
Si $Q$ es un punto del lugar, $\vec{\rho}=\vec{O'O''}+\vec{\rho_1}$ y $$[\vec{\rho}]_{ij}=[\vec{\rho_1}]_{ij}+[\vec{O'O''}]_{ij}.$$ O sea que $$\dbinom{X}{Y}=\dbinom{X'}{Y'}+\lambda[p_1]_{ij}$$ con $p_1\in EP_0^M,$ y al transponer, $$\left(\begin{array}{cc}X&Y\end{array}\right)=\left(\begin{array}{cc}X'&Y'\end{array}\right)+\lambda\left(\begin{array}{cc}\dfrac{B}{\sqrt{}}&-\dfrac{A}{\sqrt{}}\end{array}\right)$$ que llevamos a [\ref{74}]:
\begin{align*}
\left(\left(\begin{array}{cc}X'&Y'\end{array}\right)+\lambda\left(\begin{array}{cc}\dfrac{B}{\sqrt{}}&-\dfrac{A}{\sqrt{}}\end{array}\right)\right)\left(\begin{array}{cc}A&B\\B&C\end{array}\right)\left(\dbinom{X'}{Y'}+\lambda\left(\begin{array}{cc}\dfrac{B}{\sqrt{}}\\-\dfrac{A}{\sqrt{}}\end{array}\right)\right)+f(\widetilde{\widetilde{h}},\widetilde{\widetilde{k}})&=0\\
\left(\left(\begin{array}{cc}X'&Y'\end{array}\right)\left(\begin{array}{cc}A&B\\B&C\end{array}\right)+\lambda\left(\begin{array}{cc}\dfrac{B}{\sqrt{}}&-\dfrac{A}{\sqrt{}}\end{array}\right)\left(\begin{array}{cc}A&B\\B&C\end{array}\right)\right)\left(\dbinom{X'}{Y'}+\lambda\left(\begin{array}{cc}\dfrac{B}{\sqrt{}}\\-\dfrac{A}{\sqrt{}}\end{array}\right)\right)+f(\widetilde{\widetilde{h}},\widetilde{\widetilde{k}})&=0
\end{align*}
\begin{align*}
&\underset{\parallel\leftarrow[\ref{73}]}{\underbrace{\left(\begin{array}{cc}X'&Y'\end{array}\right)\left(\begin{array}{cc}A&B\\B&C\end{array}\right)\dbinom{X'}{Y'}}}+\lambda\left(\begin{array}{cc}X'&Y'\end{array}\right)\left(\begin{array}{cc}A&B\\B&C\end{array}\right)\cancel{\left(\begin{array}{cc}\dfrac{B}{\sqrt{}}\\-\dfrac{A}{\sqrt{}}\end{array}\right)}+\lambda\cancel{\left(\begin{array}{cc}\dfrac{B}{\sqrt{}}&-\dfrac{A}{\sqrt{}}\end{array}\right)}\left(\begin{array}{cc}A&B\\B&C\end{array}\right)\dbinom{X'}{Y'}+\\
&-f(\widetilde{\widetilde{h}},\widetilde{\widetilde{k}})\\
&+\lambda^2\left(\begin{array}{cc}\dfrac{B}{\sqrt{}}&-\dfrac{A}{\sqrt{}}\end{array}\right)\cancel{\left(\begin{array}{cc}A&B\\B&C\end{array}\right)}\left(\begin{array}{cc}\dfrac{B}{\sqrt{}}\\-\dfrac{A}{\sqrt{}}\end{array}\right)+f(\widetilde{h},\widetilde{k})=0
\end{align*}
Los 3 sumandos intermedios se anulan porque $\left(\begin{array}{cc}\dfrac{B}{\sqrt{}}&-\dfrac{A}{\sqrt{}}\end{array}\right)\in EP_0^M.$\\
Luego
\begin{align*}
&-f(\widetilde{\widetilde{h}},\widetilde{\widetilde{k}})+f(\widetilde{h},\widetilde{k})=0\\
&\therefore\hspace{0.5cm}f(\widetilde{\widetilde{h}},\widetilde{\widetilde{k}})=f(\widetilde{h},\widetilde{k}).
\end{align*}
Según acaba de probarse, $$f\left(-\dfrac{D}{A},0\right)=f\left(0,-\dfrac{D}{B}\right)=f(\widetilde{h},\widetilde{k})\hspace{0.5cm}\text{cualquiera sea el punto $(\widetilde{h},\widetilde{k})$}.$$ que se tome en el eje de centros.\\
Si utilizamos eje $XY$ y $x'y'$ con origen en $\left(-\dfrac{D}{A},0\right),$ figura que sigue, la ecuación de la cónica$\diagup x'y'$ se escribe así: $$y'^2=-\dfrac{f(-\dfrac{D}{A},0)}{\omega}.$$
\newpage
\begin{figure}[ht!]
\begin{center}
  \includegraphics[scale=0.5]{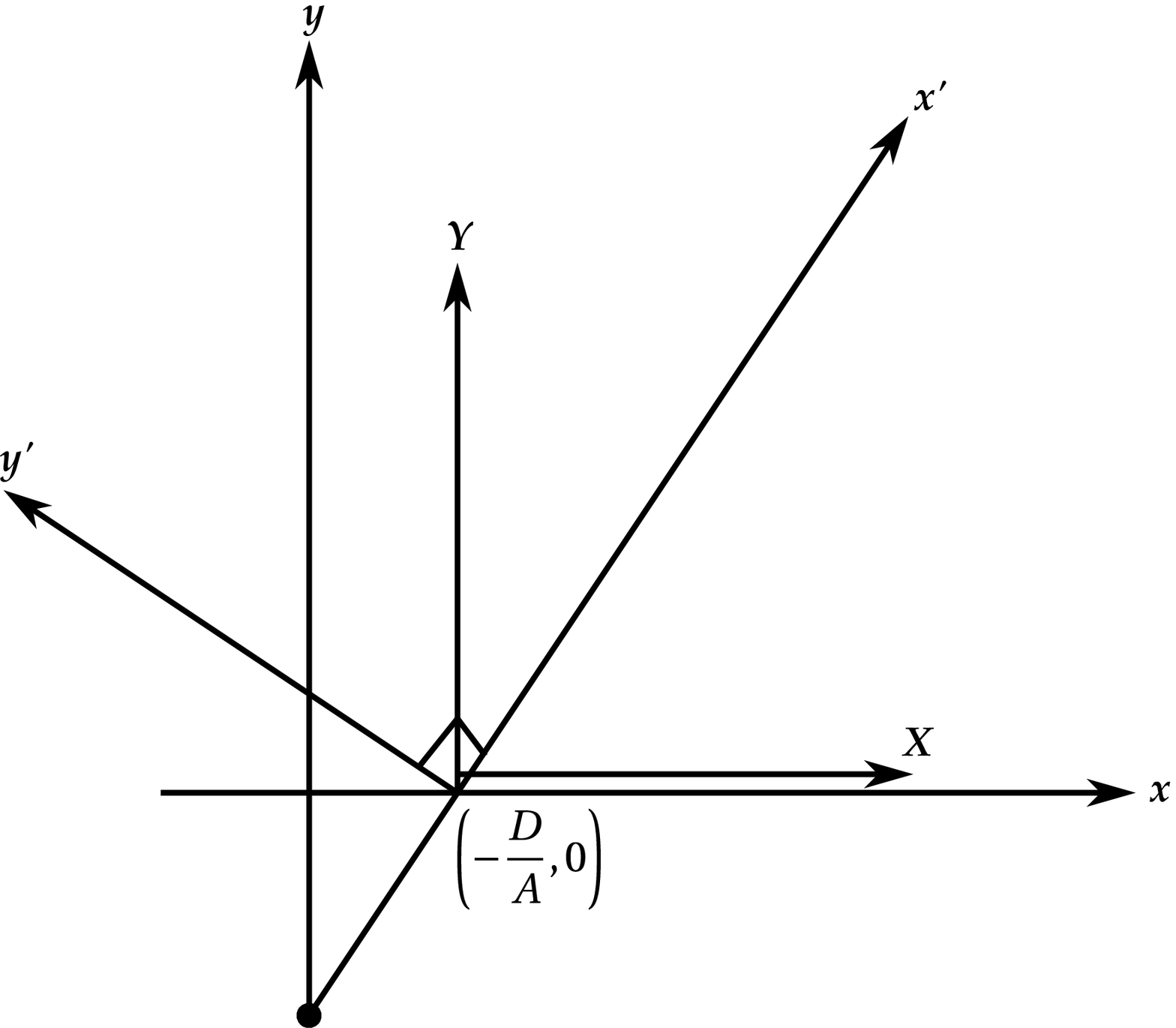}\\
\end{center}
\end{figure}
Ahora, $f\left(-\dfrac{D}{A},0\right)=\dfrac{M_{22}}{A}$ donde $M_{22}=\dfrac{AF-D^2}{A}:\text{menor pal. de orden dos de la matriz}$$$\left(\begin{array}{ccc}A&B&D\\B&C&E\\D&E&F\end{array}\right)$$
Luego $$y'^2=-\dfrac{M_{22}}{\omega}.$$
Como $A$ y $\omega$ tienen el mismo signo, $A\omega>0$ y por tanto,\\
Si $M_{22}<0,$ el lugar consta de dos rectas $\parallel$s al eje $x'$ de la figura anterior (o eje de centros).\\
Si $M_{22}>0,$ el lugar es $\emptyset.$\\
Si $M_{22}=0,$ el lugar es el eje $x'.$ El lugar consta entonces de dos rectas $\parallel$s el eje de centros siendo el eje de centros la paralela media de dos rectas, o eje de centros.\\
La ecuación de la cónica también puede escribirse así: $$y'^2=-f\left(0,-\dfrac{D}{B}\right)$$ respecto a los ejes de la figura siguiente:
\begin{figure}[ht!]
\begin{center}
  \includegraphics[scale=0.4]{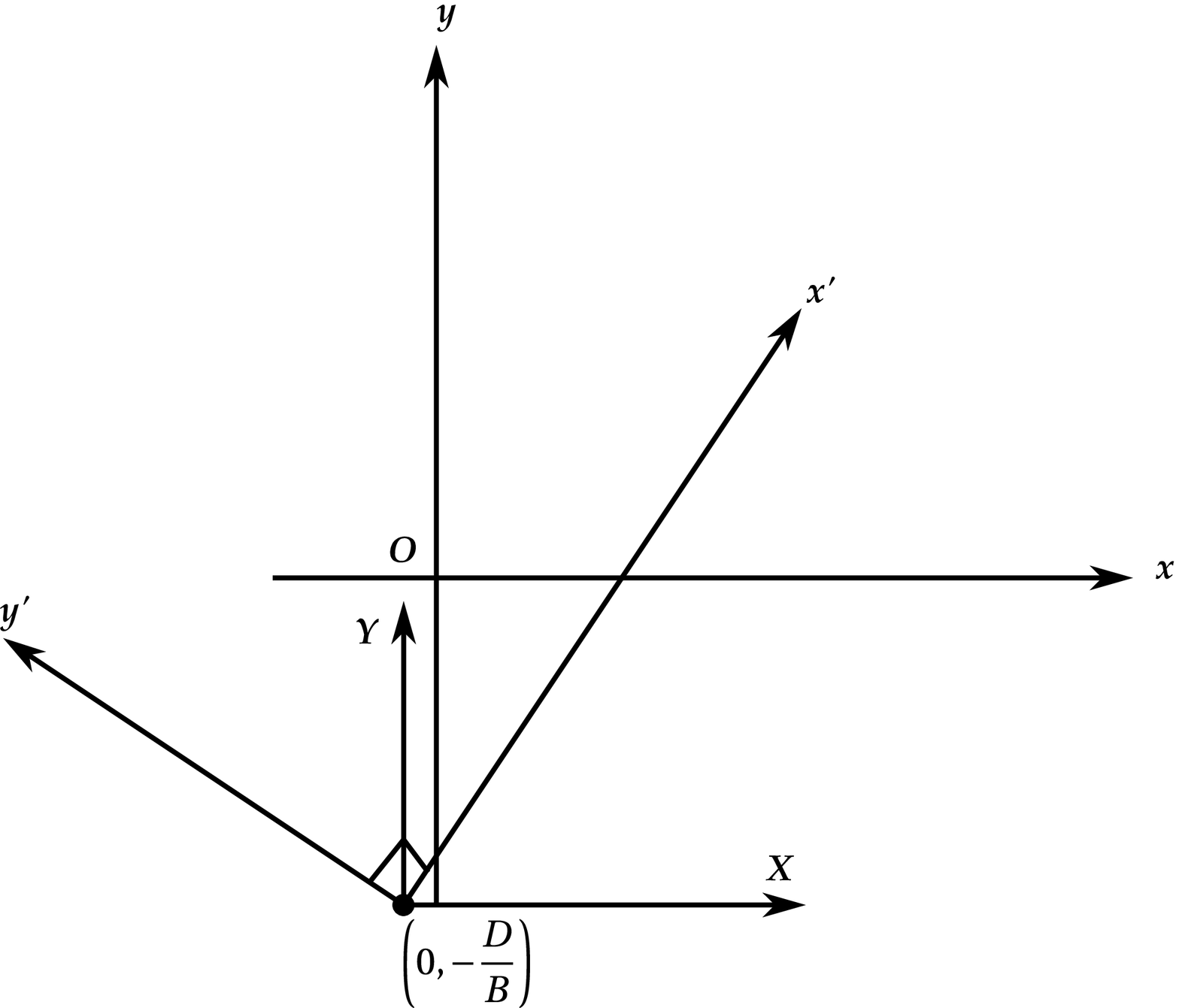}\\
\end{center}
\end{figure}
$$f\left(0,-\dfrac{D}{B}\right)=C{\left(\dfrac{D}{B}\right)}^2-{\left(\dfrac{D}{B}\right)}^2E+F\hspace{0.5cm}\star$$
Ahora, como eje de centros puede tomarse la recta $Ax+By=-D$ o la recta $Bx+Cy=-E$ ya que ambas rectas son la misma curva y por lo tanto tienen los mismos interceptos con los ejes.\\
Si $x=0,$ y $y=-\dfrac{D}{B}$ en la $1^a$ ecuación.\\
Si $x=0,$ y $y=-\dfrac{E}{C}$ en la $2^a$ ecuación. Luego $\dfrac{D}{B}=\dfrac{E}{C}$ y al regresar a $\star,$
\begin{align*}
f\left(0,-\dfrac{D}{B}\right)&=C{\left(\dfrac{E}{C}\right)}^2-2\dfrac{E}{C}\cdot E+F=\dfrac{E^2}{C}-2\dfrac{E^2}{C}+F\\
&=F-\dfrac{E^2}{C}=\dfrac{CF-E^2}{C}=\dfrac{M_{11}}{C}
\end{align*}
donde $M_{11}$ es el menor pal. $1-1$ de orden 2 de la matriz $\left(\begin{array}{ccc}A&B&D\\B&C&E\\D&E&F\end{array}\right).$\\
Así que la ecuación del lugar puede escribirse, respecto a los ejes $x'y'$ de la figura anterior así: $$y'^2=-\dfrac{M_{11}}{C\omega}$$ y como $\text{signo C\,\,$=$\,\,signo\,\,$\omega$,\,\,C$\omega>0$}.$ Luego,\\
Si $M_{11}<0,$ el lugar consta de dos rectas $\parallel$s al eje $x'.$\\
Si $M_{11}>0,$ el lugar es $\emptyset.$\\
Si $M_{11}=0,$ el lugar es el eje $x'.$\\
Regresemos a la página 89.\\
Los otros casos, $3)\hspace{0.5cm}\begin{array}{cc}B<0\\A>0\end{array}\hspace{0.5cm}\text{y}\hspace{0.5cm}\begin{array}{cc}B>0\\A>0\end{array}$\\
se analizan como los casos $1)$ y $2).$\\
Luego de realizar una rotación de los ejes $xy$ a un punto $O'$ de la línea de centros, se consigue la ecuación del lugar$\diagup XY.$\\
Seguidamente se aplica el T. Espectral y se obtiene la ecuación del lugar$\diagup x'y'$. Se halla de nuevo que el lugar\\
$\begin{cases}
\text{Son dos rectas $\parallel$s al eje de centros.}\\
\emptyset.\\
\text{ó el eje de centros.}
\end{cases}$\\
Regresemos al caso (1).\\
Si $M_{22}<0,$ el lugar consta de dos rectas paralelas $\mathscr{L}_1$ y $\mathscr{L}_2$ de ecuaciones$\diagup x'y'$\\
$\begin{cases}
\mathscr{L}_1: x'=+\sqrt{-\dfrac{M_{22}}{A\omega}}\\
\mathscr{L}_2: x'=-\sqrt{-\dfrac{M_{22}}{A\omega}}\hspace{0.5cm}(\text{Recuérdese que el Sign A$=$Sign$\omega$.})
\end{cases}$\\
donde $M_{22}=Af\left(-\dfrac{D}{A},0\right).$
\begin{figure}[ht!]
\begin{center}
  \includegraphics[scale=0.5]{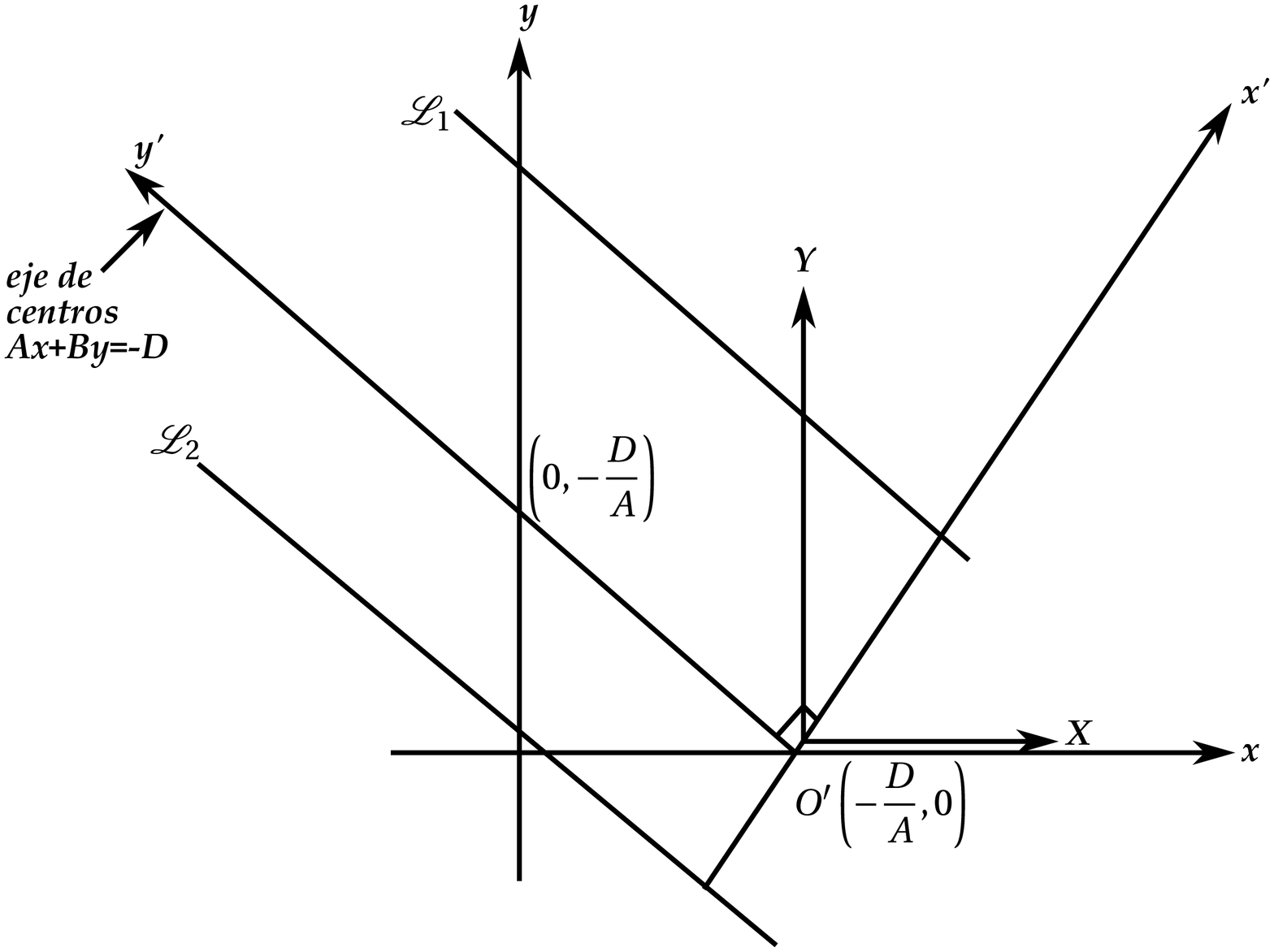}\\
\end{center}
\end{figure}
\item[$\bullet$] Ecuación de la cónica$\diagup XY:$ $$AX^2+2BXY+CY^2+f(-\dfrac{D}{A},0)=0.$$
Al multiplicar por $A,$
$$A^2X^2+2ABXY+ACY^2+A\cdot f(-\dfrac{D}{A},0)=0$$
Pero $AC=B^2.$ Luego la ecuación de la cónica$\diagup XY$ es:
\begin{equation}\label{75}
A^2X^2+2ABXY+B^2Y^2+A\cdot f(-\dfrac{D}{A},0)=0
\end{equation}
\item[$\bullet$] Hallemos ahora las ecuaciones de $\mathscr{L}_1$ y $\mathscr{L}_2\diagup XY.$\\
Ecuaciones de la transformación:
\begin{align*}
\dbinom{X}{Y}=P\dbinom{x'}{y'}&\underset{\uparrow}{=}\left(\begin{array}{ccc}-\dfrac{A}{\sqrt{}}&+&\dfrac{B}{\sqrt{}}\\ -\dfrac{B}{\sqrt{}}&-&\dfrac{A}{\sqrt{}}\end{array}\right)\dbinom{x'}{y'}\\
&\sqrt{}=\sqrt{A^2+B^2}
\end{align*}
Ecuaciones de $\mathscr{L}_1\diagup x'y':\begin{cases}x'=+\sqrt{-\dfrac{M_{22}}{A\omega}}=G\\ y'=g\end{cases}$\\
Ecuaciones de $\mathscr{L}_1\diagup XY: \dbinom{X}{Y}=\left(\begin{array}{ccc}-\dfrac{A}{\sqrt{}}&+&\dfrac{B}{\sqrt{}}\\ -\dfrac{B}{\sqrt{}}&-&\dfrac{A}{\sqrt{}}\end{array}\right)\dbinom{G}{g}$\\
\begin{align*}
X&=-\dfrac{AG}{\sqrt{}}+\dfrac{B}{\sqrt{}}g\\
Y&=-\dfrac{BG}{\sqrt{}}-\dfrac{A}{\sqrt{}}g
\end{align*}
y eliminando el parámetro $g,$ $$AX+BY=-\dfrac{A^2G}{\sqrt{}}-\dfrac{B^2G}{\sqrt{}}=-\dfrac{G(A^2+B^2)}{\sqrt{}}=-G\sqrt{A^2+B^2}.$$
\item[$\bullet$] Ecuación de $\mathscr{L}_2\diagup x'y':x=-\sqrt{\dfrac{M_{22}}{A\omega}}=-G.$\\
Ecuación de $\mathscr{L}_2\diagup XY: \dbinom{X}{Y}=\left(\begin{array}{ccc}-\dfrac{A}{\sqrt{}}&+&\dfrac{B}{\sqrt{}}\\ -\dfrac{B}{\sqrt{}}&-&\dfrac{A}{\sqrt{}}\end{array}\right)\dbinom{-G}{g}$\\
\begin{align*}
X&=+\dfrac{AG}{\sqrt{}}+\dfrac{B}{\sqrt{}}g\\
Y&=+\dfrac{BG}{\sqrt{}}-\dfrac{A}{\sqrt{}}g
\end{align*}
y eliminando a $g,$ $$AX+BY=+G\sqrt{A^2+B^2}$$
Definamos
\begin{align*}
L_1&=AX+BY+G\sqrt{A^2+B^2}\\
L_2&=AX+BY-G\sqrt{A^2+B^2}
\end{align*}
Entonces
\begin{figure}[ht!]
\begin{center}
  \includegraphics[scale=0.5]{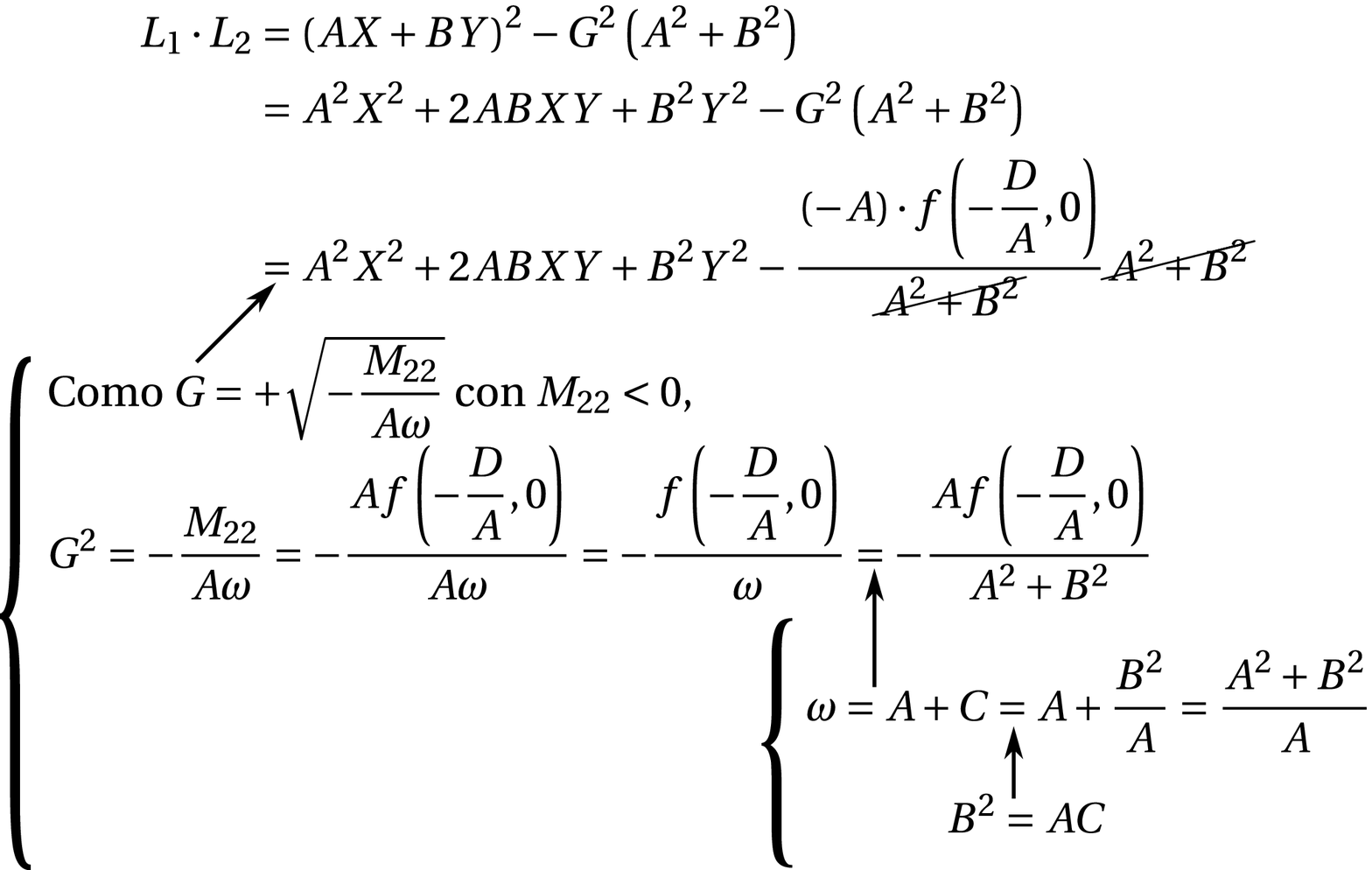}\\
\end{center}
\end{figure}
\newline
$$=A^2X^2+2ABXY+B^2Y^2+Af\left(-\dfrac{D}{A},0\right)$$
O sea que $L_1\cdot L_2\underset{\overset{\uparrow}{[\ref{75}]}}{=}\text{ecuación de la cónica$\diagup XY.$}$\\
Ecuaciones de $\mathscr{L}_1$ y $\mathscr{L}_2\diagup xy.$ Ecuaciones de transformación:
\begin{align*}
x&=X-\dfrac{D}{A}\\
y&=Y\\
\therefore\dbinom{X}{Y}&=\left(\begin{array}{cc}x+\dfrac{D}{A}\\ y\end{array}\right)
\end{align*}
Ecuación de $\mathscr{L}_1\diagup xy.$
\begin{align}
AX+BY&=-G\sqrt{A^2+B^2}\notag\\
A\left(x+\dfrac{D}{A}\right)+By&=-G\sqrt{A^2+B^2}\notag\\
Ax+By+D+G\sqrt{A^2+B^2}=0\label{76}
\end{align}
Ecuación de $\mathscr{L}_2\diagup xy.$
\begin{align}
AX+BY&=+G\sqrt{A^2+B^2}\notag\\
A\left(x+\dfrac{D}{A}\right)+By&=+G\sqrt{A^2+B^2}\notag\\
Ax+By+D-G\sqrt{A^2+B^2}&=0\label{77}
\end{align}
Hagamos
\begin{align*}
\mathscr{L}_1&=Ax+By+D+G\sqrt{A^2+B^2}\\
\mathscr{L}_2&=Ax+By+D-G\sqrt{A^2+B^2};G=\pm\sqrt{-\dfrac{M_{22}}{A\omega}}\\
\mathscr{L}_1\cdot\mathscr{L}_2&=\left(Ax+By+D-G\sqrt{A^2+B^2}\right)\left(Ax+By+D+G\sqrt{A^2+B^2}\right)\\
&={\left(Ax+By+D\right)}^2-G^2\left(A^2+B^2\right)\\
&\underset{\nearrow}{=}A^2x^2+B^2y^2+2ABxy+D^2+2D\left(Ax+By\right)+Af\left(-\dfrac{D}{A},0\right)\\
&\begin{cases}
-G^2\left(A^2+B^2\right)=Af\left(-\dfrac{D}{A},0\right)
\end{cases}\\
&=A\left(Ax^2+2Bxy+\dfrac{B^2}{A}y^2+\dfrac{D^2}{A}+\dfrac{2D}{A}\left(Ax+By\right)+f\left(-\dfrac{D}{A},0\right)\right)\\
&\underset{\uparrow}{=}A\left(Ax^2+2Bxy+Cy^2+\dfrac{D^2}{A}+2Dx+2\dfrac{BD}{A}y+\dfrac{AF-D^2}{A}\right)\\
&\begin{cases}
AC=B^2\therefore\dfrac{B^2}{A}=C\\
f\left(-D/A,0\right)=\dfrac{AF-D^2}{A}
\end{cases}\\
&\underset{\uparrow}{=}A\left(Ax^2+2Bxy+Cy^2+\cancel{\dfrac{D^2}{A}}+2Dx+2Ey+F-\cancel{\dfrac{D^2}{A}}\right)\\
&\begin{cases}
BD=AE\\
\therefore\dfrac{BD}{A}=E
\end{cases}\\
&=A\left(Ax^2+2Bxy+Cy^2Dx+2Ey+F\right)=Af(x,y)
\end{align*}
De nuevo regresamos al caso $(1).$\\
Si $M_{22}=0,$ el lugar es el eje $y'$ o eje de centros: $Ax+By=-D.$
\newpage
\begin{figure}[ht!]
\begin{center}
  \includegraphics[scale=0.5]{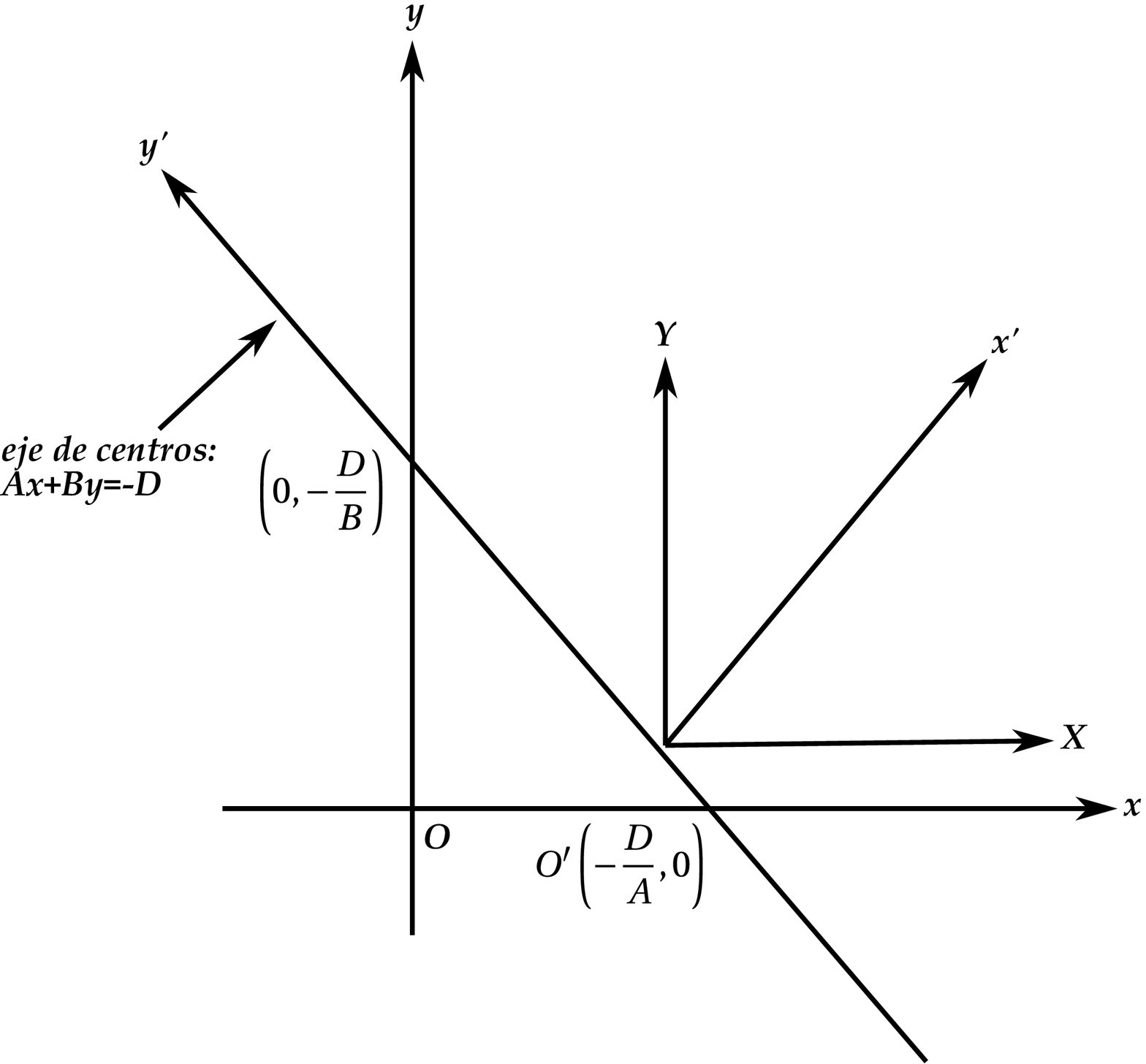}\\
\end{center}
\end{figure}
Vamos a demostrar que $f(x,y)=Ax^2+2Bxy+Cy^2+2Dx+2Ey+F$ factoriza en la forma $Af(x,y)=\mathscr{L}^2$ donde $\mathscr{L}=Ax+By+D.$\\
Ecuación de la cónica$/XY:$
$$A^2X^2+2ABXY+B^2Y^2+Af(-D/A,0)=0$$
\begin{align*}
\mathscr{L}^2&={\left(Ax+By+D\right)}^2\\
&={\left(Ax+By\right)}^2+D^2+2D\left(Ax+By\right)\\
&=A^2x^2+2ABxy+B^2y^2+D^2+2ADx+2BDy\hspace{0.5cm}\star
\end{align*}
Pero $\dfrac{M_{22}}{A}=f(-D/A,0)$ y como $M_{22}=0, f(-D/A,0)=0.$ O sea que $AF-D^2=0\hspace{0.5cm}\therefore\hspace{0.5cm}D^2=AF.$\\
Y al regresar a $\star,$
\begin{align*}
\mathscr{L}^2&=A^2x^2+2ABxy+B^2y^2+AF+2ADx+2BDy\\
&\underset{\uparrow}{=}A^2x^2+2ABxy+ACy^2+2ADx+2AEy+AF\\
&\begin{cases}
BD=AE\\
B^2=AC
\end{cases}\\
&=A\left(Ax^2+2Bxy+Cy^2+2Dx+2Ey+F\right)\\
&=Af(x,y)
\end{align*}
\end{enumerate}
\begin{ejer}
Consideremos la recta
\begin{align*}
\mathscr{L}&=x-y+1=0\\
{\mathscr{L}}^2&=(x-y+1)(x-y+1)=x^2-2xy+y^2+2x-2y+1=0
\end{align*}
Ahora invertimos los paleles.\\
Consideremos la cónica de ecuación
\begin{equation}\label{78}
x^2-2xy+y^2+2x-2y+1=0
\end{equation}
Vamos a demostrar que el lugar representado por es la recta $\mathscr{L}.$
\end{ejer}
\begin{sol}
\begin{center}
\begin{tabular}{ccc}
$A=1$\\
$2B=-2;B=-1$\\
$C=1$\\
$2D=D; D=1$&$\dbinom{-D}{-E}=\dbinom{-1}{-1}=\dbinom{B}{C}$\\
$2E=-2; E=-1$
\end{tabular}
\end{center}
\begin{figure}[ht!]
\begin{center}
  \includegraphics[scale=0.5]{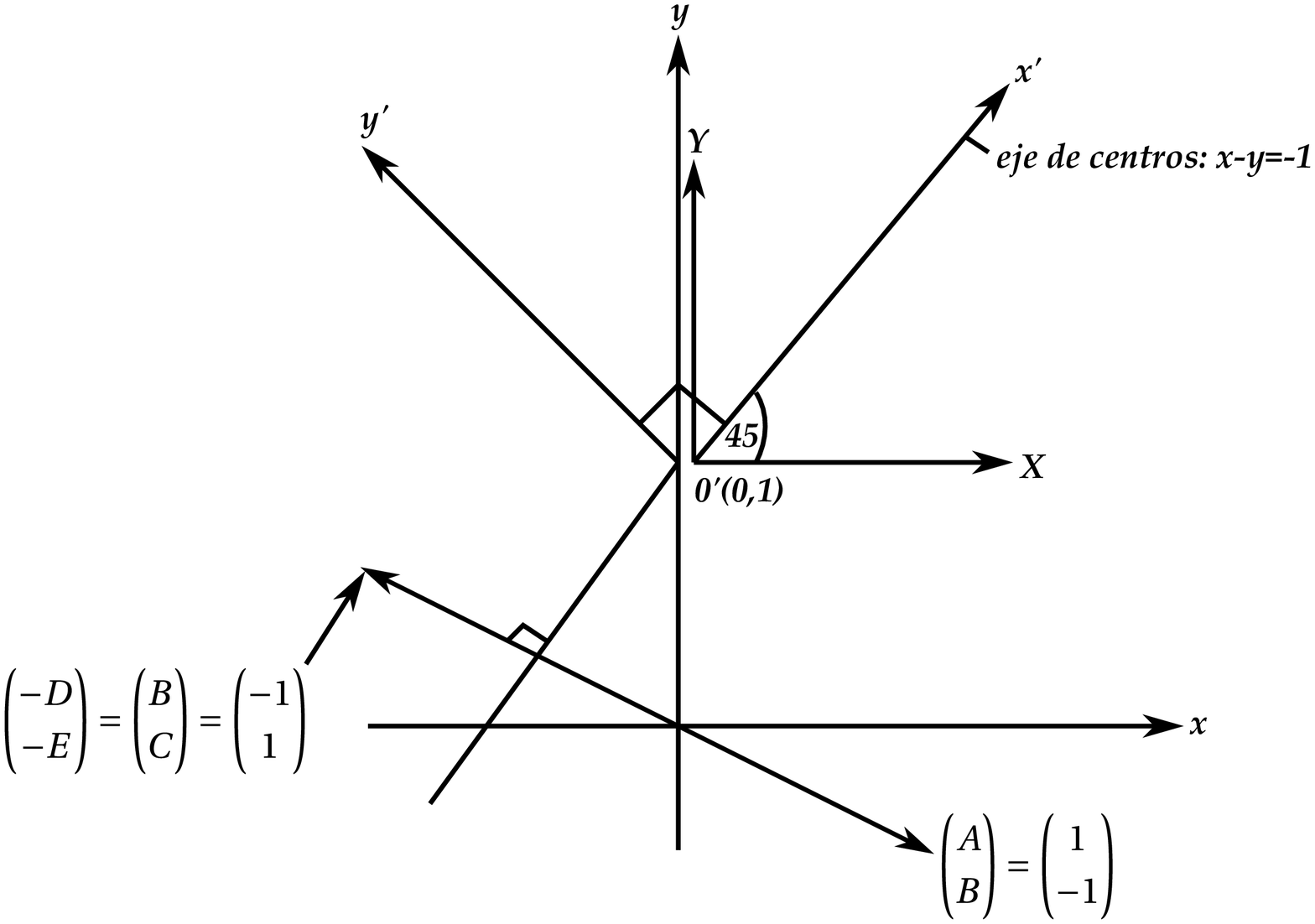}\\
\end{center}
\end{figure}
\underline{Hay $\infty$s centros.}\\
Eje de centros: $Ax+By=-D$ o sea $x-y=-1$ ó $x-y+1=0$ (que es la ecuación de la recta $\mathscr{L}.$)\\
\begin{align*}
M=\left(\begin{array}{cc}1&-1\\-1&1\end{array}\right); \delta&=AC-B^2=1-1=0\\
\omega&=A+C=2
\end{align*}
Como $\Delta=\left|\begin{array}{ccc}1&-1&1\\-1&1&-1\\1&-1&1\end{array}\right|, M_{22}=\left|\begin{array}{cc}1&1\\1&1\end{array}\right|=0.$ \underline{El lugar es una recta.} (El eje de centros.)\\ Tomemos $O'$ en el eje de centros, $O'$ de coord. $(\widetilde{h},\widetilde{k})=(0,1)\diagup xy.$\\
Si hacemos la traslación de los ejes $x-y$ a $O'$, la ecuación de la cónica$\diagup xy.$ es: $$\left(\begin{array}{cc}X&Y\end{array}\right)\left(\begin{array}{ccc}1&-1\\-1&1\end{array}\right)\dbinom{X}{Y}+f(0,1)=0$$
Como
\begin{align}
f(0,1)=0^2-2\cdot0\cdot 1+1^2+2\cdot0-2\cdot1+1&=0,\notag\\
\left(\begin{array}{cc}X&Y\end{array}\right)\left(\begin{array}{ccc}1&-1\\-1&1\end{array}\right)\dbinom{X}{Y}&=0\label{79}
\end{align}
es la ecuación de la cónica$\diagup XY.$
Para seguir transformando a [\ref{79}], consideremos la matriz $M$.\\
$PCM(\lambda)=\lambda^2-\omega+\delta=0.$ Como $\delta=0$ y $\omega=2, \lambda^2-2\lambda=0.$\\
Luego los valores propios de $M$ son 0 y 2.\\
$$EP_0^M=\mathscr{N}(M)=\left\{(u,v)\diagup Au+Bv=0\right\}=\left\{(u,v)\diagup u-v=0\right\}$$
$$(u,v)\in\mathscr{N}(M)\Longleftrightarrow v=u.$$
Todo vector de la forma $u\dbinom{1}{1}=\alpha\dbinom{1/\sqrt{2}}{1/\sqrt{2}}, \alpha\in\mathbb{R}$ está en el $EP_0^M.$
\begin{figure}[ht!]
\begin{center}
  \includegraphics[scale=0.5]{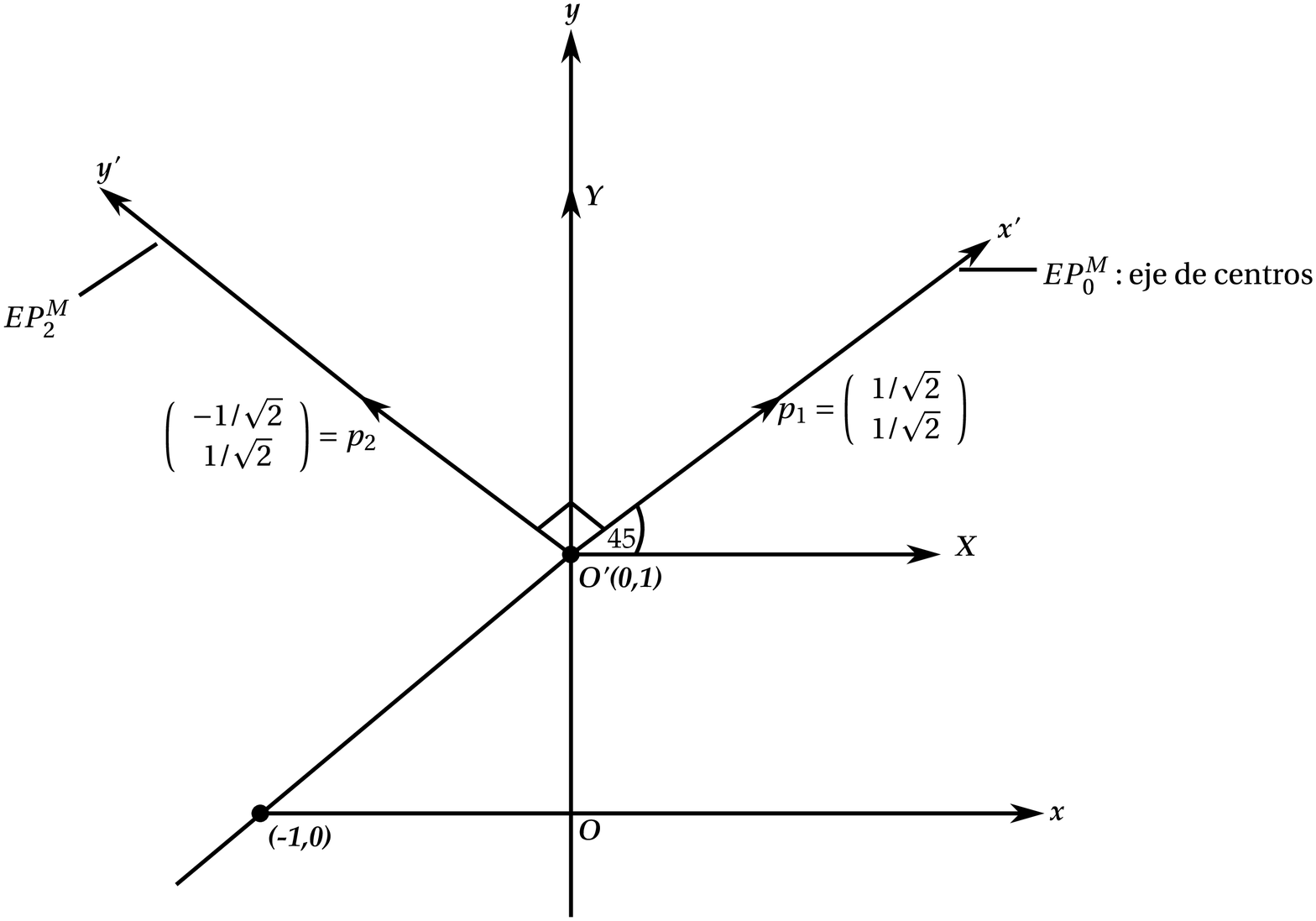}\\
\end{center}
\end{figure}
El espectro de $M$ se organiza así: $\lambda(M)=\left\{0,2\right\}.$ $$P=\left(\begin{array}{cc}1/\sqrt{2}&-1/\sqrt{2}\\1/\sqrt{2}&1/\sqrt{2}\end{array}\right).$$
Después de aplicar el T. Espectral, la ecuación de la cónica$\diagup x'y'$ con origen en $O'$ es: $$\left(\begin{array}{cc}x'&y'\end{array}\right)\left(\begin{array}{cc}0&0\\0&2\end{array}\right)\dbinom{x'}{y'}+f(0,1)=0$$
O sea $2y'^2+0=0,$ i.e., $y'^2=0.$\\
Esto dm. que la cónica es el eje $x'$ de ecuación$\diagup xy: \dfrac{x}{-1}+\dfrac{y}{1}=1,$ o sea $x-y+1=0$ que es $\mathscr{L}.$\\
\framebox[1.2\width]{\parbox[2.6\height]{8.5cm}{Resumiendo, el criterio para identificar las cónicas con $\infty$ s, centros es el sgte:\\
Consideremos la cónica $Ax^2+2Bxy+2Dx+2Ey+F=0$ en la que $A,B,C\neq0.$\\
$\left\{\dbinom{A}{B}\dbinom{B}{C}\right\}$ es L.D., i.e., $\delta=\left|\begin{array}{cc}A&B\\B&C\end{array}\right|=0.$ y que $\dbinom{-D}{-E}\in Sg\left\{\dbinom{A}{B}\right\}=Sg\left\{\dbinom{B}{C}\right\}.$ O sea que $\left\{\dbinom{A}{B},\dbinom{-D}{-E}\right\}$ es L.D., y por lo tanto, $\delta=\left|\begin{array}{cc}A&-D\\B&-E\end{array}\right|=BD-AE=0.$\\
y $\left\{\dbinom{B}{C},\dbinom{-D}{-E}\right\}$ es L.D., y, $\delta=\left|\begin{array}{cc}B&-D\\C&-E\end{array}\right|=CD-BE=0.$\\
La cónica tiene $\infty$s centros. Además, $\Delta=0.$}}\\
\newline
\framebox[1.2\width]{\parbox[2.6\height]{8.5cm}{Si $M_{22}<0,$ el lugar consta de dos rectas $\parallel$s.\\ Si $M_{22}>0,$ el lugar es $\emptyset$.\\ Si $M_{22}=0$ el lugar es una recta.\\
Otro criterio es éste:\\
Si $M_{11}<0$: dos rectas $\parallel$s.\\ Si $M_{11}>0:\emptyset.$\\ Si $M_{11}=0$: una recta.}}
\newline
Los casos en que algunos de los coeficientes $A,B,C$ son cero son:\\
$\begin{tabular}{|c|c|c|}
A&0&0\\ \hline
&&\\
\end{tabular}$
La cónica es $Ax^2+2Dx+2Ey+F=0.$\\
Si $E=0,$ \underline{hay $\infty$s centros.}\\
Si $M_{22}<0:$ dos rectas $\parallel$s.\\
Si $M_{22}=0:$ una recta.\\
Si $M_{22}>0:\emptyset.$\\
Si $E\neq 0,$ la cónica \underline{no tiene centro.}\\
Sea cualquiera $D(D=0\,\,\text{ó}\,\,D\neq0)$ se obtiene una parábola.\\
$\begin{tabular}{|c|c|c|}
0&0&C\\ \hline
&&\\
\end{tabular}$
La ecuación de la cónica es $Cy^2+2Dx+2Ey+F=0.$\\
Si $D=0,$ \underline{hay $\infty$s centros.}\\
Si $M_{11}<0:$ dos rectas $\parallel$s.\\
Si $M_{11}=0:$ una recta.\\
Si $M_{11}>0:\emptyset.$\\
Si $D\neq 0,$ la cónica \underline{no tiene centro.}\\
Sea cualquiera $E(E=0\,\,\text{ó}\,\,E\neq0)$ se obtiene una parábola.\\
\end{sol}
\begin{ejer}
Consideremos las rectas $\parallel$s:
\begin{align*}
\mathscr{L}_1&=3x-2y+2=0\hspace{0.5cm}\text{y}\hspace{0.5cm}\\
\mathscr{L}_2&=3x-2y+1=0\\
\mathscr{L}_1\cdot\mathscr{L}_2&=(3x-2y+2)(3x-2y+1)\\
&=9x^2-12xy+4y^2+3x-2y+6x-4y+2\\
&=9x^2-12xy+4y^2+9x-6y+2.
\end{align*}
Ahora vamos a cambiar los papeles.\\
Consideremos la cónica $9x^2-12xy+4y^2+9x-6y+2=0.$\\
Vamos a reducirla y a dm. que la cónica factoriza así: $$9x^2-12xy+4y^2+9x-6y+2=(3x-3y+2)(3x-2y+1).$$
Consideremos, pués, la cónica
\begin{equation}\label{80}
9x^2-12xy+4y^2+9x-6y+2=0
\end{equation}
\begin{tabular}{ccc}
$A=9$\\
$2B=-12; B=-6$\\
$C=4$\\
$2D=9; D=\dfrac{9}{2}$\\
$2E=-6; E=-3$\\
$F=2$
\end{tabular}\\
Estudiemos los centros de [\ref{80}].\\
\begin{align*}
\dbinom{A}{B}=\dbinom{9}{-6}=3\dbinom{3}{2}\\
\dbinom{B}{C}=\dbinom{-6}{4}=-2\dbinom{3}{-2}\\
\dbinom{-D}{-E}=\dbinom{-9/2}{3}=-\dfrac{3}{2}\dbinom{3}{-2}
\end{align*}
\newpage
\begin{figure}[ht!]
\begin{center}
  \includegraphics[scale=0.4]{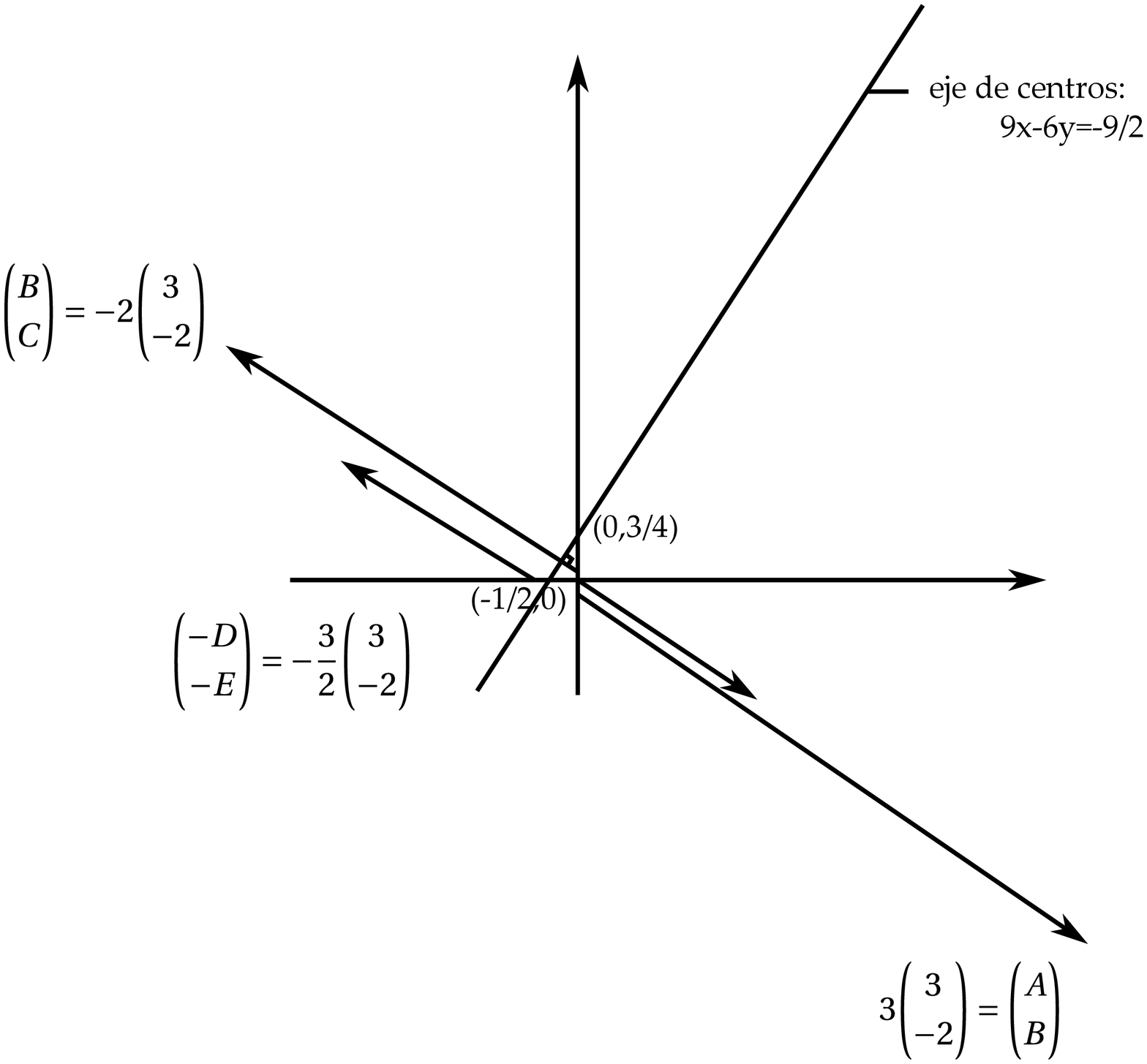}\\
\end{center}
\end{figure}
La cónica tiene $\infty$s centros.\\
El eje de centros es
\begin{align*}
Ax+By&=-D,\,\,\text{o sea,}\\
9x-6y=-9/2
\end{align*}
Si $x=0, y=3/4;\,\,\text{Si}\,\,y=0, x=-1/2.$
\begin{figure}[ht!]
\begin{center}
  \includegraphics[scale=0.5]{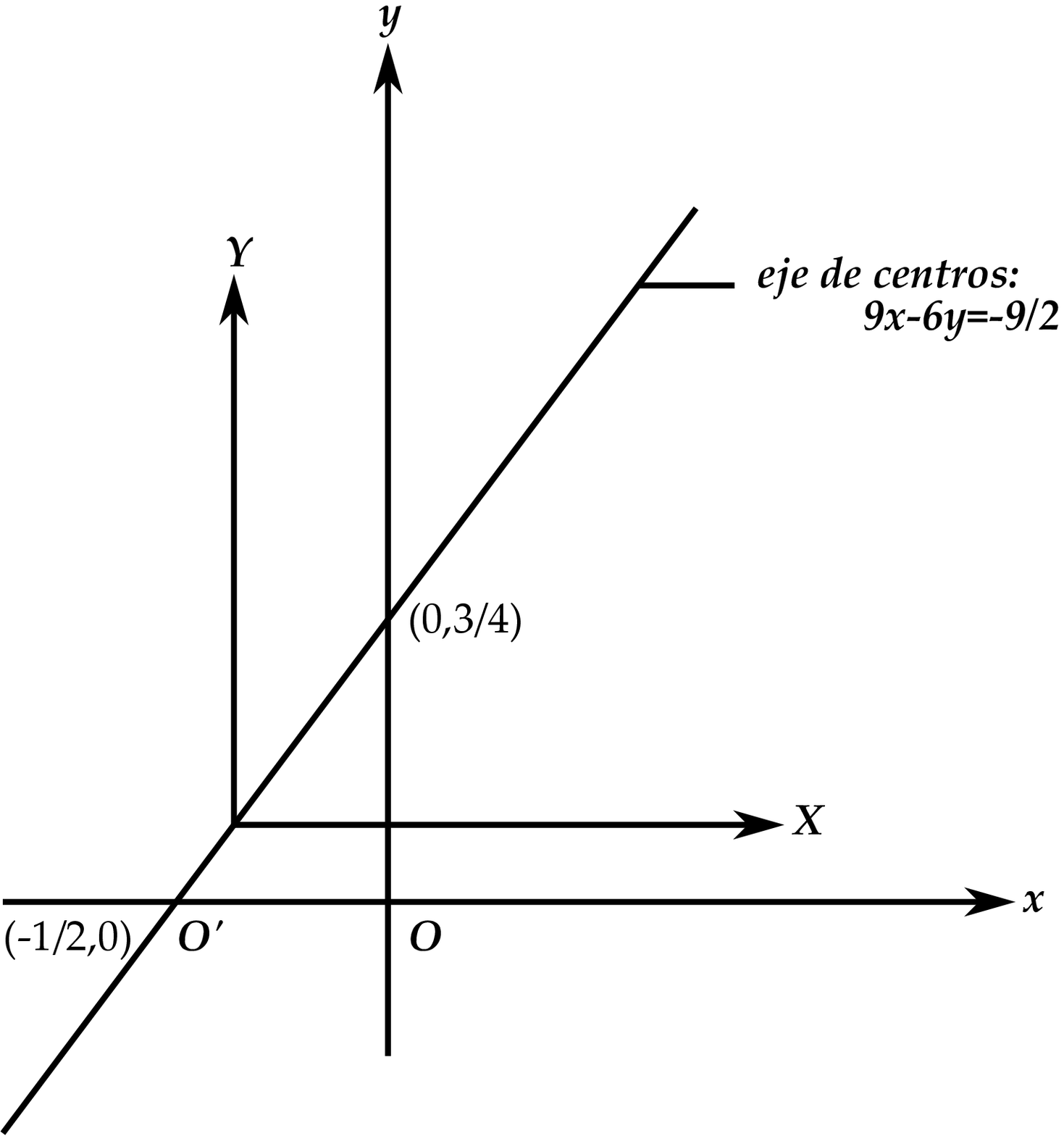}\\
\end{center}
\end{figure}
Al realizar una traslación de ejes al punto $O'(-1/2,0)$ del eje de centros, la ecuación de la cónica$\diagup XY$ es: $$9X^2+12XY+4Y^2+f(-1/2,0)=0$$
\begin{align*}
f(-1/2,0)&=9{\left(-\dfrac{1}{2}\right)}^2+9\left(-\dfrac{1}{2}\right)+2\\
&=\dfrac{9}{4}-\dfrac{9}{2}+\dfrac{2}{1}=\dfrac{9-18+8}{4}=-\dfrac{1}{4}
\end{align*}
Ecu. de la cónica$\diagup XY:$ $$9X^2-12XY+4Y^2-\dfrac{1}{4}=0$$
$M=\left(\begin{array}{cc}9&-6\\-6&4\end{array}\right);\,\,\delta=0;\,\,\omega=9+4=13;\,\,PCM(\lambda)=\lambda^2-\omega\lambda+\delta=\lambda^2-13\lambda=0$\\
Los valores propios de $M$ son 0 y 13.\\
\begin{align*}
EP_0^M&=\left\{\dbinom{u}{v}\diagup\left(\begin{array}{cc}9&-6\\-6&4\end{array}\right)\dbinom{u}{v}=\dbinom{0}{0}\right\}\\
&=\left\{\dbinom{u}{v}\diagup 9u-6v=0\right\}=\left\{\dbinom{u}{v}\diagup 3u-2v=0\right\}\\
&2v=3u\hspace{0.5cm}\therefore\hspace{0.5cm}v=\dfrac{3}{2}u
\end{align*}
Todo vector de la forma $$\left(u,\dfrac{3}{2}u\right)=u\left(1,\dfrac{3}{2}\right)=\alpha(2,3)=\beta\left(\begin{array}{c}2/\sqrt{13}\\ 3/\sqrt{13}\end{array}\right)\,\,\text{con}\,\,\beta\in\mathbb{R}\in EP_0^M.$$
\begin{figure}[ht!]
\begin{center}
  \includegraphics[scale=0.5]{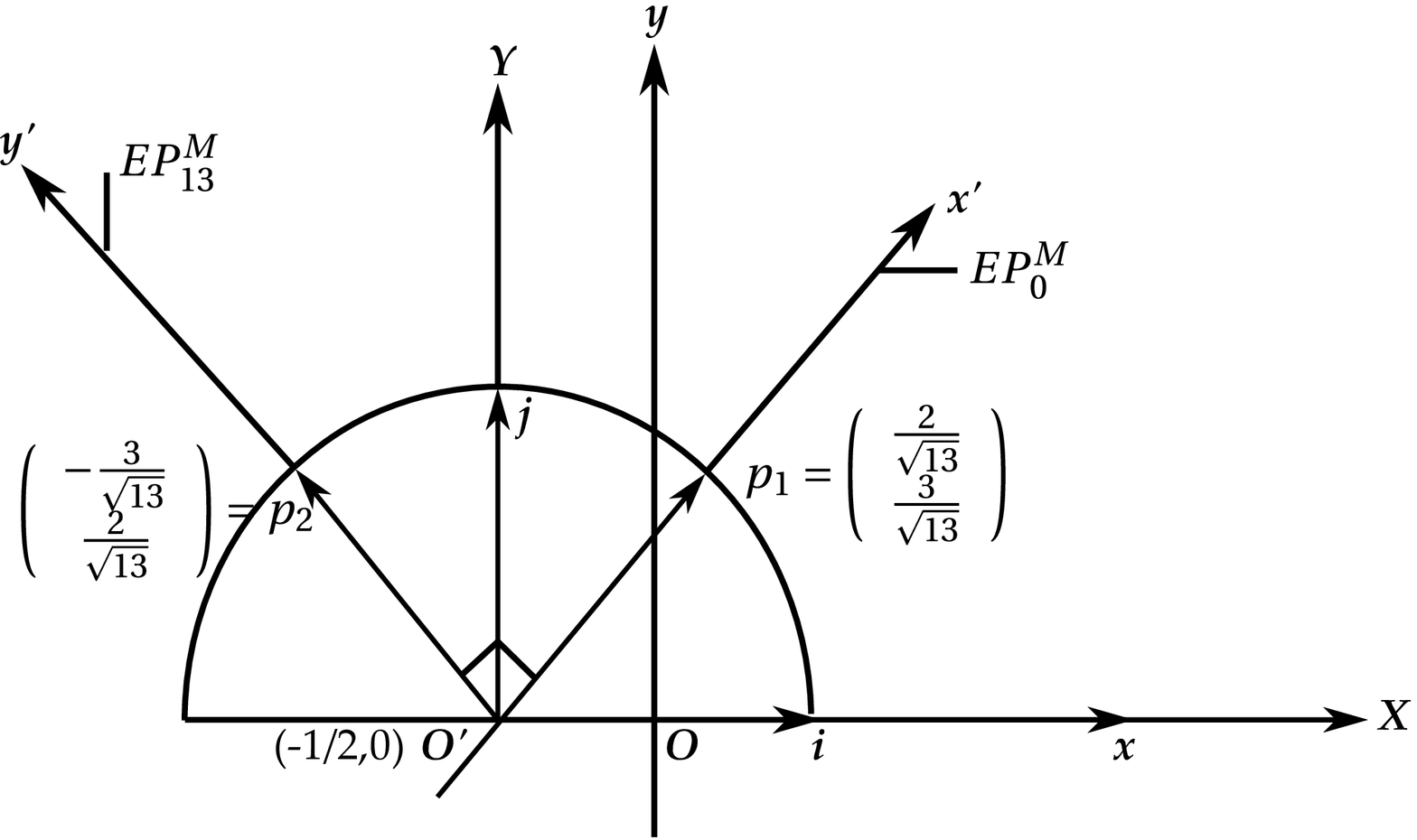}\\
\end{center}
\end{figure}
$\lambda(M)=(0,13);\,\,P=[I]_{ij}^{p_1,p_2}=\left(\begin{array}{cc}\frac{2}{\sqrt{13}}&-\frac{3}{\sqrt{13}}\\ \frac{3}{\sqrt{13}}&\frac{2}{\sqrt{13}}\end{array}\right)$\\
Ecuac. de la cónica$\diagup x'y':$ $$\left(\begin{array}{cc}x'&y'\end{array}\right)\left(\begin{array}{cc}0&0\\0&13\end{array}\right)\dbinom{x'}{y'}-\dfrac{1}{4}=0\hspace{0.5cm}\text{ó}\hspace{0.5cm}13y'^2=\dfrac{1}{4}.$$
El lugar (la cónica) se compone de dos rectas $\mathscr{L}_1$ y $\mathscr{L}_2$ de ecuaciones$\diagup x'y':$
\begin{align*}
\mathscr{L}_1&:y'+\dfrac{\sqrt{13}}{26};\,\,x'=g, g\in\mathbb{R}.\\
\mathscr{L}_2&:y'=-\dfrac{\sqrt{13}}{26};\,\,x'=g, g\in\mathbb{R}.
\end{align*}
Sus ecuaciones$\diagup xy$ se obtienen así: $$\dbinom{X}{Y}=P\dbinom{x'}{y'};\,\,\begin{cases}x=X-1/2\\ y=Y\hspace{0.5cm}\therefore\hspace{0.5cm}\dbinom{X}{Y}=\left(\begin{array}{cc}x+1/2\\ y\end{array}\right)\end{cases}$$
Por lo tanto, las ecuac. de transf. de coor. de $x-y$ a $x'-y'$ son: $$\left(\begin{array}{cc}x+1/2\\ y\end{array}\right)=\left(\begin{array}{cc}\frac{2}{\sqrt{13}}&-\frac{3}{\sqrt{13}}\\ \frac{3}{\sqrt{13}}&\frac{2}{\sqrt{13}}\end{array}\right)\dbinom{x'}{y'}$$
Ec. de $\mathscr{L}_1\diagup xy:$
$$\left(\begin{array}{cc}x+1/2\\ y\end{array}\right)=\left(\begin{array}{cc}\frac{2}{\sqrt{13}}&-\frac{3}{\sqrt{13}}\\ \frac{3}{\sqrt{13}}&\frac{2}{\sqrt{13}}\end{array}\right)\left(\begin{array}{c}g\\ \dfrac{\sqrt{13}}{26}\end{array}\right)$$
\begin{align*}
(3)\hspace{0.5cm}x+\frac{1}{2}&=\frac{2}{\sqrt{13}}g-\frac{3}{26}\\
(2)\hspace{0.5cm}y&=\frac{3}{\sqrt{13}}g+\frac{2}{26}\hspace{0.5cm}\text{Eliminando el parámetro $g$,}
\end{align*}
$$3x+\frac{3}{2}-2y=-\frac{9}{26}-\frac{4}{26}=-\frac{13}{26}=-\frac{1}{2}.$$
Luego $$3x-2y+2=0:\text{ecuac. de $\mathscr{L}_1\diagup xy$}$$
Ec. de $\mathscr{L}_2\diagup xy:$
$$\left(\begin{array}{cc}x+1/2\\ y\end{array}\right)=\left(\begin{array}{cc}\frac{2}{\sqrt{13}}&-\frac{3}{\sqrt{13}}\\ \frac{3}{\sqrt{13}}&\frac{2}{\sqrt{13}}\end{array}\right)\left(\begin{array}{c}g\\ -\dfrac{\sqrt{13}}{26}\end{array}\right)$$
\begin{align*}
(3)\hspace{0.5cm}x+\frac{1}{2}&=\frac{2}{\sqrt{13}}g+\frac{3}{26}\\
(2)\hspace{0.5cm}y&=\frac{3}{\sqrt{13}}g-\frac{2}{26}\hspace{0.5cm}\text{Eliminando el parámetro $g$,}
\end{align*}
$$3x+\frac{3}{2}-2y=\frac{9}{26}+\frac{4}{26}=\frac{13}{26}=\frac{1}{2}.$$
Luego $$3x-2y1=0:\text{ecuac. de $\mathscr{L}_2\diagup xy$}$$
Esto nos dm. que la cónica $9x^2-12xy+4y^2+9x-6y+2=0$ se compone de las rectas $3x-2y+2=0$ y $3x-2y+1=0.$\\
$$9x^2-12xy+4y^2+9x-6y+2=(3x-2y+2)(3x-2y+1)$$
\end{ejer}
\textbf{Problemas. 1.}\\
Empleando los invariantes determine la naturaleza y dibuje las cónicas representadas por las ecuaciones:
\begin{enumerate}
\item $5x^2-4xy+y^2+2x-y=0$\\
\item $3x^2-4xy+y^2+2x-y=0$\\
\item $3x^2-4xy+y^2+15x-6y+7=0$\\
\item $2x^2-7xy+3y^2-9x+7y+4=0$\\
\item $4x^2-12xy+9y^2+4x-5y+3=0$\\
\item $4x^2-12xy+9y^2-8x+12y-7=0$
\end{enumerate}
\textbf{2.} Los siguientes son ejemplos de cónicas degeneradas. Empleando los invariantes diga de que consta y si es posible, factorice la ecuación de la cónica.\\
\begin{enumerate}
\item $6x^2+xy-2y^2+7x-14y-24=0$\\
\item $4x^2+4xy+y^2-2x-y-20=0$\\
\item $x^2+2xy+2y^2-8x-12y+20=0$\\
\item $xy+5x-2x-10=0$\\
\item $6x^2+11xy+3y^2+11x-y-10=0$\\
\item $4x^2+3xy+y^2-10x-2y+8=0$\\
\item $10xy+4x-15y-6=0$\\
\item $4x^2+4xy+y^2-12x-6y+9=0$\\
\item $x^2-4xy+4y^2+2x-4y-3=0$\\
\item $9x^2-6xy+y^2-3x+y-2=0$
\end{enumerate}
\textbf{Ejercicios.}\\
En los siguientes casos se tiene una cónica del tipo $Ax^2+2Bxy+Cy^2+2Dx+2Ey+F=0.$\\
Utilizand los invariantes, pruebe que se trata de una cónica degenerada $\begin{cases}\text{Una recta ó}\\ \text{dos rectas \,\,$\parallel$s\,\, ó que se cortan.}\end{cases}$\\
Una vez definida la naturaleza de la cónica, transforme la ecuación (haga traslaciones ó rotaciones si es necesario) y finalmente dibújela.
\begin{enumerate}
\item[1)] $2x^2+xy-y^2+3y-2=0$\\
\item[2)] $x^2-y^2+x+y=0$\\
\item[3)] $2x^2+xy-2x-y=0$\\
\item[4)] $x^2-2xy+y^2+2x-2y+1=0$\\
\item[5)] $4x^2-4xy+y^2+4x-2y+1=0$
\end{enumerate}
\section[Intersección de una cónica con una recta...]{Intersección de una cónica con una recta. Recta tangente a una cónica por un punto de la curva.}
\begin{enumerate}
\item Vamos a tratar de utilizar la ecuación de incrementos de una cónica para hallar la ecuación de la tangente
a la elipse
\begin{equation}\label{81}
\dfrac{x^2}{a^2}+\dfrac{y^2}{b^2}=1,\hspace{0.5cm}a>b
\end{equation}
en un punto $P(x_0,y_0)$ de la curva.
\begin{figure}[ht!]
\begin{center}
  \includegraphics[scale=0.5]{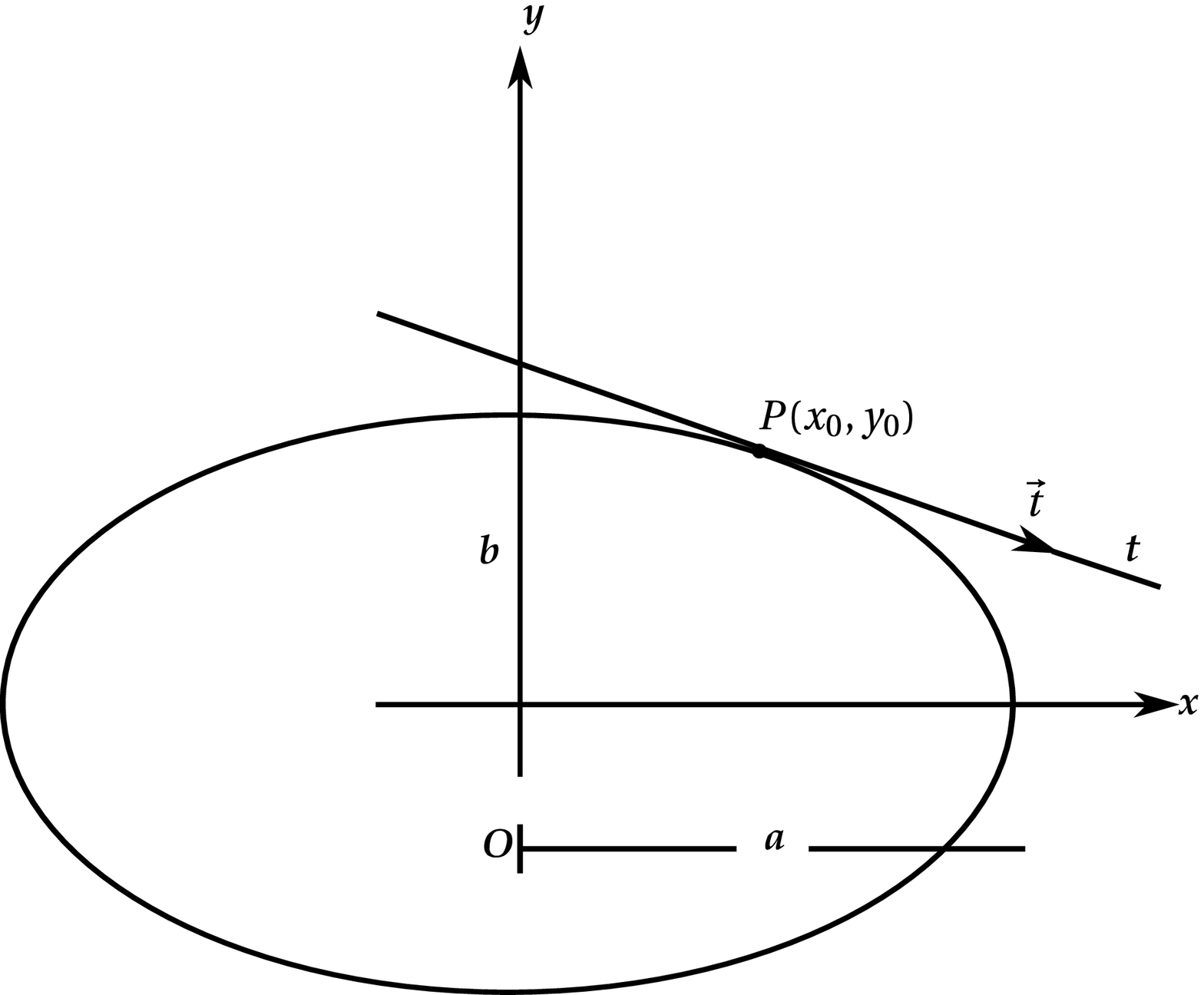}\\
\end{center}
\end{figure}
De [\ref{81}]:
\begin{equation}\label{82}
b^2x^2+a^2y^2-a^2b^2=0
\end{equation}
Consideremos la cónica $b^2x^2+a^2y^2-a^2b^2=0\hspace{0.5cm}\star$\\
En este caso,
\begin{align*}
f&:\mathbb{R}^2\longrightarrow\mathbb{R}\\
&(x,y)\longrightarrow f(x,y)=\underbrace{b^2x^2+a^2y^2}-a^2b^2\\
&\hspace{2.5cm}=q(x,y)-a^2-b^2\\
&\hspace{2.5cm}=\left(\begin{array}{cc}x&y\end{array}\right)\left(\begin{array}{cc}b^2&0\\0&a^2\end{array}\right)\dbinom{x}{y}-a^2b^2\\
q&=\mathbb{R}^2\longrightarrow\mathbb{R}\\
&(x,y)\longrightarrow q(x,y)=b^2x^2+a^2y^2\\
&\hspace{2.5cm}=\left(\begin{array}{cc}x&y\end{array}\right)\left(\begin{array}{cc}b^2&0\\0&a^2\end{array}\right)\dbinom{x}{y}\,\,\text{es la f.c. asociada a la cónica $\star.$}
\end{align*}
Sea $(x_0,y_0)$ un punto de la curva y $t$ la tangente a la curva por $(x_0,y_0)$. Vamos a dm. que la ecuación de $t$ es:
\begin{equation}\label{83}
(x-x_0)\left.\dfrac{\partial f}{\partial x}\right)_{x_0,y_o}+(y-y_0)\left.\dfrac{\partial f}{\partial x}\right)_{x_0,y_o}=0
\end{equation}
Una vez dm. [\ref{83}], como $\dfrac{\partial f}{\partial x}=2b^2x$ y $\dfrac{\partial f}{\partial y}=2a^2x,$\\
$$\left.\dfrac{\partial f}{\partial x}\right)_{x_0,y_0}=2b^2x_0,\,\,\left.\dfrac{\partial f}{\partial y}\right)_{x_0,y_0}=2a^2y_0$$ y regresando a [\ref{83}]:
\begin{align*}
t:(x-x_0)2b^2x_0+(y-y_0)2a^2y_0&=0\\
b^2x_0x-b^2x_0^2x-b^2x_0^2+a^2y_0y-a^2y_0^2&=0\\
b^2x_0x+a^2y_0y=b^2x_0^2+a^2y_0^2&\underset{\uparrow}{=}a^2b^2\\
&\begin{cases}(x_0,y_0)\,\,\text{está en la curva}\end{cases}
\end{align*}
Así que $$t:\dfrac{x_0x}{a^2}+\dfrac{y_0y}{b^2}=1,\hspace{0.5cm}\begin{array}{cc}\text{ecuación que ya habíamos}\\ \text{obtenido de otra forma}\\ \text{en las monografías de las}\\ \text{cónicas.}\end{array}$$
Regresemos a la ecuación de la cónica:
\begin{align*}
b^2x^2+a^2y^2-a^2b^2&=0\\
\left(\begin{array}{cc}x&y\end{array}\right)\underset{\overset{\parallel}{M}}{\underbrace{\left(\begin{array}{cc}b^2&0\\0&a^2\end{array}\right)}}\dbinom{x}{y}-a^2b^2=0
\end{align*}
\begin{align*}
PCM(\lambda)&=\lambda^2-(a^2+b^2)\lambda+a^2b^2\\
&\therefore \lambda=\dfrac{(a^2+b^2)\pm\sqrt{{(a^2+b^2)}^2-4a^2b^2}}{2}=\dfrac{(a^2+b^2)\pm (a^2-b^2)}{2}\begin{cases}\lambda_1=a^2\\ \lambda_2=b^2\end{cases}\\
EP_{b^2}^M&=\left\{(x,y)\diagup\left(\begin{array}{cc}b^2&0\\0&a^2\end{array}\right)\dbinom{x}{y}=b^2\dbinom{x}{y}\right\}\\
&\begin{cases}b^2x=b^2x\hspace{0.5cm}\therefore\hspace{0.5cm}x\in\mathbb{R}\\
a^2y=b^2y\hspace{0.5cm}\therefore\hspace{0.5cm}(a^2-b^2)y=0\hspace{0.5cm}\text{i.e.,}\,\,y=0\end{cases}
\end{align*}
Así que $EP_{b^2}^M$ es el eje $x$.\\
\begin{align*}
EP_{a^2}^M&=\left\{(x,y)\diagup\left(\begin{array}{cc}b^2&0\\0&a^2\end{array}\right)\dbinom{x}{y}=a^2\dbinom{x}{y}\right\}\\
&\begin{cases}b^2x=a^2x\hspace{0.5cm}\therefore\hspace{0.5cm}(a^2-b^2)x=,\hspace{0.5cm}\text{i.e.,}\,\,x=0\\
a^2y=a^2y\hspace{0.5cm}\text{o sea que y $\mathbb{R}.$}\end{cases}\\
\end{align*}
El $EP_{a^2}^M$ es el eje $y.$
\begin{figure}[ht!]
\begin{center}
  \includegraphics[scale=0.5]{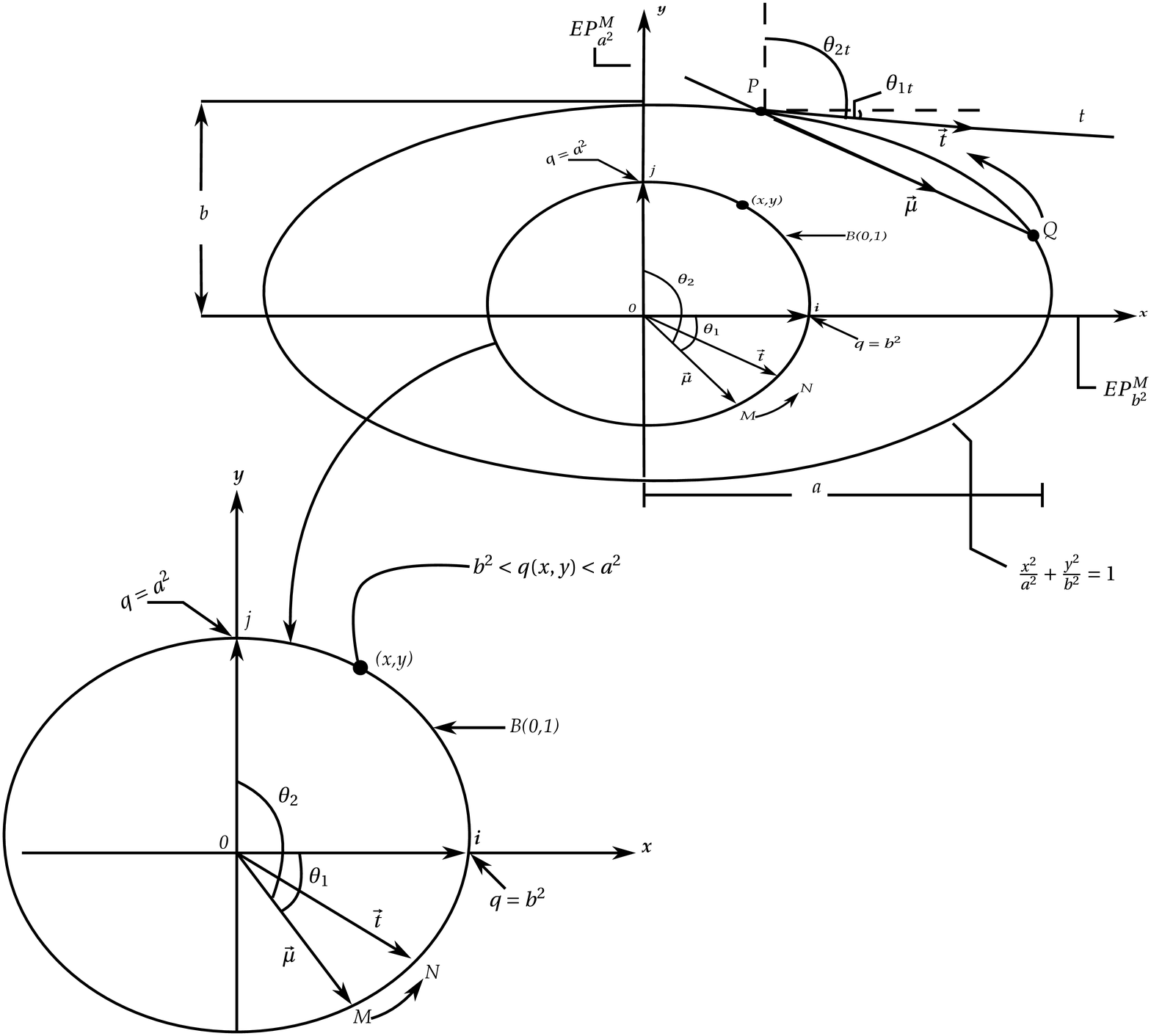}\\
\end{center}
\end{figure}
\newline
Queremos estudiar los valores que toma $q$ sobre los puntos de la $B(0;1)=\left\{(x,y)\diagup x^2+y^2=1\right\}.$\\
Recordemos el \underline{Tma. de Euler:}\\
<<La f.c. <<q>> ($q$ es en este caso la f.c. asociada a la cónica $b^2x^2+a^2y^2-a^2b^2=0$) alcanza sus valores máximo y mínimo sobre la esfera unidad en los puntos donde las direcciones principales de la matriz de la f.c. (M) corta a dicha esfera>>.\\
Como $EP_{b^2}^M$ es el eje $x$ y el $EP_{a^2}^M$ es el eje $y$, las direcciones principales de $M$ son el eje $x$ y el eje $y$. Por el Tma. de Euler se tiene que $\forall (x,y)\in B(0;1),$ i.e., $x^2+y^2=1,$
\begin{align*}
b^2\leqslant q(x,y)=\left(\begin{array}{cc}x&y\end{array}\right)\left(\begin{array}{cc}b^2&0\\0&a^2\end{array}\right)\dbinom{x}{y}=b^2x^2+a^2y^2&\underset{\uparrow}{=}b^2\cos^2\theta_1+a^2\cos^2\theta_2\leqslant a^2\\
&\begin{array}{cc}x=\cos\theta_1\\ y=\cos\theta_2\end{array}
\end{align*}
Tomemos $P$ en la elipse y $Q$ un punto en la curva cercano a $P.$\\
Llamemos $\vec{u}=\xi\vec{i}+\eta\vec{j}=\cos\theta_1\vec{i}+\cos\theta_2\vec{j}$ el vector unitario $\parallel$ a $\overline{PQ}$ y $t=\xi_{t}\vec{i}+\eta_{i}\vec{j}=\cos\theta_{1_t}\vec{i}+\cos\theta_{2_t}\vec{j}$ al vector unitario $\parallel$ a la tangente a la curva por $P$.\\
Cuando $Q\longrightarrow P$ a lo largo de la curva, el punto $M\longrightarrow N$ a lo largo de la $B(0;1)$ y en ningún momento $q(\vec{u})$ se anula.\\
Además,
\begin{align*}
\lim\xi_{\substack{Q\longrightarrow P\\ M\longrightarrow N}}&=\lim_{\substack{Q\longrightarrow P\\ M\longrightarrow N}}\cos^2\theta_1=\cos^2\theta_{1_t}=\xi^2t\\
\lim\eta_{\substack{Q\longrightarrow P\\ M\longrightarrow N}}&=\lim_{\substack{Q\longrightarrow P\\ M\longrightarrow N}}\cos^2\theta_2=\cos^2\theta_{2_t}=\eta^2t\hspace{0.5cm}\text{teniéndose que}\\
\lim_{Q\longrightarrow P}q(\vec{u})&=\lim_{Q\longrightarrow P}\left(b^2\cos^2\theta_1+a^2\cos^2\theta_2\right)=b^2\cos^2\theta_{1_t}+a^2\cos^2\theta_{2_t}=q(\vec{t})
\end{align*}
Ahora si volvamos al problema de encontrar la ecuación de la tangente a la elipse por un punto de la curva.\\
Consideremos la elipse $\dfrac{x^2}{a^2}+\dfrac{y^2}{b^2}=1, \quad a>b,$ un punto $P(x_0, y_0)$ del plano de la curva, $P$ no necesariamente en la curva, y la secante $\mathscr{L} \parallel$ al vector unitario $\vec{\mu}=\xi\vec{i}+\eta\vec{j},$
\begin{figure}[ht!]
\begin{center}
  \includegraphics[scale=0.5]{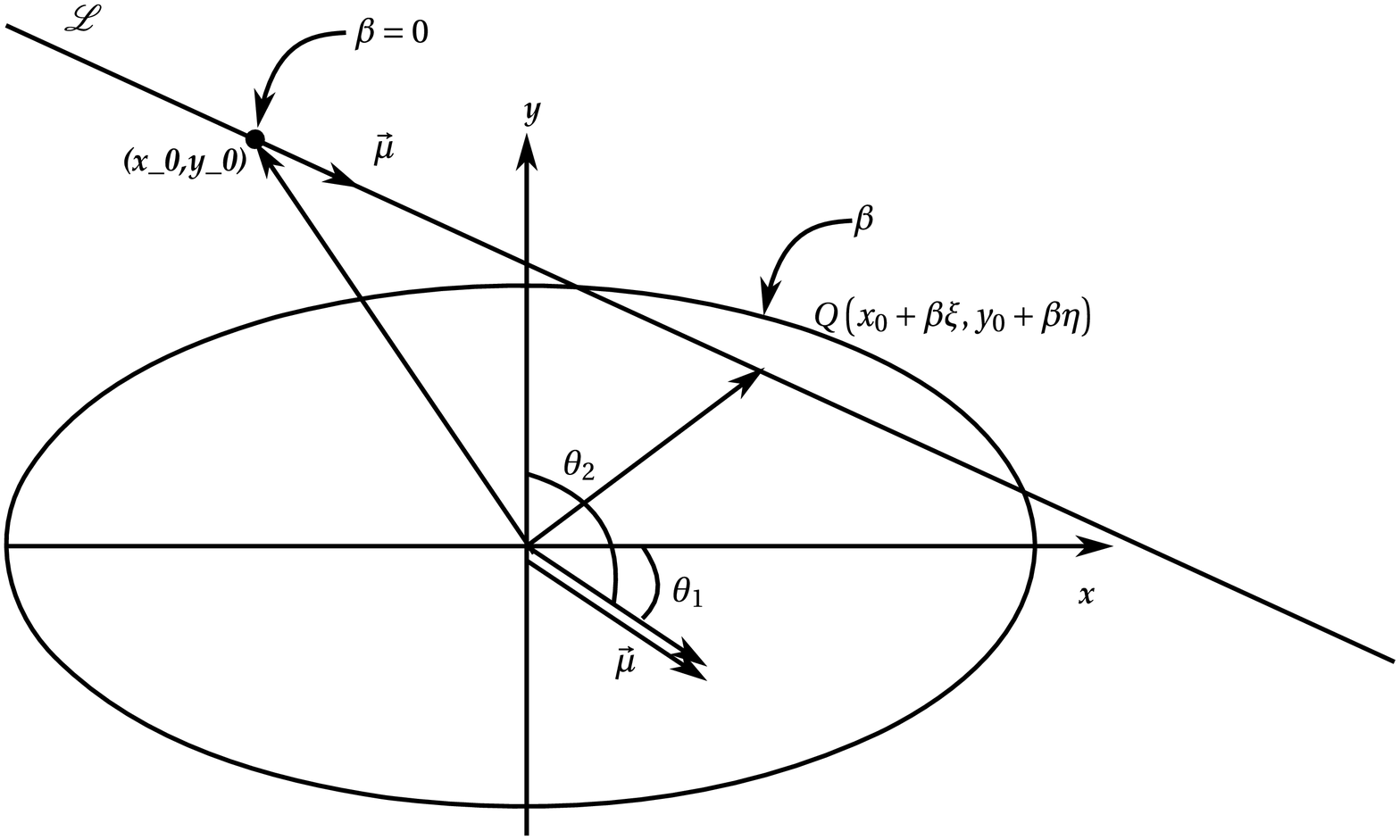}\\
\end{center}
\end{figure}
\newpage
$\xi^2+\eta^2=1,\quad \xi=\cos\theta_1,\quad \eta=\cos\theta_2.$\\
Si $Q((\vec{r})$ es un punto de la secante, $\vec{r}=\vec{r}_0+\beta\vec{\mu},$ y como $\parallel\vec{\mu}\parallel=1,$ el parámetro $\beta=PQ=d(P,Q):\text{distancia entre P y Q.}$\\
Si llamamos $Q(x,y), (x,y)=(x_0,y_0)+\beta(\xi,\eta)$\\
\begin{equation*}
\left.
\begin{split}
\therefore\hspace{0.5cm}x&=x_0+\beta\xi\\ y&=y_0+\beta\eta
\end{split}
\right\}\text{Ecuaciones paramétricas de $\mathscr{L}$.}
\end{equation*}
Recordemos la ecuación de incrementos de la cónica:\\
$\forall(x,y),\quad(h,k)\in\mathbb{R}^2:$
\begin{align*}
f\left(X+h,Y+k\right)&=q(X,Y)+\left(\left.\dfrac{\partial f}{\partial x}\right)_{h,k}\quad\left.\dfrac{\partial f}{\partial y}\right)_{h,k}\right)\dbinom{X}{Y}+f(h,k).\\
\intertext{En virtud de ésta ecuación,}\\
f\left(\beta\xi+x_0,\beta\eta+y_0\right)&=q(\beta\xi,\beta\eta)+\left(\left.\dfrac{\partial f}{\partial x}\right)_{h,k}\quad\left.\dfrac{\partial f}{\partial y}\right)_{h,k}\right)\dbinom{\beta\xi}{\beta\eta}+f(x_0,y_0)\\
&=\left(\begin{array}{cc}\beta\xi&\beta\eta\end{array}\right)\left(\begin{array}{cc}b^2&0\\0&a^2\end{array}\right)\dbinom{\beta\xi}{\beta\eta}+\beta\xi\left.\dfrac{\partial f}{\partial x}\right)_{x_0,y_0}+\beta\eta\left.\dfrac{\partial f}{\partial y}\right)_{x_0,y_0}+f(x_0,y_0)\\
&=\left(\begin{array}{cc}\beta\xi&\beta\eta\end{array}\right)\left(\begin{array}{cc}b^2\xi\beta\\a^2\eta\beta\end{array}\right)+\left(\left.\xi\dfrac{\partial f}{\partial x}\right)_{x_0,y_0}+\eta\left.\dfrac{\partial f}{\partial y}\right)_{x_0,y_0}\right)+f(x_0,y_0)\\
&=\left(\begin{array}{cc}b^2\xi^2+a^2\eta^2\end{array}\right)\beta^2+\left(\left.\xi\dfrac{\partial f}{\partial x}\right)_{h,k}+\left.\dfrac{\partial f}{\partial y}\right)_{h,k}\right)\beta+f(x_0,y_0)\\
&=q(\xi,\eta)\beta^2+\left(\left.\xi\dfrac{\partial f}{\partial x}\right)_{h,k}+\eta\left.\dfrac{\partial f}{\partial y}\right)_{h,k}\right)\beta+f(x_0,y_0)\hspace{0.5cm}\star
\end{align*}
Los punto de la cónica y la la recta $\mathscr{L}$ se consiguen al hacer simultaneas
\begin{equation}\label{84}
f(x,y)=0
\end{equation}
y
\begin{equation}\label{85}
\left.
\begin{split}
x&=x_0+\beta\xi\\
y&=y_0+\beta\eta
\end{split}
\right\}
\end{equation}
Al llevar [\ref{85}] a [\ref{84}], $f(x_0+\beta\xi, y_0+\beta\eta)=0.$\\
Luego, en virtud de $\star,$ $$q(\xi,\eta)\beta^2+\left(\xi\left.\dfrac{\partial f}{\partial x}\right)_{x_0,y_0}+\eta\left.\dfrac{\partial f}{\partial y}\right)_{x_0,y_0}\right)\beta+f(x_0,y_0)=0\hspace{0.5cm}\star\star$$
Las raíces de ésta ecuación (en general hay dos raíces simples y cuando $\mathscr{L}$ sea una secante) son los valores del parámetro $\beta$ que llevados a [\ref{85}] nos permiten encontrar las coordenadas de los puntos donde la secante corta a la cónica.\\
Supongamos ahora que $P(x_0,y_0)$ está en la curva.\\
Entonces $f(x_0,y_0)=0$ y al regresar a $\star\star,$
\begin{align*}
q(\xi,\eta)\beta^2+\left(\left.\xi\dfrac{\partial f}{\partial x}\right)_{x_0,y_0}+\left.\eta\dfrac{\partial f}{\partial y}\right)_{x_0,y_0}\right)\beta&=0\\
\beta\left(q(\xi,\eta)\beta+\left(\left.\xi\dfrac{\partial f}{\partial x}\right)_{x_0,y_0}+\left.\eta\dfrac{\partial f}{\partial y}\right)_{x_0,y_0}\right)\right)=0\\
\therefore\quad\beta_1=0\hspace{0.5cm}\text{o}\hspace{0.5cm}\beta_2&=-\dfrac{\left.\xi\dfrac{\partial f}{\partial x}\right)_{x_0,y_0}+\left.\eta\dfrac{\partial f}{\partial y}\right)_{x_0,y_0}}{q(\xi,\eta)}
\end{align*}
lo cual nos indica que la recta $\mathscr{L}$ corta a la cónica en dos puntos: $P(x_0,y_0)$ que corresponde al valor $\beta_1=0$ del parámetro y $Q(x_0+\beta_2\xi, y_0+\beta_2\eta)$ que corresponde al valor $$\beta_2=-\dfrac{\left.\xi\dfrac{\partial f}{\partial x}\right)_{x_0,y_0}+\left.\eta\dfrac{\partial f}{\partial y}\right)_{x_0,y_0}}{q(\xi,\eta)}\quad\text{del parámetro}:$$
\begin{figure}[ht!]
\begin{center}
  \includegraphics[scale=0.5]{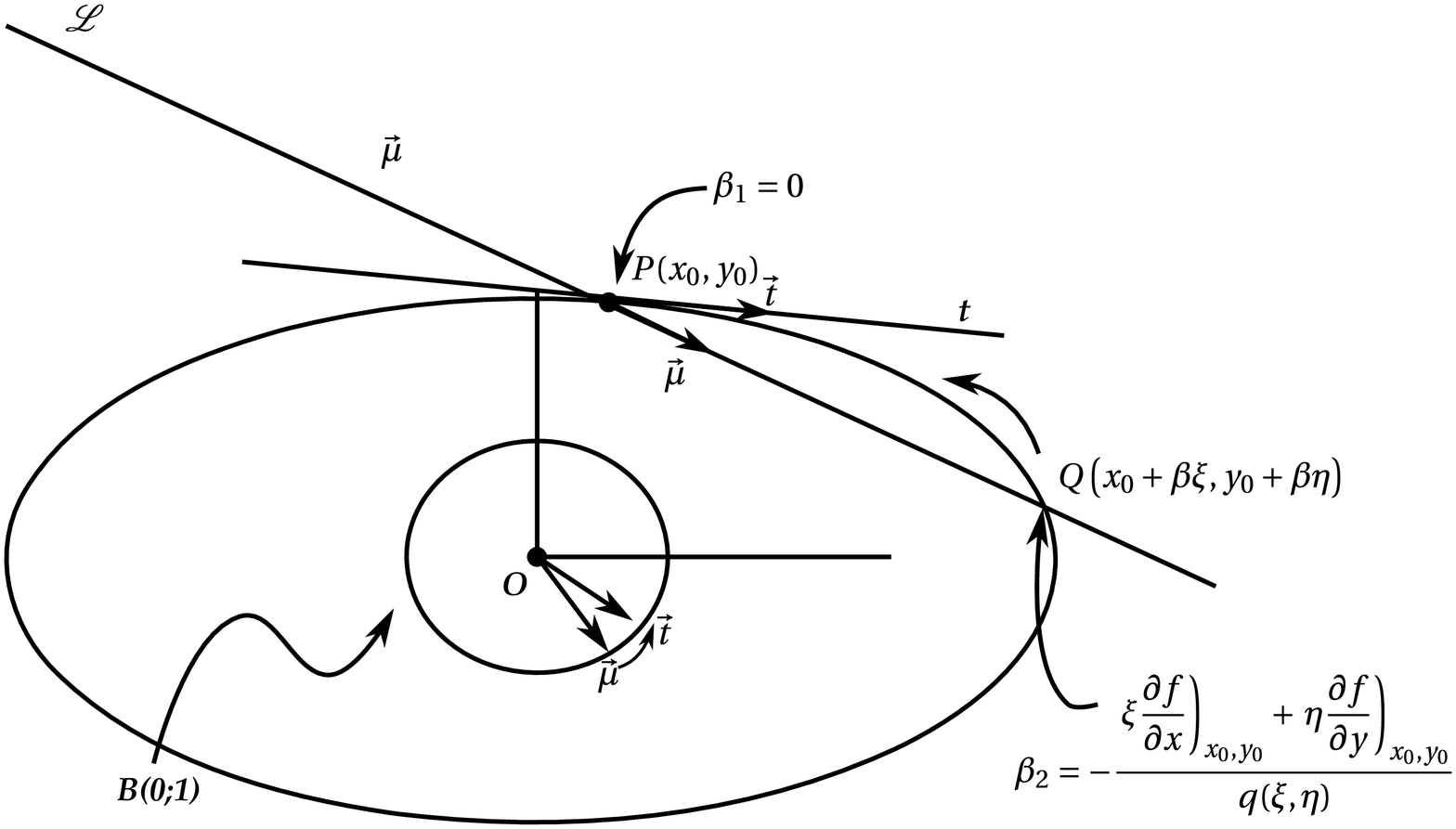}\\
\end{center}
\end{figure}
Ahora acerquemos $Q$ a $P$ a través de la curva.\\
Entonces $\vec{\mu}\longrightarrow\vec{t},\quad\lim\limits_{Q\rightarrow P}\beta_2=\beta_1=0.$\\
Pero
\begin{figure}[ht!]
\begin{center}
  \includegraphics[scale=0.5]{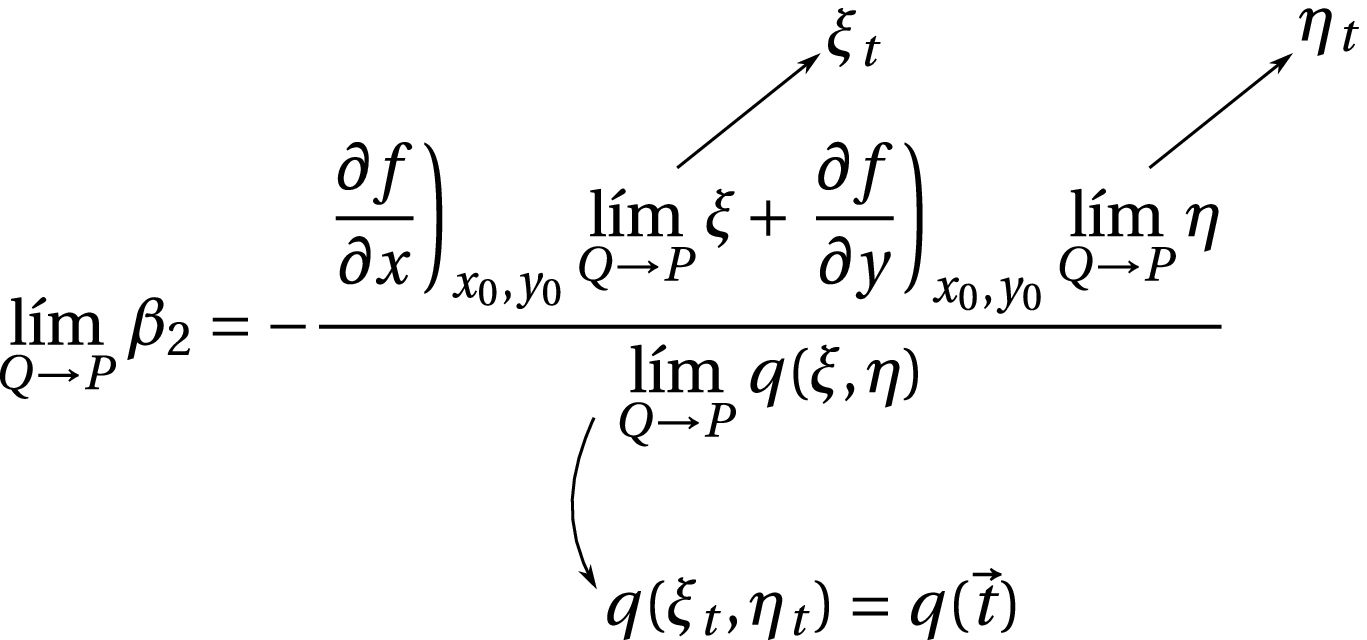}\\
\end{center}
\end{figure}
\newline
Así que $$\left.\dfrac{\partial f}{\partial x}\right)_{x_0,y_0}\xi_t+\left.\dfrac{\partial f}{\partial y}\right)_{x_0,y_0}\eta_t=0\quad\star$$
donde $\vec{t}=\xi_t\vec{i}+\eta_t\vec{j}$ es el vector unitario $\parallel$ a la tangente de la cónica por $P(x_0,y_0).$\\
De $\star:$
\begin{align*}
\eta_t&=-\dfrac{\left.\dfrac{\partial f}{\partial x}\right)_{x_0,y_0}\xi_t}{\left.\dfrac{\partial f}{\partial y}\right)_{x_0,y_0}}\quad\text{y}\\
\underset{\nwarrow}{\underbrace{m_t}}&=\dfrac{\eta_t}{\xi_t}=-\dfrac{\left.\dfrac{\partial f}{\partial x}\right)_{x_0,y_0}}{\left.\dfrac{\partial f}{\partial y}\right)_{x_0,y_0}}\\
&\begin{array}{cc}\text{pendiente}\\ \text{de la tangente $t$ por $P(x_0,y_0)$}\\ \text{a la elipse}\end{array}
\end{align*}
De éste modo, la ecuación de $t$ es: $$y-y_0=-\dfrac{\left.\dfrac{\partial f}{\partial x}\right)_{x_0,y_0}}{\left.\dfrac{\partial f}{\partial y}\right)_{x_0,y_0}}(x-x_0)\quad\text{y finalmente,}$$
$$(x-x_0)\left.\dfrac{\partial f}{\partial x}\right)_{x_0,y_0}+(y-y_0)\left.\dfrac{\partial f}{\partial y}\right)_{x_0,y_0}=0.$$
De paso hallemos la ecuación de la normal a la curva en $P(x_0,y_0)$
\begin{figure}[ht!]
\begin{center}
  \includegraphics[scale=0.5]{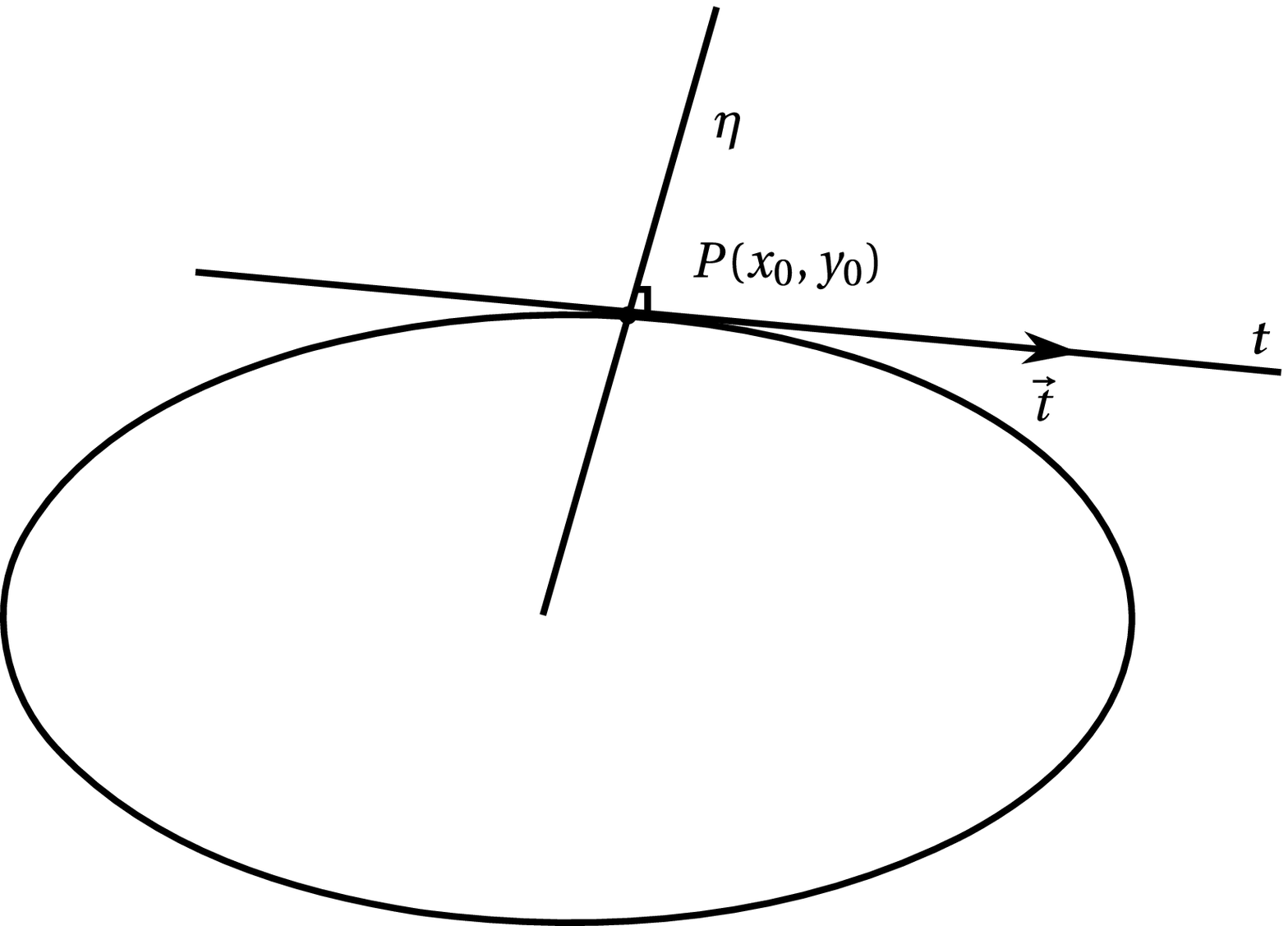}\\
\end{center}
\end{figure}
\begin{align*}
\underset{\nwarrow}{\underbrace{m_\eta}}&=-\dfrac{1}{m_t}=\dfrac{\left.\dfrac{\partial f}{\partial y}\right)_{x_0,y_0}}{\left.\dfrac{\partial f}{\partial x}\right)_{x_0,y_0}}\\
&\begin{array}{c}\text{pendiente de la normal}\end{array}
\end{align*}
Ecuación de la normal: $$y-y_0=\dfrac{\left.\dfrac{\partial f}{\partial y}\right)_{x_0,y_0}}{\left.\dfrac{\partial f}{\partial x}\right)_{x_0,y_0}}$$
que podemos escribir así: $$(x-x_0)\left.\dfrac{\partial f}{\partial y}\right)_{x_0,y_0}-(y-y_0)\left.\dfrac{\partial f}{\partial x}\right)_{x_0,y_0}=0.$$
\item[(2)] Supongamos ahora que la elipse es $\dfrac{x^2}{a^2}+\dfrac{y^2}{b^2}=1$ con $a<b.$
\begin{figure}[ht!]
\begin{center}
  \includegraphics[scale=0.5]{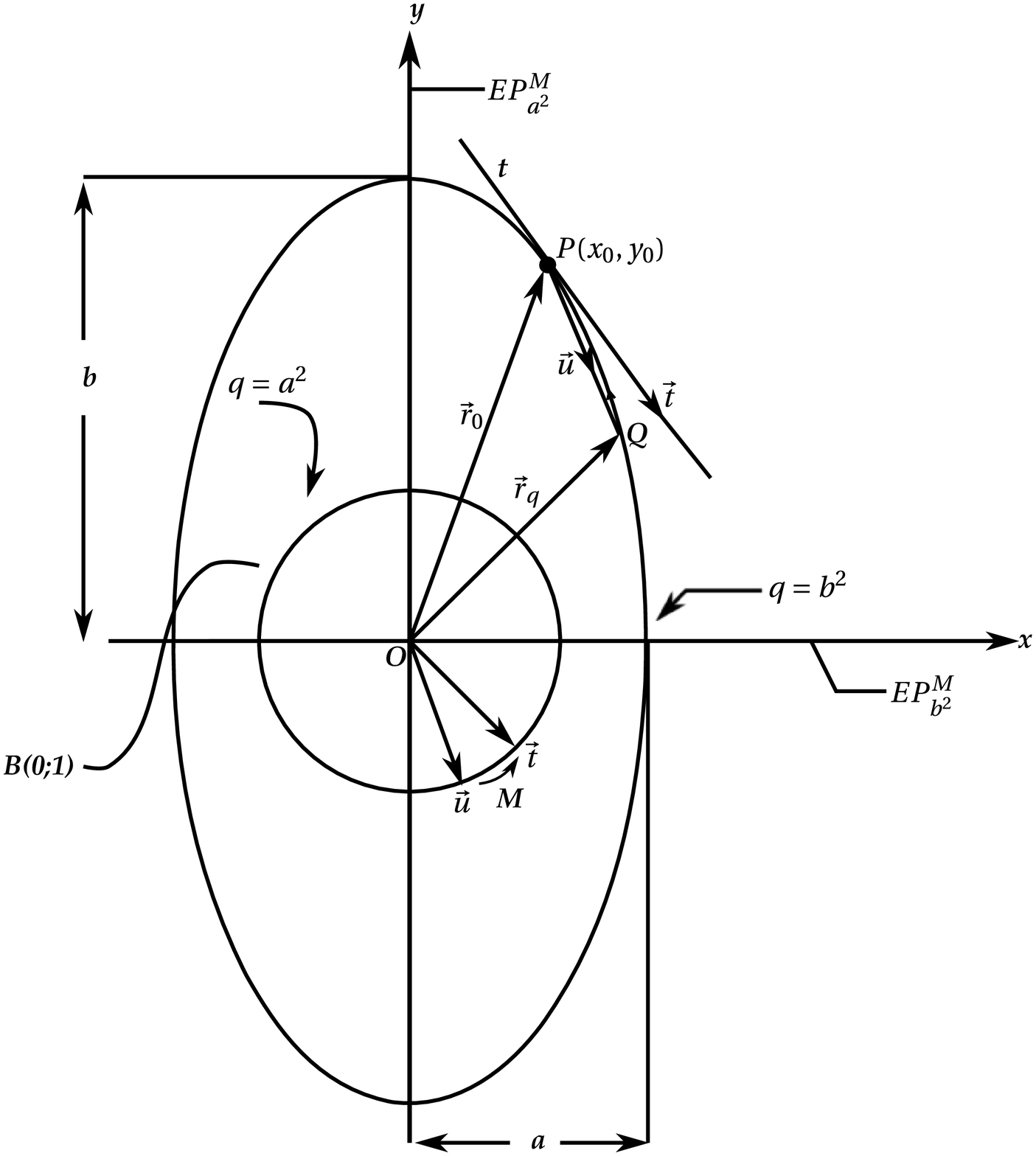}\\
\end{center}
\end{figure}
La cónica es $b^2x^2+a^2y^2-a^2b^2=0.$\\
La componente cuadrática es
\begin{align*}
q(x,y)&=b^2x^2+a^2y^2\\
&=\left(\begin{array}{cc}x&y\end{array}\right)\underset{\overset{\parallel}{M}}{\left(\begin{array}{cc}b^2&0\\ 0&a^2\end{array}\right)}\dbinom{x}{y}
\end{align*}
El $EP_{b^2}^M$ es el eje $x$.\\
El $EP_{a^2}$ es el eje $y$ y $\forall(x,y)\in B(0;1),$ por el T. de Euler, $a^2\leqslant q(x,y)\leqslant b^2.$\\
Si tomamos $(x_0,y_0)$ en la curva y $Q$ un punto de la curva cercano a $P,$ la ecuación de la secante $\overset{\longleftrightarrow}{PQ}$ es $\vec{r}=\vec{r}_0+\beta\vec{\mu}$ donde $\vec{\mu}=\xi\vec{i}+\eta\vec{j}$ es un vector $\parallel$ a $\overset{\longleftrightarrow}{PQ}.$ La secante $\overset{\longleftrightarrow}{PQ}$ corta a la curva en los parámetros $(x_0,y_0)$ y $Q$ correspondientes a los valores del parámetro $\beta_1=0$ y $\beta_2=-\dfrac{\left.\xi\dfrac{\partial f}{\partial x}\right)_{x_0,y_0}+\left.\eta\dfrac{\partial f}{\partial y}\right)_{x_0,y_0}}{q(\xi,\eta)}.$\\
Cuando $Q\longrightarrow P$ a través de la curva, $M\longrightarrow N$ a través de la $$B(0;1),\quad\lim\limits_{Q\longrightarrow P}\xi=\xi_t,\quad\lim\limits_{Q\longrightarrow P}\eta=\eta_t$$ con $\vec{t}=\xi_t\vec{i}+\eta_t\vec{j}.$\\
$\lim\limits_{Q\longrightarrow P}q(\xi,\eta)=q(\vec{t}).$\\
\begin{figure}[ht!]
\begin{center}
  \includegraphics[scale=0.5]{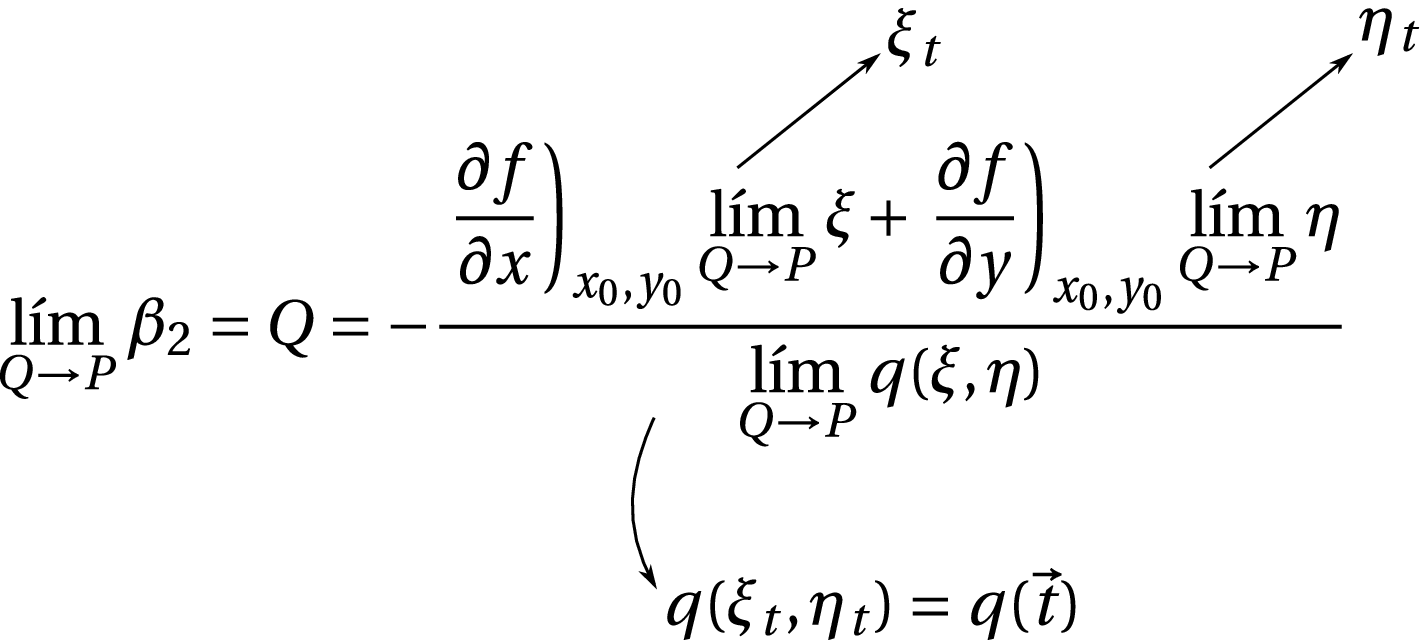}\\
\end{center}
\end{figure}
\newline
De suerte que $$m_t=\dfrac{\eta_t}{\xi_t}=-\dfrac{\left.\dfrac{\partial f}{\partial x}\right)_{x_0,y_0}}{\left.\dfrac{\partial f}{\partial y}\right)_{x_0,y_0}}$$ y la ecuación de la recta $t$ tangente a la curva en $(x_0,y_0)$ es:
\begin{align*}
y-y_0&=-\dfrac{\left.\dfrac{\partial f}{\partial x}\right)_{x_0,y_0}}{\left.\dfrac{\partial f}{\partial y}\right)_{x_0,y_0}}(x-x_0),\quad\text{o sea}:\\
t&:(x-x_0)\left.\dfrac{\partial f}{\partial x}\right)_{x_0,y_0}+(y-y_0)\left.\dfrac{\partial f}{\partial y}\right)_{x_0,y_0}=0
\end{align*}
En este caso, $\left.\dfrac{\partial f}{\partial x}\right)_{x_0,y_0}=2b^2x_0;\quad\left.\dfrac{\partial f}{\partial y}\right)_{x_0,y_0}=2a^2y_0.$\\
y la ecuación de la tangente es\\
\begin{align*}
t:(x-x_0)2b^2x_0+(y-y_0)2a^2y_0&=0\\
b^2x_0x-b^2x_0^2+a^2y_0y-a^2y_0^2&=0\\
b^2x_0x+a^2y_0y&=b^2x_0^2+a^2y_0^2\\
&\underset{\uparrow}{=}a^2b^2\\
&\begin{cases}
\text{Como}\quad (x_0,y_0)\quad\text{está en la curva,}\\
b^2x_0^2+a^2y_0^2=a^2b^2
\end{cases}
\end{align*}
$$t:\dfrac{x_0x}{a^2}+\dfrac{y_0y}{b^2}=1$$
\item[3)] Consideremos ahora la hipérbola $\dfrac{x^2}{a^2}-\dfrac{y^2}{b^2}=1$
\begin{figure}[ht!]
\begin{center}
  \includegraphics[scale=0.5]{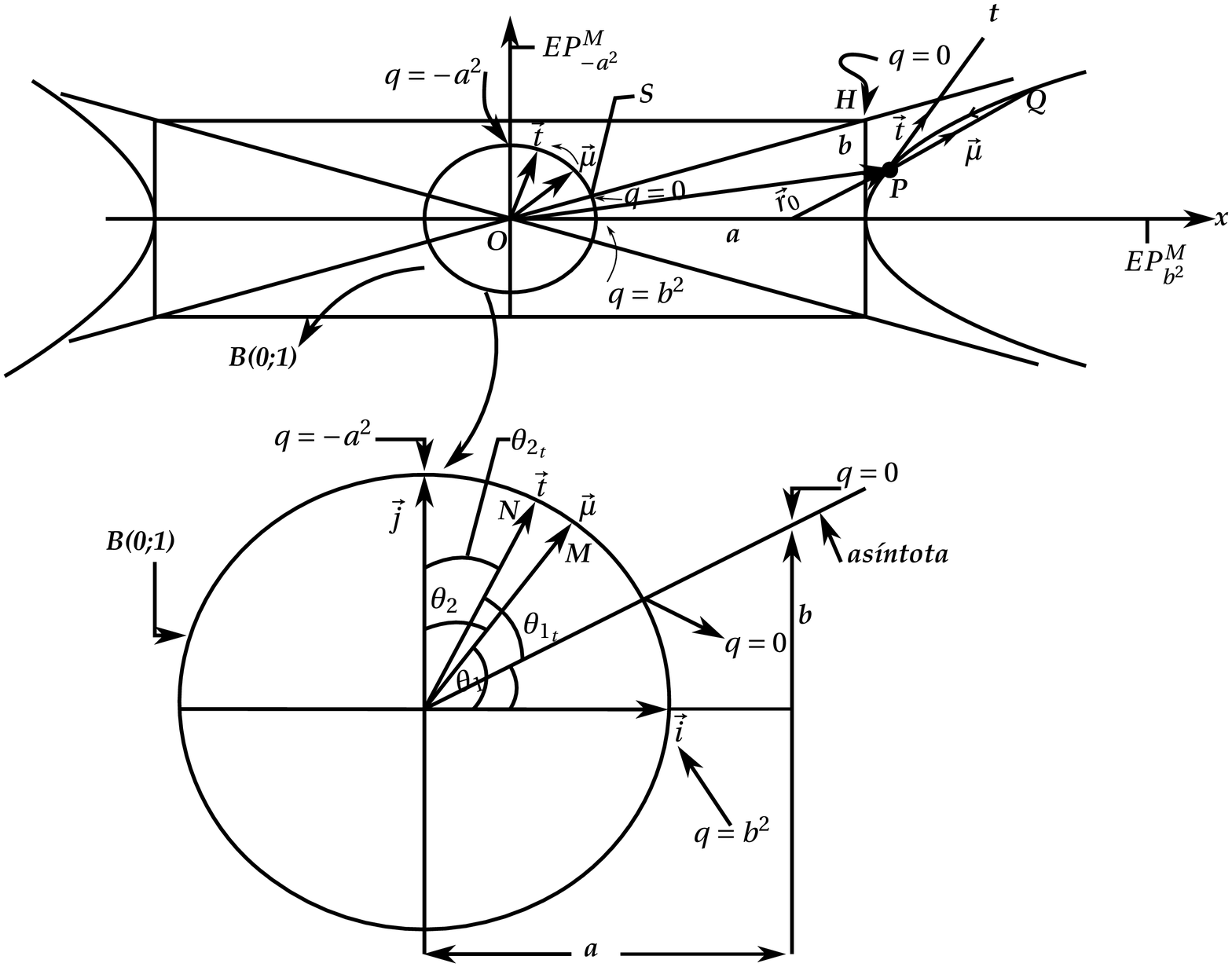}\\
\end{center}
\end{figure}
La cónica es:
\begin{align*}
b^2x^2-a^2y^2-a^2b^2&=0\quad\text{y su f.c. es}\\
q(x,y)=b^2x^2-a^2y^2&=\left(\begin{array}{cc}x&y\end{array}\right)\underset{\overset{\parallel}{M}}{\left(\begin{array}{cc}b^2&0\\0&-a^2\end{array}\right)}\dbinom{x}{y}
\end{align*}
\begin{align*}
PCM(\lambda)&=\lambda^2-(b^2-a^2)\lambda-a^2b^2=0\\
\lambda&=\dfrac{(b^2-a^2)\pm\sqrt{{(b^2-a^2)}^2+4a^2b^2}}{2}=\dfrac{(b^2-a^2)\pm(b^2+a^2)}{2}\begin{cases}\lambda_1=b^2\\ \lambda_2=-a^2\end{cases}
\end{align*}
Queremos estudiar los valores que toma $q$ en la $B(0;1).$\\
\begin{align*}
EP_{b^2}^M=\left\{(x,y)\diagup\left(\begin{array}{cc}b^2&0\\0&-a^2\end{array}\right)\dbinom{x}{y}=b^2\dbinom{x}{y}\right\}\\
\begin{cases}b^2x=b^2x\quad\text{:}\quad x\in\mathbb{R}\\
-a^2y=b^2y\quad\therefore\quad(b^2+a^2)y=0\quad\text{O sea $y=0$.}\end{cases}
\end{align*}
El $EP_{b^2}^M$ es el eje $x.$\\
\begin{align*}
EP_{-a^2}^M=\left\{(x,y)\diagup\left(\begin{array}{cc}b^2&0\\0&-a^2\end{array}\right)\dbinom{x}{y}=-a^2\dbinom{x}{y}\right\}\\
\begin{cases}b^2x=-a^2x\quad\therefore\quad(b^2+a^2)x=0\quad\text{;}\quad x=0\\
-a^2y=-a^2y\quad\text{;}\quad y\in\mathbb{R}\end{cases}
\end{align*}
El $EP_{-a^2}^M$ es el eje $y$.\\
Luego, por el T. de Euler, $\forall(\xi,\eta)\in B(0;1):-a^2\leq q(\xi,\eta)\leq b^2$\\
\begin{align*}
q(H)&=q(a,b)=\left(\begin{array}{cc}a&b\end{array}\right)\left(\begin{array}{cc}b^2&0\\0&-a^2\end{array}\right)\dbinom{a}{b}=\left(\begin{array}{cc}a&b\end{array}\right)\left(\begin{array}{cc}b^2a\\-a^2b\end{array}\right)=0\\
q(S)&=\left(\begin{array}{cc}\dfrac{a}{\sqrt{a^2+b^2}}&\dfrac{b}{\sqrt{a^2+b^2}}\end{array}\right)\left(\begin{array}{cc}b^2&0\\0&-a^2\end{array}\right)\left(\begin{array}{cc}\dfrac{a}{\sqrt{a^2+b^2}}\\ \dfrac{b}{\sqrt{a^2+b^2}}\end{array}\right)\\
&=\left(\begin{array}{cc}\dfrac{a}{\sqrt{a^2+b^2}}&\dfrac{b}{\sqrt{a^2+b^2}}\end{array}\right)\left(\begin{array}{cc}\dfrac{ab^2}{\sqrt{a^2+b^2}}&-\dfrac{a^2b}{\sqrt{a^2+b^2}}\end{array}\right)=0
\end{align*}
Consideremos ahora la secante $\overline{PQ}$ de vector unitario $\vec{\mu}\parallel\overline{PQ},$ $$\vec{\mu}=\xi\vec{i}+\eta\vec{j}=\cos\theta_1\vec{i}+\cos\theta_2\vec{j}.$$ Como en los casos anteriores, llamemos $t=\xi_t\vec{j}+\eta_t\vec{j}=\cos\theta_{1_t}\vec{i}+\cos\theta_{2_t}\vec{j}$ el vector unitario $\parallel$ a la tangente $t$ por $P$ a la curva.\\
Cuando el punto $Q\longrightarrow P$ a través de la curva, $M\longrightarrow N$ a través de la $B(0;1).$\\
\begin{align*}
\lim\xi_{\substack{Q\longrightarrow P\\M\longrightarrow N}}&=\lim\limits_{Q\longrightarrow P}\cos\theta_1=\cos\theta_{1_t}=\xi_t\\
\lim\limits_{Q\longrightarrow P}\eta&=\lim\limits_{Q\longrightarrow P}\cos\theta_2=\cos\theta_{2_t}=\eta_t\\
\lim\limits_{Q\longrightarrow P}q(\vec{u})&=\lim\limits_{Q\longrightarrow P}(b^2\cos\theta_1-a^2\cos\theta_2)=b^2\cos\theta_{1_t}-a^2\cos\theta_{2_t}=q(\vec{t})
\end{align*}
La ecuación de la secante $\overset{\longleftrightarrow}{PQ}$ es $\vec{r}=\vec{r}_0+\beta\vec{\mu}.$\\
Dicha secante corta a la curva en los puntos $P$ y $Q$ asociados a los valores $\beta_1=0$ y\\ $\beta_2=-\dfrac{\left.\xi\dfrac{\partial f}{\partial x}\right)_{x_0,y_0}+\left.\eta\dfrac{\partial f}{\partial y}\right)_{x_0,y_0}}{q(\xi,\eta)}$ del parámetro.\\
Cuando $Q\longrightarrow P$ a través de la curva, $M\longrightarrow N$ se tiene que
\begin{figure}[ht!]
\begin{center}
  \includegraphics[scale=0.5]{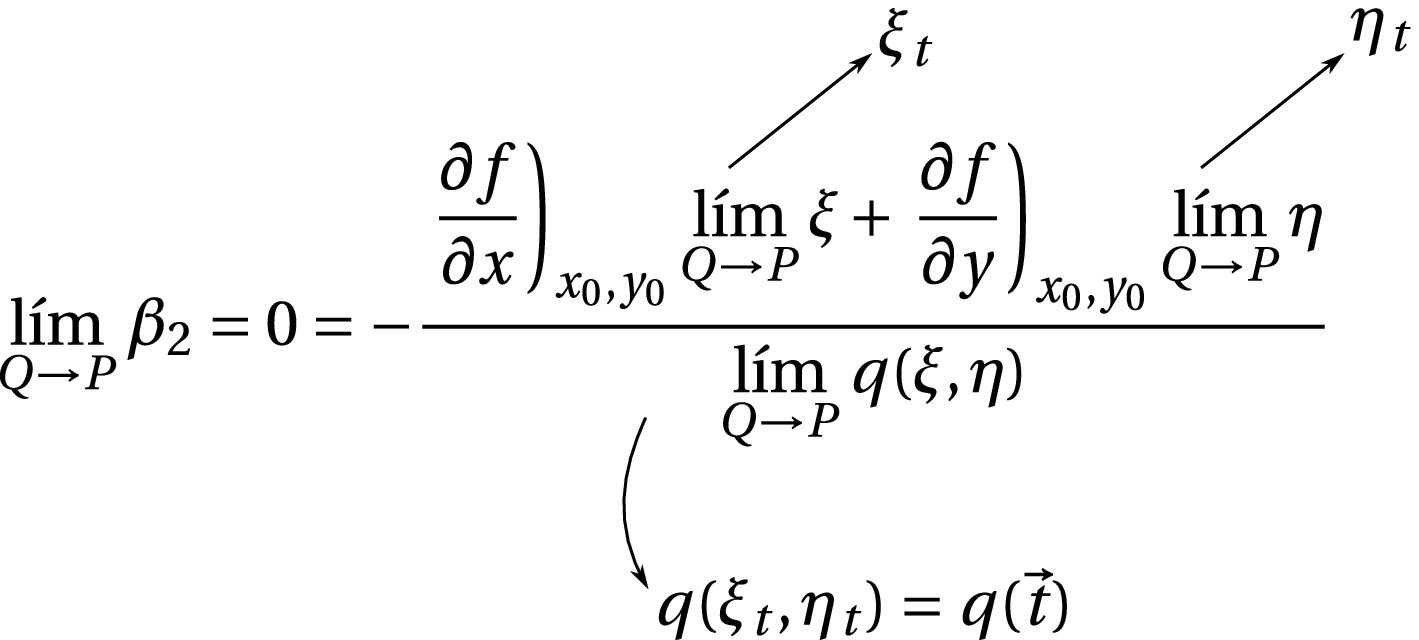}\\
\end{center}
\end{figure}
\newline
y se tiene de nuevo que $$m_t=\dfrac{\eta_t}{\xi_t}=-\dfrac{\left.\dfrac{\partial f}{\partial x}\right)_{x_0,y_0}}{\left.\dfrac{\partial f}{\partial y}\right)_{x_0,y_0}}$$
La ecuación de la tangente es $$t:(x-x_0)\left.\dfrac{\partial f}{\partial x}\right)_{x_0,y_0}+(y-y_0)\left.\dfrac{\partial f}{\partial y}\right)_{x_0,y_0}=0$$
En este caso, $f(x,y)=b^2x^2-a^2y^2-a^2b^2.$\\
\begin{align*}
\dfrac{\partial f}{\partial x}&=2b^2x;\quad\dfrac{\partial f}{\partial y}=-2a^2y,\\
\dfrac{\partial f}{\partial x}&=2b^2x_0;\quad\dfrac{\partial f}{\partial y}=-2a^2y_0
\end{align*}
Así que
\begin{align*}
t:(x-x_0)2b^2x_0-(y-y_0)2a^2y_0&=0\\
b^2x_0x-b^2x_0^2-a^2y_0y+a^2y_0^2&=0\\
b^2x_0x-a^2y_0y&=b^2x_0^2-a^2y_0^2\\
&=a^2b^2
\end{align*}
y finalmente, $$t:\dfrac{x_0x}{a^2}-\dfrac{y_0y}{b^2}=1$$
\item[4)]
Tomemos ahora la hipérbola $\dfrac{y^2}{b^2}-\dfrac{x^2}{a^2}=1.$\\
La cónica es $a^2y^2-b^2x^2-a^2b^2=0$ y su f.c. asociada es $$q(x,y)=-b^2x^2+a^2y^2=\left(\begin{array}{cc}x&y\end{array}\right)\underset{\overset{\parallel}{M}}{\left(\begin{array}{cc}-b^2&0\\0&a^2\end{array}\right)}\dbinom{x}{y}$$
Vamos a dm. que la ecuación de la tangente $t$ a la curva por el punto $P(x_0,y_0)$ de la curva se construye así:
$$\text{ecua. de la curva:}\dfrac{yy}{b^2}-\dfrac{xx}{a^2}=1;\quad t:\dfrac{y_0y}{b^2}-\dfrac{x_0x}{a^2}=1$$
Podemos dm. que los valores propios de $M$ son $-b^2,a^2$ y que $EP_{-b^2}^M$ es el eje $x$, $EP_{a^2}^M$ es el eje $y.$
\begin{figure}[ht!]
\begin{center}
  \includegraphics[scale=0.3]{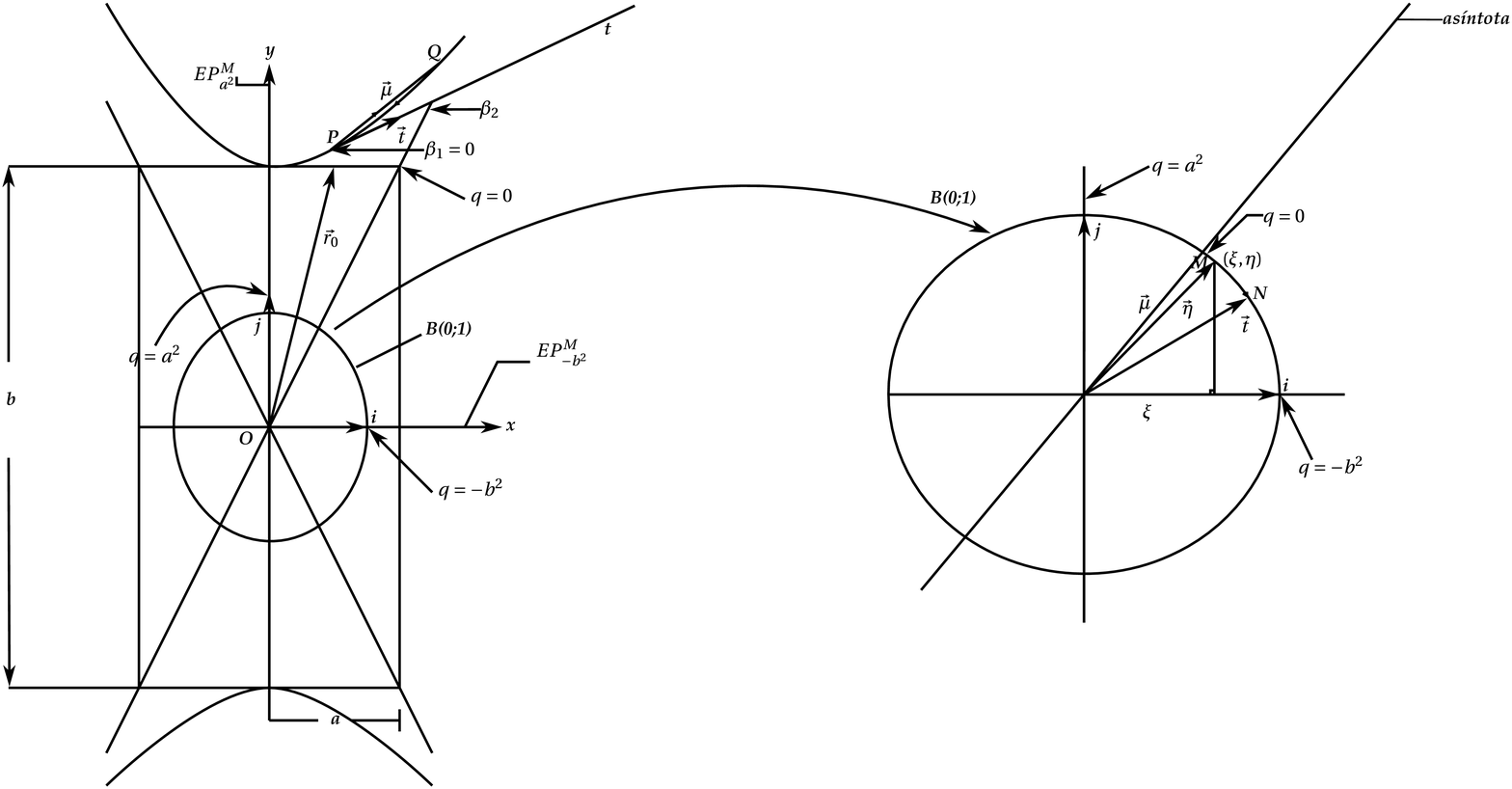}\\
\end{center}
\end{figure}
\newline
Por el Tma. de Euler, $\forall(\xi,\eta)\in B(0;1):-b^2\leq q(\xi,\eta)\leq a^2.$\\
La ecuación de la secante $\overset{\longleftrightarrow}{PQ}$ donde $P(x_0,y_0)$ es un punto de la curva es: $\vec{r}=\vec{r}_0+\beta\vec{\mu},\quad \vec{\mu}=\xi\vec{i}+\eta\vec{j}:$ vector unitario $\parallel$ a $\overline{PQ}.$\\
El valor del parámetro $\beta$ es: $\beta_1=0$ en $P$ y $\beta_2=-\dfrac{\xi\left.\dfrac{\partial f}{\partial x}\right)_{x_0,y_0}+\eta\left.\dfrac{\partial f}{\partial y}\right)_{x_0,y_0}}{q(\xi,\eta)}$ en $Q.$\\
Cuando $Q\longrightarrow P$ a través de la curva, $M\longrightarrow N$ a través de la $B(0;1)$ y
\begin{figure}[ht!]
\begin{center}
  \includegraphics[scale=0.5]{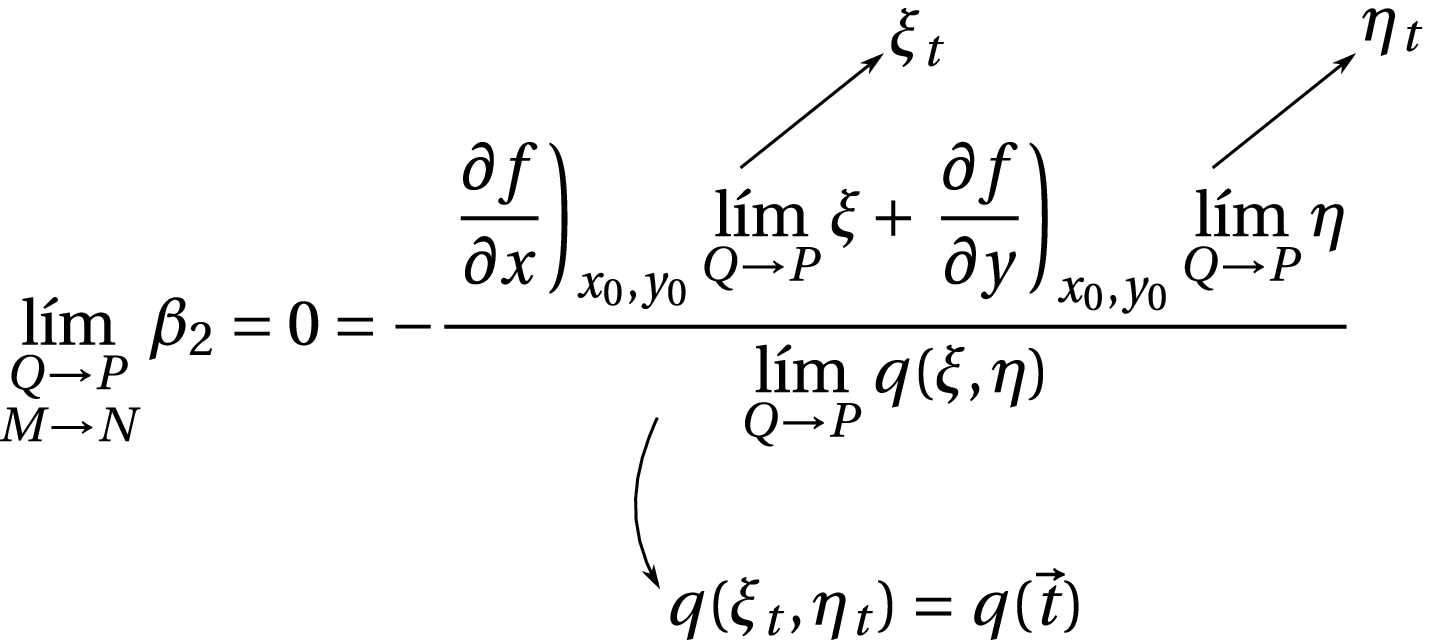}\\
\end{center}
\end{figure}
donde $\vec{t}=\xi_t\vec{i}+\eta_t\vec{j}$ es el vector unitario $\parallel$ a la tangente $t$ por el punto $P(x_0,y_0)$ de la curva.\\
Se tiene de nuevo que $$\underset{\text{pendiente de t}}{\underbrace{m_t}}=\dfrac{\eta_t}{\xi_t}=-\dfrac{\left.\dfrac{\partial f}{\partial x}\right)_{x_,y_0}}{\left.\dfrac{\partial f}{\partial y}\right)_{x_0,y_0}}$$
$$t:(x-x_0)\left.\dfrac{\partial f}{\partial x}\right)_{x_,y_0}+(y-y_0)\left.\dfrac{\partial f}{\partial y}\right)_{x_,y_0}=0$$
En este caso,
\begin{align*}
f(x,y)&=-b^2x^2+a^2y^2-a^2b^2\\
\dfrac{\partial f}{\partial x}&=-2b^2x;\quad\left.\dfrac{\partial f}{\partial y}\right)_{x_0,y_0}=-2b^2x_0\\
\dfrac{\partial f}{\partial x}&=2a^2y;\quad\left.\dfrac{\partial f}{\partial y}\right)_{x_0,y_0}=2a^2y_0
\end{align*}
Así que la ecuación de $t$ es
\begin{align*}
t:-(x-x_0)\cancel{2}b^2x_0+(y-y_0)\cancel{2}a^2y_0&=0\\
-b^2x_0x+b^2x_0^2+a^2y_0y-a^2y_0^2&=0\\
a^2y_0y-b^2x_0x=a^2y_0^2-b^2x_0^2&\underset{\uparrow}{=}a^2b^2\\
&(x_0,y_0)\quad\text{está en la curva}
\end{align*}
y finalmente,\\
$$t:\dfrac{y_0y}{b^2}-\dfrac{x_0x}{a^2}=1$$
\item[5)] Consideremos la parábola $y^2=2px$
\newpage
\begin{figure}[ht!]
\begin{center}
  \includegraphics[scale=0.5]{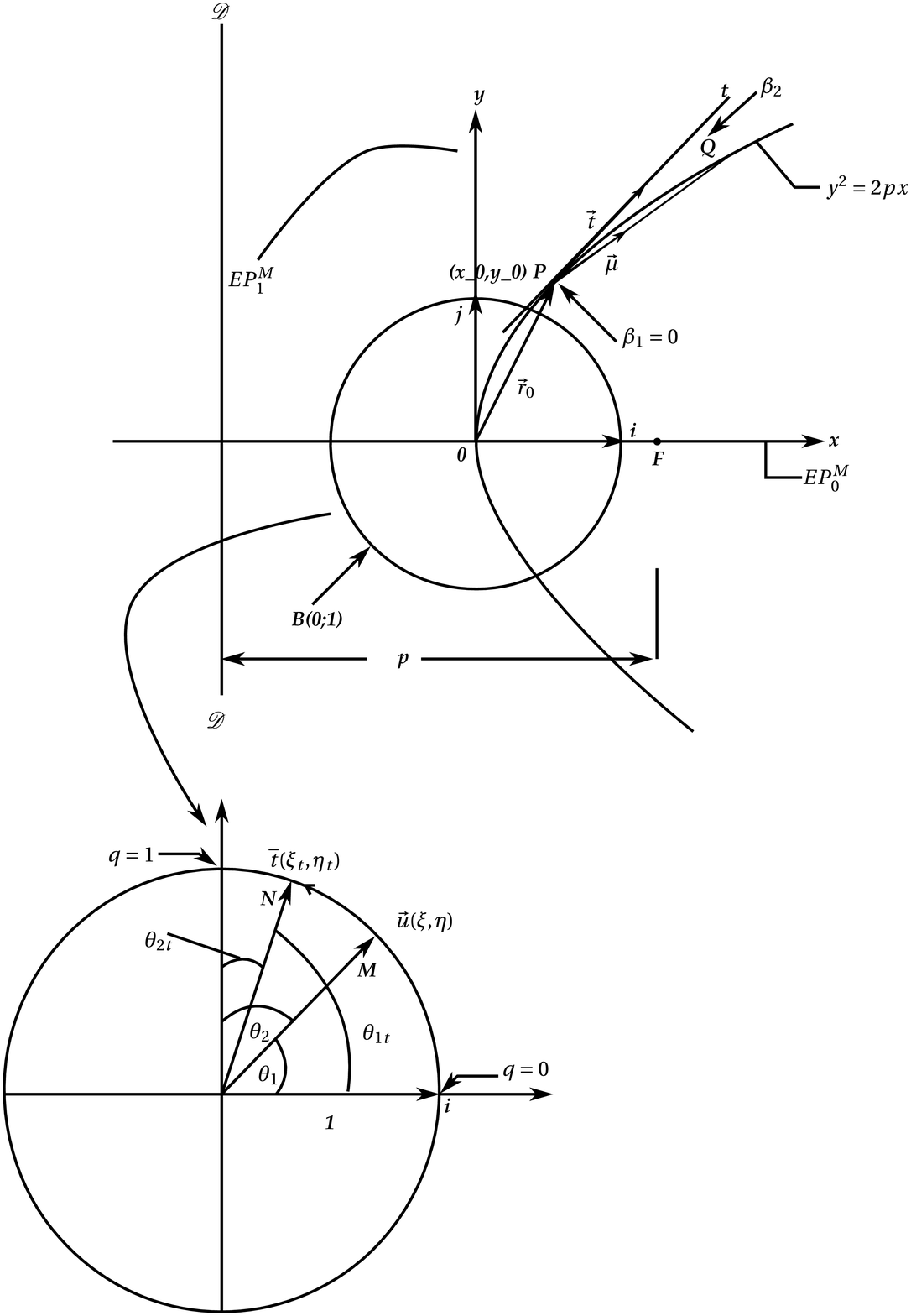}\\
\end{center}
\end{figure}
Vamos a dm. que la ecuación de la tangente $t$ a la curva por un punto $P(x_0,y_0)$ de ella se construye así:\\
ecuación de la curva: $y^2=2px$ que podemos escribir así:
\begin{align*}
y\cdot y&=p(x+x)\\
t:y_0y&=p(x+x_0)
\end{align*}
La cónica es
\begin{align*}
y^2-2pxy&=0\\
q(x,y)=y^2&=\left(\begin{array}{cc}x&y\end{array}\right)\underset{\overset{\parallel}{M}}{\left(\begin{array}{cc}0&0\\0&1\end{array}\right)}\dbinom{x}{y}=y^2\\
PCM(\lambda)&=\lambda^2-\omega\lambda+\delta\\
&\underset{\nearrow}{=}\lambda^2-\lambda=\lambda(\lambda-1)=0\\
&\begin{cases}
\omega&=1\quad\text{los valores propios de M son 1 y 0.}\\
\delta&=1
\end{cases}\\
EP_0^M&=\left\{(x,y)\diagup\left(\begin{array}{cc}0&0\\0&1\end{array}\right)\dbinom{x}{y}=\dbinom{0}{0}\right\}\\
&=\left\{(x,y)\diagup 0\cdot x+1\cdot y=0\hspace{0.5cm}x\in\mathbb{R}, y=0\right\}\\
\intertext{El $EP_0^M$ es el eje $x$}\\
EP_1^M&=\left\{(x,y)\diagup\left(\begin{array}{cc}0&0\\0&1\end{array}\right)\dbinom{x}{y}=\dbinom{x}{y}\right\}\\
&\begin{cases}
0\cdot+0\cdot y&=x;\quad x=0\\
0\cdot x+1\cdot y&=y;\quad y\in\mathbb{R}
\end{cases}\\
\intertext{El $EP_1^M$ es el eje $y$}
\end{align*}
Así que las direcciones principales de la matriz $M$ son el eje $x$ y el eje $y$ y por el Tma. de Euler, $\forall(\xi,\eta)\in B(0;1):0\leqslant q(\xi,\eta)\leqslant 1.$\\
Se traza la secante $\overline{PQ}$ donde $P(x_0,y_0)$ es un punto de la curva.\\
Su ecuación es: $\vec{r}=\vec{r}_0+\beta\vec{\mu}$ donde $\mu=\xi\vec{i}+\eta\vec{j}$ es el vector unitario $\parallel$ a $\overline{PQ}.$\\
En este caso, $$\beta_2=-\dfrac{\left.\dfrac{\partial f}{\partial x}\right)_{x_0,y_0}+\left.\eta\dfrac{\partial f}{\partial y}\right)_{x_0,y_0}}{q(\xi,\eta)}$$
Si ahora hacemos que $Q\longrightarrow P$ y llamamos $\vec{t}=\xi_t\vec{t}+\eta_t\vec{j}$ al vector unitario $\parallel$ a la tangente $t$ llevada a la curva por $P$, se tiene de nuevo que
\begin{figure}[ht!]
\begin{center}
  \includegraphics[scale=0.5]{C9.eps}\\
\end{center}
\end{figure}
\newline
(Como $0\leqslant q(\xi,\eta)\leqslant 1$, en el movimiento de $Q\longrightarrow P$ a través de la curva, el denominador nunca se anula.)\\
Así que $$t:(x-x_0)\left.\dfrac{\partial f}{\partial x}\right)_{x_0,y_0}+(y-y_0)\left.\dfrac{\partial f}{\partial y}\right)_{x_0,y_0}=0$$
En este caso, $f(x,y)=y^2-2px$\\
$$\dfrac{\partial f}{\partial x}=-2p;\quad \dfrac{\partial f}{\partial y}=2y;\quad \left.\dfrac{\partial f}{\partial x}\right)_{x_0,y_0}=-2p;\quad\left.\dfrac{\partial f}{\partial y}\right)_{x_0,y_0}=-2y_0$$
y
\begin{align*}
t:-(x-x_0)\cancel{2}p+2(y-y_0y_0)&=0\\
\therefore\quad y_0y&=y_0^2-x_0p+px\\
&\underset{\nearrow}{=}2px_0-px_0+px\\
&\begin{cases}
y_0^2=2px_0\\
\text{ya que $P(x_0,y_0)$ está en la curva}
\end{cases}
\end{align*}
y finalmente: $$t:y_0y=p(x+x_0)$$
\item[6)] Finalmente consideremos la parábola $x^2=2py.$
\begin{figure}[ht!]
\begin{center}
  \includegraphics[scale=0.5]{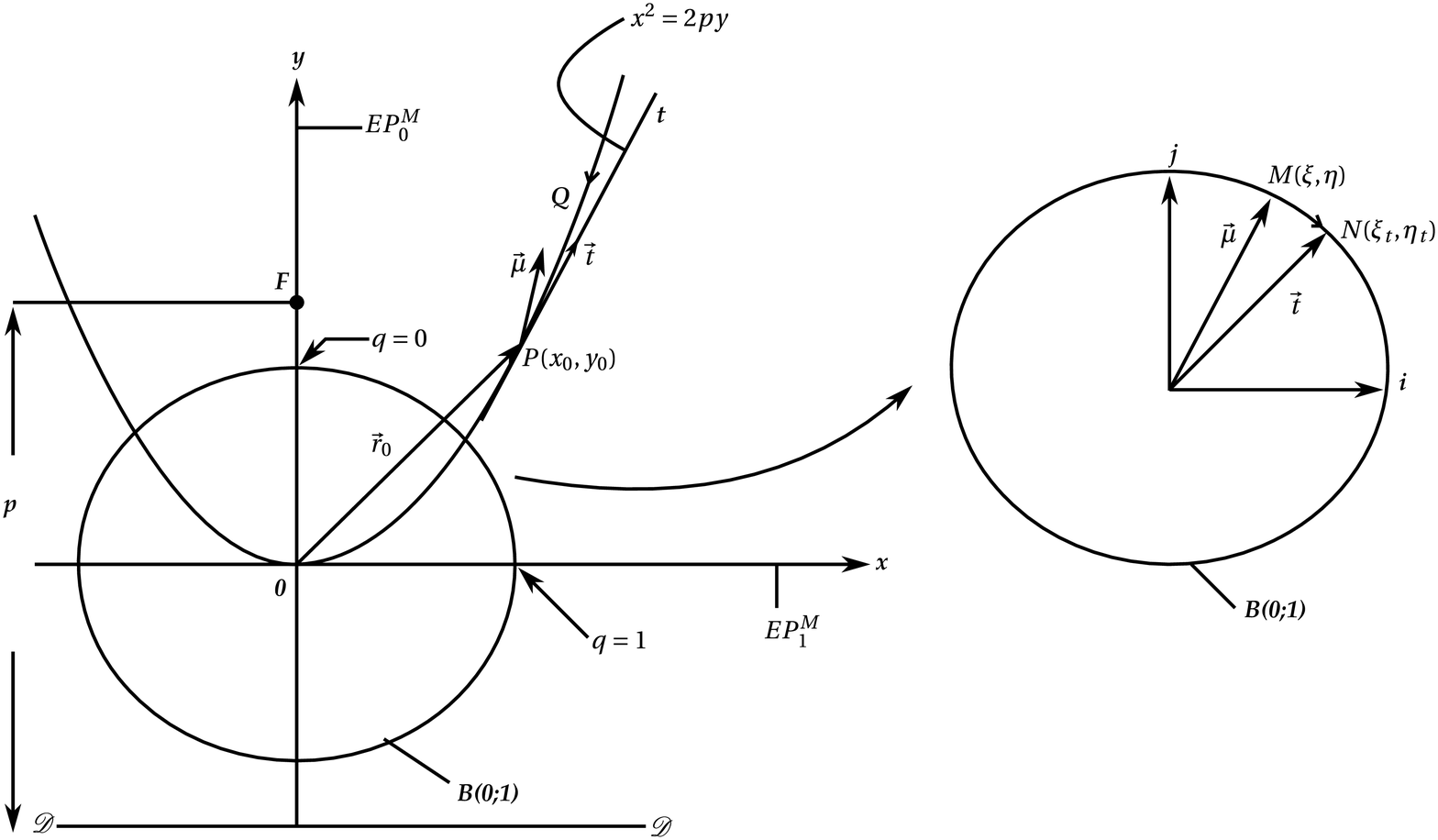}\\
\end{center}
\end{figure}
\newline
Vamos a dm. que la ecuación de la tangente $t$ a la curva por el punto $P(x_0,y_0)$ de la curva se construye así:
\begin{align*}
x^2&=2py\\
x\cdot x&=p(y+y)\\
t:x_0x&=p(y_0+y)
\end{align*}
La cónica es $x^2-2py=0$\\
$$q(x,y)=x^2=\left(\begin{array}{cc}x&y\end{array}\right)\underset{\overset{\parallel}{M}}{\underbrace{\left(\begin{array}{cc}1&0\\0&0\end{array}\right)}}\dbinom{x}{y}$$
\begin{align*}
PCM(\lambda)=\lambda^2+\omega\lambda+\delta&\underset{\uparrow}{=}\lambda^2-\lambda=\lambda(\lambda-1)\\
&\begin{array}{cc}\omega=1\\ \delta=0\end{array}
\end{align*}
Los valores propios de $M$ son 0 y 1.\\
El $EP_1^M$ es el eje, el $EP_0^M$ es el eje $y.$\\
Luego por el Tma. de Euler, $\forall(\xi,\eta)\in B(0;1):0\leqslant q(\xi,\eta)\leqslant 1.$\\
Consideremos la secante $\overline{PQ}$ a la curva por $P(x_0,y_0)$. Llamemos $\vec{\mu}=\xi\vec{i}+\eta\vec{j}$
al vector unitario $\parallel$ a la tangente $t$ a la curva por $P(x_0,y_0).$\\
$\overline{PQ}=\vec{r}=\vec{r}_0+\beta\vec{\mu}.$\\
El parámetro $\beta$ vale $\beta_1=0$ en $P$ y en $Q,$ $$\beta_2=-\dfrac{\left.\dfrac{\partial f}{\partial x}\right)_{x_0,y_0}+\left.\eta\dfrac{\partial f}{\partial y}\right)_{x_0,y_0}}{q(\xi,\eta)}$$
Cuando $Q\longrightarrow P$ a través de la curva, $M\longrightarrow N$ a través de la $B(0;1)$ y $q(\xi,\eta)=\xi^2$ nunca se anula.
\begin{figure}[ht!]
\begin{center}
  \includegraphics[scale=0.5]{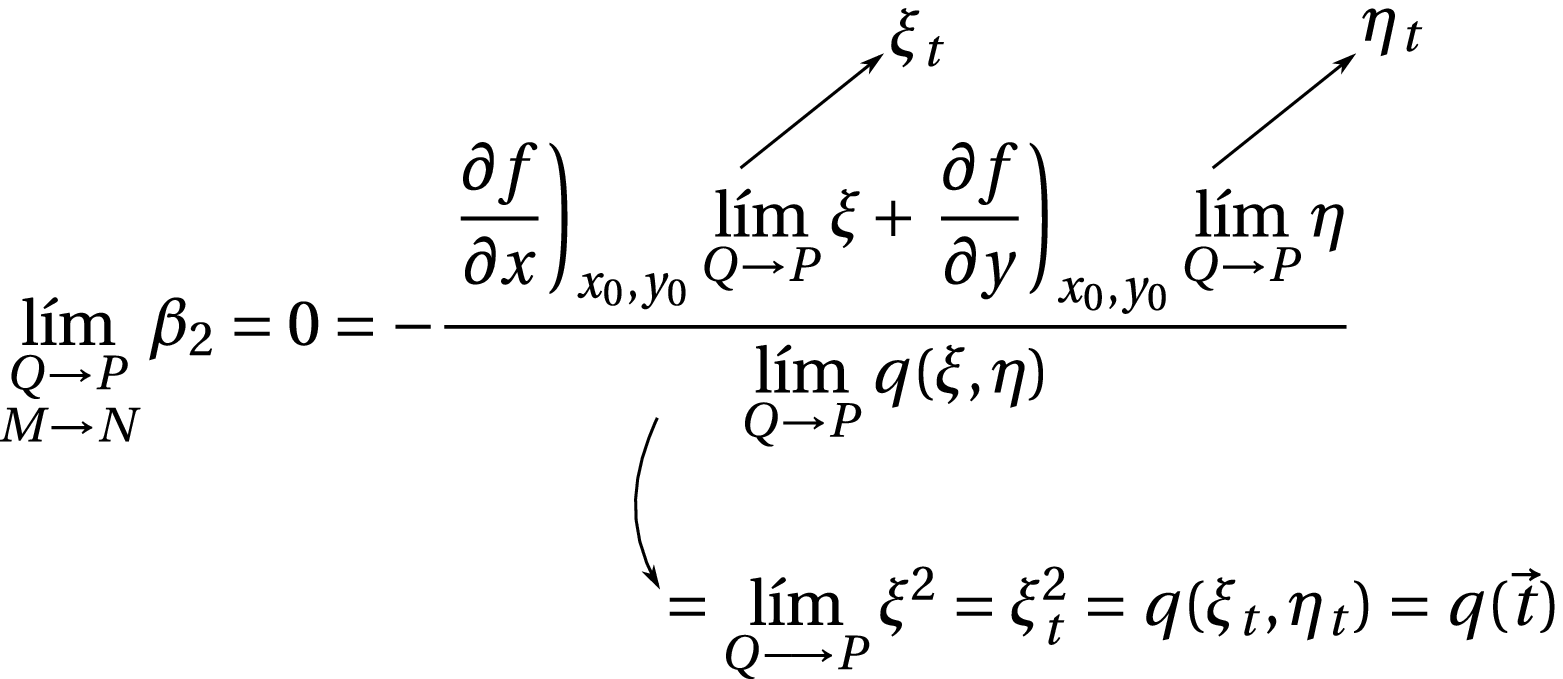}\\
\end{center}
\end{figure}
\newline
De nuevo: $$\underset{\text{pendiente de t}}{\underbrace{m_t}}=-\dfrac{\left.\dfrac{\partial f}{\partial x}\right)_{x_0,y_0}}{\left.\dfrac{\partial f}{\partial y}\right)_{x_0,y_0}}$$ y la ecuación de $t$ es:
$$(x-x_0)\left.\dfrac{\partial f}{\partial x}\right)_{x_0,y_0}+(y-y_0)\left.\dfrac{\partial f}{\partial y}\right)_{x_0,y_0}=0$$
En este caso, $f(x,y)=x^2-2py;\quad\dfrac{\partial f}{\partial x}=2x\quad\dfrac{\partial f}{\partial y}=-2p$\\
Luego la ecuación de $t$ es:
\begin{align*}
(x-x_0)2x_0+(y-y_0)(-2p)&=0\\
x_0x-x_0^2-py+py_0&=0\\
x_0x=py+x_0^2-py_0&\underset{\uparrow}{=}py+2py_0-py_0\\
&\begin{cases}
x_0^2=2py_0\\
\text{ya que $P(x_0,y_0)$}\\
\text{está en la curva}
\end{cases}
\end{align*}
y finalmente, $x_0x=p(y_0+y).$\\
\item[7)] Consideremos la cónica
\begin{align*}
Ax^2+2Bxy+Cy^2+2Dx+2Ey+F=0\\
q(x,y)=Ax^2+2Bxy+Cy^2=\left(\begin{array}{cc}x&y\end{array}\right)\underset{\overset{\parallel}{M}}{\underbrace{\left(\begin{array}{cc}A&B\\B&C\end{array}\right)}}\dbinom{x}{y}
\end{align*}
con $\delta=AC-B^2\neq 0.$ O sea que la cónica tiene centro único.\\
Supongamos que luego de trasladar y rotar los ejes se obtiene una elipse de ecuación $\lambda_1x'^2+\lambda_2y'^2=-\dfrac{\Delta}{\delta}$ donde $\lambda_1$ y $\lambda_2$ son las raíces del $$PCM(\lambda)=\lambda^2-\omega\lambda+\delta=\lambda^2-(A+C)\lambda+(AC-B^2).$$ Por el Tma. de Euler, $\forall(\xi,\eta)\in B(0;1)\quad\min\{\lambda_1,\lambda_2\}\leqslant q(\xi,\eta)\leqslant\max{\lambda_1,\lambda_2}.$\\
(Recuérdese que en el caso de una elipse, $\lambda_1$ y $\lambda_2$ tiene el mismo signo que el signo de $-\dfrac{\Delta}{\delta}$)
\begin{figure}[ht!]
\begin{center}
  \includegraphics[scale=0.5]{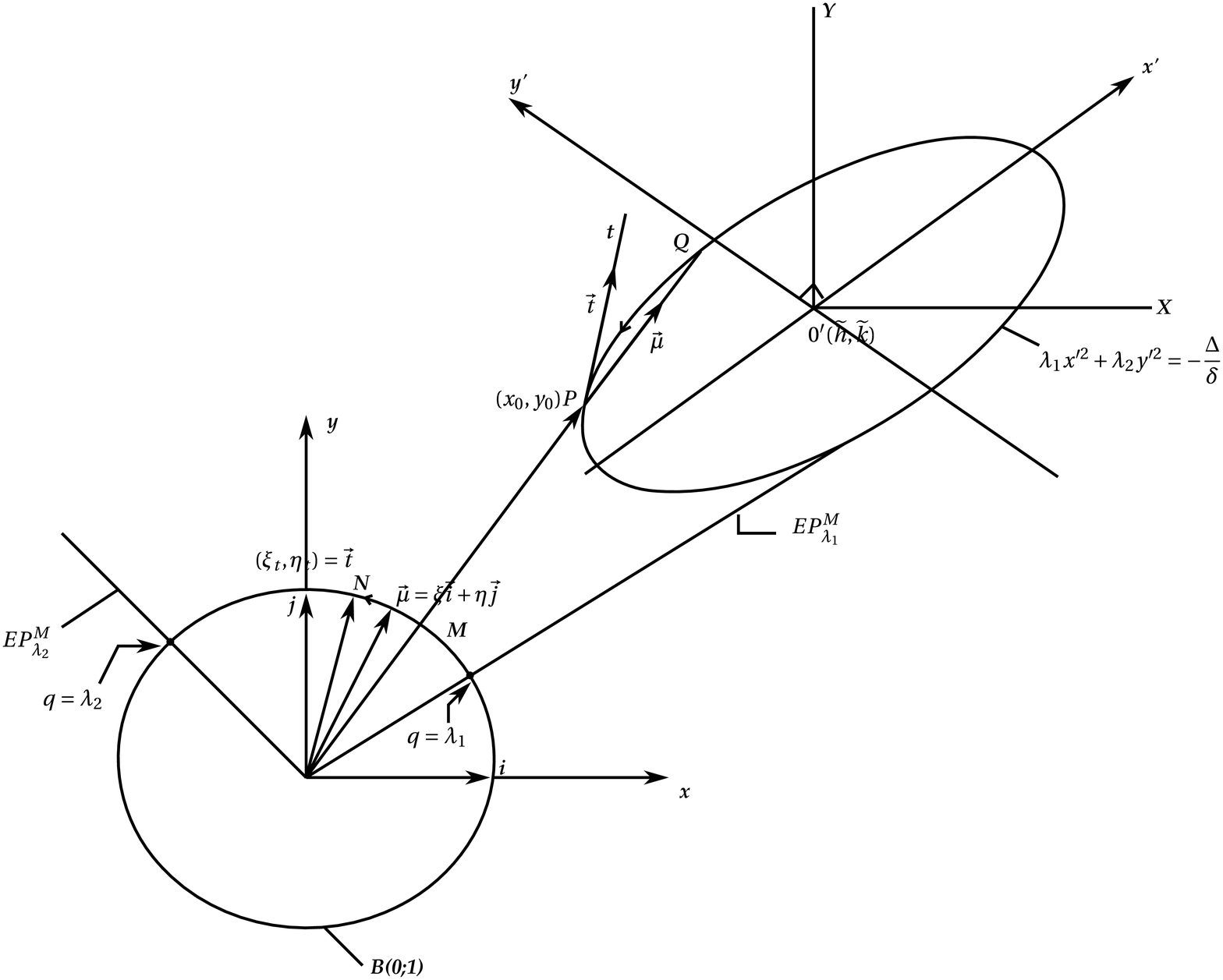}\\
\end{center}
\end{figure}
\newline
Tomemos un punto $P(x_0,y_0)$ de la curva y $Q$ un punto cercano a $P.$\\
Llamemos $\vec{\mu}=\xi\vec{i}+\eta\vec{j}$ al vector unitario $\parallel$a $\overline{PQ}$ y $\vec{t}=\xi_t\vec{i}+\eta_t\vec{j}$ el vector unitario $\parallel$ a la tangente $t$ a la curva por $P.$\\
La ecuación de $\overset{\longleftrightarrow}{PQ}$ es: $\vec{t}=\vec{r}_0+\beta\vec{\mu}.$ El valor del parámetro $\beta$ es $\beta_1=0$ en $P$ y $\beta_2=-\dfrac{\left.\dfrac{\partial f}{\partial x}\right)_{x_0,y_0}+\left.\eta\dfrac{\partial f}{\partial y}\right)_{x_0,y_0}}{q(\xi,\eta)}$ en el punto $Q.$\\
Cuando $Q\longrightarrow P$ a través de la curva, $M\longrightarrow N$ a través de la $B(0;1)$ y el denominador en la expresión para $\beta_2$ no se anula en ningún momento. Luego
\begin{figure}[ht!]
\begin{center}
  \includegraphics[scale=0.5]{C9.eps}\\
\end{center}
\end{figure}
\newline
Se tiene que:
$$\underset{\text{pendiente de t}}{\underbrace{m_t}}=-\dfrac{\left.\dfrac{\partial f}{\partial x}\right)_{x_0,y_0}}{\left.\dfrac{\partial f}{\partial y}\right)_{x_0,y_0}}$$
y la ecuación de la tangente es:
$$t:(x-x_0)\left.\dfrac{\partial f}{\partial x}\right)_{x_0,y_0}+(y-y_0)\left.\dfrac{\partial f}{\partial y}\right)_{x_0,y_0}=0$$
En este caso,
\begin{align*}
\left.\dfrac{\partial f}{\partial x}\right)_{x_0,y_0}&=2Ax_0+2By_0+2D\\
\left.\dfrac{\partial f}{\partial y}\right)_{x_0,y_0}&=2Cy+2Bx+2E
\end{align*}
Luego
\begin{align*}
&t:\left(2Ax_0+2By_0+2D\right)(x-x_0)+\left(2Cy_0+Bx_0+2E\right)(y-y_0)=0\\
&Ax_0x-Ax_0^2+By_0x-2B_0y_0+Dx-Dx_0+Bx_0y+Cy_oy-Cy_0^2+Ey-Ey_0=0
\end{align*}
O sea:
\begin{figure}[ht!]
\begin{center}
  \includegraphics[scale=0.5]{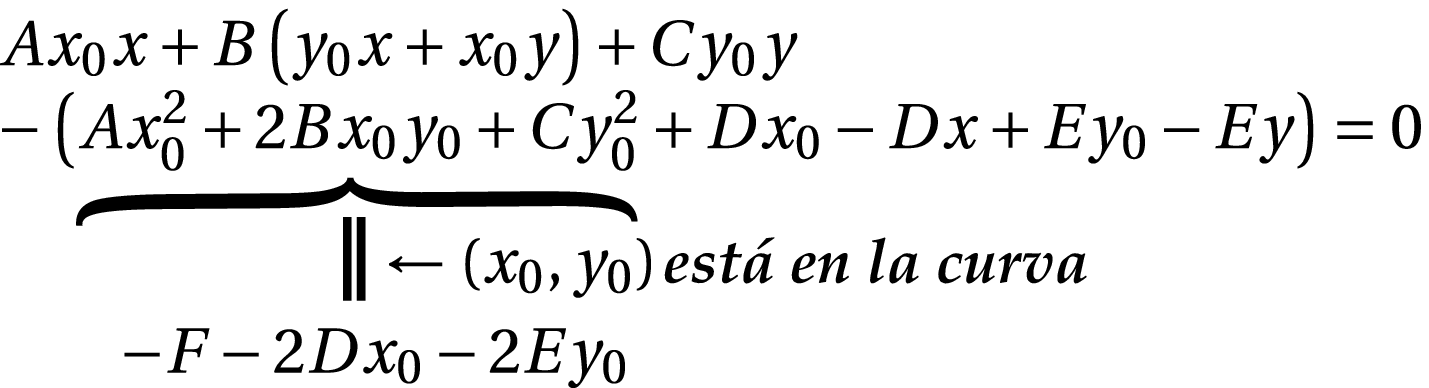}\\
\end{center}
\end{figure}
\newline
\begin{align*}
Ax_0x+B\left(y_0x+x_0y\right)+Cy_oy-\left(-F-2Dx_0-2Ey_0+Dx_0-Dx+Ey_0-Ey\right)&=0\\
Ax_0x+B\left(y_0y+x_0y\right)+Cy_0y+F+Dx_0+Dx+Ey_0+Ey+F&=0
\end{align*}
y finalmente:
$$t:Ax_0+B\left(y_0x+x_0y\right)+Cy^2+D\left(x_0+x\right)+E\left(y_0+y\right)+F=0$$
que se puede construir a partir de la ecuación de la cónica así: $$Ax^2+2Bxy+Cy^2+2Dx+2Ey+F=0$$
\begin{figure}[ht!]
\begin{center}
  \includegraphics[scale=0.5]{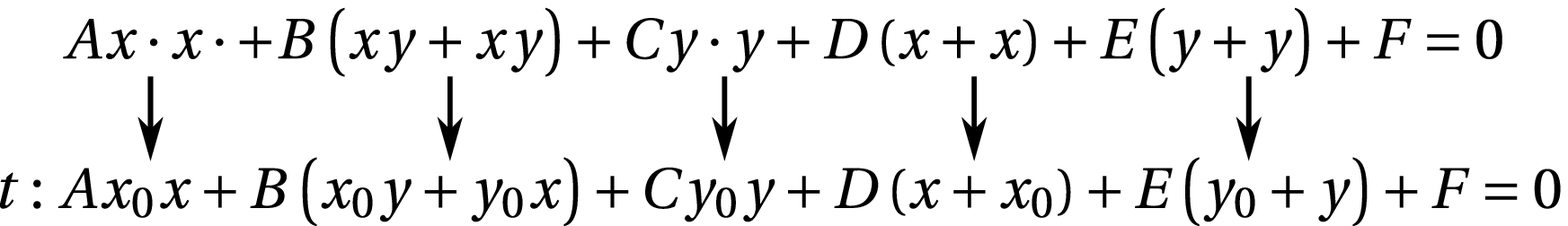}\\
\end{center}
\end{figure}
\end{enumerate}
\begin{ejem}
Considere la cónica $$3x^2+4xy+2y^2+3x+y-11=0.$$
Utilizando los invariantes diga que tipo de cónica es, dibújela y encuentre la ecuación de la tangente en el punto $(\xi,1)$ de la curva.\\
$A=3; 2B=4, B=2; C=2; 2D=3, D=\dfrac{3}{2}; 2E=1, E=\dfrac{1}{2}; F=-11.$ $P(\underset{\overset{\parallel}{x_0}}{2},\underset{\overset{\parallel}{y_0}}{-1}).$ La ecuación de la tangente es:
\begin{align*}
3x_0x+2\left(x_0y+y_oy\right)+2y_0y+\dfrac{3}{2}\left(x+x_0\right)+\dfrac{1}{2}\left(y_0+y\right)-11&=0\\
3\cdot 2\cdot x+2\left(2y-x\right)+2(-1)y+\dfrac{3}{2}\left(x+2\right)+\dfrac{1}{2}\left(-1+y\right)-11&=0
\end{align*}
que después de simplificar se convierte en $$11x+5y-17=0.$$
\end{ejem}
\section{Las secciones cónicas ó intersecciones de un cono con un plano.}
Las secciones cónicas se las define como las curvas al cortar un cono circular recto con un plano. Fue desde ese punto de vista como fueron estudiadas por los primeros geómetras que se obtienen.\\
\begin{enumerate}
\item[i)] Si el plano es $\perp$ al eje del cono, la sección es una circunferencia.\\
\item[ii)] Si el plano es $\parallel$ al eje del cono la sección que se obtiene es una hipérbola.\\
\item[iii)] Si el plano contiene al eje se obtienen dos rectas (dos generatrices del cono.)
\end{enumerate}
Estas tres definiciones son fáciles de demostrar.\\
Vamos a dm. la $ii).$\\
Consideremos el cono circular recto de la Fig. con vértice en $V$ y que tiene por directriz la circunferencia $\mathscr{C}$ de centro en el eje $Z$, radio $R$ y a una distancia $VA=\delta$ de $V; Z$ es el eje del cono.\\
Tomemos como plano $xy$ el plano que por $V$ es $\perp$ al eje $Z.$
\newpage
\begin{figure}[ht!]
\begin{center}
  \includegraphics[scale=0.5]{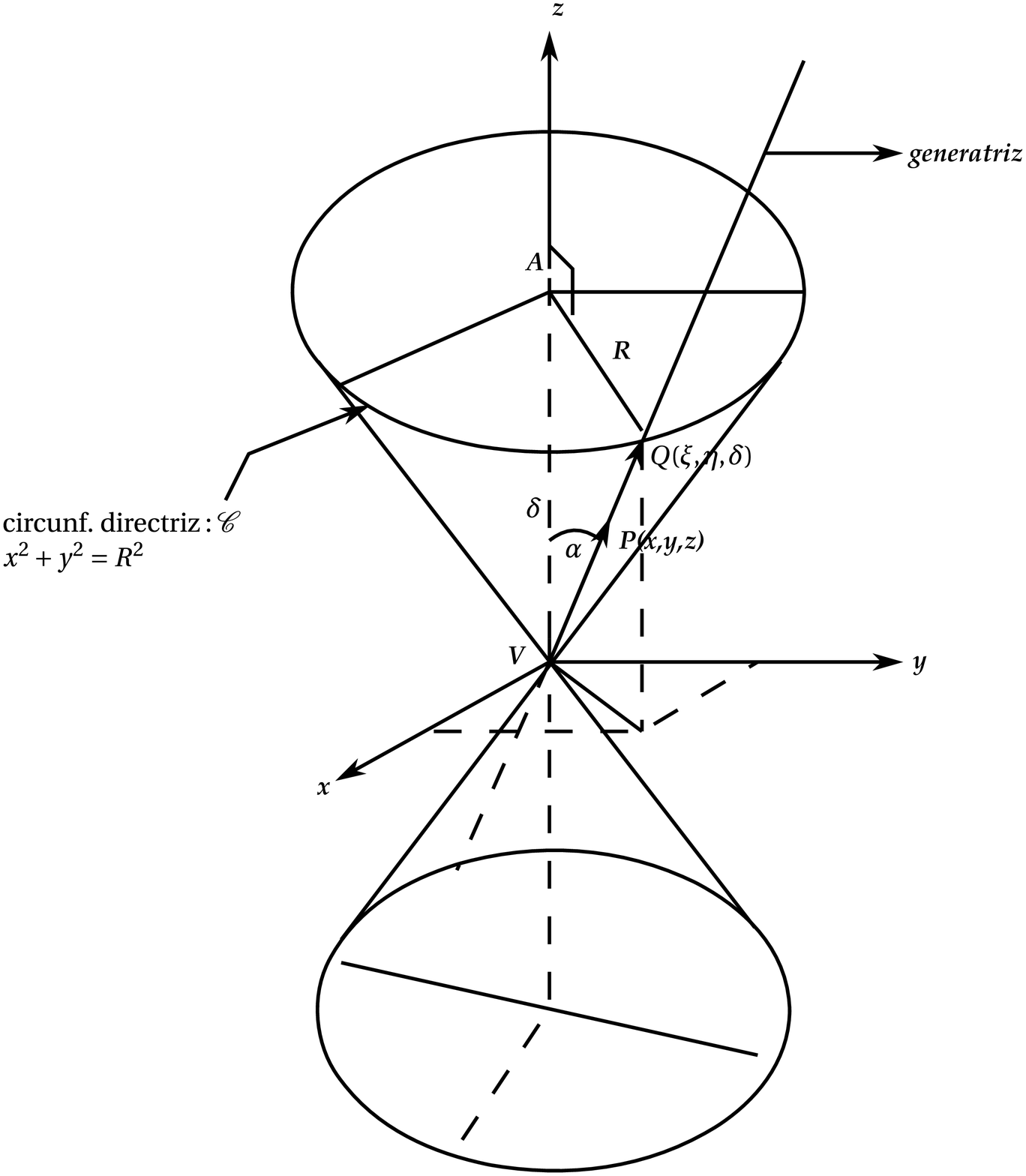}\\
  \caption{}\label{}
\end{center}
\end{figure}
Vamos a obtener en primer lugar la ecuación del cono$\diagup xyz.$\\
Sea $P(x,y,z)$ un punto de la superficie.\\
Llamemos $Q(\xi,\eta,\delta)$ al punto de la generatriz $\overset{\longleftrightarrow}{VP}$ que está en la directriz $\mathscr{C}.$\\
Como $Q\in\mathscr{C}, \xi^2+\eta^2=R^2.$\\
Ahora, $$\overset{\longleftrightarrow}{VP}=(x,y,z)=\lambda\overset{\longleftrightarrow}{OQ}=\lambda(\xi,\eta,\delta)$$
\begin{equation*}
\left.
\begin{split}
\therefore\quad x&=\lambda\xi\\
y&=\lambda\eta\\
z&=\lambda\delta
\end{split}
\right\}\hspace{0.5cm}x^2+y^2=\lambda^2(\xi^2+\eta^2)=\lambda^2R^2\underset{\overset{\uparrow}{\lambda=\dfrac{z}{\delta}}}{=}\dfrac{R^2}{\delta^2}z^2
\end{equation*}
Llamemos $\alpha$ el ángulo en el vértice del cono.\\
Entonces $\tan\alpha=\dfrac{R}{\delta}$ y por lo tanto, $\dfrac{R^2}{\delta^2}=\tan^2\alpha.$\\
Así que $x^2+y^2=\tan^2\alpha\cdot z^2$ es la ecuación del cono$\diagup xyz.$\\
Ahora se trata de probar que la sección del cono con un plano $\parallel$ al eje del cono y que no pase por $V$ es una hipérbola.
\begin{figure}[ht!]
\begin{center}
  \includegraphics[scale=0.5]{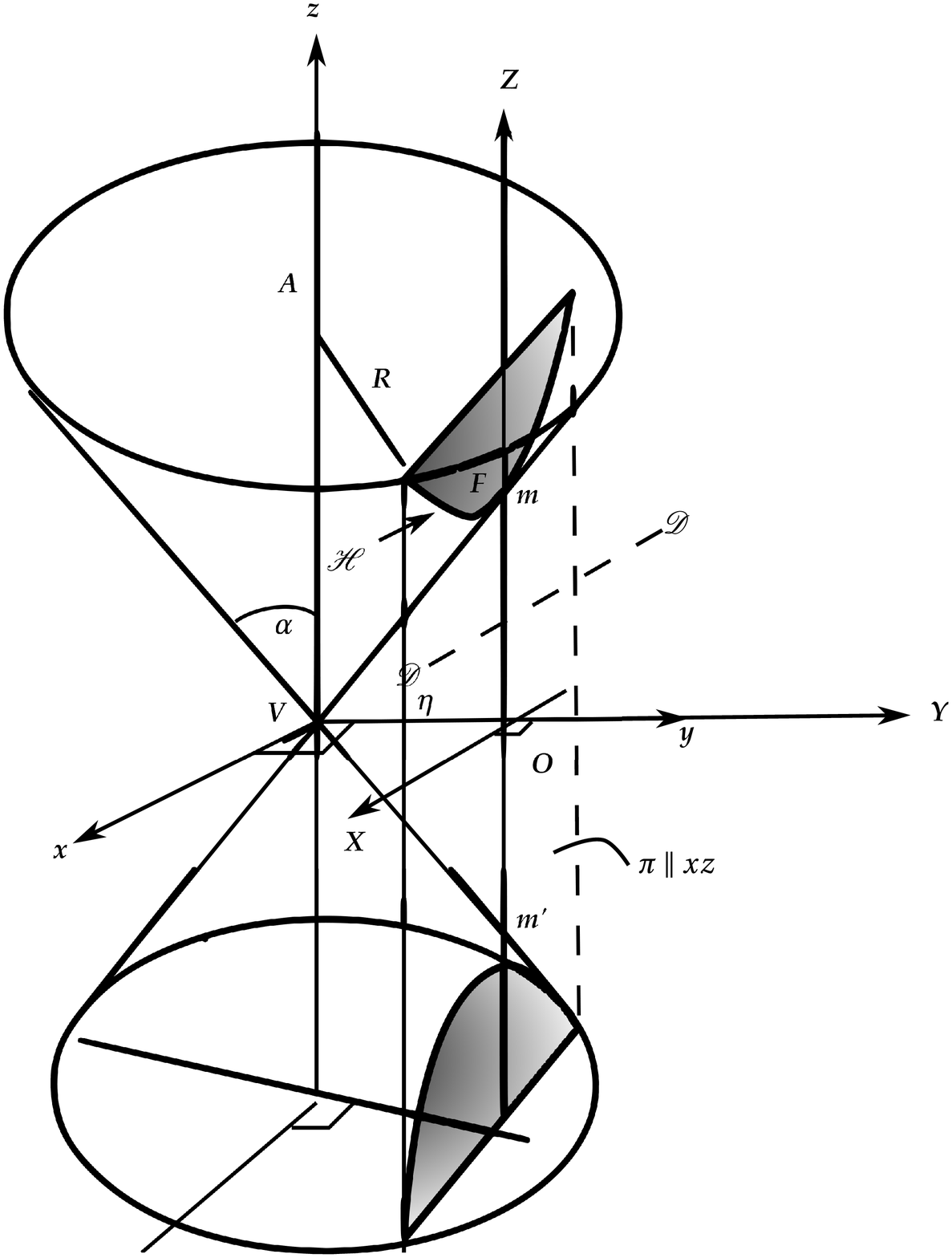}\\
  \caption{}\label{}
\end{center}
\end{figure}
\newline
La situación está representada en la Fig:1.7 en la que el plano $\pi$ es $\parallel$ al plano $xz$ y queda definido dando la distancia $VO=\eta$ que suponemos conocida. La $\bigcap$ del cono con el plano $\pi$ es la curva $\mathscr{H}$. Vamos a dm. que $\mathscr{H}$ es una hipérbola y a definir todos sus elementos: focos, directriz, excentricidad, etc.\\
Definimos ejes $XYZ$ paralelos a $xyz$ y con origen en $O(0,\eta,0),$ Fig:1.7. Las ecuaciones de transf. de coordenadas son:
\begin{equation}\label{86}
\left.
\begin{split}
x=X\\
y=Y+\eta\\
z=Z
\end{split}
\right\}
\end{equation}
Ahora, la ecuación del cono$\diagup xyz$ es: $x^2+y^2=\tan^2\alpha\cdot z^2$ y al tener en cuenta [\ref{86}]
\begin{equation}\label{87}
X^2+{\left(Y+\eta\right)}^2=\tan^2\alpha\cdot Z^2:\text{ecuación del cono$\diagup XYZ$ con origen en $0.$}
\end{equation}
$\pi=\left\{(X,Y,Z)\diagup Y=0\right\}.$ O sea que la ecuación del plano $\pi\diagup XYZ$ es $Y=0$ que llevada a [\ref{87}] nos da: $$X^2+\eta^2=\tan^2\alpha\cdot Z^2:\text{ecuación de $\mathscr{H}\diagup XYZ$}$$
Así que la curva $\mathscr{H}$ está en el plano $XZ$ y tiene ecuación:
\begin{align*}
X^2+\eta^2=\tan^2\alpha\cdot Z^2\\
\tan^2\alpha\cdot Z^2-X^2=\eta^2\\
\intertext{O sea que}\\
\dfrac{Z^2}{{\left(\dfrac{\eta}{\tan\alpha}\right)}^2}-\dfrac{X^2}{\eta^2}=1
\end{align*}
lo que nos dm. que $\mathscr{H}$ es una hipérbola.\\
Si miramos la curva y sus ejes desde el punto $V$ aparece así:
\begin{figure}[ht!]
\begin{center}
  \includegraphics[scale=0.5]{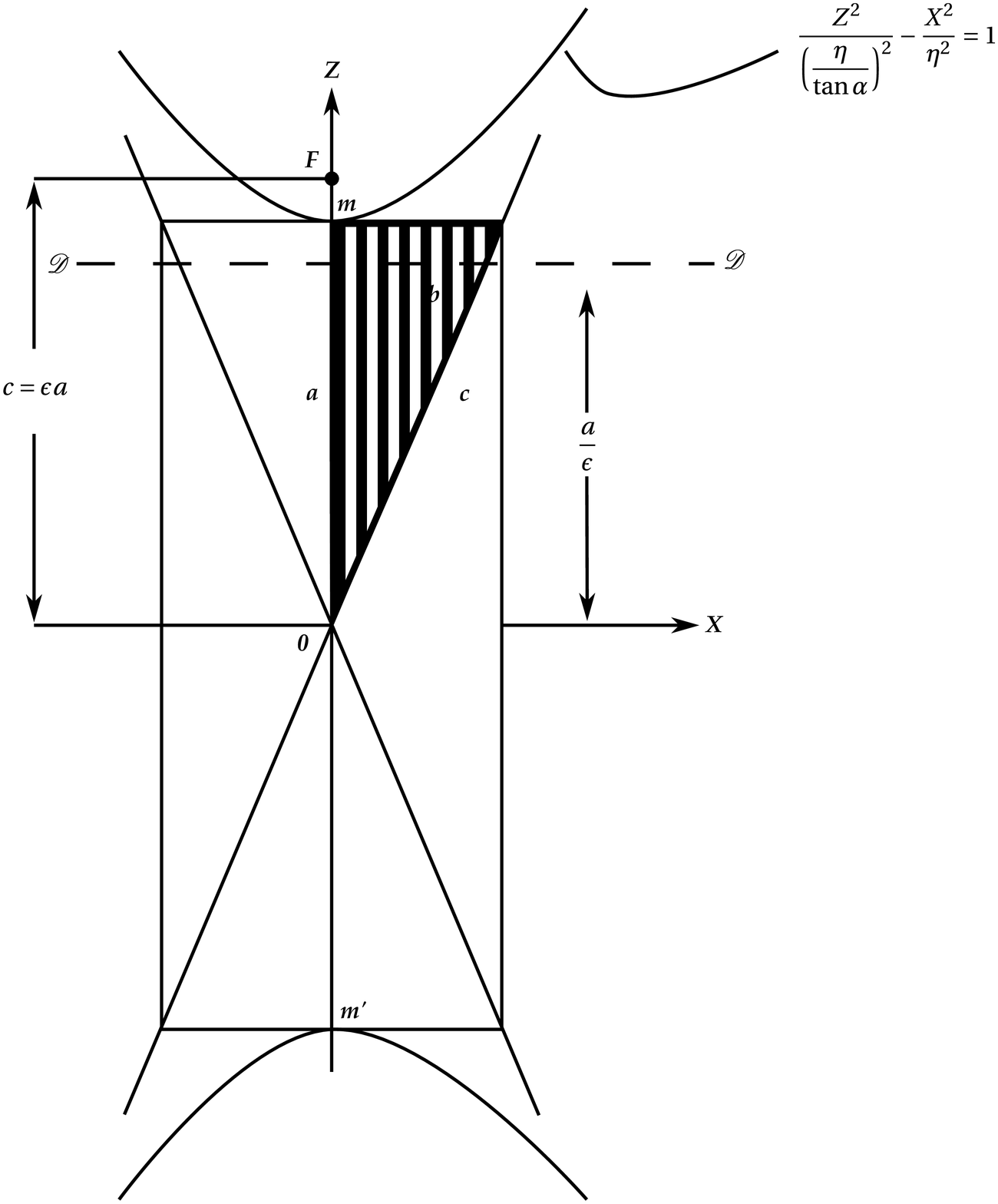}\\
  \caption{}\label{}
\end{center}
\end{figure}
\newline
$$a=\dfrac{\eta}{\tan\alpha};\quad b=\eta;\quad c=\sqrt{a^2+b^2}=\sqrt{\dfrac{\eta^2}{\tan^2\alpha}+\eta^2}=\dfrac{\eta}{\tan\alpha}\sqrt{1+\tan^2\alpha}=\dfrac{\eta}{\sin\alpha}$$
Nótese que $$\tan\alpha=\dfrac{\sin\alpha}{\cos\alpha}\quad\therefore\quad\sin\alpha=\cos\alpha\cdot\tan\alpha\underset{\overset{\uparrow}{\cos\alpha<1}}{<}\tan\alpha$$
Luego $$\dfrac{\eta}{\tan\alpha}<\dfrac{\eta}{\sin\alpha},\quad\text{o sea que $a<c.$}$$
$$\epsilon=\dfrac{c}{a}=\dfrac{\dfrac{\eta}{\sin\alpha}}{\dfrac{\eta}{\tan\alpha}}=\sec\alpha.$$
Esto es un hecho sorprendente!.\\
\textit{Todos las secciones obtenidas al cortar el cono con planos $\parallel$s al plano xy son hipérbolas con la misma excentricidad: $\epsilon=\sec\alpha$.}\\
La directriz $\mathscr{DD}$ se localiza así (Fig. 1.8.): $$\dfrac{a}{\epsilon}=\dfrac{\dfrac{\eta}{\tan\alpha}}{\sec\alpha}=\dfrac{\eta\cos^2\alpha}{\sin\alpha}.$$
Las coordenadas del foco $F\diagup XYZ$ son $\left(0,0,c=\dfrac{\eta}{\sin\alpha}\right)$ y respecto a $xyz$ serían, regresando a [\ref{86}],
\begin{align*}
x&=0\\
y&=\eta\quad\therefore\quad z=\dfrac{1}{\sin\alpha}y\\
z&=\dfrac{\eta}{\sin\alpha}
\end{align*}
Esto nos señala que cuando el plano $\pi$ se desplaza paralelamente a $xy,$ el foco $F$ de la hipérbola se mueve por la recta $$\begin{cases}x&=0\\ t&=\dfrac{1}{\sin\alpha}y\end{cases}.$$\\
Ahora cortemos el cono con un plano $\pi$ no $\parallel$ al eje del cono.\\
Llamemos $\beta$ al ángulo que hace el plano $\pi$ con el eje del cono y $0$ al punto donde el plano $\pi$ corta al eje del cono, $0\neq V.$
\begin{figure}[ht!]
\begin{center}
  \includegraphics[scale=0.4]{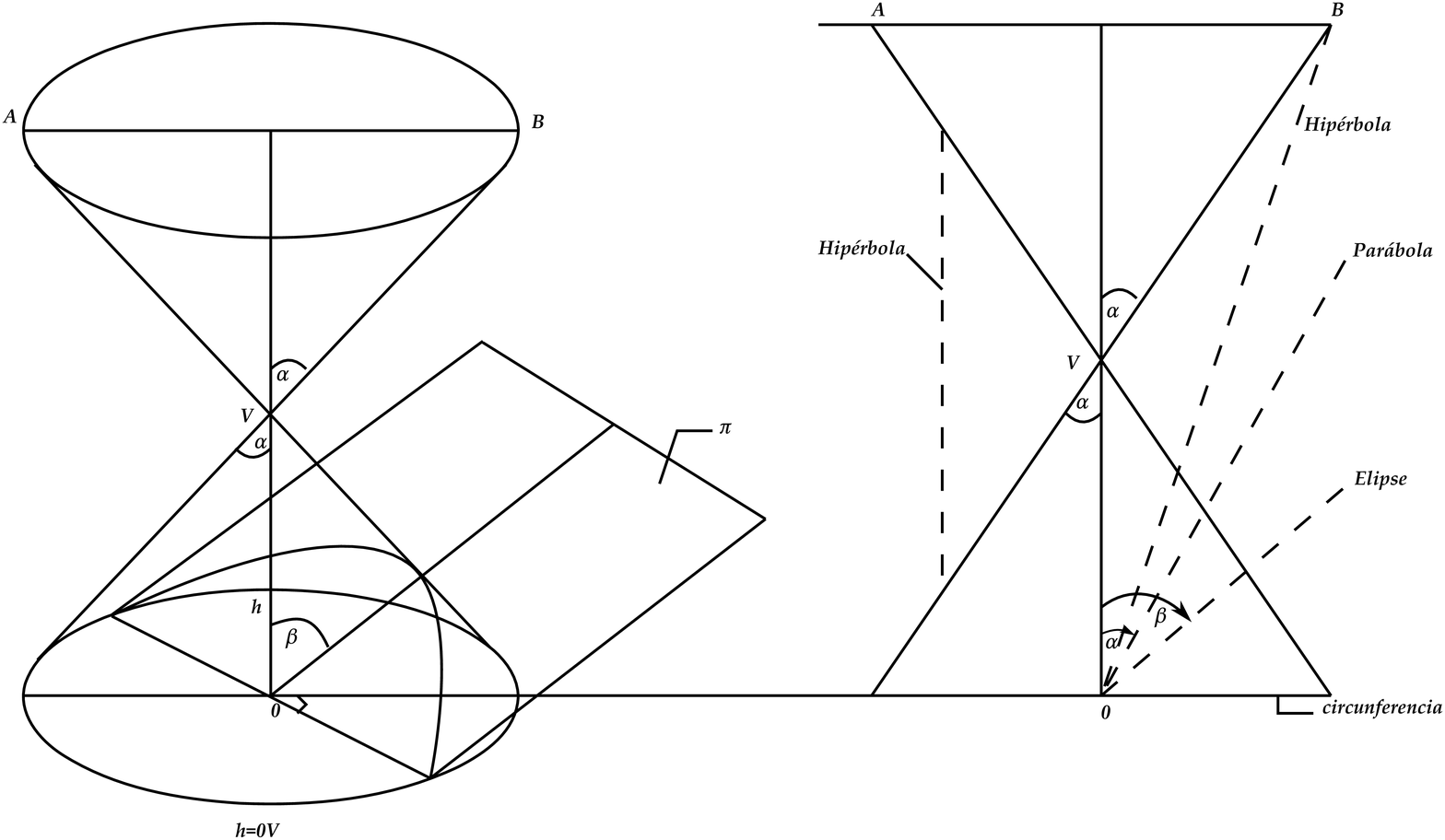}\\
  \caption{}\label{}
\end{center}
\end{figure}
Vamos a dm. que
\begin{itemize}
\item Si $0<\beta<\alpha,$ la sección es una hipérbola.\\
\item Si $\beta=\alpha,$ la sección es una parábola.\\
\item $\beta_i\quad\alpha<\beta<\pi/2,$ la sección cónica es una elipse. (Fig. 1.9)
\end{itemize}
Recordemos que $x^2+y^2=\tan^2\alpha\cdot z^2$ es la ecuación del cono$\diagup xyz$ con origen (Fig. 1.10.)\\
Llamemos $h=0V.$\\
Primero vamos a trasladar los ejes $XYZ$ con origen en $V$ al punto $O$ donde $O(0,0,h)\diagup xyz.$
\begin{figure}[ht!]
\begin{center}
  \includegraphics[scale=0.5]{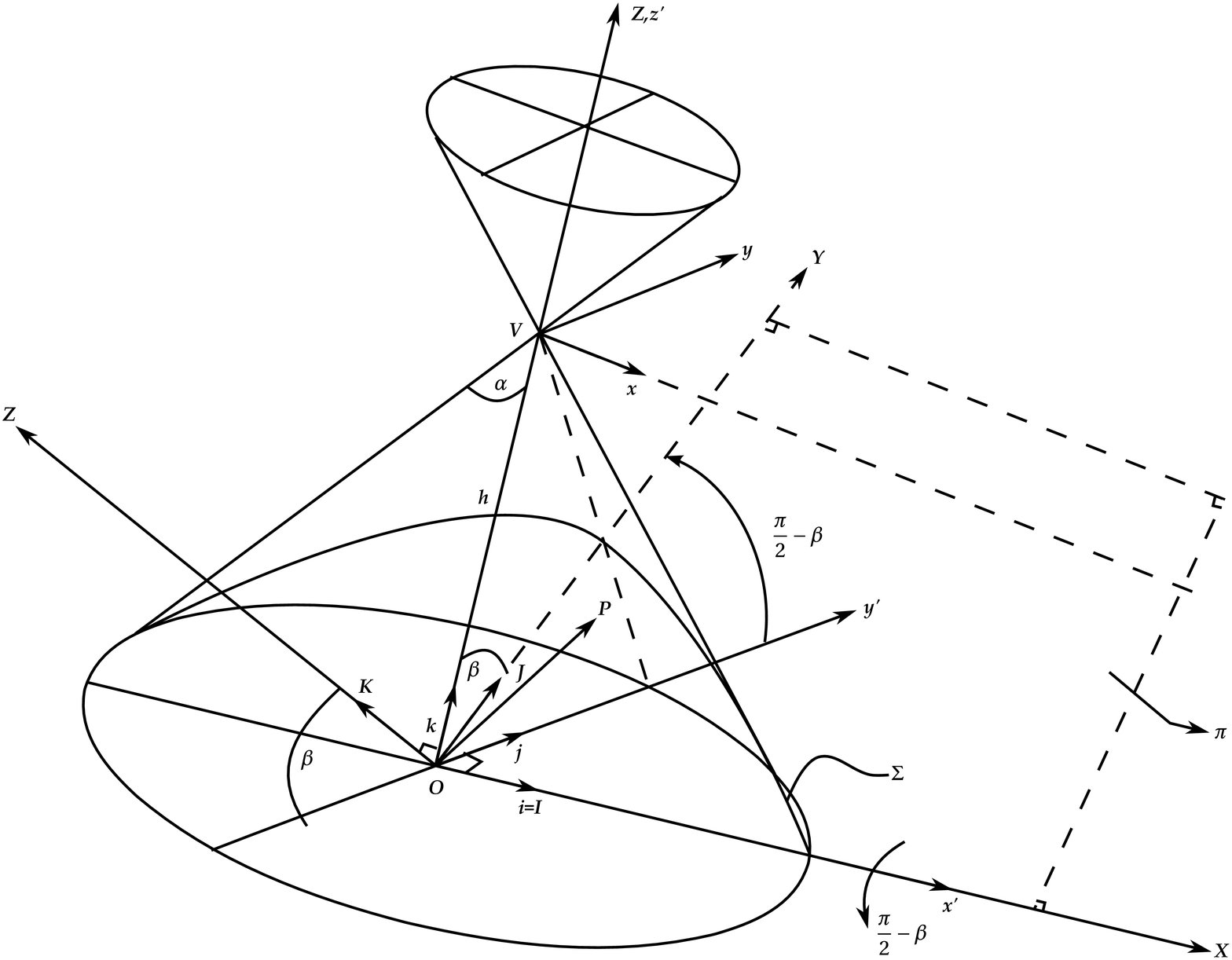}\\
  \caption{}\label{}
\end{center}
\end{figure}
\newline
Las ecuaciones de la transformación son:
\begin{align*}
x&=x'\\
y&=y'\\
z&=z'-h
\end{align*}
que llevamos a la ecuación del cono obteniendo: $\tan^2\alpha{(z-h)}^2=x'^2+y'^2:$ ecuación del cono$\diagup x'y'z'$ con origen en $O.$ Ahora consideremos el sistema ortogonal $XYZ$ con origen en $O$ y conseguido al rotar los ejes $x'y'z'$ un ángulo $\pi/2-\beta\circlearrowleft$ al rededor del eje $x'.$\\
Si $i,j,k, I,J,K$ son los vectores unitarios en las direcciones de los ejes,
\begin{align*}
I&=i\\
J&=\sin\beta j+\cos\beta k\\
K&=-\cos\beta j+\sin\beta k
\end{align*}
Nótese que la curva $\Sigma,$ intersección de $\pi$ con el cono está en el plano $XY$ y que el eje $Z$ es $\perp$ a $\pi.$\\
Sea $P(x',y',z')$ (ó $P(X,Y,Z)$) un punto cualquiera del cono.\\
Entonces $[\vec{OP}]_{ijk}=[I]_{ijk}^{IJK}[\vec{OP}]_{IJK}.$ O sea que
\begin{align*}
\left(\begin{array}{cc}x'\\y'\\z'\end{array}\right)&=\left(\begin{array}{ccc}1&0&0\\0&\sin\beta&-\cos\beta\\0&\cos\beta&\sin\beta\end{array}\right)\left(\begin{array}{cc}X\\Y\\Z\end{array}\right)\\
\therefore\quad x'&=X\\
y'&=\sin\beta Y-\cos\beta Z\\
z'&=\cos\beta Y+\sin\beta Z
\end{align*}
que llevamos a la ecuación del cono$\diagup x'y'z'.$ $$\tan^2\alpha{\left(\cos\beta Y+\sin\beta Z-h\right)}^2=X^2+{\left(\sin\beta Y-\cos\beta Z\right)}^2:\text{ecuación del cono$\diagup XYZ$}$$
Si en la ecuación anterior hacemos $Z=0$ obtenemos la ecuación de la curva $\Sigma$. O sea que $$\tan^2\alpha{\left(\cos\beta Y-h\right)}^2=X^2+\sin^2\beta Y^2$$ es la ecuación de $\Sigma\diagup XY$ y que podemos simplificar así:
\begin{align*}
\tan^2\alpha\left(\cos^2\beta Y^2-2h\cos\beta Y+h^2\right)=X^2+\sin^2\beta Y^2\\
\intertext{ó}\\
X^2+\left(\sin^2\beta-\tan^2\alpha\cos^2\beta\right)Y^2+2h\tan^2\alpha\cos\beta Y=h^2\tan^2\alpha\quad\star
\end{align*}
Vamos a analizar tres casos en la ecuación $\star.$
\begin{enumerate}
\item Supongamos que $\beta=\alpha.$ Esto significa que $\pi$ es $\parallel$ a una generatriz del cono. El
coeficiente de $Y^2$ en la ecuación $\star$ se anula y la ecuación de $\Sigma\diagup XY$ es:
\begin{align*}
X^2+2h\tan^2\alpha\cos\alpha Y=\tan^2\alpha h^2\\
\therefore\quad Y=-\dfrac{1}{2h\tan^2\alpha\cos\alpha}X^2+\dfrac{h}{2\cos\alpha}
\end{align*}
La sección $\Sigma$ es una \underline{PARÁBOLA}.\\
Los elementos de la curva, foco, directriz, etc... se calculan fácilmente$\diagup XY.$
\begin{figure}[ht!]
\begin{center}
  \includegraphics[scale=0.5]{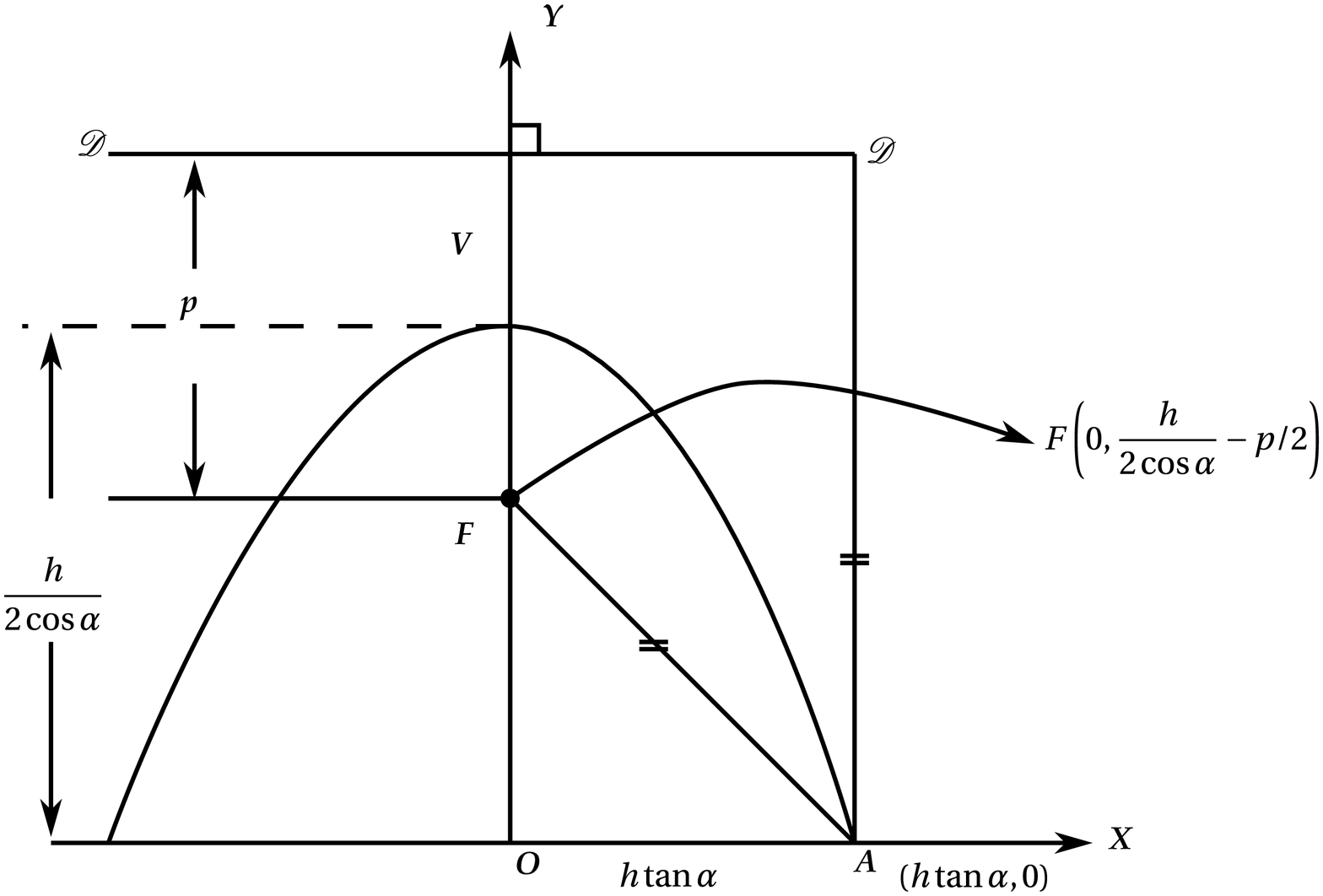}\\
  \caption{}\label{}
\end{center}
\end{figure}
\newline
Si $X=0,\quad Y=OV=\dfrac{h}{2\cos\alpha}$\\
Si $Y=0,\quad O=-\dfrac{1}{2h\tan^2\alpha\cos\alpha}OA^2+\dfrac{h}{2\cos\alpha}$
\begin{align*}
OA^2&=\dfrac{h}{\cancel{2}\cos\alpha}\cancel{2}h\tan^2\alpha\cos\alpha=\dfrac{h^2}{\cancel{\cos\alpha}}\tan\alpha\cancel{\cos\alpha}\\
OA^2&=h\tan\alpha
\end{align*}
Como $A$ está en la parábola, $AD=AF.$ O sea que
\begin{align*}
\dfrac{h}{2\cos\alpha}+\dfrac{p}{2}&=\sqrt{h^2\tan^2\alpha+{\left(\dfrac{h}{2\cos\alpha}-\dfrac{p}{2}\right)}^2}\\
{\left(\dfrac{h}{2\cos\alpha}+\dfrac{p}{2}\right)}^2&=h^2\tan^2\alpha+{\left(\dfrac{h}{2\cos\alpha}+\dfrac{p}{2}\right)}^2\\
&\therefore\quad p=h\tan^2\alpha\cos\alpha=d(F;\mathscr{DD}):\text{distancia foco-directriz}
\end{align*}
El foco $F$ se localiza así: $$OF=\dfrac{h}{2\cos\alpha}-\dfrac{h\tan^2\alpha\cos\alpha}{2}=\dfrac{1}{2\cos\alpha}(1-\sin^2\alpha)$$
\item Supongamos ahora que $\alpha<\beta<\pi/2$ y consideremos la cónica $\Sigma$
$$X^2+\left(\sin^2\beta-\tan^2\alpha\cos^2\beta\right)Y^2+2h\tan^2\alpha\cos\beta Y-h^2\tan^2\alpha=0$$
Vamos a emplear lo estudiado en las secciones anteriores para reducirla. Si consideramos la ecuación general de las cónicas $AX^2+2BY+CY^2+2DX+2EY+F$ se tiene que\\
$\begin{cases}
A=1\\
B=0\\
C=\sin^2\beta-\tan^2\alpha\cos^2\beta\\
D=0\\
2E=2h\tan^2\alpha\cos\beta\quad\therefore\quad E=h\tan^2\alpha\cos\beta\\
F=-h^2\tan^2\alpha
\end{cases}$\\
Llamemos $e=\dfrac{\cos\beta}{\cos\alpha}.$ Como $\alpha<\beta,\quad\cos\beta<\cos\alpha$ y por lo tanto, $e=\dfrac{\cos\beta}{\cos\alpha}<1.$ Vamos a dm. que en éste $\Sigma$ es una elipse de excentricidad $e=\dfrac{\cos\beta}{\cos\alpha}.$\\
Como $e=\dfrac{\cos\beta}{\cos\alpha},\quad\cos\beta=e\cos\alpha.$\\
Luego
\begin{align*}
C&=\sin^2\beta-\tan^2\alpha^2\beta=1-\cos^2\beta-\tan^2\alpha\cos^2\beta\\
&=1-\cos^2\beta(1+\tan^2\alpha)=1-\cos^2\beta\sec^2\alpha\underset{\overset{\uparrow}{\cos\beta}=e\cos\alpha}{=}1-e^2\cos^2\alpha\sec^2\alpha=1-e^2
\end{align*}
y la ecuación de $\Sigma$ puede ponerse así:
\begin{align*}
X^2+(1-e^2)Y^2+2h\tan^2\alpha\cos\beta Y-h^2\tan^2\alpha=0\quad\text{con}1-e^2>0\\
M=\left(\begin{array}{cc}A&B\\B&C\end{array}\right)=\left(\begin{array}{cc}1&0\\0&1-e^2\end{array}\right);\quad\delta=|M|=1-e^2&=C
\end{align*}
Para hallar el centro debe resolverse el sistema:$\begin{cases}Ax+By=-D\\ Bx+Cy=-E\end{cases}$\\
O sea\\
$\begin{cases}
1\cdot x+0\cdot y=0\\
0\cdot x+(1-e^2)y=-h\tan^2\alpha\cos\beta
\end{cases}$\\
Hay solución única:
\begin{align*}
\widetilde{h}=0,\quad\widetilde{k}=-\dfrac{h\tan^2\alpha\cos\beta}{1-e^2}&\underset{\uparrow}{<}0\\
&1-e^2>0\\
&\begin{array}{cc}\text{y por lo tanto se trata de}\\ \text{una cónica con centro único}\end{array}
\end{align*}
\begin{figure}[ht!]
\begin{center}
  \includegraphics[scale=0.4]{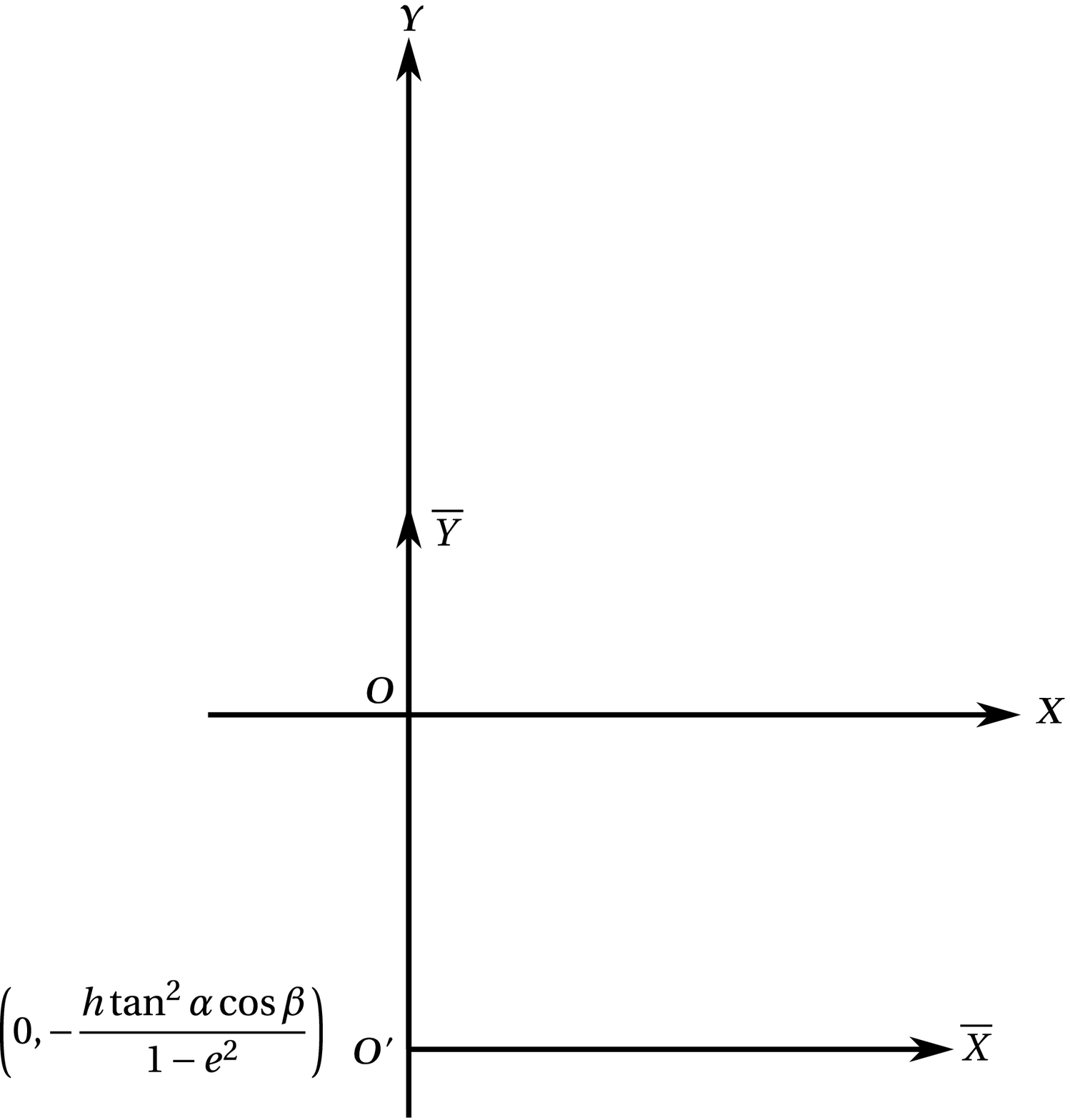}\\
  \caption{}\label{}
\end{center}
\end{figure}
\newline
Al trasladar los ejes $XY$ al punto $O'$ de coor. $\left(0,-\dfrac{h\tan^2\alpha\cos\beta}{1-e^2}\right)\diagup X$ se eliminan los términos lineales en $\star.$\\
La ecuación de la cónica $\Sigma\diagup\overline{X}\overline{Y}$ con origen en $O'$ es: $$\overline{X}^2+(1-e^2)\overline{Y}^2=-\dfrac{\Delta}{\delta}$$
\begin{align*}
\dfrac{\Delta}{\delta}&=\dfrac{1}{1-e^2}\left|\begin{array}{ccc}A&B&D\\B&C&E\\D&E&F\end{array}\right|=\dfrac{1}{1-e^2}\left|\begin{array}{ccc}1&0&0\\0&1-e^2&h\tan^2\alpha\cos\beta\\0&h\tan^2\alpha\cos\beta&-h^2\tan^2\alpha\end{array}\right|\\
&=\dfrac{1}{1-e^2}\left[-(1-e^2)h^2\tan^2\alpha-h^2\tan^4\cos^2\beta\right]\\
&\underset{\uparrow}{=}\left[-(1-e^2)h^2\tan^2\alpha-h^2e^2\tan^2\alpha(1-\cos^2\alpha)\right]\\
&\begin{cases}
\tan^4\alpha\cos^2\beta\underset{\overset{\uparrow}{\cos\beta}=e\cos\alpha}{=}\tan^4\alpha{(e\cos\alpha)}^2&=e^2\tan^2\alpha\dfrac{\sin^2\alpha}{\cancel{\cos^2\alpha}}\cancel{\cos^2\alpha}\\
&=e^2\tan^2\alpha\sin^2\alpha\\
&=e^2\tan^2\alpha(1-\cos^2\alpha)
\end{cases}\\
&=\dfrac{1}{1-e^2}\left[h^2e^2\tan^2\alpha\cos^2\alpha\cdot h^2\tan^2\alpha\right]\\
&=\dfrac{h^2\tan^2\alpha}{1-e^2}(e^2\cos^2\alpha-1)=\dfrac{h^2\tan^2\alpha}{1-e^2}(\cos^2\beta-1)
\end{align*}
Luego $$-\dfrac{\Delta}{\delta}=\dfrac{h^2\tan^2\alpha}{1-e^2}(1-\cos^2\beta)=\dfrac{h^2\tan^2\alpha\cdot\sin^2\beta}{1-e^2}$$
La ecuación de la cónica $\Sigma\diagup\overline{X}\overline{Y}$ es finalmente $$\overline{X}^2+(1-e^2)\overline{Y}^2=\dfrac{h^2\tan^2\alpha\sin^2\beta}{1-e^2}$$ que podemos finalmente escribir así: $$\dfrac{\overline{X}^2}{{\left(\dfrac{h\tan\alpha\sin\beta}{\sqrt{1-e^2}}\right)}^2}+\dfrac{\overline{Y}^2}{{\left(\dfrac{h\tan\alpha\sin\beta}{1-e^2}\right)}^2}=1$$
lo que nos dm. que la curva $\Sigma$ es una \underline{Elipse} de centro en $O'$.
\begin{figure}[ht!]
\begin{center}
  \includegraphics[scale=0.5]{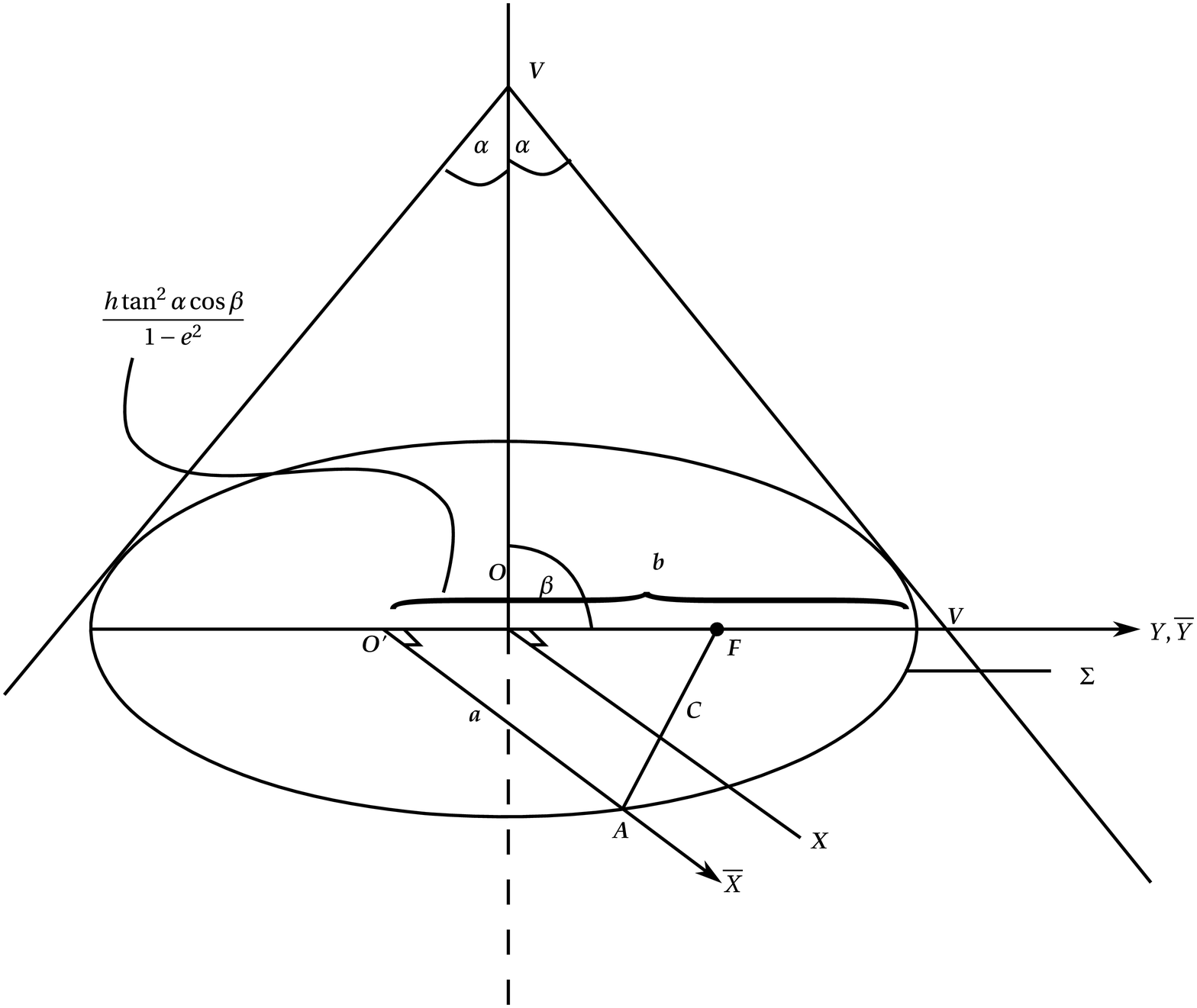}\\
  \caption{}\label{}
\end{center}
\end{figure}
\newline
en la que los semiejes son
\begin{align*}
O'A&=a=\dfrac{h\tan\alpha\sin\beta}{\sqrt{1-e^2}}\\
\intertext{y}\\
O'V&=b=\dfrac{h\tan\alpha\sin\beta}{1-e^2};\quad e=\dfrac{\cos\beta}{\cos\alpha}<1
\end{align*}
Vamos a dm. que $a<b$ con lo cual queda establecido que los focos de la elipse están en el eje $\overline{Y}$ y localizados así: $$0'F=c=\sqrt{b^2-a^2}$$ Sabemos que si $x<1,\quad x^2<x.$\\
Como $\sqrt{1-e^2}<1,\quad 1-e^2<\sqrt{1-e^2}.$ Luego $\dfrac{1}{\sqrt{1-e^2}}<\dfrac{1}{1-e^2}$ y por tanto, $\dfrac{h\tan\alpha\sin\beta}{\sqrt{1-e^2}}<\dfrac{h\tan\alpha\sin\beta}{1-e^2},$ o sea que $a<b.$\\
$$O'F=c=\sqrt{b^2-a^2}=h\tan\alpha\sin\beta\dfrac{1}{1-e^2}.$$
Sea $\epsilon$ la excentricidad de $\Sigma.$\\
$$c=0F'=\epsilon b\quad\therefore\quad\epsilon=\dfrac{c}{b}=\dfrac{h\tan\alpha\sin\beta\frac{e}{1-e^2}}{\dfrac{h\tan\alpha\sin\beta}{1-e^2}}=e=\dfrac{\cos\beta}{\cos\alpha}$$
Esto dm. que la excentricidad $\epsilon$ de la elipse es la misma para todas las elipses obtenidas al cortar el cono con los planos $\parallel$s a $\pi.$\\
\item[3)] Supongamos que $0<\beta<\alpha$ y consideremos la cónica $$\Sigma:X^2+\left(\sin^2\beta-\tan^2\alpha\cos^2\beta\right)Y^2+2h\tan^2\alpha\cos\beta Y-h^2\tan^2\alpha=0$$
De nuevo:
\begin{align*}
A&=1\\
B&=0\\
C&=\sin^2\beta-\tan^2\alpha\cos^2\beta\\
D&=0\\
E&=h\tan^2\alpha\cos\beta\\
F&=-h\tan^2\alpha
\end{align*}
Llamemos $e=\dfrac{\cos\beta}{\cos\alpha}.$ Como $\beta<\alpha,\quad\cos\beta>\cos\alpha$ y se tiene ahora que $e=\dfrac{\cos\beta}{\cos\alpha}>1\quad\therefore e^2-1\quad\text{y}1-e^2<0.$\\
Vamos a dm. que $\Sigma$ es una hipérbola de excentricidad $e$.\\
En este caso,
\begin{align*}
C&=\sin^2\beta-\tan^2\alpha\cos^2\beta=1-\cos^2\beta-\tan^2\alpha\cos^2\beta\\
&=1-\cos^2\beta(1-\tan^2\alpha)=1-\cos^2\beta\sec^2\alpha\\
&\underset{\nearrow}{=}1-e^2\cos^2\alpha\sec^2\alpha=1-e^2<0\\
&\cos\beta=e\cos\alpha
\end{align*}
y la ecuación de la cónica se puede escribir: $$X^2+(1-e^2)Y^2+2h\tan^2\alpha\cos\beta Y-h^2\tan^2\alpha=0$$
con $1-e^2<0.$\\
En análisis de centros es el mismos que hizo en el caso [2)].\\
La cónica tiene centro único en $O'\left(0,-\dfrac{h\tan^2\alpha\cos\beta}{1-e^2}\right),$ solo que ahora $1-e^2<0.$ O sea que el centro $O'$ está situado así:
\begin{figure}[ht!]
\begin{center}
  \includegraphics[scale=0.4]{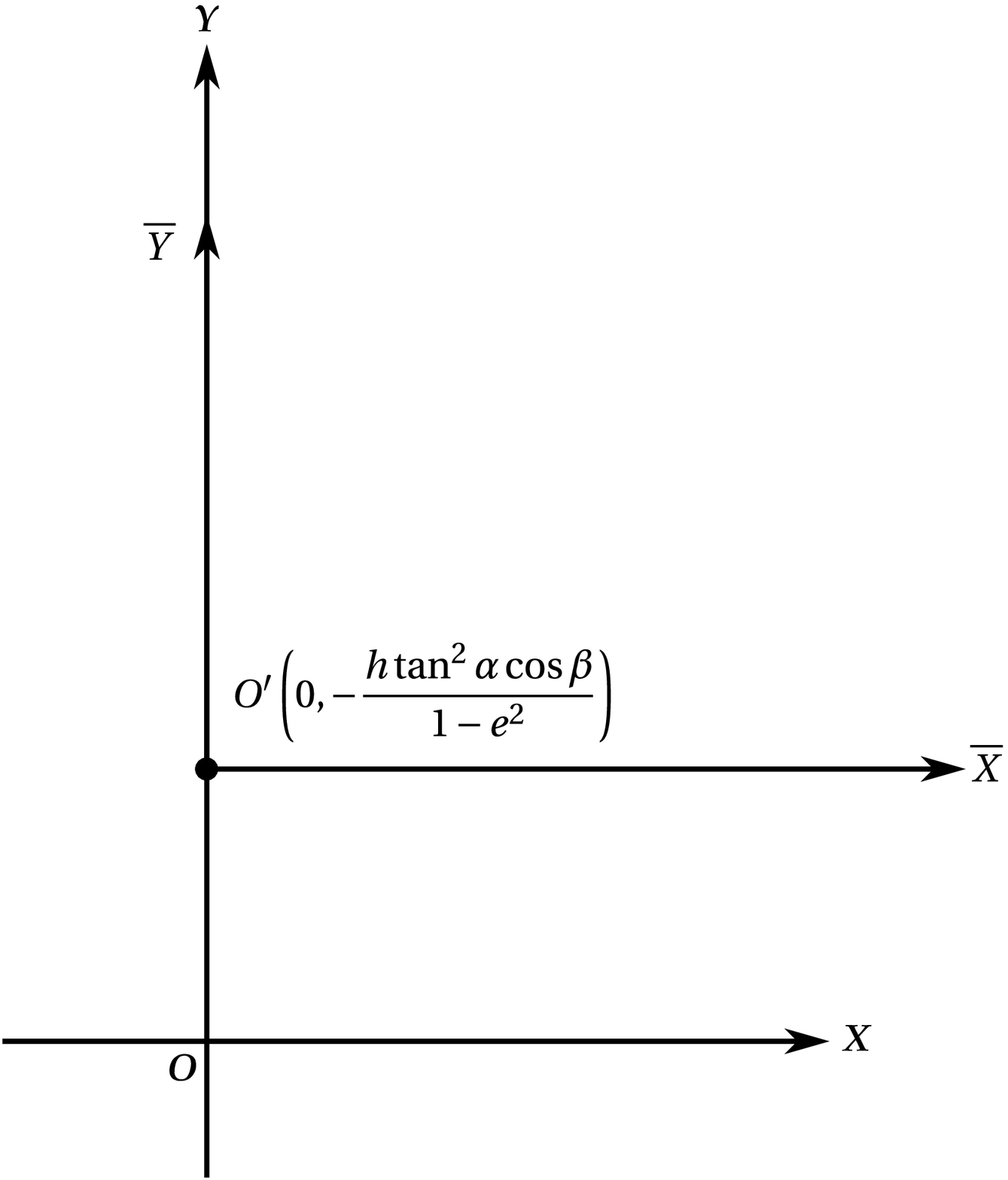}\\
  \caption{}\label{}
\end{center}
\end{figure}
\newline
Al trasladar los ejes al centro $O'$ de la cónica, desaparecen los términos lineales y la ecuación de $\Sigma\diagup\overline{X},\overline{Y}$ es: $$\overline{X}^2+(1-e^2)\overline{Y}^2=-\dfrac{\Delta}{\delta}\quad\text{con}\quad 1-e^2<0.$$
El cálculo de $-\dfrac{\Delta}{\delta}$ es el mismo que se llevó a cabo en el caso [2)]: $$-\dfrac{\Delta}{\delta}=\dfrac{h^2\tan^2\alpha\sin^2\beta}{1-e^2}\quad\text{con}\quad 1-e^2<0.$$
La ecuación de la cónica$\diagup\overline{X}\overline{Y}$ es
\begin{align*}
\overline{X}^2+(1-e^2)\overline{Y}^2&=\dfrac{h^2\tan^2\alpha\sin^2\beta}{1-e^2}\quad\text{y con}\quad 1-e^2<0,\\
\overline{X}^2-(1-e^2)\overline{Y}^2&=-\dfrac{h^2\tan^2\alpha\sin^2\beta}{e^2-1}\quad\text{con}\quad e^2-1<0
\end{align*}
O sea que la ecuación es: $$\dfrac{\overline{Y}^2}{{\left(\dfrac{h\tan\alpha\sin\beta}{e^2-1}\right)}^2}-\dfrac{\overline{X}^2}{{\left(\dfrac{h\tan\alpha\sin\beta}{\sqrt{e^2-1}}\right)}^2}=1$$
lo que nos indica que $\Sigma$ es una \underline{Hipérbola} de centro en $O'.$\\
y con semiejes
\begin{align*}
O'V'&=b=\dfrac{h\tan\alpha\sin\beta}{e^2-1}\\
V'N&=a=\dfrac{h\tan\alpha\sin\beta}{\sqrt{e^2-1}}\quad[Fig.]\\
O'N&=c=\sqrt{a^2+b^2}=\sqrt{\dfrac{h^2\tan^2\alpha\sin^2\beta}{e^2-1}+\dfrac{h^2\tan^2\alpha\sin^2\beta}{{(e^2-1)}^2}}=\\
&=h\tan\alpha\sin\beta\dfrac{e}{e^2-1}
\end{align*}
\begin{figure}[ht!]
\begin{center}
  \includegraphics[scale=0.5]{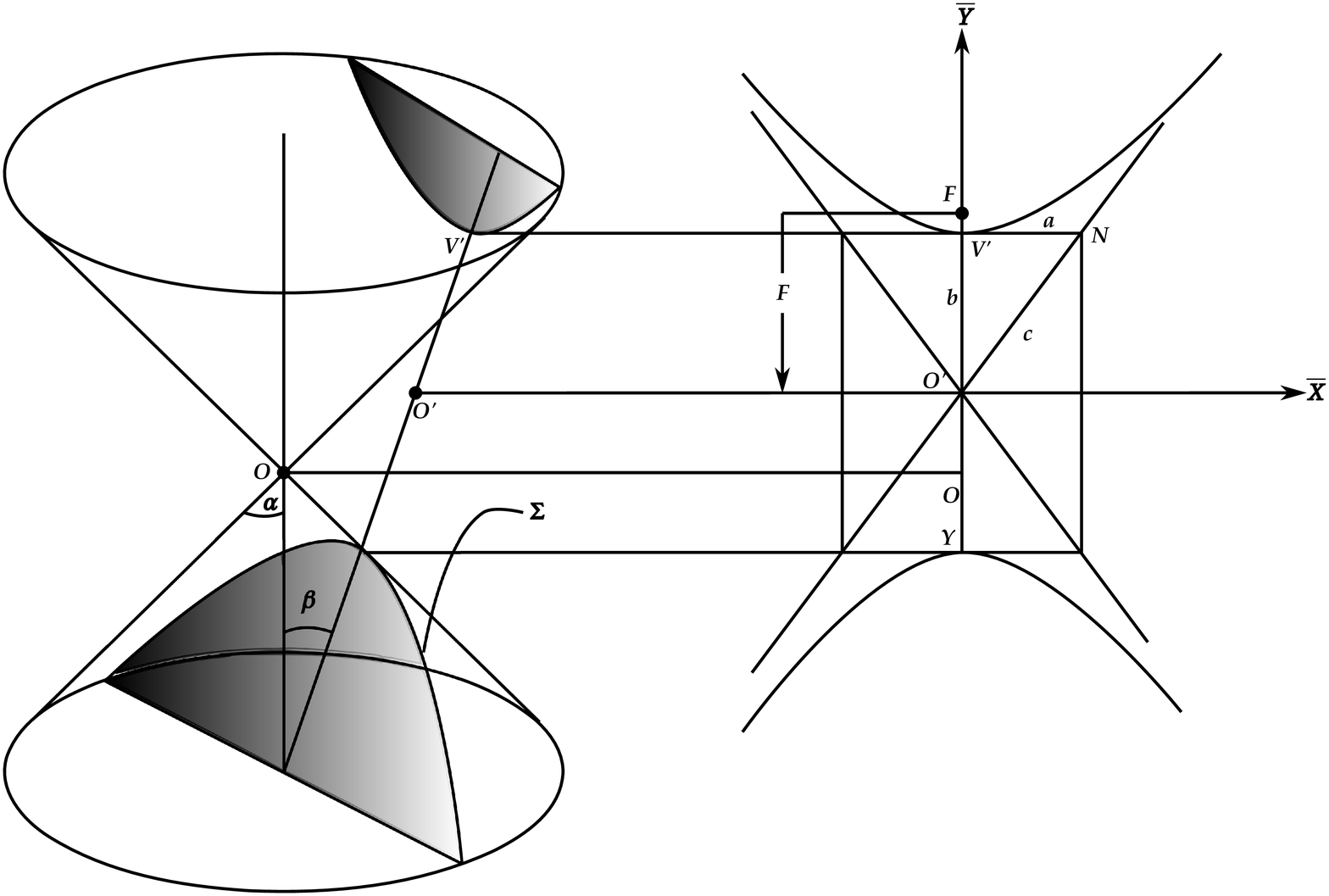}\\
  \caption{}\label{}
\end{center}
\end{figure}
\newline
Llamemos $\epsilon$ a la excentricidad de $\Sigma$ y $F$ al foco.\\
$$c=0'F=\epsilon p\quad\therefore\quad\epsilon=\dfrac{c}{b}=\dfrac{h\tan\alpha\sin\beta\dfrac{e}{e^2-1}}{\dfrac{h\tan\alpha\sin\beta}{e^2-1}}=e=\dfrac{\cos\beta}{\cos\alpha}$$
lo que nos dm. que todas las hipérbolas obtenidas al cortar el cono con los planos $\parallel$s a $\pi$ tienen la misma excentricidad: $$\epsilon=\dfrac{\cos\beta}{\cos\alpha}>1$$
\end{enumerate}
\newpage
\section{Intersección de un cilindro circular recto con un plano que corta al eje del cilindro}
Consideremos el cilindro circular recto de radio $R$ y cuyo eje es el eje $z$.  Su ecuación es\\
$\begin{cases}
x^2+y^2=R^2\\
z\in\mathbb{R}
\end{cases}$\\
Si lo cortamos con un plano $\pi$ que pase por $O$ y haga con el eje $z$ un ángulo $\beta$, la curva que se obtiene es una \underline{elipse} de excentricidad $\epsilon=\cos\beta.$ Vamos a demostrar esto.
\begin{figure}[ht!]
\begin{center}
  \includegraphics[scale=0.4]{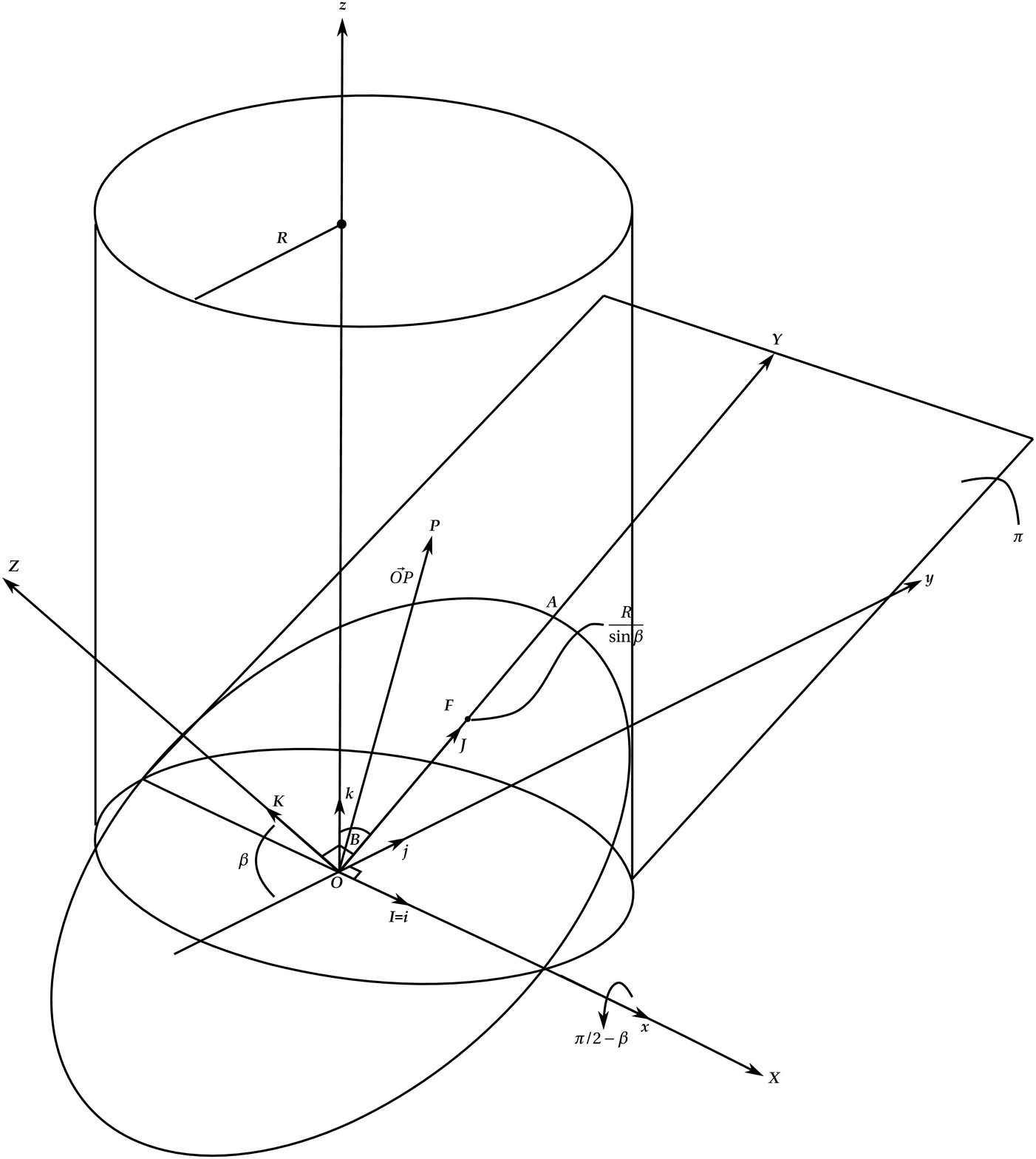}\\
  \caption{}\label{}
\end{center}
\end{figure}
\newpage
Si giramos los ejes $xyz$ un ángulo $\pi/2-\beta\circlearrowleft$ al rededor del eje $x$ obtenemos un sistema $XYZ$ en el que eje $Z$ es $\perp$ al plano $\pi$ y los ejes $X$ y $Y$ están en $\pi.$\\
Sea $P$ un punto del cilindro de coord. $(x,y,z)$ y $(X,Y,Z)$ respecto a los ejes $xyz$ y $XYZ.$\\
\begin{align*}
[\vec{OP}]_{ijk}=\left(\begin{array}{cc}x\\y\\z\end{array}\right)&=[I]_{ijk}^{IJK}[\vec{OP}]_{IJK}\\
\intertext{O sea que}\\
\left(\begin{array}{cc}x\\y\\z\end{array}\right)&=\left(\begin{array}{ccc}1&0&\\0&\sin\beta&-\cos\beta\\0&\cos\beta&\sin\beta\end{array}\right)\left(\begin{array}{cc}X\\Y\\Z\end{array}\right)\\
\therefore\quad x&=X\\
y&=\sin\beta Y-\cos\beta Z\\
z&=\cos\beta Y+\sin\beta Z
\end{align*}
Así que la ecuación del cilindro$\diagup XYZ$ es:
\begin{align*}
X^2+{\left(\sin\beta Y-\cos\beta Z\right)}^2=R^2\\
X^2+\sin^2\beta Y^2-2\cos\beta\sin\beta YZ+\cos^2\beta Z^2=R^2
\end{align*}
Si $Z=0,$ obtenemos la ecuación de la sección del cilindro con el plano:
\begin{align*}
X^2+\sin^2 Y^2&=R^2\\
\dfrac{X^2}{R^2}+\dfrac{Y^2}{{\left(\dfrac{R}{\sin\beta}\right)}^2}&=1\quad\text{Elipse}\\
a&=R\\
b&=\dfrac{R}{\sin\beta}>R=a,
\end{align*}
lo cual explica que los focos están en el eje $Y.$\\
$$c=OF=\sqrt{{\left(\dfrac{R}{\sin\beta}\right)}^2-R^2}=\dfrac{R\cos\beta}{\sin\beta}=\dfrac{R}{\tan\beta}$$
Si $\epsilon$ es la excentricidad de la curva, $$\epsilon=\dfrac{c}{b}=\dfrac{R\dfrac{\cos\beta}{\sin\beta}}{\dfrac{R}{\sin\beta}}=\cos\beta$$

\end{document}